\numberwithin{equation}{section}
\newtheorem{theorem}{Theorem}[section]
\newtheorem{lemma}[theorem]{Lemma}
\newtheorem{remark}[theorem]{Remark}
\numberwithin{equation}{section}
\newtheorem{thm}{Theorem}[section]
\newtheorem{lema}[thm]{Lemma}
\title[]
{Classical solutions to the Boltzmann equations for  gas mixture with unequal molecular masses}
\author[
]{Gaofeng Wang, Weike Wang and Tianfang Wu}
\address{G.F. Wang. School of Mathematics and Statistics, Nantong University,
	Nantong 226019, P. R. China.}
\email{gfwang@ntu.edu.cn}
\address{W.K. Wang. School of Mathematical Sciences, CMA-Shanghai and Institute of Natural
	Science, Shanghai Jiao Tong University, Shanghai, 200240, P.R.China.}
\email{wkwang@sjtu.edu.cn}
\address{T.F. Wu. (corresponding author). Wenzhou Business College, Wenzhou 325000, P. R. China.}
\email{20249237@wzbc.edu.cn}
\thanks{}
\begin{document}	
	
	\maketitle
	\begin{abstract}
	‌The Boltzmann equation is essential for gas thermodynamics, as it models how the molecular density distribution $F(t,x,v)$ changes over time. However, existing research primarily focuses on the single-species Boltzmann equation, while investigations into gas mixtures with unequal molecular masses remain relatively limited. Notably, mixed-gas studies have broader applications---exemplified by Earth's atmosphere, composed of 78\% nitrogen, 21\% oxygen, and 1\% trace gases, where the $N_2$ to $O_2$ molecular mass ratio is 28:32 (simplified as 7:8). This work addresses the Boltzmann equations for such mixtures with unequal molecular masses $(m^A\neq m^B)$, establishing the global-in-time existence of classical solutions near Maxwellians for soft potentials ($-3<\gamma<0$) in a periodic spatial domain. Our analysis encompasses arbitrary molecular mass ratios. ‌The main contribution of this paper lies in ‌the‌ detailed characterization of the linear collision operator's structure and establishing estimates for the nonlinear terms under unequal mass conditions.  ‌Consequently, these results‌ may help advance‌ spectral analysis for soft potentials ‌as well as $L^2-L^{\infty}$ frameworks in future studies of multi-component Boltzmann equations. \\
		
		\noindent{\bf Keywords}. {Boltzmann equations; Gas mixtures; Unequal molecular masses; 
			Global-in-time; Classical solutions; Soft potentials.} \vspace{5pt}\\
		\noindent{\bf AMS subject classifications:} {35B38, 35J47}
	\end{abstract}
	
	\allowdisplaybreaks

	\section{Introduction and main result}
	
	\subsection{Introduction}
	We consider the Boltzmann equations for two species of particles
	\begin{equation}\label{MAMAMAMAzhuyaotuimox}
		\left\{
		\begin{gathered}
			\partial_tF^A+v\cdot \nabla_xF^A=Q^{AA}(F^A,F^A)+Q^{AB}(F^A,F^B),
			\\
			\partial_tF^B+v\cdot \nabla_xF^B=Q^{BA}(F^B,F^A)+Q^{BB}(F^B,F^B),
		\end{gathered}
		\right.
	\end{equation}
	where the particle species is denoted by the Greek letters $\alpha,\beta\in\{A,B\}$. $F^\alpha(t,x,v)$ is the spatially periodic density distribution function for the particles species $\alpha$ at time $t\geq0$, position $x=(x_1,x_2,x_3)\in [-\pi,\pi]^3=\mathbb{T}^3$, and velocity $v=(v_1,v_2,v_3)\in \mathbb{R}^3$. Particle mass of species $\alpha$ is denoted by $m^\alpha$.  
	The collision operator  for $\alpha$-$\beta$ particle pairs takes the form
	$$ Q^{\alpha \beta}(F^{\alpha},F^{\beta})=\int_{\mathbb{R}^3}\int_{\mathbb{S}^2}b^{\alpha \beta}(\theta) |v-v_*|^{\gamma} \Big[F^{\alpha}(v') F^{\beta}(v'_*)-F^{\alpha}(v)F^{\beta}(v_*) \Big] d\omega dv_*. $$
	Here $-3<\gamma<0, \omega\in\mathbb{S}^2$, and $b^{\alpha \beta}(\theta)$ satisfies the Grad angular cutoff assumption,
	\begin{equation*}
		0< b^{\alpha \beta}(\cos \theta)\leq C_{b}  |\cos \theta |,
	\end{equation*}
	where, $C_{b}$ is a constant, and $\cos\theta=\frac{\omega\cdot(v-v_*)}{|v-v_*|}$. Moreover, the velocities $v, v_*$
	and $v', v'_*$ correspond to the species $\alpha$ and $\beta$ before and after the collision occurs, which obey the laws of momentum conservation and energy conservation:
	\begin{equation*}
		m^{\alpha} v_*' +m^{\beta}v' =m^{\alpha} v_*+m^{\beta} v, \quad
		m^{\alpha}|v_*'|^{2}+m^{\beta}|v'|^2 =m^{\alpha}|v_*|^2+m^{\beta}|v|^2.
	\end{equation*}
	So, $ v', v'_* $ can be expressed as	
	\begin{equation}\label{F1vuvusacl}
			v'=v-\frac{2m^{\beta}}{m^\alpha+m^\beta}[(v-v_*)\cdot \omega]\omega,\qquad 
			v'_{*}=v_{*}+\frac{2m^{\alpha}}{m^\alpha+m^\beta}[(v-v_*)\cdot \omega]\omega.
	\end{equation}

Over the past few decades, many mathematical theories have emerged regarding the Boltzmann equation, most of which focus on the single-species Boltzmann equation. Relatively few discussions address the Boltzmann equation for multi-species gases. For gas mixtures with equal molecular collision masses ($m^A = m^B$), H.L.Li, T.Yang and M.Y.Zhong \cite{[meq]LYZ} studied the decay rates of the bipolar Vlasov -Poisson-Boltzmann equations. Z.D.Fang and K.L.Qi \cite{[FQ]ARC} investigated the hydrodynamic transition process from Boltzmann equations to two-fluid systems. C.Bardos and X.F.Yang \cite{[7]Bardos2012CPDE} gave the classification of well-posed kinetic boundary
layer for hard sphere gas mixtures. Meanwhile, Y.J.Wang \cite{YJWangSIMA} established the diffusive limit behavior of the VPB system in periodic domains under the same mass condition ($m^A = m^B$), and subsequently \cite{[49]WangJDE2013} provided rigorous analysis of its temporal decay properties. N.Jiang, Y.J.Lei, and H.J.Zhao \cite{[2025X]Jiang} investigated the Euler-Maxwell limit for potential  $-3<\gamma\leq1$, ($m_{\pm}=e_{\pm}=1$).
However, when dealing with a mixed gas containing two species with different particle masses, more difficulties arise, primarily in three aspects:
\begin{enumerate}
	\item The collision operator incorporates the mass parameters of different particles, complicating the analysis of its properties.
	\item Collisions between particles of unequal mass lack symmetry.
	\item Collision operator on the right-hand side cannot decouple the system of equations through $``(f^{+}+f^{-})''$ and $``(f^{+}-f^{-})''$, which is different from the case of $m^A=m^B$.
\end{enumerate}

Consequently, studies on collisions involving particles of different masses ($m^A \neq m^B$) are far fewer. K.Aoki, C.Bardos and S.Takata \cite{[1]Aoki2003JSP} investigated the Knudsen layer for gas mixtures, establishing that the equilibrium state for particles of unequal masses is a bi-Maxwellian distribution and proving that is a self-adjoint and non-negative operator.  
In \cite{[YU2010]}, A.Sotirov and S.H.Yu investigate the Boltzmann equation for a binary gas mixture in one spatial dimension, with initial conditions where one gas is near vacuum and the other in proximity to a Maxwellian equilibrium state. They decompose the operator 
$\bm{L}(v)$ into four scalar operators $L^{AA}, L^{AB}, L^{BA}, L^{BB}$, where $L^{AA}$ and $L^{BB}$ share the same properties with the single-species operator $L$. Specifically, they establish the coercivity of the operator $L^{AB}$. Their work establishes a spectral analysis framework for asymmetric initial conditions, paving the way for subsequent research by other scholars—as demonstrated in \cite{[2014]LSQ}, \cite{you3F}, \cite{you4F} and \cite{[51]Wu2024AA}.

Later, Briant and Daus \cite{[60]Briant2016ARM} studied the Boltzmann equation system for hard potentials with particles of different masses, employing spectral methods to construct operator semigroups and establish the well-posedness of solutions for small perturbations near equilibrium initial conditions. Their paper makes a contribution ‌by‌ providing  mathematically rigorous treatment of gases composed of particles of unequal masses without structural constraints, ‌and by‌ establishing key properties of essential linear operators — ‌specifically‌ the coercivity of $\bm{L}(v)$ and‌ decay estimates for  $\bm{K}(v)$, thereby‌ providing a direct foundation for subsequent research $($one can refer to \cite{Aoki20020JDE}, \cite{Alonso2024}, \cite{BondesanCPAA}, \cite{[DL]VPB} and \cite{[50]Wu2023JDE}$)$. 

However, the conclusions and proofs ‌in \cite{[60]Briant2016ARM} no longer apply for soft potentials. Specifically, Theorem 3.3's method for proving coercivity of the linear collision operator $\bm{L}(v)$ does not apply in case of $-3<\gamma <0$, as the spectrum of 
$\bm{L}(v)$ no longer has a gap ‌near zero‌. $($boldface denotes vector-valued operators$)$.  Additionally, in \cite{[60]Briant2016ARM} Lemma 5.1 they prove that  $\bm{L}=-\bm{\nu} + \bm{K}$ and the integral kernel $\bm{k}(v,v_*)$ of the $\bm{K}(v)$ satisfies 
\begin{equation}\label{estMKB1}
	\frac{\left \langle v \right\rangle^l}{\left \langle v_* \right\rangle^l}\big|\bm{k}(v,v_*)\big|\leq C \big[|v-v_*|^{\gamma}+|v-v_*|^{\gamma-2}\big] e^{-c(|v-v_*|^2+\frac{||v|^2-|v_*|^2|^2}{|v-v_*|^2})},
\end{equation}
as well as in \cite{[60]Briant2016ARM} Lemma 5.2:
\begin{align}
	&\int_{\mathbb{R}^3 }\frac{\left \langle v \right\rangle^l}{\left \langle v_* \right\rangle^l} \big|\bm{k}(v,v_*)\big| d v_*\notag\\
	&\leq \int_{\mathbb{R}^3 }  \big[|v-v_*|^{\gamma}+|v-v_*|^{\gamma-2}\big] e^{-c(|v-v_*|^2+\frac{||v|^2-|v_*|^2|^2}{|v-v_*|^2})} d v_* \leq \frac{C}{1+|v|},\label{estMKB2}
\end{align}
where $0\leq\gamma\leq 1$ and $\left \langle v \right\rangle=(1+|v|),\,l \geq 0$. 

Note that  estimate \eqref{estMKB1} fails to provide any  decay‌ about velocity $v$ in $L^{\infty}_v$, because one can take $|v|$ and $|v_*|$ arbitrarily‌ large while satisfying $$|v-v_*|=\big||v|^2-|v_*|^2\big|=1.$$ As shown in Figure 1:
\begin{figure}[H]
	\centering
	\includegraphics[width=14cm,height=7cm]{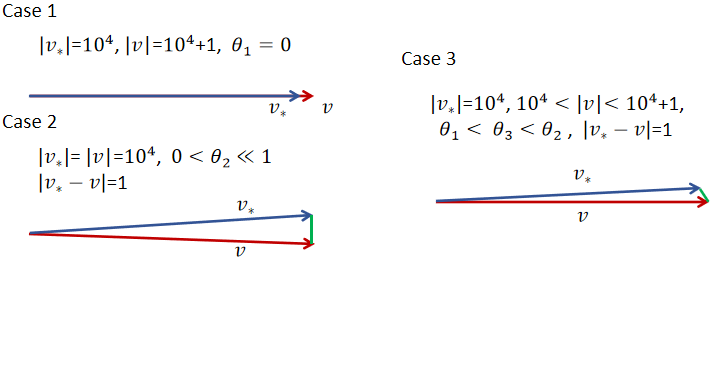}
	\vspace{-1.8cm}
	\caption{Why no decay in \eqref{estMKB1}}
\end{figure}
\begin{itemize}
	\item In Case 1, $\big||v|^2-|v_*|^2\big|=2\times10^4+1$;
	\item In Case 2, $\big||v|^2-|v_*|^2\big|=0$;
	\item Thus‌, Case 3 exists where $\big||v|^2-|v_*|^2\big|=1$.
\end{itemize}
Moreover, when $-3<\gamma\leq-1$, estimate \eqref{estMKB2} ‌is not integrable‌.  This estimate ‌is also likely crude for the hard potentials case‌, ‌as‌ its decay rate $\frac{C}{1+|v|}$, does not reflect‌ the dependence on the potential parameter $\gamma$.

For the Boltzmann equation of a single species of particles, in fact, the corresponding integral kernel $k(v,v_*)$ satisfies the following improved decay estimate: 		
\begin{align}
		\frac{\left \langle v \right\rangle^l}{\left \langle v_* \right\rangle^l}\big|k(v,v_*)\big|\leq C \frac{(1+|v|+|v_*|)^{\gamma-1}}{|v-v_*|} \, e^{-c\{|v-v_*|^2+\frac{||v|^2-|v_*|^2|^2}{|v-v_*|^2}\}}, \hspace{0.5cm}-1<\gamma \leq 1,\label{FSC15}\\
		\frac{\left \langle v \right\rangle^l}{\left \langle v_* \right\rangle^l}\big|k^{\chi}(v,v_*)\big|\leq C_{\varepsilon} \frac{(1+|v|+|v_*|)^{\gamma-1}}{|v-v_*|} \, e^{-c\{|v-v_*|^2+\frac{||v|^2-|v_*|^2|^2}{|v-v_*|^2}\}}, \hspace{0.5cm}-3<\gamma \leq -1,\label{FSC16}
\end{align}
and 
\begin{align}
	\int_{\mathbb{R}^3 } \frac{\left \langle v \right\rangle^l}{\left \langle v_* \right\rangle^l}\big|k(v,v_*)\big| d v_*\leq C \left \langle v \right\rangle^{\gamma-2},\hspace{0.5cm}-1<\gamma \leq 1,\label{FSC17}\\
	\int_{\mathbb{R}^3 } \frac{\left \langle v \right\rangle^l}{\left \langle v_* \right\rangle^l}\big|k^{\chi}(v,v_*)\big| d v_*\leq C_{\varepsilon} \left \langle v \right\rangle^{\gamma-2},\hspace{0.5cm}-3<\gamma \leq -1,\label{FSC18}
\end{align}
 where $\chi=\chi_{\varepsilon}$ is a cutoff function defined in \eqref{cutoffuctiondef} and $k^{\chi}(v,v_*)$ is the kernel of $K^{\chi}(v)$, defined as the regular part of $K(v)$. Estimates \eqref{FSC15} and \eqref{FSC17} (see \cite{[DiJMP]Guo}, \cite{[19]Guo2010ARMA}), ‌exhibit‌ a decay of $(1+|v|)^{\gamma-1}$.  \eqref{FSC16} and \eqref{FSC18}, proved in \cite{[impvR]Guo2003} and \cite{[2017]ARMLSQ}, provide finer estimates than \eqref{estMKB2}.‌ These finer decay analyses are useful, ‌especially‌ when dealing with problems of spectral analysis for soft potentials, one can refer to \cite{[DYZSF]VPB},\cite{[meq]LYZ},\cite{[PG]Liu2004}. They are also crucial  for  $L^{\infty}$  estimates along the characteristics within the $L^2-L^{\infty}$ framework (see \cite{[20]Guo2021ARMA},\cite{[22]Guo2010CPAM},\cite{[inp]LIWANG2023SIAM},\cite{[OY]2025J}), where any slight decay shortfall would prevent the energy estimate from closing.
 
‌‌Since these refined estimates are critically important, it naturally raises the question: Does the corresponding integral kernel 
$\bm{k}(v,v_*)$ for multi-particle systems similarly satisfy such estimates? Therefore, a more detailed structural dissection of the linear operator $\bm{K}$ is necessary. ‌Here we focus on the case of soft potentials.  $\bm{K}$ is decomposed into $\bm{K}_1 + \bm{K}_2$. The structure of operator $\bm{K}_1$ is relatively simple. Using momentum and energy conservation in collisions establishes its exponential decay, thus satisfying estimates \eqref{FSC16},\eqref{FSC18}. For the part of $\bm{K}_2$, it is decomposed via a cutoff function $\chi=\chi_{\varepsilon}$, as $\bm{K}_2 = \bm{K}^{\chi}_2 + \bm{K}^{1-\chi}_2$. If $\varepsilon$ is chosen sufficiently small, $\bm{K}^{1-\chi}_2$ becomes small that can always be absorbed by the right-hand side. ‌Let  $\bm{f}=(f^A,f^B)^T$ be a vector valued function. The part of $\bm{K}_{2}^{\chi}\mathbf{f}=(K_{2}^{\alpha,\chi}\mathbf{f},K_{2}^{\alpha,\chi}\mathbf{f})^T$ requires focused analysis. It admits the following decomposition:
 \begin{equation*}
 	K_{2}^{\alpha,\chi}\mathbf{f} = \underbrace{K_{2,(1)}^{\alpha,\chi}\mathbf{f}}_{\text{Typical part}} + \underbrace{K_{2,(2)}^{\alpha,\chi}\mathbf{f}}_{\text{Hybrid part}},
 \end{equation*}
 \begin{figure}[H]
 	\vspace{-0.4cm}
 	\centering
 	\includegraphics[width=14.5cm,height=5.7cm]{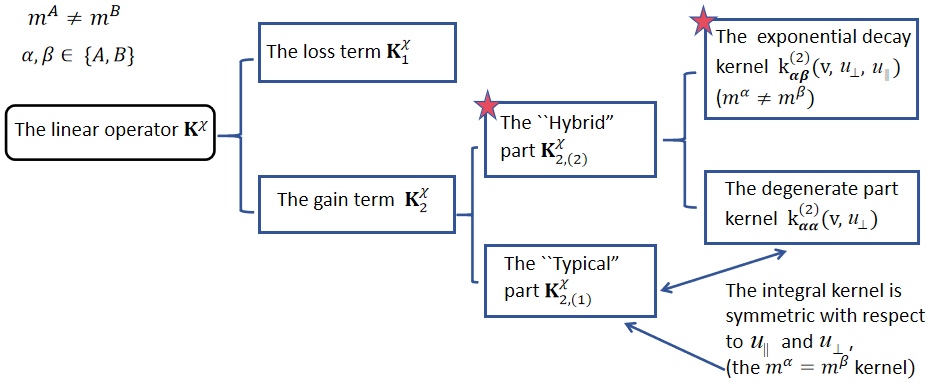}
 	\vspace{-0.2cm}
 	\caption{Decomposition of the Linear Operator $\mathbf{K}^{\chi}$}
 \end{figure}
 \noindent where 
 \begin{equation*}
 	\begin{split}
 		&	 K_{2}^{\alpha,\chi} \mathbf{f}=  \frac{1}{\sqrt{\mu^\alpha}} \sum_{\beta=A,B} \iint_{\mathbb{R}^3 \times \mathbb{S}^2}
 		|v-v_*|^{\gamma}\chi_{\varepsilon}(v-v_*) \Big[\mu^\alpha(v')\sqrt{\mu^\beta(v_*')}f^\beta(v_*')\\
 		& \hspace{7.7cm} +\mu^{\beta}(v_*') \sqrt{\mu^{\alpha}(v')}\, f^\alpha(v')\Big]b^{ \alpha \beta}(\theta) d\omega dv_*.
 	\end{split}	
 \end{equation*}	
 
 Both parts $\alpha \neq \beta$ and $\alpha = \beta$ in ``Typical part" exhibit properties analogous to ‌those in the Boltzmann equation for a single species, which is equivalent to the case of identical molecules $(m_A=m_B)$. So, its integral kernel $k_{\alpha \beta}^{(1)}(v,u_{\parallel})$ thus fulfills estimates \eqref{FSC16},\eqref{FSC18}.  ``Hybrid part" comprises a ‌degenerate  part $(m^{\alpha}=m^{\beta})$ and an ‌exponentially decaying part $(m^{\alpha} \neq m^{\beta})$. ‌It is straightforward to verify that the integral kernel $k_{\alpha \beta}^{(2)}(v,u_{\perp},u_{\parallel})$ of the exponentially decaying part satisfies estimates \eqref{FSC16},\eqref{FSC18}. While the degenerate part exhibits symmetry with the $\alpha = \beta$ part in ``Typical part", $($thus maintaining the same structure to that in equal-mass collisions$)$, its integral kernel $k_{\alpha \alpha}^{(2)}(v,u_{\perp})$ also fulfills these improved estimates. Details can be found in Lemma \ref{lemma2.2} and Remark \ref{RK2211}.  How to formally transform these integral kernels to be consistent with \eqref{FSC16},\eqref{FSC18}, see Remark \ref{RK2222}.
 
 For hard potentials, we can drop the cutoff function $\chi_{\varepsilon}$ while preserving the above conclusions. These notations here carry a subtle distinction from that in the proof of Lemma \ref{lemma2.2}, primarily due to the inclusion of a truncation function $\mathbf{1}_{|v|+|v_*|>m}$ introduced to simultaneously establish operator compactness in Lemma \ref{lemma2.2}. ‌Yet, this adjustment sustains the principal properties of the decay-related decomposition.  Therefore, the answer is ‌Yes!‌

Based on the structural analysis of the linear operator $\bm{K}$, we can further derive the coercivity for the linear operator $\bm{L}$. ‌The proof we employ ‌not only‌ establishes the result for soft potentials ‌but also‌ applies to hard potentials, because ‌when $0 \leq \gamma \leq 1$, $\bm{K}$ is a compact operator. Thus differs from the proof in \cite{[60]Briant2016ARM}. In a subsequent section, we provide some estimates for the nonlinear term $\Gamma^{\alpha \beta}$. Although these nonlinear collision terms involve parameters $m^\alpha$, $m^\beta$ and couple functions $f^\alpha_1$ and $f^\beta_2$ together, we achieve the desired control by considering their vector-valued weighted $L^2_{x,v}$ norms.  ‌Furthermore‌, ‌to extend the existence time of local solutions to long time, we ‌generalize‌ Guo's method $($see Lemma 9 in \cite{[impvR]Guo2003}$)$ to the case of collisions with unequal masses and prove that the coercivity of the linear operator $\bm{L}$  is ‌preserved under integration with respect to any large time t‌.  Utilizing the properties of these operators and established lemmas, we finally prove the main result—Theorem \ref{maintheorem11}, establishing the global-in-time existence of classical solutions near Maxwellians for soft potentials ($-3<\gamma<0$).

\subsection{Notations}
We use notation $\Vert \cdot\Vert_{L^{p}_{x,v} } $ to denote the norm of $L^p$ ( $1\leq p \leq \infty$) space with respect to the variables $(x,v) \in \mathbb{R}^3  \times \mathbb{R}^3$,  and $| \cdot|_{L^p_v} $, $| \cdot|_{L^p_x} $ are the $L^p$ norms   w.r.t. the variable $v$ or $x$. For vector functions $\mathbf{f}(t,x,v)=(f^A,f^B)^T$, its $L^p$ norms are denoted by
\begin{equation*}
	\Vert \mathbf{f} \Vert_{L^{p}_{x,v}}=\Vert f^A\Vert_{L^{p}_{x,v}}+\Vert f^B \Vert_{L^{p}_{x,v}},\qquad | \mathbf{f} |_{L^{p}_{v}}=| f^A|_{L^{p}_{v}}+| f^B|_{L^{p}_{v}}.
\end{equation*}
The notations  $\left \langle \cdot, \cdot	\right \rangle_{L^{2}_{x,v}}$ and $\left\langle \cdot, \cdot\right \rangle_{L^{2}_{v}}$ are the inner product of the function space $L^2_{x,v}$ and $L^2_{v}$ respectively. For vector functions $\mathbf{f}$ and $\mathbf{g}(t,x,v)=(g^A,g^B)^T$,  their inner product in $L^2_v$ and $L^2_{x,v}$ are denoted as 
\begin{gather*}
	\left \langle \mathbf{f}, \mathbf{g}\right \rangle_{L^{2}_{v}} = \left \langle f^A, g^A	\right \rangle_{L^{2}_{v}} + \left \langle f^B, g^B	\right \rangle_{L^{2}_{v}}, \\
	\left \langle \mathbf{f}, \mathbf{g}\right \rangle_{L^{2}_{x,v}} = \left \langle f^A, g^A	\right \rangle_{L^{2}_{x,v}} + \left \langle f^B, g^B	\right \rangle_{{L^{2}_{x,v}}}.
\end{gather*}
We introduce the matrix product of column vectors $\mathbf{f}$ and $\mathbf{g}$:
\begin{equation*}
	(\mathbf{f}:\mathbf{g})=(f^Ag^A,f^Bg^B)^T.
\end{equation*} 
The letter $C$ is a constant, which may vary from line to line. $J \lesssim K$ means $J \leq C K$. If both hold $J \lesssim K$ and $K \lesssim J$, then we use the notation $J\thicksim K$. 
We define the polynomial weight function $\left \langle v \right\rangle= 1+|v|$, and $w=w(v)=(1+|v|)^{\gamma}$. Then we introduce the following weighted $L^2$ norm
\begin{gather*}
	\Vert f \Vert_{\nu}^2 = \iint_{\mathbb{R}^3 \times \mathbb{R}^3} |f|^2 w \,dxdv, \qquad  | f |_{\nu}^2 = \int_{\mathbb{R}^3 } |f|^2 w \,dv.\\
	\Vert \mathbf{f} \Vert_{\nu}^2 = \Vert f^A \Vert_{\nu}^2 +\Vert f^B \Vert_{\nu}^2 , \quad \quad \quad  | \mathbf{f} |_{\nu}^2 =| f^A |_{\nu}^2 +| f^B |_{\nu}^2.
\end{gather*}
Let $\tilde{\alpha}, \tilde{\beta}$ denote multi-indices with length $|\tilde{\alpha}|$ and $|\tilde{\beta}|$ respectively, and 
\begin{equation*}
	\partial^{\tilde{\alpha}}_{\tilde{\beta}} \equiv \partial_x^{\tilde{\alpha}} \partial_v^{\tilde{\beta}}.
\end{equation*}
Furthermore, we denote $\tilde{\beta} \leq \tilde{\alpha}$ if no component of $\tilde{\beta}$ is greater than the corresponding component of $\tilde{\alpha}$, and $\tilde{\beta} \leq \tilde{\alpha}$ if $\tilde{\beta} \leq \alpha$ and $|\tilde{\beta}| < |\alpha|$, and use 
$C^{\tilde{\beta}}_{\tilde{\alpha}}$ to denote the combination number.

Global solutions are constructed in the following energy norm:
\begin{equation}\label{DFenergynorm}
	\mathcal{E}[\mathbf{f}(t, x, v)] =  \sum_{|\tilde{\alpha}|+|\tilde{\beta}|\leq N}  \left( \frac{1}{2} \big\| w^{|\tilde{\beta}|} \partial_{\tilde{\beta}}^{\tilde{\alpha}} \mathbf{f}(t) \big\|_{L^{2}_{x,v}}^2 + \int_0^t \big\| w^{|\tilde{\beta}|} \partial_{\tilde{\beta}}^{\tilde{\alpha}} \mathbf{f}(s) \big\|^2_\nu \, ds \right)
\end{equation}
with the initial data satisfies
\begin{equation*}
	\mathcal{E}[\mathbf{f}^{\rm{in}}] \equiv  \mathcal{E}[\mathbf{f}(0)] \equiv \sum_{|\tilde{\alpha}|+|\tilde{\beta}|\leq N}  \frac{1}{2} \big\| w^{|\tilde{\beta}|} \partial_{\tilde{\beta}}^{\tilde{\alpha}} \mathbf{f}_0 \big\|^2_{L^{2}_{x,v}}. 
\end{equation*}

Throughout this article, $ N \geq 8 $, and the Einstein summation convention is used 
from time to time.

	\subsection{Basic properties of collision operators \cite{BoudinSC7}.}

We introduce the bilinear Boltzmann collision operator $\mathscr C $ in vector form: 
\begin{equation}\label{equation1.4}
	\mathscr{C} \mathbf{F} =
	\left(
	\begin{array}{cccc}
		Q^{AA}(F^A,F^A)+Q^{AB}(F^A,F^B)\vspace{3pt}\\
		Q^{BA}(F^B,F^A)+Q^{BB}(F^B,F^B)
	\end{array}
	\right).
\end{equation}
Firstly, we list some  properties of the collision operators. 

 \noindent (Collision invariants)  $\mathbf{\Psi}=(\Psi^{A},\Psi^{B})^{T}$ is called a collision invariant of operator $\mathscr{C}$ with the inner product of $(L^2_v(\mathbb{R}^3))^2$ if it satisfies
	\begin{equation}\label{Collision invariants}
		\langle\mathscr{C}\mathbf{F}, ~~\mathbf{\Psi} \rangle_{(L_v^2(\mathbb{R}^3))^2} =\sum_{\alpha,\beta\in \{A,B\}}\left\langle Q^{\alpha  \beta}(F^{\alpha},F^{\beta}) , ~~\Psi^{\alpha} \right\rangle_{L_v^2}
		=0\quad {\rm for ~any ~vector}\,~ \mathbf{F}.
	\end{equation}
	If and only if $\mathbf{\Psi}$ belongs to ${\rm Span} \big\{\mathbf{e}_1, \mathbf{e}_2, v_1 \mathbf{m}, v_2 \mathbf{m}, v_3 \mathbf{m}, |v|^2 \mathbf{m}\big\} $, where  $\mathbf{m} = (m^A,m^B )^T$, and $ \mathbf{e}_j$ is the $j^{th}$ unit vector in $\mathbb{R}^2$.
	
 \noindent (H-theorem) 
	The entropy function of the two-species Boltzmann equations  satisfies
	\begin{equation}\label{equation1.5}
		\left \langle
		\mathscr{C}\mathbf{F},  \log \mathbf{F}
		\right \rangle_{(L_v^2(\mathbb{R}^3))^2}
		= \sum_{\alpha,\beta\in \{A,B\}}\left\langle Q^{\alpha  \beta}(F^{\alpha},F^{\beta}) , ~~\log F^{\alpha} \right\rangle_{L_v^2(\mathbb{R}^3)}   \leq 0,
	\end{equation}
	and this equality holds if and only if 
	\begin{equation}\label{basisHG}
		\log \mathbf{F} \in {\rm Span} \big\{\mathbf{e}_1, \mathbf{e}_2, v_1 \mathbf{m}, v_2 \mathbf{m}, v_3 \mathbf{m}, |v|^2 \mathbf{m}\big\}.
	\end{equation}
In this article, we study the initial value problem
\begin{equation}\label{iniprobmain}
(\partial_t + v \cdot \nabla_{x}) \mathbf{F} = \mathscr{C} \mathbf{F} , \quad \mathbf{F}(0, x, v) = \mathbf{F}_0(x, v),
\end{equation}
in a periodic box From the property of the collision invariants , the conservation of mass, momentum and energy for \eqref{iniprobmain} take the form
\begin{align*}
	\frac{d}{dt} \int_{\mathbb{T}^3} \left\langle \mathbf{F}(t), \mathbf{e}_i \right\rangle_{L_v^2}  dx  &\equiv 0, \quad i=1,2,\\
	\frac{d}{dt} \int_{\mathbb{T}^3} \left\langle \mathbf{F}(t), v_j \mathbf{m} \right\rangle_{L_v^2} dx &\equiv 0, \quad j=1,2,3,\\
		\frac{d}{dt} \int_{\mathbb{T}^3} \left\langle \mathbf{F}(t), \frac{|v|^2}{2}\mathbf{m}   \right\rangle_{L_v^2}dx &\equiv 0.
\end{align*}
is to construct classical solutions for \eqref{iniprobmain} near a global bi-Maxwellian
	\begin{gather}\label{GBM}
	\bm{\mu}=
	\left(
	\begin{array}{cccc}
		\mu^A\\
		\mu^B
	\end{array}
	\right)
	=
	\left(
	\begin{array}{cccc}
		\frac{({m^A})^{3/2}}{(2\pi )^{3/2}}	e^{- \frac{m_A|v|^2}{2}}\\
		\frac{({m^B})^{3/2}}{(2\pi )^{3/2}}	e^{- \frac{m_B|v|^2}{2}}
	\end{array}
	\right).
\end{gather}
We denote 
\begin{equation}\label{defmsmsmsmsms}
\sqrt{\bm{\mu}}=(\sqrt{\mu^A},\sqrt{\mu^B})^T,\hspace{1cm}m^s=\min\big\{m^A,m^B\big\}, \hspace{1cm}  \mu^{s}=\frac{({m^s})^{3/2}}{(2\pi )^{3/2}}	e^{- \frac{m_s|v|^2}{2}}.
\end{equation}
The ‌orthonormal basis of $\mathbb{R}^3(v)$ is defined
as
\begin{gather}
	\tilde{\bm{\chi}}_1=\kappa_1(\mathbf{e}_1:\bm{\mu}), \quad \tilde{\bm{\chi}}_2=\kappa_2(\mathbf{e}_2:\bm{\mu}), \quad\tilde{\bm{\chi}}_j=\kappa_jv_{j-2}( \mathbf{m}:\bm{\mu}),\,\, j=3,4,5,  \quad  \tilde{\bm{\chi}}_6=\kappa_6|v|^2(\mathbf{m}:\bm{\mu}).\label{deforthonormalB}
\end{gather}
where $\kappa_i$ are some constants such that $\|\tilde{\bm{\chi}}_i\|_{L_v^2}=1$.

 $\mathbf{L}$ is a non-negative operator, and $\left\langle \mathbf{L}\mathbf{f}, \mathbf{f}  \right\rangle_{L_v^2}=0$ if and only if 
\begin{equation}
	\mathbf{f}= \mathbf{P}_0 \mathbf{f}
\end{equation}
where the projection $\mathbf{P}_0$ in $L_2(\mathbb{R}^3)$ is defined as
\begin{equation*}
	\mathbf{P}_0 \mathbf{f}(t,x,v) = \sum^6_{j=1}\big\langle \mathbf{f}(t,x,\cdot), \tilde{\bm{\chi}}_j \big\rangle_{L_v^2}\tilde{\bm{\chi}}_j.
\end{equation*}
The perturbation function $\mathbf{f}=(f^A,f^B)^T$ to $\bm{\mu}$ is defined by
\begin{equation}
	\mathbf{F}=\bm{\mu}+(\sqrt{\bm{\mu}}:\mathbf{f})
\end{equation}
 then the Boltzmann equation \eqref{iniprobmain} can be written as 
\begin{align}\label{socal555EQSO}
	\begin{split}
	(\partial_t + v \cdot \nabla_{x}) \mathbf{f} +\mathbf{L}\mathbf{f} = \bf{\Gamma}(f,f),\\
	\mathbf{f}(0,x,v) = \mathbf{f}^{\rm{in}}(x,v)
	\end{split}
\end{align}
Here the standard linear operator $\mathbf{L}=(L^A,L^B)^T$ is
	\begin{gather}\label{4.2}
	\begin{split}
		L^\alpha \mathbf{f}=- \frac{1}{\sqrt{\mu^{\alpha}}} \sum_{\beta=A,B} \left[Q^{ \alpha\beta}(\mu^\alpha,\sqrt{\mu^{\beta}} f^{\beta})+Q^{\alpha \beta}(\sqrt{\mu^\alpha}f^{\alpha},\mu^\beta)\right],
	\end{split}
\end{gather}
which can be decomposed by $\mathbf{L}\mathbf{f}=(\bm{\nu}:\mathbf{f})+\mathbf{K}\mathbf{f}$, i.e. 
\begin{gather}\label{Ldeopvec}
	\begin{split}
		\mathbf{L}\mathbf{f}=\left(\begin{array} {lll} \mathbf{L}^{A} \mathbf{f} \\  \mathbf{L}^{B} \mathbf{f} \end{array}\right) = \left(\begin{array} {lll} \upsilon^{A} f^A \\ \upsilon^{B} f^B \end{array}\right)+\left(\begin{array} {lll}  K^A \mathbf{f}\\   K^B \mathbf{f}\end{array}\right),
	\end{split}	
\end{gather}
where
\begin{equation}\label{nuKsuanzidef}
	\begin{split}
		&\upsilon^{\alpha}=   \sum_{\beta=A,B} \int_{\mathbb{R}^3\times \mathbb{S}^2}
		b^{\alpha\beta}(\theta)|v-v_*|^{\gamma} \mu^{\beta}(v_*) d\omega dv_*, \quad  K^{\alpha} \mathbf{f}= K_{1}^{\alpha} \mathbf{f}+K_{2}^{\alpha} \mathbf{f},
		\\
		&	 K_{1}^{\alpha} \mathbf{f}=  \frac{1}{\sqrt{\mu^\alpha}} \sum_{\beta=A,B} \int_{\mathbb{R}^3 \times \mathbb{S}^2}
		b^{\alpha\beta}(\theta)|v-v_*|^{\gamma}  \mu^\alpha(v) \sqrt{\mu^\beta(v_*)}f^\beta(v_*)   d\omega dv_*,\\
		&	 K_{2}^{\alpha} \mathbf{f}= - \frac{1}{\sqrt{\mu^\alpha}} \sum_{\beta=A,B} \int_{\mathbb{R}^3 \times \mathbb{S}^2}
		b^{\alpha\beta}(\theta)|v-v_*|^{\gamma}  \Big[\mu^\alpha(v')\sqrt{\mu^\beta(v_*')}f^\beta(v_*')\\
		& \hspace{6.8cm} +\mu^{\beta}(v_*') \sqrt{\mu^{\alpha}(v')}\, f^\alpha(v')\Big] d\omega dv_*.
	\end{split}	
\end{equation}	
Moreover, the non-symmetric bi-linear form  
\begin{gather}\label{Blackgamdef}
	\begin{split}
		\bf{\Gamma}(f,f) = \left(\begin{array} {lll} \displaystyle{\sum_{\beta=A,B}} \Gamma^{A \beta }(f^{\alpha},f^{\beta})\\ \displaystyle{\sum_{\beta=A,B}}\Gamma^{B \beta }(f^{\alpha},f^{\beta})\end{array}\right),
	\end{split}	
\end{gather}
where
\begin{eqnarray}\label{gain loss part}
	\begin{split}
		\Gamma^{\alpha \beta }(f^{\alpha},f^{\beta})= & \frac{1}{\sqrt{\mu^{\alpha }}}Q^{ \alpha \beta}(\sqrt{\mu^{\alpha }}f^{\alpha},  \sqrt{\mu^{\beta }}f^{\beta})\\
		= &  \frac{1}{\sqrt{\mu^{\alpha }}}\int_{\mathbb{R}^3\times \mathbb{S}^2}|v-v_*|^{\gamma}b^{\alpha \beta}(\theta)\\
		&  \times \left(\sqrt{\mu^{\alpha }(v') \mu^{\beta }(v_*')}f^{\alpha}(v')f^{\beta}(v_*')
		-\sqrt{\mu^{\alpha }(v)\mu^{\beta }(v_*)}f^{\alpha}(v)f^{\beta}(v_*)\right)d\omega dv_* \\
		=:& \Gamma^{\alpha \beta }_{\rm gain} - \Gamma^{\alpha \beta }_{\rm loss}.
	\end{split}
\end{eqnarray}

In this paper, we suppose the initial data $\mathbf{F}^{\rm{in}}(x,v)$ has the same mass, momentum and energy as the bi-Maxwellian $\bm{\mu}$, then for any $t\geq0$,
\begin{equation}\label{conseerlaw}
\begin{aligned}
	 \int_{\mathbb{T}^3} \big\langle \mathbf{f}, (\mathbf{e}_i:\sqrt{\bm{\mu}}) \big\rangle_{L_v^2}  dx  &= 0, \quad i=1,2,\\
	 \int_{\mathbb{T}^3} \big\langle \mathbf{f}, (v_j \mathbf{m}:\sqrt{\bm{\mu}}) \big\rangle_{L_v^2} dx &= 0, \quad j=1,2,3,\\
	 \int_{\mathbb{T}^3} \big\langle \mathbf{f}, (\frac{|v|^2}{2}\mathbf{m} :\sqrt{\bm{\mu}})  \big\rangle_{L_v^2}dx &= 0.
\end{aligned}
\end{equation}

\subsection{The main result}

\begin{thm}\label{maintheorem11}
	Let $-3 < \gamma < 0$, and $\mathbf{f}^{\rm{in}}(x,v)$ satisfies the conservation law \eqref{conseerlaw} at $t = 0$ with
	\begin{equation*}
		\mathbf{F}^{\rm{in}}=\bm{\mu}^{\rm{in}}+(\sqrt{\bm{\mu}^{\rm{in}}}:\mathbf{f}^{\rm{in}})
	\end{equation*}
	 There are constants $C_0 > 0$ and $\varepsilon_0 > 0$, such that if $\mathcal{E}(\mathbf{f}^{\rm{in}}) \leq \varepsilon_0$, there exists a unique global solution $\mathbf{f}(t,x,v)$ to the Boltzmann equation \eqref{socal555EQSO} with $\mathbf{F}=\bm{\mu}+(\sqrt{\bm{\mu}}:\mathbf{f})$, and
	$$
	\sup_{0 \leq s \leq \infty} \mathcal{E}\big[\mathbf{f}(s)\big] \leq C_0\mathcal{E}(\mathbf{f}^{\rm{in}}).
	$$
\end{thm}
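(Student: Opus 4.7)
The plan is to establish global existence via the standard continuation argument: first build a local-in-time solution on some $[0,T_*]$ by a Picard iteration on the linearized problem $(\partial_t+v\cdot\nabla_x)\mathbf{f}^{n+1}+\mathbf{L}\mathbf{f}^{n+1}=\boldsymbol{\Gamma}(\mathbf{f}^n,\mathbf{f}^n)$ in the energy space defined by \eqref{DFenergynorm}, then close a uniform a priori estimate of the form $\mathcal{E}[\mathbf{f}(t)] \leq C_1\mathcal{E}[\mathbf{f}^{\mathrm{in}}]+C_2\mathcal{E}[\mathbf{f}(t)]^{3/2}$, and finally bootstrap to global existence by the smallness of $\mathcal{E}[\mathbf{f}^{\mathrm{in}}]\leq\varepsilon_0$. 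The genuinely new work is contained in closing the a priori estimate under the unequal-mass, soft-potential regime, which hinges on the refined kernel estimates for $\mathbf{K}$ and the coercivity of $\mathbf{L}$ developed earlier in the paper.

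To derive the a priori inequality, I would apply every mixed derivative $\partial^{\tilde{\alpha}}_{\tilde{\beta}}$ with $|\tilde{\alpha}|+|\tilde{\beta}|\leq N$ to \eqref{socal555EQSO}, pair with $w^{2|\tilde{\beta}|}\partial^{\tilde{\alpha}}_{\tilde{\beta}}\mathbf{f}$, and integrate over $\mathbb{T}^3\times\mathbb{R}^3$. The transport term vanishes by periodicity, while the commutator $[\partial^{\tilde{\alpha}}_{\tilde{\beta}},v\cdot\nabla_x]$ produces terms of the form $\partial_{x_i}\partial^{\tilde{\alpha}}_{\tilde{\beta}'}\mathbf{f}$ with $|\tilde{\beta}'|<|\tilde{\beta}|$; this is precisely why the weight power decreases with $|\tilde{\beta}|$, so each lost factor of $v$ is paid for by an extra $w=\langle v\rangle^\gamma$ in the dissipation norm. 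The linear term splits via $\mathbf{L}\mathbf{f}=(\bm{\nu}\!:\!\mathbf{f})+\mathbf{K}\mathbf{f}$: the $\bm{\nu}$ part produces the dissipation $\|w^{|\tilde{\beta}|}\partial^{\tilde{\alpha}}_{\tilde{\beta}}\mathbf{f}\|_\nu^2$, while the $\mathbf{K}$ contribution, after the decomposition $\mathbf{K}=\mathbf{K}_1+\mathbf{K}_2^\chi+\mathbf{K}_2^{1-\chi}$ described in the introduction and the improved kernel bounds \eqref{FSC16},\eqref{FSC18}, is absorbed into a small fraction of this dissipation plus a lower-order term (the $\mathbf{K}_2^{1-\chi}$ piece is rendered small by choosing $\varepsilon$ small). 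The trilinear contribution $\langle w^{2|\tilde{\beta}|}\partial^{\tilde{\alpha}}_{\tilde{\beta}}\boldsymbol{\Gamma}(\mathbf{f},\mathbf{f}),\partial^{\tilde{\alpha}}_{\tilde{\beta}}\mathbf{f}\rangle$ is handled by Leibniz-distributing derivatives across both arguments of $\Gamma^{\alpha\beta}$, applying the component-wise weighted bilinear bounds, and using Sobolev embedding $H^2(\mathbb{T}^3)\hookrightarrow L^\infty(\mathbb{T}^3)$ together with $N\geq 8$; this yields a bound of the shape $\mathcal{E}[\mathbf{f}]^{1/2}\sum\|w^{|\tilde{\beta}|}\partial^{\tilde{\alpha}}_{\tilde{\beta}}\mathbf{f}\|_\nu^2$, i.e.\ controllable by the dissipation once the energy is small.

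The main obstacle, and the reason the paper has to work harder than the single-species or equal-mass setting, is that for $-3<\gamma<0$ the operator $\mathbf{L}$ has no spectral gap: its coercivity merely gives $\langle\mathbf{L}\mathbf{f},\mathbf{f}\rangle\gtrsim\|(\mathbf{I}-\mathbf{P}_0)\mathbf{f}\|_\nu^2$, which is weaker than the $L^2_{x,v}$ norm and provides no direct control on the hydrodynamic piece $\mathbf{P}_0\mathbf{f}$. To recover macroscopic dissipation, I would use Guo's interaction-functional strategy: construct auxiliary test functions built from the six-dimensional basis \eqref{deforthonormalB} adapted to $\mathbf{m}=(m^A,m^B)^T$, test them against \eqref{socal555EQSO}, and integrate by parts in $x$ to obtain, after time integration, a bound of the form $\int_0^t\|\nabla_x\mathbf{P}_0\mathbf{f}\|_{L^2_{x,v}}^2\,ds$ controlled by the microscopic dissipation plus boundary terms in $\mathbf{f}(t)$ and $\mathbf{f}(0)$ plus nonlinear remainders. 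The global conservation laws \eqref{conseerlaw} eliminate the zero Fourier modes of $\mathbf{P}_0\mathbf{f}$, so a Poincaré inequality on $\mathbb{T}^3$ upgrades $\nabla_x\mathbf{P}_0\mathbf{f}$ control into $\mathbf{P}_0\mathbf{f}$ control. Because $\|\cdot\|_\nu$ is genuinely weaker than $\|\cdot\|_{L^2}$ for soft potentials, this step only works in the time-integrated sense, and this is where one must generalize Lemma~9 of \cite{[impvR]Guo2003} to the unequal-mass system—keeping track of how the mass parameters $m^A,m^B$ enter the invariants $v_j\mathbf{m}$ and $|v|^2\mathbf{m}$ and verifying that the bilinear form $\langle\mathbf{L}\mathbf{f},\mathbf{f}\rangle$ integrated over $[0,t]$ still bounds $\int_0^t\|\mathbf{f}\|_\nu^2\,ds$.

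Combining the high-order weighted estimate with the time-integrated macroscopic estimate produces the closed inequality $\mathcal{E}[\mathbf{f}(t)]\leq C_1\mathcal{E}[\mathbf{f}^{\mathrm{in}}]+C_2\mathcal{E}[\mathbf{f}(t)]^{3/2}$. A standard continuity argument—if $\mathcal{E}[\mathbf{f}(s)]\leq 2C_1\varepsilon_0$ for all $s<t$, then the inequality forces $\mathcal{E}[\mathbf{f}(t)]\leq C_1\varepsilon_0+C_2(2C_1\varepsilon_0)^{1/2}\mathcal{E}[\mathbf{f}(t)]$, so that for $\varepsilon_0$ small enough, $\mathcal{E}[\mathbf{f}(t)]\leq 2C_1\varepsilon_0$—combined with the local existence result yields the global classical solution with $\sup_{s\geq 0}\mathcal{E}[\mathbf{f}(s)]\leq C_0\mathcal{E}[\mathbf{f}^{\mathrm{in}}]$ stated in Theorem~\ref{maintheorem11}. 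The hardest step in practice is the macroscopic estimate, because the absence of the $(f^+\pm f^-)$ decoupling forces one to treat the $A$ and $B$ fluid moments as a coupled six-component system whose coefficient matrix depends nontrivially on the mass ratio $m^A/m^B$; verifying non-degeneracy of this system at arbitrary mass ratios is the central new technical difficulty relative to the equal-mass analyses of \cite{[meq]LYZ} and \cite{YJWangSIMA}.
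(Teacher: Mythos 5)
Your overall architecture matches the paper's: a linearized iteration for local existence, high-order weighted energy estimates closed by the trilinear bounds and the microscopic coercivity, and a time-integrated macroscopic coercivity estimate whose smallness closes the continuation argument. But there is a substantive confusion in the step you identify as ``the hardest step in practice,'' namely the macroscopic coercivity over a time interval.

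You describe this step as ``Guo's interaction-functional strategy: construct auxiliary test functions built from the six-dimensional basis \eqref{deforthonormalB}, test them against \eqref{socal555EQSO}, and integrate by parts in $x$ to obtain $\int_0^t\|\nabla_x\mathbf{P}_0\mathbf{f}\|^2\,ds$ controlled by the microscopic dissipation,'' and simultaneously you say ``this is where one must generalize Lemma~9 of \cite{[impvR]Guo2003}.'' These are two genuinely different and incompatible strategies. The first (auxiliary functionals / compensating functions / thirteen-moments method, as in Guo's VPB paper and Strain--Guo) proceeds by explicit construction of an interaction functional $\sum_{\tilde\alpha}\langle \phi_{\tilde\alpha},\partial^{\tilde\alpha}\mathbf{f}\rangle$ whose time derivative cancels the transport term and produces $\|\nabla_x(\mathbf{a},\mathbf{b},c)\|^2$. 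The second — Lemma~9 of \cite{[impvR]Guo2003}, which is what this paper actually generalizes in Lemma~\ref{socedlem9} — is a proof by contradiction: one normalizes a putative counterexample sequence $\mathbf{Z}_n$ as in \eqref{normalized8711}--\eqref{normalized888}, uses coercivity to kill the microscopic part in the limit, invokes the velocity averaging lemma of DiPerna--Lions to upgrade weak to strong convergence of the moments $\langle\chi_1\partial^{\tilde\alpha}\mathbf{Z}_n,\tilde{\bm\chi}_j\rangle$, identifies the strong limit $\mathbf{Z}=\mathbf{P}_0\mathbf{Z}$, derives the macroscopic equations \eqref{socad10606}--\eqref{socad110100} for $(\mathbf{a},\mathbf{b},c)$ by comparing Hermite-type coefficients in \eqref{socad105105}, estimates each coefficient elliptically (the $\mathbf{b}$-system via $\Delta\partial^{\tilde\alpha}b_i$, the $\mathbf{a}$-system by time integration of \eqref{socad10909}), and closes with Poincaré plus the conservation laws \eqref{conseerlawFZZ} to get $\sum_{|\tilde\alpha|\le N}\int_0^1(|\partial^{\tilde\alpha}\mathbf{a}|^2+|\partial^{\tilde\alpha}\mathbf{b}|^2+|\partial^{\tilde\alpha}c|^2)\le CM$, contradicting the normalization. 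Your proposal names this lemma but does not describe this mechanism, and the mechanism it does describe is a different method. Without the averaging lemma and the compactness argument, one cannot extract the strong $\|\cdot\|_\nu$ convergence that makes the contradiction work — that step is not an afterthought but the spine of Section~5.

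Two smaller discrepancies. First, the iteration scheme: you propose the fully linearized Picard iteration $(\partial_t+v\cdot\nabla_x)\mathbf{f}^{n+1}+\mathbf{L}\mathbf{f}^{n+1}=\bm\Gamma(\mathbf{f}^n,\mathbf{f}^n)$, whereas the paper uses the semi-implicit scheme \eqref{socald70F}--\eqref{socal71f} in which only the loss term from $\Gamma$ acts on the new iterate $\mathbf{f}_{n+1}$; this choice is precisely what lets the paper propagate positivity $\mathbf{F}_{n+1}\ge0$. Second, the Sobolev embedding you cite, $H^2(\mathbb{T}^3)\hookrightarrow L^\infty(\mathbb{T}^3)$, is not the one the nonlinear estimates need: Lemma~\ref{estimatesofnonlinears} repeatedly uses $H^4(\mathbb{T}^3\times\mathbb{R}^3)\subset L^\infty$ and $W^{4,1}(\mathbb{T}^3)\subset L^\infty$, which is why the paper takes $N\ge 8$ so that after splitting the derivatives across two factors one of them carries at most $N/2$ derivatives.
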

\begin{remark}
	‌This work builds on a energy method developed  by Guo Yan in \cite{[ivR]Guo2002},\cite{[impvR]Guo2003} to establish global classical solutions.  
	Despite the complexity of collisions between particles of differing masses and the limited mathematical literature available, examining each component in isolation leads to confusion and inconclusive results. However, adopting a holistic perspective on these collision operators reveals a key insight: by replacing the 
	space with a ‌2D vector-valued 
	space‌, the system as a whole satisfies conclusions analogous to those of the single-species Boltzmann equation.
\end{remark}

\begin{figure}[H]
	\vspace{-0.5cm}
	\centering
	\includegraphics[width=15.3cm,height=9.5cm]{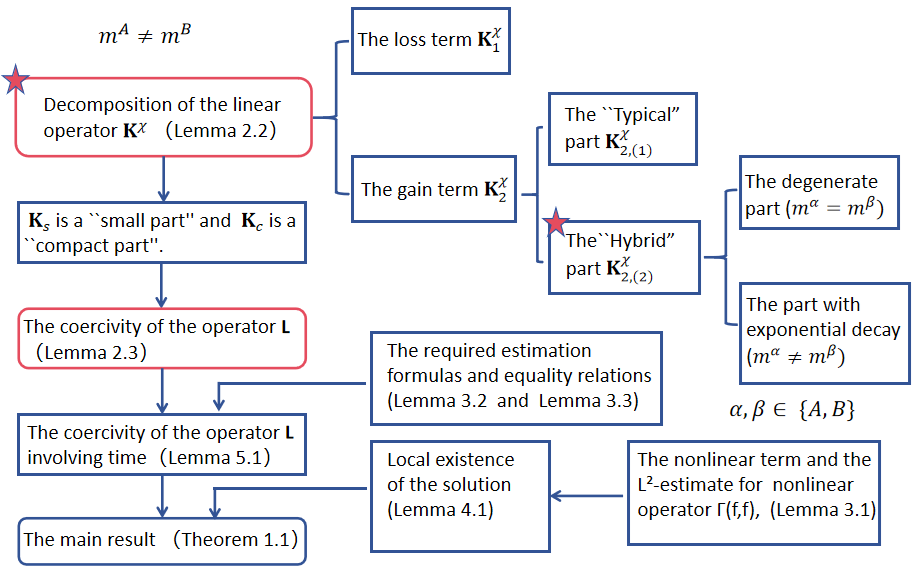}
	\vspace{-0.5cm}
	\caption{Structure presentation of the paper }
	\vspace{-0.1cm}
\end{figure}

\begin{thm}\label{maintheorem22}
	Let $-3 < \gamma < 0$ and assume  that
	$\mathbf{f}^{\rm{in}}(x, v)$ satisfies the conservation law \eqref{conseerlaw} at $t = 0$. Let $\mathbf{f}(t, x, v)$ be the classical solution to the Boltzmann
	equation \eqref{socal555EQSO} in $[0, T^*)$ such that for $0 \leq t < T^*$,
	$$
	\mathcal{E}\big[\mathbf{f} (t)\big] \leq M.
	$$
	Then there exists a small constant $0 < \delta_M < 1$, such that for any $t_1 \geq 0$, and any
	non-negative integer $n$ with $0 \leq t_1 + n < T^*$,
	\begin{equation*}
		\sum_{|\tilde{\alpha}| \leq N} \int_{t_1}^{t_1+n} \big\langle \mathbf{L}\big[\partial^{\tilde{\alpha}} \mathbf{f}(s)\big], \partial^{\tilde{\alpha}} \mathbf{f} (s) \big\rangle_{L^2_{x,v}} \, ds \leq \delta_M \sum_{|\tilde{\alpha}| \leq N} \int_{t_1}^{t_1+n} \| \partial^{\tilde{\alpha}} \mathbf{f} (s) \|^2_{\nu} \, ds. 
	\end{equation*}
\end{thm}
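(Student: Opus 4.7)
The plan is to generalize Guo's Lemma 9 argument in \cite{[impvR]Guo2003} to the two-species vector framework, combining three ingredients: the identity $\mathbf{L}\mathbf{P}_0=0$, the refined integral-kernel estimates for $\mathbf{K}$ established in Lemma \ref{lemma2.2}, and a macroscopic (``$13$-moment''-type) estimate derived from equation \eqref{socal555EQSO} together with the conservation laws \eqref{conseerlaw}.

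First I would perform the macro-micro decomposition $\partial^{\tilde{\alpha}}\mathbf{f} = \mathbf{P}_0\partial^{\tilde{\alpha}}\mathbf{f} + (\mathbf{I}-\mathbf{P}_0)\partial^{\tilde{\alpha}}\mathbf{f}$; since $\mathbf{L}\mathbf{P}_0 = 0$, the LHS integrand equals $\langle \mathbf{L}(\mathbf{I}-\mathbf{P}_0)\partial^{\tilde{\alpha}}\mathbf{f}, (\mathbf{I}-\mathbf{P}_0)\partial^{\tilde{\alpha}}\mathbf{f}\rangle$. Combining $\mathbf{L}=\bm{\nu}-\mathbf{K}$ with the Schur-type bound $|\langle\mathbf{K}\mathbf{g},\mathbf{g}\rangle| \lesssim \|\mathbf{g}\|_\nu^2$ (a consequence of the refined kernel decay $\int|\bm{k}(v,v_*)|\,dv_* \lesssim \langle v\rangle^{\gamma-2}$ proved via Lemma \ref{lemma2.2}) then yields
$$\sum_{|\tilde{\alpha}|\le N}\int_{t_1}^{t_1+n}\langle\mathbf{L}\partial^{\tilde{\alpha}}\mathbf{f},\partial^{\tilde{\alpha}}\mathbf{f}\rangle_{L^{2}_{x,v}}\,ds \;\le\; C_1 \sum_{|\tilde{\alpha}|\le N}\int_{t_1}^{t_1+n}\|(\mathbf{I}-\mathbf{P}_0)\partial^{\tilde{\alpha}}\mathbf{f}\|_\nu^2\,ds.$$

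Next, I would test equation \eqref{socal555EQSO} against multipliers built from the basis $\{\tilde{\bm{\chi}}_j\}_{j=1}^{6}$ to derive coupled equations for the macroscopic coefficients (density, momentum, and energy of each species), then invoke the periodic Poincar\'e inequality (legitimized by the zero-mean conservation laws \eqref{conseerlaw}) to obtain
$$\sum_{|\tilde{\alpha}|\le N}\int_{t_1}^{t_1+n}\|\mathbf{P}_0\partial^{\tilde{\alpha}}\mathbf{f}\|_\nu^2\,ds \;\le\; C_2\sum_{|\tilde{\alpha}|\le N}\int_{t_1}^{t_1+n}\|(\mathbf{I}-\mathbf{P}_0)\partial^{\tilde{\alpha}}\mathbf{f}\|_\nu^2\,ds + C_M,$$
where $C_M$ collects boundary contributions and nonlinear terms bounded via the previously established estimates on $\bm{\Gamma}$ together with $\mathcal{E}[\mathbf{f}(t)]\le M$. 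Writing the full dissipation as $\sum\int\|\partial^{\tilde{\alpha}}\mathbf{f}\|_\nu^2 = \sum\int\|\mathbf{P}_0\partial^{\tilde{\alpha}}\mathbf{f}\|_\nu^2 + \sum\int\|(\mathbf{I}-\mathbf{P}_0)\partial^{\tilde{\alpha}}\mathbf{f}\|_\nu^2$ and combining with the first display, a short dichotomy on whether this integral dominates $C_M$ produces the inequality with $\delta_M = C_1/(1+C_2)$, up to a vanishing correction coming from $C_M$; arranged so that $C_1<1+C_2$, this is strictly less than $1$ uniformly in $t_1\ge 0$ and $n\in\mathbb N$.

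The principal obstacle is the macroscopic estimate for $m^A\ne m^B$. In the single-species or equal-mass setting, the moment equations decouple cleanly via $f^\pm = (f^A \pm f^B)/\sqrt{2}$ (see point (3) of the Introduction); for unequal masses the basis elements $\tilde{\bm{\chi}}_{3,4,5}$ and $\tilde{\bm{\chi}}_{6}$ contain the mass vector $\mathbf{m}=(m^A,m^B)^T$ and rigidly couple the two species, so the single-species proof does not transfer verbatim. One must work directly in the vector-valued space, choose test functions that respect this coupling, and track cross-species moment cancellations carefully to ensure the resulting constants $C_1,C_2$ still satisfy the strict inequality required for $\delta_M<1$.
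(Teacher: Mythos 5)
Your proposal identifies the right ingredients (macro--micro decomposition, coercivity of $\mathbf{L}$ on the range of $\mathbf{I}-\mathbf{P}_0$, a macroscopic moment estimate legitimized by the conservation laws), but the way you assemble them does not close, and it departs from the paper's argument at the point where the argument is actually delicate. First, the directions of your two key displays are inconsistent with the conclusion. The substantive content of this theorem (see the title of Section 5, Lemma \ref{socedlem9}, and the way the result is applied in Section 6) is the \emph{lower} bound $\int\langle\mathbf{L}\partial^{\tilde\alpha}\mathbf{f},\partial^{\tilde\alpha}\mathbf{f}\rangle\,ds\geq\delta_M\int\|\partial^{\tilde\alpha}\mathbf{f}\|_\nu^2\,ds$; for that you need the coercivity $\langle\mathbf{L}\mathbf{g},\mathbf{g}\rangle_{L^2_v}\geq\delta|(\mathbf{I}-\mathbf{P}_0)\mathbf{g}|_\nu^2$ of Lemma \ref{LeMcoevlem}, not the upper bound by $C_1\|(\mathbf{I}-\mathbf{P}_0)\partial^{\tilde\alpha}\mathbf{f}\|_\nu^2$ that you wrote. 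Conversely, if you insist on the upper-bound reading as literally printed, your macroscopic estimate $\int\|\mathbf{P}_0\partial^{\tilde\alpha}\mathbf{f}\|_\nu^2\leq C_2\int\|(\mathbf{I}-\mathbf{P}_0)\partial^{\tilde\alpha}\mathbf{f}\|_\nu^2+C_M$ only bounds $\int\|(\mathbf{I}-\mathbf{P}_0)\partial^{\tilde\alpha}\mathbf{f}\|_\nu^2$ from \emph{below} by $(\int\|\partial^{\tilde\alpha}\mathbf{f}\|_\nu^2-C_M)/(1+C_2)$, which goes the wrong way for converting $C_1\int\|(\mathbf{I}-\mathbf{P}_0)\partial^{\tilde\alpha}\mathbf{f}\|_\nu^2$ into $\delta_M\int\|\partial^{\tilde\alpha}\mathbf{f}\|_\nu^2$ with $\delta_M<1$; you would need $C_1<1$, which is not available. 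In neither reading does $\delta_M=C_1/(1+C_2)$ follow from the two displays you state.

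Second, the additive constant $C_M$ cannot be dispatched by a ``short dichotomy'': on the branch where the dissipation does not dominate $C_M$ you conclude nothing, yet the theorem must hold for every solution with $\mathcal{E}[\mathbf{f}]\leq M$, including those whose dissipation over $[t_1,t_1+n]$ is arbitrarily small. The nonlinear and time-boundary contributions must be controlled \emph{multiplicatively}, as $O(\sqrt M)$ times the dissipation; moreover, integrating the moment equations in time produces endpoint terms at $t_1$ and $t_1+n$ that your sketch does not account for. This is precisely why the paper argues by contradiction on each unit interval: it normalizes $\mathbf{Z}_n=\mathbf{f}_n/\big(\sum_{|\tilde\alpha|\le N}\int_0^1\|\partial^{\tilde\alpha}\mathbf{f}_n\|_\nu^2\,ds\big)^{1/2}$ so the dissipation equals $1$, uses the compactness of $\mathbf{K}_c$ from Lemma \ref{lemma2.2} together with the averaging lemma to obtain a strong limit $\mathbf{Z}=\mathbf{P}_0\mathbf{Z}$, derives the macroscopic system \eqref{socad10606}--\eqref{socad110100} for the \emph{limit}, and then uses elliptic estimates, Poincar\'e, and the conservation laws \eqref{conseerlawFZZ} to force $\sum_{|\tilde\alpha|\le N}\int_0^1\|\partial^{\tilde\alpha}\mathbf{Z}\|_\nu^2\,ds\leq CM<1$, contradicting the normalization. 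Your observation about the unequal-mass coupling through $\mathbf{m}=(m^A,m^B)^T$ in the moment system is correct and is indeed handled componentwise in the paper (each $Z^\alpha$ carries its own $a^\alpha$ while sharing $\mathbf{b},c$), but to make your direct approach rigorous you would still have to fix the coercivity direction, replace $C_M$ by an $O(\sqrt M)$ multiple of the dissipation, and control the time-endpoint terms --- or adopt the paper's compactness argument.
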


\begin{remark}
	‌The proof of this theorem resides primarily in Lemma \ref{socedlem9}, which is proved by contradiction. If \eqref{socedlem9ct} were false, we obtain a  normalized sequence $\mathbf{Z}_n=(Z^A_n,Z^B_n)^T$ satisfy \eqref{normalized8722} and \eqref{normalized888}. ‌The proof then proceeds in two steps:
	‌The first step is to prove that the limit function takes the form:
	\begin{equation*}
		\mathbf{Z}(t,x,v) = (\mathbf{a}(t,x):\sqrt{\bm{\mu}}) + (\mathbf{b}(t,x) \cdot v) (\mathbf{m}:\sqrt{\bm{\mu}}) + c(t,x)|v|^2(\mathbf{m}:\sqrt{\bm{\mu}}).
	\end{equation*}
	‌The second step demonstrates that 
	$\mathbf{a},\mathbf{b},c$ satisfy 
	\begin{equation*}
	\sum_{\tilde{\alpha} \neq 0, |\tilde{\alpha}| \leq N} \int_0^1 \left[ \big|\partial^{\tilde{\alpha}} \mathbf{a}\big|_{L_x^2}^2 + \big|\partial^{\tilde{\alpha}} \mathbf{b}\big|_{L_x^2}^2 + \big|\partial^{\tilde{\alpha}} c\big|_{L_x^2}^2 \right] (s) ds \le C M,
	\end{equation*}
	where $\mathbf{a}=(a_1,a_2),\, \mathbf{b}=(b_1,b_2,b_3)$. In the first step, ‌it is essential to treat the vector-valued function $\mathbf{Z}$ as an integrated entity‌, since these lemmas such as Lemma \ref{lemma2.1}---\ref{estimatesofnonlinears}, hold for the vector as a whole but not for its individual components. In the second step, ‌component-wise estimation of $Z^{\alpha}$ becomes feasible‌ since the limit function $\mathbf{Z}$ has been proven to satisfy equation \eqref{socad103103} in the sense of distributions. Specifically, $Z^A$
	satisfies equations related to $(a_1,\mathbf{b},c)$, while $Z^B$
	corresponds to $(a_2,\mathbf{b},c)$. The coefficients in both equations can be estimated separately.
\end{remark}

	\noindent \textbf{The paper is structured as follows}.
Section 2‌ provides a detailed discussion of the decomposition of the linear collision operator ‌$\bm{L}$\,$(m^A \neq m^B)$, which is further used to prove the coercivity. ‌Section 3‌ gives estimates for the nonlinear collision operator ‌$\bm{\Gamma}$. ‌Section 4‌ obtains a local-in-time solution to the Boltzmann equation by constructing an iterative sequence. ‌Section 5 proves the coercivity involving time (see \eqref{socedlem9ct}) under the amplitude assumption \eqref{socad8080}, utilizing the conservation law equations. Finally, in ‌Section 6‌, by combining the local existence of the solution established in ‌Section 4‌ and the key inequality \eqref{socedlem9ct} derived in ‌Section 5‌, we prove the global existence of the solution.

	\section{The linear operator $\mathbf{L}$}

	\begin{lema}\label{lemma2.1}
		Let $\mathbf{f}_i=(f^A_i,f^B_i)^T, (i=1,2)$ be two vector functions, and $\mathbf{K}$ be defined in \eqref{Ldeopvec}. For $\theta\geq0$,
		\begin{equation}\label{socaed1616}
			\big|\big\langle w^{2l}  \mathbf{K} \mathbf{f}_1, \mathbf{f}_2 \big\rangle_{L^{2}_{v}}\big| \leq C |w^{l}\mathbf{f}_1|_{\nu} |w^{l}\mathbf{f}_2|_{\nu}.
		\end{equation}
	\end{lema}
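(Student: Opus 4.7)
My plan is to reduce the vector-valued estimate to a collection of scalar kernel bounds, one for each species pair $(\alpha,\beta)\in\{A,B\}^2$ and for each of the two constituents of $\mathbf{K}$, and then close by a weighted Cauchy--Schwarz argument in the spirit of Guo \cite{[ivR]Guo2002},\cite{[impvR]Guo2003}. First I would unfold the vector inner product as
$$
\bigl\langle w^{2l}\mathbf{K}\mathbf{f}_1,\mathbf{f}_2\bigr\rangle_{L^2_v} = \sum_{\alpha\in\{A,B\}} \bigl\langle w^{2l}(K_1^\alpha + K_2^\alpha)\mathbf{f}_1,\, f_2^\alpha\bigr\rangle_{L^2_v},
$$
and separate each $K_j^\alpha \mathbf{f}_1$ into its $\beta = A$ and $\beta = B$ summands via the definitions in \eqref{nuKsuanzidef}. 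This cuts \eqref{socaed1616} into eight scalar pieces, each carrying its own mass parameters $(m^\alpha,m^\beta)$ through the collision rule \eqref{F1vuvusacl}.

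For every $K_1^\alpha$ piece the kernel contains the factor $\sqrt{\mu^\alpha(v)\mu^\beta(v_*)}$ together with $|v-v_*|^\gamma$, so it is Gaussian in both variables with only the locally integrable soft-potential singularity at $v=v_*$; the bound against $|w^l \mathbf{f}_1|_\nu |w^l \mathbf{f}_2|_\nu$ follows by a direct Cauchy--Schwarz in the double integral. For the $K_2^\alpha$ pieces I would perform a Carleman-type change of variables suited to the unequal-mass rule \eqref{F1vuvusacl}---whose Jacobian involves $m^\beta/(m^\alpha+m^\beta)$---to obtain a representation
$$
K_2^\alpha\mathbf{f}_1(v) = \sum_{\beta\in\{A,B\}} \int_{\mathbb{R}^3} k^{\alpha\beta}(v,v_*)\,f_1^\beta(v_*)\,dv_*.
$$
Next I split $K_2 = K_2^\chi + K_2^{1-\chi}$ via the cutoff $\chi_\varepsilon(v-v_*)$ announced in the introduction. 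Since $|v-v_*|^\gamma$ is locally integrable for $\gamma>-3$, the $K_2^{1-\chi}$ piece yields a factor $C\varepsilon^{3+\gamma}$ after a plain Cauchy--Schwarz and is harmlessly absorbed. For $K_2^\chi$ I would use the refined pointwise and integrated bounds of the form \eqref{FSC16}--\eqref{FSC18}, in the multi-species version established in Lemma \ref{lemma2.2} and Remark \ref{RK2211}:
$$
\frac{\langle v\rangle^l}{\langle v_*\rangle^l}\bigl|k^{\alpha\beta}(v,v_*)\bigr| \leq C\,\frac{(1+|v|+|v_*|)^{\gamma-1}}{|v-v_*|}\,e^{-c|v-v_*|^2-c\frac{||v|^2-|v_*|^2|^2}{|v-v_*|^2}},
\qquad
\int \frac{\langle v\rangle^l}{\langle v_*\rangle^l}\bigl|k^{\alpha\beta}(v,v_*)\bigr|\,dv_* \leq C\langle v\rangle^{\gamma-2}.
$$

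With these kernel estimates in hand the closing step is the standard weighted Cauchy--Schwarz splitting
$$
w^{2l}(v)\bigl|f_1^\beta(v_*)f_2^\alpha(v)\bigr| \leq \tfrac{1}{2}\bigl[\Phi(v,v_*)|f_1^\beta(v_*)|^2 + \Phi(v_*,v)|f_2^\alpha(v)|^2\bigr],
$$
with $\Phi$ chosen so that $\int |k^{\alpha\beta}(v,v_*)|\Phi(v,v_*)\,dv \leq C\,w^{2l+1}(v_*)$ and the symmetric integral in $v_*$ yields $C\,w^{2l+1}(v)$; the integrated kernel bound then collapses each double integral into $|w^l \mathbf{f}_i|_\nu^2$. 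The step I expect to be the main obstacle is the cross-species case $\alpha\neq\beta$: the kernel is no longer symmetric under $(v,v_*)\leftrightarrow(v_*,v)$ because the Maxwellians in the numerator and denominator have different inverse temperatures, so the integrated bound must be verified separately in the $v$- and $v_*$-directions. To close the second direction I would invoke precisely the extra exponential gain coming from the mass mismatch $m^\alpha\neq m^\beta$ that is isolated in the ``hybrid'' decomposition of Lemma \ref{lemma2.2}. Once that is secured, summing the eight pieces yields \eqref{socaed1616} with a constant $C$ uniform in $l\geq 0$.
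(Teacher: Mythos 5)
Your argument is workable, but it follows a genuinely different (and much heavier) route than the paper's own proof of Lemma \ref{lemma2.1}. You route the $\mathbf{K}_2$ estimate through a Carleman representation, the cutoff splitting $\mathbf{K}_2=\mathbf{K}_2^{\chi}+\mathbf{K}_2^{1-\chi}$, and the refined kernel bounds of type \eqref{FSC16}--\eqref{FSC18} together with a Schur-type weighted Cauchy--Schwarz; this forces you to confront the asymmetry of the cross-species kernel (you need the integrated bound in \emph{both} the $v$- and $v_*$-directions, whereas Remark \ref{RK2222} only records the $v_*$-integral) and it front-loads essentially all of Lemma \ref{lemma2.2}, which the paper proves independently and later. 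The paper's proof is far more elementary and never introduces a kernel at all: after using energy conservation $\mu^{\alpha}(v')\mu^{\beta}(v_*')=\mu^{\alpha}(v)\mu^{\beta}(v_*)$ to rewrite $K_2^{\alpha}$ with the factors $\sqrt{\mu^{\alpha}(v')}\sqrt{\mu^{\beta}(v_*)}$ and $\sqrt{\mu^{\beta}(v_*')}\sqrt{\mu^{\alpha}(v_*)}$, it applies Cauchy--Schwarz, uses momentum conservation to pass the weight $w^{2l}(v)\sqrt{\mu^{\alpha}(v_*)}\leq C\,w^{2l}(|v'|+|v_*'|)$, and then changes variables $(v,v_*)\to(v',v_*')$ using $|\det\mathbf{J}|=1$ and $|v-v_*|=|v'-v_*'|$ (which hold even for $m^A\neq m^B$); no cutoff, no smallness in $\varepsilon$, and no decay estimates on kernels are needed, since $\int|v-v_*|^{\gamma}\sqrt{\mu(v_*)}\,dv_*\lesssim w(v)$ already suffices for $-3<\gamma<0$. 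Your approach buys nothing extra here (the refined kernel decay is only needed later, for the compactness and smallness decomposition of Lemma \ref{lemma2.2}), while the paper's buys a short, self-contained proof with no forward dependence; if you keep your route, you must at minimum supply the missing $v$-direction integrated kernel bound for the non-symmetric $\alpha\neq\beta$ kernels before the Schur argument closes.
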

	\begin{proof}
		By the decomposition $K^{\alpha} \mathbf{f}= K_{1}^{\alpha} \mathbf{f}+K_{2}^{\alpha} \mathbf{f}$,  one has 
		\begin{equation*}
			\langle w^{2l}  \mathbf{K} \mathbf{f}_1, \mathbf{f}_2 \rangle_{L^{2}_{v}}=\langle w^{2l}  \mathbf{K}_1 \mathbf{f}_1, \mathbf{f}_2 \rangle_{L^{2}_{v}}+\langle w^{2l}  \mathbf{K}_2 \mathbf{f}_1, \mathbf{f}_2 \rangle_{L^{2}_{v}}.
		\end{equation*}
		For $ \mathbf{K}_1 $ part,  integrating $\omega$ over $   S^2 $ yields
		\begin{equation*}
		\begin{aligned}
			&\big\langle w^{2l} \mathbf{K}_1 \mathbf{f}_1, \mathbf{f}_2 \big\rangle_{L^{2}_{v}} \\
			&\hspace{1cm}= \sum_{\alpha=A,B}\sum_{\beta=A,B} C_{\alpha\beta} \iint_{(\mathbb{R}^3)^2} w^{2l}(v)|v - v_*|^\gamma  \sqrt{\mu^\alpha(v)} \sqrt{\mu^\beta(v_*)}f^\beta_1(v_*)f^\alpha_2(v)\,dv_*\,dv \\
			&\hspace{1cm}\lesssim \sum_{\alpha=A,B}\sum_{\beta=A,B}  \Bigg( \iint_{(\mathbb{R}^3)^2} w^{2l}(v)\sqrt{\mu^\alpha(v)} \sqrt{\mu^\beta(v_*)}|v - v_*|^\gamma [f^\beta_1(v_*)]^2 \,dv_*\,dv \Bigg)^{1/2} \\
			&\hspace{3.4cm} \times \Bigg( \iint_{(\mathbb{R}^3)^2} w^{2l}(v)\sqrt{\mu^\alpha(v)} \sqrt{\mu^\beta(v_*)}|v - v_*|^\gamma [f^\alpha_2(v)]^2 \,dv_*\,dv \Bigg)^{1/2}.
		\end{aligned}
		\end{equation*}
		Since $ w(v) = (1+|v|)^\gamma $ and $ \sqrt{\mu^\alpha} $ decays exponentially, we integrate the first factor of $f^\beta_1$ over $ v $, and integrate the second factor of $f^\alpha_2$ over $ v_* $, to obtain
		\begin{align}
			\big|\big\langle w^{2l} \mathbf{K}_1 \mathbf{f}_1, \mathbf{f}_2 \big\rangle_{L^{2}_{v}}\big| &
			\lesssim \sum_{\alpha=A,B}\sum_{\beta=A,B}  \left( \int_{\mathbb{R}^3} w^{2l+1}(v_*) [f^\beta_1(v_*)]^2 \,dv_* \right)^{1/2} \notag\\
		 &\hspace{3.5cm}\times\left( \int_{\mathbb{R}^3} w^{2l+1}(v) [f^\alpha_2(v)]^2 \,dv \right)^{1/2}\notag\\
		 &\lesssim \, |w^{l}\mathbf{f}_1|_{\nu} |w^{l}\mathbf{f}_2|_{\nu},\label{K1PTlem21}
		\end{align}
		which concludes the $\mathbf{K}_1$ part.
		
		 By the law of energy conservation, $K_{2}^{\alpha} \mathbf{f}$ defined in \eqref{nuKsuanzidef} can be written as 
		\begin{equation*}
			\begin{split}
				&	 K_{2}^{\alpha} \mathbf{f}= - \sum_{\beta=A,B} \iint_{\mathbb{R}^3 \times \mathbb{S}^2}
				b^{\alpha\beta}(\theta)|v-v_*|^{\gamma}  \Big[\sqrt{\mu^\alpha(v')}\sqrt{\mu^\beta(v_*)}f^\beta(v_*')\\
				& \hspace{6.5cm} +\sqrt{\mu^{\beta}(v_*')} \sqrt{\mu^{\alpha}(v_*)}\, f^\alpha(v')\Big] d\omega dv_*.
			\end{split}	
		\end{equation*}	
		Then, for the part of $\mathbf{K}_2$
		\begin{gather}\label{le21k2}
		\begin{aligned}
			&\big|\big\langle w^{2l} \mathbf{K}_2 \mathbf{f}_1, \mathbf{f}_2 \big\rangle_{L^{2}_{v}}\big|\\
			& \hspace{0.8cm}\lesssim \sum_{\alpha=A,B}\sum_{\beta=A,B}  \Bigg(\iint_{(\mathbb{R}^3)^2 }w^{2l}(v)|v-v_*|^{\gamma}\sqrt{\mu^{\beta}(v_*')} \sqrt{\mu^{\alpha}(v_*)}\, f^\alpha_1(v')f^\alpha_2(v)dv_*\,dv\\
			&\hspace{3.4cm}+ \iint_{(\mathbb{R}^3)^2 }w^{2l}(v)|v-v_*|^{\gamma}\sqrt{\mu^\alpha(v')}\sqrt{\mu^\beta(v_*)}f^\beta_1(v_*')f^\alpha_2(v)dv_*\,dv\Bigg).
		\end{aligned}
		\end{gather}
		The first term inside the summation symbol in \eqref{le21k2} is bounded by
		\begin{gather}\label{lem21f1fc}
		\begin{aligned}
		&\Bigg(\iint_{(\mathbb{R}^3)^2 }w^{2l}(v)|v-v_*|^{\gamma}\sqrt{\mu^{\beta}(v_*')} \sqrt{\mu^{\alpha}(v_*)}\, [f^\alpha_1(v')]^2dv_*\,dv \Bigg)^{1/2} \\
		&\hspace{0.5cm}\times \Bigg(\iint_{(\mathbb{R}^3)^2 }w^{2l}(v)|v-v_*|^{\gamma}\sqrt{\mu^{\beta}(v_*')} \sqrt{\mu^{\alpha}(v_*)}\, [f^\alpha_2(v)]^2dv_*\,dv \Bigg)^{1/2}.
		\end{aligned}
		\end{gather}
		From the law of momentum conservation, for $\theta\geq0$,
		\begin{equation}
			w^{2l}(v) \sqrt{\mu^{\alpha}(v_*)}\leq C w^{2l}(|v|+|v_*|)\leq C w^{2l}(|v'|+|v_*'|).
		\end{equation}
		Since 
		\begin{equation*}
			v'=v-\frac{2m^{\beta}}{m^\alpha+m^\beta}[(v-v_*)\cdot \omega]\omega,\qquad 
			v'_{*}=v_{*}+\frac{2m^{\alpha}}{m^\alpha+m^\beta}[(v-v_*)\cdot \omega]\omega.
		\end{equation*}
		The Jacobian matrix ‌$\mathbf{J}$ for the change of variables for $(v,v_*)\rightarrow(v',v_*')$  is the following $2\times2$ block matrix, where each block is a $3\times3$ matrix:
		\begin{equation*}
			\mathbf{J} =   \begin{pmatrix}
				 \frac{\partial v'}{\partial v} & \frac{\partial v'_*}{\partial v} \\
				 \frac{\partial v'}{\partial v_*}  & \frac{\partial v'_*}{\partial v_*} 
			\end{pmatrix}=\begin{pmatrix}
			 \mathbf{I}-\frac{2m^{\beta}}{m^\alpha+m^\beta}\omega \omega^{\rm{T}} & \frac{2m^{\alpha}}{m^\alpha+m^\beta}\omega \omega^{\rm{T}}   \\
			 \frac{2m^{\beta}}{m^\alpha+m^\beta}\omega \omega^{\rm{T}} & \mathbf{I}-\frac{2m^{\alpha}}{m^\alpha+m^\beta}\omega \omega^{\rm{T}}
			\end{pmatrix},
		\end{equation*}
	where $\mathbf{I}$ is a $3\times3$ identity matrix, and $\omega=(\omega_1,\omega_2,\omega_3)^{\rm{T}}$ is a column vector. According to the linear transformation of the determinant:
	\begin{align*}
		\det\mathbf{J} 
		=& \left|\begin{array}{ccc}
			 \mathbf{I} & \mathbf{I} \\
			 \frac{2m^{\beta}}{m^\alpha+m^\beta}\omega \omega^{\rm{T}}  & \mathbf{I}-\frac{2m^{\alpha}}{m^\alpha+m^\beta}\omega \omega^{\rm{T}}
		\end{array}\right|
		=\left|\begin{array}{ccc}
			\mathbf{I} & 0 \\
			\frac{2m^{\beta}}{m^\alpha+m^\beta}\omega \omega^{\rm{T}}  & \mathbf{I}-2\omega \omega^{\rm{T}}
		\end{array}\right|.
	\end{align*}
	Therefore
	\begin{align*}
		\det\mathbf{J} =& \det (\mathbf{I}-2\omega \omega^{\rm{T}}) \\
		=& \left|\begin{array}{ccc}
			 1-2\omega_1^2 & -2\omega_1\omega_2  & -2\omega_1\omega_3 \\
			 -2\omega_2\omega_1 & 1-2\omega_2\omega_2 & -2\omega_2\omega_3 \\
			 -2\omega_3\omega_1 & -2\omega_3\omega_2  & 1-2\omega_3\omega_3 
		\end{array}\right|\\
		=&(1-2\omega_1^2)\big[(1-2\omega_2^2)(1-2\omega_3^2)-4\omega_2^2\omega_3^2\big]\\
		&+2\omega_1\omega_2\big[-2\omega_1\omega_2(1-2\omega^2_3)-4\omega_1\omega_2\omega^2_3\big]
		\\&-2\omega_1\omega_3\big[4\omega_1\omega^2_2\omega_3+2\omega_1\omega_3(1-2\omega_2^2)\big]
		\\=& 1-2\omega^2_1-2\omega^2_2-2\omega^2_3,\\
		=& -1,
	\end{align*}
	which implies
	\begin{equation}\label{ddstardstar}
		dv_*'dv'=|\det\mathbf{J}|dv_*dv=dv_*dv.
	\end{equation}
	 Additionally, it is straightforward to verify
	 \begin{equation*}
	 	v'-v'_{*}=v-v_* - 2[(v-v_*)\cdot \omega]\omega.
	 \end{equation*}
	 As the picture below shows
	 \begin{figure}[H]
	 	\vspace{-0.1cm}
	 	\centering
	 	\includegraphics[width=9cm,height=3.35cm]{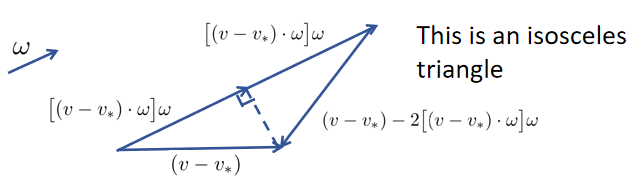}
	 	\vspace{-0.3cm}
	 	\caption{$|v-v_*|=|(v-v_*) -2	\big[(v-v_*)\cdot \omega\big]\omega|$}
	 	\vspace{-0.3cm}
	 \end{figure}
	This is because any point on the perpendicular bisector is equidistant from the two endpoints of the line segment,  so we obtain
	 \begin{equation}\label{yipievvstar}
	 	|v-v_*|=|v'-v_*'|.
	 \end{equation}
	Then, the first line in $\eqref{lem21f1fc}$ is bounded by
	\begin{align*}
		&\Bigg(\iint_{(\mathbb{R}^3)^2 }w^{2l}(|v'|+|v_*'|)|v-v_*|^{\gamma}\sqrt{\mu^{\beta}(v_*')} \, \big[f^\alpha_1(v')\big]^2dv_*\,dv \Bigg)^{1/2} \\
		&\hspace{0.4cm}\lesssim \Bigg(\iint_{(\mathbb{R}^3)^2 }w^{2l}(|v'|)|v'-v_*'|^{\gamma}\sqrt{\mu^{\beta}(v_*')} \, \big[f^\alpha_1(v')\big]^2dv_*'\,dv' \Bigg)^{1/2}\\
		&\hspace{0.4cm}\lesssim \Bigg(\int_{\mathbb{R}^3}\int_{\mathbb{R}^3}|v'-v_*'|^{\gamma}\sqrt{\mu^{\beta}(v_*')}\, dv_*'\,w^{2l}(|v'|) \, \big[f^\alpha_1(v')\big]^2\,dv' \Bigg)^{1/2}\\
		&\hspace{0.4cm}\lesssim \, \big|w^{l}\mathbf{f}_1\big|_{\nu},
	\end{align*}
	 and the second line in $\eqref{lem21f1fc}$ is bounded by 
	\begin{align*}
		&\Bigg(\iint_{(\mathbb{R}^3)^2}w^{2l}(v)|v-v_*|^{\gamma} \sqrt{\mu^{\alpha}(v_*)}\, [f^\alpha_2(v)]^2dv_*\,dv \Bigg)^{1/2} \\
		&\hspace{0.4cm}\lesssim \Bigg( \int_{\mathbb{R}^3}\int_{\mathbb{R}^3}|v-v_*|^{\gamma}\sqrt{\mu^{\alpha}(v_*)}dv_* \, w^{2l}(v) \, [f^\alpha_2(v)]^2\,dv\Bigg)^{1/2}\\
		&\hspace{0.4cm}\lesssim \, \big|w^{l}\mathbf{f}_2\big|_{\nu}.
	\end{align*}
	Next, we consider the second term in \eqref{le21k2}, which is bounded by
	\begin{gather}\label{lem21f2fc22}
		\begin{aligned}
			C&\Bigg(\iint_{(\mathbb{R}^3)^2 }w^{2l}(v)|v-v_*|^{\gamma}\sqrt{\mu^\alpha(v')}\sqrt{\mu^\beta(v_*)}[f^\beta_1(v_*')]^2dv_*\,dv \Bigg)^{1/2} \\
			&\hspace{0.5cm}\times \Bigg(\iint_{(\mathbb{R}^3)^2 }w^{2l}(v)|v-v_*|^{\gamma}\sqrt{\mu^\alpha(v')}\sqrt{\mu^\beta(v_*)}[f^\alpha_2(v)]^2dv_*\,dv \Bigg)^{1/2}.
		\end{aligned}
	\end{gather}
	By a change of variables $(v,v_*)\rightarrow(v',v_*')$, utilizing \eqref{ddstardstar} and \eqref{yipievvstar}, the first line in \eqref{lem21f2fc22} is bounded by
	\begin{equation*}
	\begin{aligned}
		C&\Bigg(\iint_{(\mathbb{R}^3)^2 }w^{2l}(|v'|+|v_*'|)|v-v_*|^{\gamma}\sqrt{\mu^\alpha(v')}[f^\beta_1(v_*')]^2dv_*\,dv \Bigg)^{1/2} \\
		&\hspace{0.4cm}\lesssim \Bigg(\int_{\mathbb{R}^3}\int_{\mathbb{R}^3}|v'-v_*'|^{\gamma}\sqrt{\mu^{\alpha}(v')} dv_*' \, w^{2l}(|v_*'|) \, [f^\beta_1(v_*')]^2\,dv' \Bigg)^{1/2}\\
		&\hspace{0.4cm}\lesssim \, \big|w^{l}\mathbf{f}_1\big|_{\nu},
	\end{aligned}
	\end{equation*}
	and the second line in \eqref{lem21f2fc22} is bounded by
	\begin{align*}
		&\Bigg(\iint_{(\mathbb{R}^3)^2 }w^{2l}(v)|v-v_*|^{\gamma} \sqrt{\mu^{\beta}(v_*)}\, [f^\alpha_2(v)]^2dv_*\,dv \Bigg)^{1/2} \\
		&\hspace{0.4cm}\lesssim \Bigg( \int_{\mathbb{R}^3}\int_{\mathbb{R}^3}|v-v_*|^{\gamma}\sqrt{\mu^{\beta}(v_*)}dv_* \,w^{2l}(v) \, [f^\alpha_2(v)]^2\,dv\Bigg)^{1/2}\\
		&\hspace{0.4cm}\lesssim \, \big|w^{l}\mathbf{f}_2\big|_{\nu}.
	\end{align*}
	Therefore, \eqref{le21k2} is estimated and together with \eqref{K1PTlem21}, this lemma thus follows.
	\end{proof}

		For $m>0$, we define
	\begin{equation}\label{C2Kscdef}
		\begin{split}
			&	 K_{1,r}^{\alpha} \mathbf{f}=  \frac{1}{\sqrt{\mu^\alpha}} \sum_{\beta=A,B} \iint_{\mathbb{R}^3 \times \mathbb{S}^2}\mathbf{1}_{|v|+|v_*|>m}
			b^{ \alpha \beta}(\theta)|v-v_*|^{\gamma}  \mu^\alpha(v) \sqrt{\mu^\beta(v_*)}f^\beta(v_*)   d\omega dv_*,\\
			&	 K_{2,r}^{\alpha} \mathbf{f}= - \frac{1}{\sqrt{\mu^\alpha}} \sum_{\beta=A,B} \iint_{\mathbb{R}^3 \times \mathbb{S}^2}\mathbf{1}_{|v|+|v_*|>m}
			b^{ \alpha \beta}(\theta)|v-v_*|^{\gamma} \Big[\mu^\alpha(v')\sqrt{\mu^\beta(v_*')}f^\beta(v_*')\\
			& \hspace{8.5cm} +\mu^{\beta}(v_*') \sqrt{\mu^{\alpha}(v')}\, f^\alpha(v')\Big] d\omega dv_*.
		\end{split}	
	\end{equation}	
	To replace ‌$\mathbf{1}_{|v|+|v_*|>m}$ with ‌$\mathbf{1}_{|v|+|v_*|\leq m}$ in ‌\eqref{C2Kscdef}, the resulting expression defines ‌$K_{1,z}^{\alpha} \mathbf{f}$ and ‌$K_{2,z}^{\alpha} \mathbf{f}$, for $i=1,2$, satisfying
	\begin{equation*}
		\begin{split}
			K_{i}^{\alpha} \mathbf{f}=K_{i,r}^{\alpha} \mathbf{f}+K_{i,z}^{\alpha} \mathbf{f}, \qquad K_{z}^{\alpha} \mathbf{f}=K_{1,z}^{\alpha} \mathbf{f}+K_{2,z}^{\alpha} \mathbf{f}\\
			K_{r}^{\alpha} \mathbf{f}=K_{1,r}^{\alpha} \mathbf{f}+K_{2,r}^{\alpha} \mathbf{f}, \qquad K^{\alpha} \mathbf{f}=K_{r}^{\alpha} \mathbf{f}+K_{z}^{\alpha} \mathbf{f}
		\end{split}	
	\end{equation*}
	Furthermore‌, by choosing a smooth cutoff function $\chi_{r}$, 
	\begin{equation}\label{cutoffuctiondef}
		\chi(r) \equiv 1 \text{ for } r \geq 2\varepsilon;\quad \chi(r) \equiv 0 \text{ for } r \leq \varepsilon
	\end{equation}
	 we define
	\begin{equation}\label{C2KSchidef}
		\begin{split}
			&	 K_{1,r}^{\alpha,\chi} \mathbf{f}=  \frac{1}{\sqrt{\mu^\alpha}} \sum_{\beta=A,B} \iint_{\mathbb{R}^3 \times \mathbb{S}^2}\mathbf{1}_{|v|+|v_*|>m}
			b^{ \alpha \beta}(\theta)|v-v_*|^{\gamma} \chi(|v-v_*|) \mu^\alpha(v) \sqrt{\mu^\beta(v_*)}f^\beta(v_*)   d\omega dv_*,\\
			&	 K_{2,r}^{\alpha,\chi} \mathbf{f}= - \frac{1}{\sqrt{\mu^\alpha}} \sum_{\beta=A,B} \iint_{\mathbb{R}^3 \times \mathbb{S}^2}\mathbf{1}_{|v|+|v_*|>m}
			b^{ \alpha \beta}(\theta)|v-v_*|^{\gamma}\chi(|v-v_*|) \Big[\mu^\alpha(v')\sqrt{\mu^\beta(v_*')}f^\beta(v_*')\\
			& \hspace{9.1cm} +\mu^{\beta}(v_*') \sqrt{\mu^{\alpha}(v')}\, f^\alpha(v')\Big] d\omega dv_*.
		\end{split}	
	\end{equation}	
	and
	\begin{equation}\label{C2KSchidefse}
		\begin{split}
			&	 K_{1,z}^{\alpha,\chi} \mathbf{f}=  \frac{1}{\sqrt{\mu^\alpha}} \sum_{\beta=A,B} \iint_{\mathbb{R}^3 \times \mathbb{S}^2}\mathbf{1}_{|v|+|v_*|\leq m}
			b^{ \alpha \beta}(\theta)|v-v_*|^{\gamma} \chi(|v-v_*|) \mu^\alpha(v) \sqrt{\mu^\beta(v_*)}f^\beta(v_*)   d\omega dv_*,\\
			&	 K_{2,z}^{\alpha,\chi} \mathbf{f}= - \frac{1}{\sqrt{\mu^\alpha}} \sum_{\beta=A,B} \iint_{\mathbb{R}^3 \times \mathbb{S}^2}\mathbf{1}_{|v|+|v_*|\leq m}
			b^{ \alpha \beta}(\theta)|v-v_*|^{\gamma}\chi(|v-v_*|) \Big[\mu^\alpha(v')\sqrt{\mu^\beta(v_*')}f^\beta(v_*')\\
			& \hspace{9.1cm} +\mu^{\beta}(v_*') \sqrt{\mu^{\alpha}(v')}\, f^\alpha(v')\Big] d\omega dv_*.
		\end{split}	
	\end{equation}	
	Replace ‌$\chi(|v-v_*|)$ with ‌$\big[1-\chi(|v-v_*|)\big]$ in ‌\eqref{C2KSchidef} and \eqref{C2KSchidefse}, the resulting expression defines ‌$K_{1,r}^{\alpha,1-\chi} ,\,K_{2,r}^{\alpha,1-\chi} ,\,K_{1,z}^{\alpha,1-\chi} $ and ‌$K_{2,z}^{\alpha,1-\chi} $. It can be easily verified‌ that
	\begin{equation}\label{2121212Q1569}
		\begin{split}
		K^{\alpha} =\big\{K^{\alpha}_1+K^{\alpha}_2\big\} =&\big\{K_{1,r}^{\alpha,\chi}+K_{1,z}^{\alpha,\chi}+K_{1,r}^{\alpha,1-\chi}+K_{1,z}^{\alpha,1-\chi}\big\}\\
		&+\big\{K_{2,r}^{\alpha,\chi}+K_{2,z}^{\alpha,\chi}+K_{2,r}^{\alpha,1-\chi}+K_{2,z}^{\alpha,1-\chi}\big\}
		\end{split}	
	\end{equation}	
	
	Furthermore, we define 
	\begin{equation*}
		\mathbf{K}_s=(K^A_s,K^B_s)^{T},\hspace{0.6cm} \mathbf{K}_c=(K^A_c,K^B_c)^{T}
	\end{equation*}
	by
	\begin{equation}\label{2131213Q1569}
		\begin{split}
			&K^{\alpha}_s = K_{1,r}^{\alpha,1-\chi}+K_{1,z}^{\alpha,1-\chi}+K_{2,r}^{\alpha,1-\chi}+K_{2,z}^{\alpha,1-\chi}+K_{1,r}^{\alpha,\chi}+K_{2,r}^{\alpha,\chi}	\\
			&K^{\alpha}_c = K_{1,z}^{\alpha,\chi}+K_{2,z}^{\alpha,\chi}
		\end{split}	
	\end{equation}	
	and define
	\begin{equation}\label{2141214Q1569}
		\begin{split}
				K^{\alpha,1-\chi}_{1,s} &=K_{1,r}^{\alpha,1-\chi}+K_{1,z}^{\alpha,1-\chi}\\
			K^{\alpha,1-\chi}_{2,s} &=K_{2,r}^{\alpha,1-\chi}+K_{2,z}^{\alpha,1-\chi}\\
			K^{\alpha}_{1,s} &=K_{1,r}^{\alpha,1-\chi}+K_{1,z}^{\alpha,1-\chi}+K_{1,r}^{\alpha,\chi}\\
			K^{\alpha}_{2,s} &=K_{2,r}^{\alpha,1-\chi}+K_{2,z}^{\alpha,1-\chi}+K_{2,r}^{\alpha,\chi}
		\end{split}	
	\end{equation}	
	By combining \eqref{2121212Q1569} and \eqref{2131213Q1569}, we can verify that $\mathbf{K} = \mathbf{K}_c + \mathbf{K}_s$.
	
	Moreover, in the following lemma, we will prove that  $\mathbf{K}_s$  is a ``small part'' and   $\mathbf{K}_c$ is a ``compact part''.

	\begin{lema}\label{lemma2.2}
		Let $l\geq0$, $\mathbf{f}_i, (i=1,2)$ be two vector functions. The operator $\mathbf{K}$ defined in \eqref{Ldeopvec} and \eqref{nuKsuanzidef} is split as
		\begin{equation*}
				\mathbf{K} = \mathbf{K}_c + \mathbf{K}_s,
		\end{equation*}
		then for any small $1>\eta>0$,
		\begin{equation}\label{Lem222maineqv}
			\big|\big\langle w^{2l}  \mathbf{K}_s \mathbf{f}_1, \mathbf{f}_2 \big\rangle_{L^{2}_{v}}\big| \leq \eta |w^{l}\mathbf{f}_1|_{\nu} |w^{l}\mathbf{f}_2|_{\nu}.
		\end{equation}
		Moreover, $\mathbf{K}_c$ is a compact operator in $L^2_\nu(\mathbb{R}^3)$ with respect to $|\cdot|_\nu$.
	\end{lema}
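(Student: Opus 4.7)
The plan is to prove the two assertions in parallel by exploiting the two truncations built into the decomposition: the cutoff $\chi(|v-v_*|)$, which separates the near-grazing regime $|v-v_*|\leq 2\varepsilon$ from the bulk, and the cutoff $\mathbf{1}_{|v|+|v_*|\leq m}$, which separates low from high velocity. The small part $\mathbf{K}_s$ collects exactly those six pieces where at least one of these truncations pushes us to a regime of smallness, while $\mathbf{K}_c$ collects the two pieces with $|v-v_*|\geq\varepsilon$ and $|v|+|v_*|\leq m$, a compact and non-degenerate region where standard Hilbert--Schmidt arguments apply.

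For the estimate \eqref{Lem222maineqv} I would revisit the Cauchy--Schwarz computation of Lemma \ref{lemma2.1} and track where the smallness arises. There are two mechanisms at play. First, for any piece carrying the factor $[1-\chi(|v-v_*|)]$, the support is contained in $\{|v-v_*|\leq 2\varepsilon\}$, and the bounds \eqref{K1PTlem21}, \eqref{lem21f1fc}--\eqref{lem21f2fc22} reduce everything to integrals of the type $\int_{|v-v_*|\leq 2\varepsilon}|v-v_*|^{\gamma}\sqrt{\mu^{\beta}(v_*)}\,dv_*$, which, since $-3<\gamma<0$, are controlled by $C\varepsilon^{\gamma+3}\to0$ as $\varepsilon\to 0^{+}$. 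Second, for any piece carrying $\chi$ together with $\mathbf{1}_{|v|+|v_*|>m}$, at least one of $|v|$ or $|v_*|$ exceeds $m/2$ on the support; the Gaussian factors $\sqrt{\mu^{\alpha}}$, $\sqrt{\mu^{\beta}}$ (possibly after the $(v,v_*)\mapsto (v',v'_*)$ change of variables provided by \eqref{ddstardstar}--\eqref{yipievvstar}) dominate the polynomial weight $w^{2l}$ and yield a factor bounded by $e^{-cm^{2}}\to 0$ as $m\to\infty$. Choosing $\varepsilon$ first sufficiently small and then $m$ sufficiently large, one gets a combined prefactor $\leq\eta$.

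For the compactness of $\mathbf{K}_c$, note that on the support of $\mathbf{K}_c$ one has $|v|+|v_*|\leq m$ and $|v-v_*|\geq\varepsilon$. I would represent $\mathbf{K}_c$ as a matrix of integral operators with kernels $k_c^{\alpha\beta}(v,v_*)$. For the $K_{1,z}^{\alpha,\chi}$ component the kernel is read off directly from the definition. For the gain-type component $K_{2,z}^{\alpha,\chi}$ one uses a Carleman-type representation (integrate out $\omega$ to reduce $\mathbb{S}^{2}$ to integration along a plane in $v_*$) or, equivalently, the collisional change of variables $(v,v_*)\mapsto(v',v'_*)$ to recast it in kernel form. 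In every case the kernel is bounded, compactly supported in $(v,v_*)$ (because of $|v|+|v_*|\leq m$) and continuous away from $v=v_*$ (because of $|v-v_*|\geq\varepsilon$), so that the weighted operator $w^{l}\mathbf{K}_c w^{-l}$ has a square-integrable kernel and is therefore Hilbert--Schmidt on $L^{2}(\mathbb{R}^{3};w\,dv)$. This gives compactness of $\mathbf{K}_c$ with respect to $|\cdot|_\nu$.

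The main obstacle will be the treatment of the off-diagonal gain piece $K_{2,z}^{\alpha,\chi}$ with $\alpha\neq\beta$, because the exponents of $\mu^{\alpha}(v')$ and $\mu^{\beta}(v'_*)$ involve distinct masses $m^{\alpha}\neq m^{\beta}$ and the collisional change of variables \eqref{F1vuvusacl} mixes them non-symmetrically. The identities \eqref{ddstardstar} and \eqref{yipievvstar} from Lemma \ref{lemma2.1} already provide the correct algebraic framework, but to obtain a clean kernel one must verify that the combined Gaussian weight $\sqrt{\mu^{\alpha}(v')\mu^{\beta}(v'_*)}$, after expressing $(v',v'_*)$ in terms of $(v,v_*,\omega)$ or vice versa, still decays uniformly in the free variables. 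This is the point at which the unequal-mass structure is most delicate; fortunately, since the truncations make both $|v|$ and $|v_*|$ bounded and $|v-v_*|$ bounded away from zero, the Gaussians retain their decay and both the smallness estimate and the Hilbert--Schmidt bound go through.
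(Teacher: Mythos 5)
Your decomposition strategy matches the paper's, and your account of the $1-\chi$ pieces is sound: on $\{|v-v_*|\leq 2\varepsilon\}$ the Cauchy--Schwarz computation of Lemma~\ref{lemma2.1} reduces to $\int_{|v-v_*|\leq 2\varepsilon}|v-v_*|^{\gamma}\,dv_*\lesssim\varepsilon^{3+\gamma}$, exactly as you say (cf.~\eqref{k1ssing}, \eqref{lem22cclssig}). The $K_{1,r}^{\alpha,\chi}$ piece also goes as you sketch, because its kernel carries $\sqrt{\mu^\alpha(v)}\sqrt{\mu^\beta(v_*)}$ and the truncation $|v|+|v_*|>m$ forces one of these two Gaussians to supply exponential smallness directly (cf.~\eqref{k1sregu}).

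The genuine gap is in your treatment of the gain piece $K_{2,r}^{\alpha,\chi}$, which is the heart and the bulk of this lemma. After energy conservation the relevant Gaussians are $\sqrt{\mu^{\alpha}(v')}\sqrt{\mu^{\beta}(v_*)}$ and $\sqrt{\mu^{\beta}(v_*')}\sqrt{\mu^{\beta}(v_*)}$ --- they sit in the collision variables, not in $v$, and $|v|+|v_*|>m$ does \emph{not} force any of $|v_*|,|v'|,|v_*'|$ to be of order $m$. Take $|v|\geq m$, $|v_*|$ bounded, and $\omega$ close to orthogonal to $v-v_*$: then $v'_*\approx v_*$ remains bounded, so for the component paired with $f^\alpha_1(v')$ neither $\mu^\beta(v_*)$ nor $\mu^\beta(v'_*)$ is small, while the only factor in $v$ is the polynomial $w^{2l}(v)$. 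The change of variables $(v,v_*)\mapsto(v',v'_*)$ relocates this difficulty rather than removing it. Consequently no factor $e^{-cm^2}$ is available; the paper obtains only a rate $C_\varepsilon/m$, and even that requires passing to Carleman-type coordinates $(u_\parallel,u_\perp)$ via \eqref{CE2ESYCGV}, recognising that the exponential there decays in $\zeta_\parallel=v_\parallel+\tfrac{m^\beta}{m^\alpha+m^\beta}u_\parallel$ rather than in $v$, and running a case decomposition over which of $|v_\parallel|,|v_\perp|,|u_\parallel|,|u_\perp|$ is dominant (Cases A1--A4 for the ``typical'' kernel, B1--B4 for the exponentially decaying $\alpha\neq\beta$ hybrid kernel, and a separate symmetrization for the degenerate $\alpha=\beta$ hybrid kernel, where $\xi_\parallel$ and $\eta_\parallel$ are no longer linearly independent). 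Your sketch omits this analysis entirely.

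A secondary point: you flag the unequal-mass off-diagonal gain piece as the ``main obstacle'' but park that concern under the compactness of $\mathbf{K}_c$, where it is in fact benign --- on $\{|v|+|v_*|\leq m,\ |v-v_*|\geq\varepsilon\}$ everything is compactly supported and the Carleman factor $1/|u_\parallel|$ is locally square-integrable, so the Hilbert--Schmidt conclusion is routine, as you say. The real delicacy of $m^\alpha\neq m^\beta$ sits in the smallness estimate for $K_{2,r}^{\alpha,\chi}$ discussed above.
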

	\begin{proof}
		By \eqref{2131213Q1569} and \eqref{2141214Q1569}, we decompose $K_{s}^{\alpha}$ as follows:
		\begin{equation}\label{KS1KS2DPPFJ}
			K_{s}^{\alpha}=K_{1,s}^{\alpha}+K_{2,s}^{\alpha}.
		\end{equation}
		We first consider the part of $K_{1,s}^{\alpha}$, which can be written as
		\begin{equation*}
			K_{1,s}^{\alpha}=K^{\alpha,1-\chi}_{1,s}+K_{1,r}^{\alpha,\chi}.
		\end{equation*}
		Since the angular part of the collision kernel  $|b^{\alpha \beta}|\leq C_{b}  |\cos \theta |$, $\left \langle  w^{2l}K_{1,s}^{\alpha,1-\chi}\mathbf{f}_1,f^{\alpha}_2\right \rangle_{L^{2}_{v}} $ is bounded by
		\begin{align}
			C&\sum_{\beta=A,B}\left( \iint_{(\mathbb{R}^3)^2} \sqrt{\mu^\alpha(v)} \sqrt{\mu^\beta(v_*)}|v - v_*|^\gamma w^{2l}(v) (1 - \chi) [f^\beta_1(v_*)]^2 \, dv_* dv \right)^{1/2} \notag\\
			&\hspace{1.2cm} \times \left( \iint_{(\mathbb{R}^3)^2} \sqrt{\mu^\alpha(v)} \sqrt{\mu^\beta(v_*)}|v - v_*|^\gamma w^{2l}(v) (1 - \chi) [f^{\alpha}_2(v)]^2 \, dv_* dv \right)^{1/2} \notag\\
			&\leq C \sum_{\beta=A,B}\Bigg( \int_{\mathbb{R}^3}  \int_{\mathbb{R}^3} |v - v_*|^\gamma (1 - \chi) dv\, w^{2l+\frac{\gamma}{2}}(v_*) [\mu^\beta(v_*)]^{\frac{1}{4}}[f^\beta_1(v_*)]^2dv_* \Bigg)^{1/2} \notag\\
			&\hspace{2cm} \times \Bigg( \int_{\mathbb{R}^3} \int_{\mathbb{R}^3} |v - v_*|^\gamma (1 - \chi) dv_* \, w^{2l+\frac{\gamma}{2}}(v)[\mu^\alpha(v)]^{\frac{1}{4}}[f^{\alpha}_2(v)]^2dv \Bigg)^{1/2} \notag\\
			&\leq C \sum_{\beta=A,B}\varepsilon^{3+\gamma} \Bigg( \int_{\mathbb{R}^3} w^{2l+\frac{\gamma}{2}}(v_*) [f^\beta_1(v_*)]^2 dv_* \Bigg)^{1/2} \Bigg( \int_{\mathbb{R}^3} w^{2l+\frac{\gamma}{2}}(v)[f^{\alpha}_2(v)]^2 dv \Bigg)^{1/2} \notag\\
			&\leq C \varepsilon^{3+\gamma} \big| w^l \mathbf{f}_1 \big|_\nu \big| w^l \mathbf{f}_2 \big|_\nu.\label{k1ssing}
		\end{align}
		Using the notation \eqref{defmsmsmsmsms}, $\left \langle  w^{2l}K_{1,r}^{\alpha,\chi}\mathbf{f}_1,f^{\alpha}_2\right \rangle_{L^{2}_{v}} $ is bounded by
		\begin{align}
			C& \sum_{\beta=A,B}\iint_{\{|v|+|u| \geq m\}} \sqrt{\mu^\alpha(v)} \sqrt{\mu^\beta(v_*)}|v - u|^\gamma w^{2l}\chi(|v - v_*|) f^{\alpha}_2(v)f^\beta_1(v_*) \, dv_* dv \notag\\
			&\leq C \sum_{\beta=A,B}[\mu^s(m)]^{\frac{1}{8}} \iint_{(\mathbb{R}^3)^2} [\mu^\beta(v_*)]^{\frac{1}{4}}[\mu^\alpha(v)]^{\frac{1}{4}}|v - v_*|^\gamma |f^{\alpha}_2(v)f^\beta_1(v_*)| \, dv_* dv \notag\\
			&\leq C \sum_{\beta=A,B}[\mu^s(m)]^{\frac{1}{8}} \left( \iint_{(\mathbb{R}^3)^2} [\mu^\beta(v_*)]^{\frac{1}{4}}[\mu^\alpha(v)]^{\frac{1}{4}}|v - v_*|^\gamma [f^\beta_1(v_*)]^2 \, dv_* dv \right)^{1/2} \notag\\
			&\hspace{3.4cm} \times \left( \iint_{(\mathbb{R}^3)^2} [\mu^\beta(v_*)]^{\frac{1}{4}}[\mu^\alpha(v)]^{\frac{1}{4}}|v - v_*|^\gamma [f^\alpha_2(v)]^2 \, dv_* dv \right)^{1/2} \notag\\
			&\leq C [\mu^s(m)]^{\frac{1}{8}}  \big| w^l \mathbf{f}_1 \big|_\nu \big| w^l \mathbf{f}_2 \big|_\nu. \label{k1sregu}
		\end{align}
		The proof for $K^{\alpha}_2$ part is more complicated than that of $K^{\alpha}_1$. 
		We write $K_{2,s}^{\alpha}$ as
		\begin{equation*}
			K_{2,s}^{\alpha}=K^{\alpha,1-\chi}_{2,s}+K_{2,r}^{\alpha,\chi}.
		\end{equation*}
		 The expression for $K^{\alpha}_2$ in \eqref{nuKsuanzidef} and \eqref{2141214Q1569} can be simplified, by using the conservation of energy:  $m^{\beta}|v_*'|^{2}+m^{\alpha}|v'|^2 =m^{\beta}|v_*|^2+m^{\alpha}|v|^2$,
		\begin{equation*}
			\begin{split}
				&	 K_{2,s}^{\alpha,1-\chi} \mathbf{f}_1= -  \sum_{\beta=A,B} \iint_{\mathbb{R}^3 \times \mathbb{S}^2}
				b^{ \alpha \beta}(\theta)|v-v_*|^{\gamma}(1 - \chi) \sqrt{\mu^\beta(v_*)}\\  &\hspace{6cm}\times\Big(\sqrt{\mu^\alpha(v')}f^\beta_1(v_*')
				 +\sqrt{\mu^{\beta}(v_*')} \, f^\alpha_1(v')\Big) d\omega dv_*.
			\end{split}	
		\end{equation*}	
		‌Therefore, $\left \langle  w^{2l}K_{2,s}^{\alpha,1-\chi}\mathbf{f}_1,f^{\alpha}_2\right \rangle_{L^{2}_{v}} $  is decomposed into two components based on $K_{2,s}^{\alpha}$. The part containing $f^\alpha_1(v')$ is bounded by
		\begin{equation}\label{lem22k2ssig}
			\begin{split}
				C\sum_{\beta=A,B} &\left(\iiint_{(\mathbb{R}^3)^2 \times \mathbb{S}^2}
				\sqrt{\mu^\beta(v_*)}\sqrt{\mu^{\beta}(v_*')}|v - v_*|^\gamma w^{2l} (1 - \chi)[f^\alpha_1(v')]^2
				d\omega dv_*dv\right)^{1/2}\\
				&\hspace{0.5cm}\times
				\left(\iiint_{(\mathbb{R}^3)^2 \times \mathbb{S}^2}
				\sqrt{\mu^\beta(v_*)}\sqrt{\mu^{\beta}(v_*')}|v - v_*|^\gamma w^{2l} (1 - \chi)[f^\alpha_2(v)]^2
				d\omega dv_*dv\right)^{1/2}
			\end{split}	
		\end{equation}	
		Although $m^A \neq m^B$, a direct computation shows that  $|v_* - v|=|v_*' - v'|$ still holds. Since $|v_* - v|\leq 2 \varepsilon$, it follows that
		\begin{align*}
			|v_*'| &= \big|v' - (v' - v_*')\big| \geq |v'| - |v- v_*| \geq |v'| - 2\varepsilon, \\
			|v_*| &= \big|v_*' + \frac{2m^{\alpha}}{m^{\alpha}+m^{\beta}}[(v_* - v) \cdot \omega]\omega\big| \geq |v_*'| - 2|v_* - v| \geq |v'| - 6\varepsilon.
		\end{align*}
		Therefore, for $0<\varepsilon<1$, 
		\begin{equation*}
			\sqrt{\mu^\beta(v_*)}\leq C\sqrt{\mu^{\beta}(v')}
		\end{equation*}
		By \eqref{ddstardstar}, the square of first line in \eqref{lem22k2ssig} is bounded by
		\begin{align*}
			C&\sum_{\beta=A,B} \iiint_{(\mathbb{R}^3)^2 \times \mathbb{S}^2} \sqrt{\mu^\beta(v_*)}\sqrt{\mu^{\beta}(v_*')}w^{2l}(v) |v - v_*|^\gamma \big[1 - \chi(|v_* - v|)\big] [f^\alpha_1(v')]^2 \,dv_*\,dv\,d\omega \\
			&\leq C \sum_{\beta=A,B} \iint_{(\mathbb{R}^3)^2} \sqrt{\mu^\beta(v')}\sqrt{\mu^{\beta}(v_*')}w^{2l}(v) |v' - v_*'|^\gamma \big[1 - \chi(|v_*' - v'|)\big] [f^\alpha_1(v')]^2 \,dv_*\,dv \\
			&\leq C \sum_{\beta=A,B} \iint_{(\mathbb{R}^3)^2} [\mu^\beta(v')]^{\frac{1}{4}}[\mu^{\beta}(v_*')]^{\frac{1}{4}}w^{2l}(v') |v' - v_*'|^\gamma \big[1 - \chi(|v_*' - v'|)\big] [f^\alpha_1(v')]^2 \,dv_*\,dv \\
			&\leq C \sum_{\beta=A,B} \int_{|v' - v_*'| \leq 2\varepsilon} [\mu^{\beta}(v_*')]^{\frac{1}{4}} |v' - v_*'|^\gamma \,dv_*' \int_{\mathbb{R}^3} [\mu^\beta(v')]^{\frac{1}{4}}w^{2l}(v') [f^\alpha_1(v')]^2 \,dv' \\
			&\leq C\varepsilon^{3+\gamma} |w^{l} \mathbf{f}_1|_\nu^2
		\end{align*}
		We note that
		\begin{align*}
			|v_*| &= |v - (v - v_*)| \geq |v| - |v - v_*| \geq |v| - 2\varepsilon, \\
			|v_*'| &= |v_* - \frac{2m^{\alpha}}{m^{\alpha}+m^{\beta}}[(v_* - v) \cdot \omega]\omega| \geq |v_*| - 2|v_* - v| \geq |v| - 6\varepsilon,
		\end{align*}
		so it holds that $\sqrt{\mu^\beta(v_*')}\leq C \sqrt{\mu^\beta(v)}$. 
		
		Hence, the square of second line in \eqref{lem22k2ssig} is
		bounded by
		\begin{align*}
			C &\sum_{\beta=A,B}\int_{\mathbb{R}^3} \int_{\mathbb{R}^3}\sqrt{\mu^\beta(v_*)}\sqrt{\mu^{\beta}(v)} |v - u|^\gamma \big[1 - \chi(|v_* - v|)\big] \,dv_*\,[f^\alpha_2(v)]^2\,dv \\
			&\leq C \sum_{\beta=A,B} \int_{\mathbb{R}^3} \int_{|v - v_*| \leq 2\varepsilon} \sqrt{\mu^\beta(v_*)} |v - v_*|^\gamma \,dv_*  \sqrt{\mu^{\beta}(v)} [f^\alpha_2(v)]^2 \,dv \\
			&\leq C\varepsilon^{3+\gamma} \big|w^{l} \mathbf{f}_2\big|_\nu^2
		\end{align*}
		So, \eqref{lem22k2ssig} is bounded by $C\varepsilon^{3+\gamma} |w^{l} \mathbf{f}_1|_\nu \big|w^{l} \mathbf{f}_2\big|_\nu$.
		
		Therefore, we have obtained the estimates of the first part in $\left \langle  w^{2l}K_{2,s}^{\alpha,1-\chi}\mathbf{f}_1,f^{\alpha}_2\right \rangle_{L^{2}_{v}} $.
		
		Similarly, the part of containing $f^\beta_1(v_*')$ in $\left \langle  w^{2l}K_{2,s}^{\alpha,1-\chi}\mathbf{f}_1,f^{\alpha}_2\right \rangle_{L^{2}_{v}} $  is bounded by
		\begin{equation}\label{lemmass213213so}
			\begin{split}
					C\sum_{\beta=A,B} &\left(\iiint_{(\mathbb{R}^3)^2 \times \mathbb{S}^2}
				\sqrt{\mu^\beta(v_*)}\sqrt{\mu^\alpha(v')}|v - v_*|^\gamma w^{2l} (1 - \chi)[f^\beta_1(v_*')]^2
				d\omega dv_*dv\right)^{1/2}\\
				&\hspace{0.5cm}\times
				\left(\iiint_{(\mathbb{R}^3)^2 \times \mathbb{S}^2}
				\sqrt{\mu^\beta(v_*)}\sqrt{\mu^\alpha(v')}|v - v_*|^\gamma w^{2l} (1 - \chi)[f^\alpha_2(v)]^2
				d\omega dv_*dv\right)^{1/2}
			\end{split}	
		\end{equation}	
		Since $|v_* - v|\leq 2 \varepsilon$, 
		\begin{align*}
			|v_*| &= |v_*' - \frac{2m^{\alpha}}{m^\alpha+m^\beta}[(v-v_*)\cdot \omega]\omega| \geq |v_*'| - 2|v_* - v| \geq |v_*'| - 4\varepsilon, \\
			|v'| &= |v-\frac{2m^{\beta}}{m^\alpha+m^\beta}[(v-v_*)\cdot \omega]\omega| \geq |v| - 2|v_* - v| \geq |v| - 4\varepsilon
		\end{align*}
		For $\varepsilon<1$, we have
		\begin{equation*}
			\sqrt{\mu^\beta(v_*)}\leq C\sqrt{\mu^{\beta}(v_*')},\quad \sqrt{\mu^{\alpha}(v')}\leq C\sqrt{\mu^{\alpha}(v)}.
		\end{equation*}
		So, the square of the first line in \eqref{lemmass213213so} is bounded by
		\begin{align*}
			C&\sum_{\beta=A,B} \iiint_{(\mathbb{R}^3)^2 \times \mathbb{S}^2} \sqrt{\mu^{\alpha}(v')}\sqrt{\mu^{\beta}(v_*')}w^{2l}(v) |v - v_*|^\gamma \big[1 - \chi(|v_* - v|)\big] [f^{\beta}_1(v_*')]^2 \,dv_*\,dv\,d\omega \\
			&\hspace{0.3cm}\leq C \sum_{\beta=A,B} \iint_{(\mathbb{R}^3)^2} \sqrt{\mu^{\alpha}(v')}\sqrt{\mu^{\beta}(v_*')}w^{2l}(v) |v' - v_*'|^\gamma \big[1 - \chi(|v_*' - v'|)\big] [f^{\beta}_1(v_*')]^2 \,dv_*'\,dv' \\
				&\hspace{0.3cm}\leq C \sum_{\beta=A,B} \iint_{(\mathbb{R}^3)^2} [\mu^{\alpha}(v')]^{\frac{1}{4}}[\mu^{\beta}(v_*')]^{\frac{1}{4}}w^{2l}(v_*') |v' - v_*'|^\gamma \big[1 - \chi(|v_*' - v'|)\big] [f^{\beta}_1(v_*')]^2 \,dv_*'\,dv' \\
				&\hspace{0.3cm}\leq C \sum_{\beta=A,B} \int_{|v' - v_*'| \leq 2\varepsilon} [\mu^{\alpha}(v')]^{\frac{1}{4}} |v' - v_*'|^\gamma \,dv' \int_{\mathbb{R}^3} [\mu^\beta(v'_*)]^{\frac{1}{4}}w^{2l}(v'_*) [f^{\beta}_1(v_*')]^2 \,dv_*' \\
				&\hspace{0.3cm}\leq C\varepsilon^{3+\gamma} \big|w^{l} \mathbf{f}_1\big|_\nu^2
		\end{align*}
		The square of the second line in \eqref{lemmass213213so} is bounded by
		\begin{align*}
			C &\sum_{\beta=A,B}\int_{\mathbb{R}^3} \int_{\mathbb{R}^3}\sqrt{\mu^\beta(v_*)}\sqrt{\mu^{\alpha}(v)} |v - u|^\gamma \big[1 - \chi(|v_* - v|)\big] \,dv_*\,[f^\alpha_2(v)]^2\,dv \\
			&\leq C \sum_{\beta=A,B} \int_{\mathbb{R}^3} \int_{|v - v_*| \leq 2\varepsilon} \sqrt{\mu^\beta(v_*)} |v - v_*|^\gamma \,dv_*  \sqrt{\mu^{\alpha}(v)} [f^\alpha_2(v)]^2 \,dv \\
			&\leq C \varepsilon^{3+\gamma} \big|w^{l} \mathbf{f}_2\big|_\nu^2
		\end{align*}
		So, \eqref{lemmass213213so} is also bounded by $C\varepsilon^{3+\gamma} \big|w^{l} \mathbf{f}_1\big|_\nu \big|w^{l} \mathbf{f}_2\big|_\nu$.
		
		‌In conclusion‌, we have
		\begin{equation}\label{lem22cclssig}
		\left \langle  w^{2l}\mathbf{K}_{2,s}^{1-\chi}\mathbf{f}_1,\mathbf{f}_2\right \rangle_{L^{2}_{v}}=	\sum_{\alpha=A,B}\left \langle  w^{2l}K_{2,s}^{\alpha,1-\chi}\mathbf{f}_1,f^{\alpha}_2\right \rangle_{L^{2}_{v}} \leq C \varepsilon^{3+\gamma} \big|w^{l} \mathbf{f}_1\big|_\nu \big|w^{l} \mathbf{f}_2\big|_\nu
		\end{equation}
		We now analyze the rest part  
		\begin{equation}\label{lem22K2schimainf}
			\begin{split}
				&	 -K_{2,r}^{\alpha,\chi} \mathbf{f}_1=   \sum_{\beta=A,B} \int_{\mathbb{R}^3 \times \mathbb{S}^2}\mathbf{1}_{|v|+|v_*|>m}
				b^{ \alpha \beta}(\theta)|v-v_*|^{\gamma} \sqrt{\mu^\beta(v_*)}\chi(|v-v_*'|)\\ & \hspace{5.6cm}\times\Big(\sqrt{\mu^\alpha(v')}f^\beta_1(v_*')
				 +\sqrt{\mu^{\beta}(v_*')} \, f^\alpha_1(v')\Big)d\omega dv_*.
			\end{split}	
		\end{equation}	
		Since its truncation function no longer supplies a small quantity, we perform a change of variable
		\begin{align}
			&u=v_*-v,\quad u_{\parallel} = (u \cdot \omega)\omega, \quad 
			u_{\perp} = u - (u \cdot \omega)\omega,  \notag\\
			&\zeta_{\parallel} + \zeta_{\perp} = v + \frac{m^{\beta}}{m^{\alpha}+m^{\beta}}u_{\parallel}, \quad 
			\zeta_{\parallel} \parallel u_{\parallel}, \quad \zeta_{\perp} \parallel u_{\perp},\label{CE2ESYCGV}
		\end{align}
		where $u \in \mathbb{R}^3$, $\omega \in \mathbb{S}^2$, $u_{\parallel} \in \mathbb{R}^3$, $u_{\perp} \in \mathbb{R}^2$, and
		\begin{equation*}
			|u_{\parallel}|^{2}(|u_{\parallel}|^{-1}d|u_{\parallel}|)d\omega = du_{\parallel} ,\qquad du ={du}^{+}+{du}^{-} =2(|u_{\parallel}|^{-1} d|u_{\parallel}|) du_{\perp}. 
		\end{equation*}
		Here, $(|u_{\parallel}|^{-1}d|u_{\parallel}|)$ can be understood as ``$dr$'' in spherical coordinate transformation. Since the direction of the coordinate variable $(|u_{\parallel}|^{-1}d|u_{\parallel}|)$ is only one direction of $\omega$ or $-\omega$. So, it yields
		\begin{equation*}
			{du}^{+}=  \mathbf{1}_{\{u\cdot\omega\geq0\}}du=(|u_{\parallel}|^{-1} d|u_{\parallel}|) du_{\perp},\hspace{0.5cm} {du}^{-}= \mathbf{1}_{\{u\cdot\omega<0\}} du=(|u_{\parallel}|^{-1} d|u_{\parallel}|) du_{\perp}.
		\end{equation*}
		Therefore, 
		\begin{equation}
			du d\omega = 2|u_{\parallel}|^{-2} du_{\perp} du_{\parallel}.
		\end{equation}
		Then $(-1)\times$ \eqref{lem22K2schimainf} can be expressed by
		\begin{align}
			\sum_{\beta=A,B}&\Bigg(\int_{|v|+|v_*|>m}\int_{\mathbb{S}^2} |u|^{\gamma} [\mu^{\beta}(u+v)]^{\frac{1}{2}}\chi(|u|)\big[\mu^{\beta}(v + u_{\perp}+\frac{m^{\beta}-m^{\alpha}}{m^{\alpha}+m^{\beta}}u_{\parallel})\big]^{\frac{1}{2}}\notag\\
			&\hspace{5.5cm}\times f^\alpha_1(v + \frac{2m^{\beta}}{m^{\alpha}+m^{\beta}}u_{\parallel})b^{ \alpha \beta}(\theta) \, du \, d\omega\Bigg) \label{CE2easyPT}\\
			& +\int_{|v|+|v_*|>m}\int_{\mathbb{S}^2} |u|^{\gamma} [\mu^{\beta}(u+v)]^{\frac{1}{2}}\chi(|u|)\big[\mu^{\alpha}(v + \frac{2m^{\beta}}{m^{\alpha}+m^{\beta}}u_{\parallel})\big]^{\frac{1}{2}}
			\hspace{1cm}(\alpha \neq \beta)\notag\\
			&\hspace{5cm}\times f^\beta_1(v + u_{\perp}+\frac{m^{\beta}-m^{\alpha}}{m^{\alpha}+m^{\beta}}u_{\parallel})b^{ \alpha \beta}(\theta) \, du \, d\omega \label{CE2hadPT}\\
			&+\int_{|v|+|v_*|>m}\int_{\mathbb{S}^2} |u|^{\gamma} [\mu^{\alpha}(u+v)]^{\frac{1}{2}}\chi(|u|)\big[\mu^{\alpha}(v + u_{\parallel})\big]^{\frac{1}{2}} f^\alpha_1(v + u_{\perp})b^{ \alpha \alpha}(\theta) \, du \, d\omega \label{CE2hadDGP}
		\end{align}
		Here, \eqref{CE2easyPT} is the integral for ``Typical part''. \eqref{CE2hadPT}$+$\eqref{CE2hadDGP} is the integral for ``Hybrid part'', where \eqref{CE2hadPT} exhibit exponential decay and \eqref{CE2hadDGP} is the degenerate part.
		
		Notice that 
		\begin{equation*}
			|u_{\parallel}| = |u \cos \theta| = \sqrt{|u_{\parallel}|^2 + |u_{\perp}|^2} \, |\cos \theta|
		\end{equation*}
		Furthermore, by \eqref{CE2ESYCGV}, it holds that
		\begin{align}
			u + v &= u_{\perp} + \zeta_{\perp} + \zeta_{\parallel} + \frac{m^\alpha}{m^\alpha+m^\beta}u_{\parallel}, \label{CEPT2ACA1vb}\\
			v + u_{\perp}&+\frac{m^{\beta}-m^{\alpha}}{m^{\alpha}+m^{\beta}}u_{\parallel} = u_{\perp} + \zeta_{\perp} + \zeta_{\parallel} - \frac{m^\alpha}{m^\alpha+m^\beta}u_{\parallel},\notag
		\end{align}
		where
		\begin{equation}\label{zetperzetpara}
			 \zeta_{\perp}=v_{\perp},\hspace{0.5cm}\zeta_{\parallel}=v_{\parallel}+\frac{m^\beta}{m^\alpha+m^\beta}u_{\parallel}.
		\end{equation}
		Therefore, the exponent in \eqref{CE2easyPT} is $-\frac{1}{2}\Big(|u_{\perp} + \zeta_{\perp}|^2+|\zeta_{\parallel}|^2+(\frac{m^\alpha}{m^\alpha+m^\beta})^2|u_{\parallel}|^2\Big)$.	\\
		Hence, \eqref{CE2easyPT} can be written as
		\begin{align}
			\sum_{\beta=A,B}& \int_{\mathbb{R}^3} \frac{1}{|u_{\parallel}|^2} e^{-\frac{m^\beta}{2}\big[|\zeta_{\parallel}|^2+(\frac{m^\alpha}{m^\alpha+m^\beta})^2|u_{\parallel}|^2\big]} f^\alpha_1(v + \frac{2m^{\beta}}{m^{\alpha}+m^{\beta}}u_{\parallel}) \notag\\
			&\quad \times \int_{\mathbb{R}^2} e^{-\frac{m^\beta}{2}|u_{\perp} + \zeta_{\perp}|^2} \mathbf{1}_{|v|+|v_*|>m}\left[|u_{\parallel}|^2 + |u_{\perp}|^2\right]^{\frac{\gamma}{2}} \chi\big(\sqrt{|u_{\parallel}|^2 + |u_{\perp}|^2}\big) b^{ \alpha \beta}(\theta) \, du_{\perp} du_{\parallel} \notag\\
			=&\sum_{\beta=A,B} \int_{\mathbb{R}^3} \frac{1}{|u_{\parallel}|} e^{-\frac{m^\beta}{2}\big[|\zeta_{\parallel}|^2+(\frac{m^\alpha}{m^\alpha+m^\beta})^2|u_{\parallel}|^2\big]} f^\alpha_1(v + \frac{2m^{\beta}}{m^{\alpha}+m^{\beta}}u_{\parallel})\notag \\
			&\quad \times \int_{\mathbb{R}^2} e^{-\frac{m^\beta}{2}|u_{\perp} + \zeta_{\perp}|^2} \mathbf{1}_{|v|+|v_*|>m}\left[|u_{\parallel}|^2 + |u_{\perp}|^2\right]^{\frac{\gamma - 1}{2}} \chi\big(\sqrt{|u_{\parallel}|^2 + |u_{\perp}|^2}\big) \frac{b^{ \alpha \beta}(\theta)}{|\cos \theta|} \, du_{\perp} du_{\parallel}\label{CE2BD221fSesD1}.
		\end{align}
		We introduce the integral kernel:
		\begin{align}
			k_{\alpha \beta}^{(1),r}(v,u_{\parallel}) &=:  \frac{1}{|u_{\parallel}|} e^{-\frac{m^\beta}{2}\big[|\zeta_{\parallel}|^2+(\frac{m^\alpha}{m^\alpha+m^\beta})^2|u_{\parallel}|^2\big]} \int_{\mathbb{R}^2} e^{-\frac{m^\beta}{2}|u_{\perp} + \zeta_{\perp}|^2} \mathbf{1}_{|v|+|v_*|>m}\notag\\
			&\hspace{2.5cm}\times  \left[|u_{\parallel}|^2 + |u_{\perp}|^2\right]^{\frac{\gamma - 1}{2}} \chi\big(\sqrt{|u_{\parallel}|^2 + |u_{\perp}|^2}\big) \frac{b^{ \alpha \beta}(\theta)}{|\cos \theta|} \, du_{\perp}.\label{ker1rrr}
		\end{align}
		Then \eqref{CE2easyPT} can be further written as
		\begin{equation*}
		\sum_{\beta=A,B}	\int_{\mathbb{R}^3}k_{\alpha \beta}^{(1),r}(v,u_{\parallel})\,f^\alpha(v + \frac{2m^{\beta}}{m^{\alpha}+m^{\beta}}u_{\parallel}) \,d u_{\parallel}
		\end{equation*}
		
		Next we turn to the other part of \eqref{lem22K2schimainf}. Note that in the part of \eqref{CE2hadPT}, $\mu^{\alpha}$ and $\mu^{\beta}$ may be different. To clearly characterize their exponential decay, we apply the following change of variable:
		\begin{eqnarray}\label{lem2and222cgv}
			\begin{split}
				\xi_{\parallel} + \xi_{\perp} &= \frac{\sqrt{m^\alpha}+\sqrt{m^\beta}}{2}v + \big(\frac{\sqrt{m^\beta}}{2}+\frac{\sqrt{m^{\alpha}}m^{\beta}}{m^{\alpha}+m^{\beta}}\big)u_{\parallel}+\frac{\sqrt{m^\beta}}{2}u_{\perp}, 
			\end{split}
		\end{eqnarray}
		where $\xi_{\parallel} \parallel u_{\parallel},\,\, \xi_{\perp} \parallel u_{\perp}$. It follows that
		\begin{align*}
			&\sqrt{m^\alpha}(v + \frac{2m^{\beta}}{m^{\alpha}+m^{\beta}}u_{\parallel})\\
			&\hspace{1.7cm}= \xi_{\parallel} + \xi_{\perp}-\Big[\frac{\sqrt{m^\beta}-\sqrt{m^\alpha}}{2}v+\big(\frac{\sqrt{m^\beta}}{2}-\frac{\sqrt{m^{\alpha}}m^{\beta}}{m^{\alpha}+m^{\beta}}\big)u_{\parallel}+\frac{\sqrt{m^\beta}}{2}u_{\perp}\Big], \\
			&\sqrt{m^\beta}(v +u_{\parallel}+ u_{\perp}) \\
			&\hspace{1.7cm}= \xi_{\parallel} + \xi_{\perp}+\Big[\frac{\sqrt{m^\beta}-\sqrt{m^\alpha}}{2}v+\big(\frac{\sqrt{m^\beta}}{2}-\frac{\sqrt{m^{\alpha}}m^{\beta}}{m^{\alpha}+m^{\beta}}\big)u_{\parallel}+\frac{\sqrt{m^\beta}}{2}u_{\perp}\Big]
		\end{align*}
		To keep the notation concise, we make the following change of variable
		\begin{eqnarray}\label{lem2223cgvb3}
			\begin{split}
				\eta_{\parallel} &= \frac{\sqrt{m^\beta}-\sqrt{m^\alpha}}{2}v_{\parallel} + \big(\frac{\sqrt{m^\beta}}{2}-\frac{\sqrt{m^{\alpha}}m^{\beta}}{m^{\alpha}+m^{\beta}}\big)u_{\parallel} \\
				\eta_{\perp} &= \frac{\sqrt{m^\beta}-\sqrt{m^\alpha}}{2}v_{\perp} +\frac{\sqrt{m^\beta}}{2}u_{\perp},
			\end{split}
		\end{eqnarray}
		where $v=v_{\parallel}+v_{\perp}, v_{\parallel} \parallel u_{\parallel}, v_{\perp}\parallel u_{\perp}$.
		Then, we deduce that the exponent in \eqref{CE2hadPT} is 
		\begin{equation*}
			-\frac{1}{4}\Big(|\xi_{\parallel}|^2+|\xi_{\perp}|^2+|\eta_{\parallel}|^2+|\eta_{\perp}|^2\Big).
		\end{equation*}
		Therefore, \eqref{CE2hadPT} can be written as
		\begin{align}
			& \sum_{\beta=A,B}\int_{\mathbb{R}^3} \frac{1}{|u_{\parallel}|^2} e^{-\frac{1}{4}(|\xi_{\parallel}|^2+|\eta_{\parallel}|^2)}\int_{\mathbb{R}^2}\mathbf{1}_{|v|+|v_*|>m}b^{ \alpha \beta}(\theta)\chi\big(\sqrt{|u_{\parallel}|^2 + |u_{\perp}|^2}\big)  \notag\\
			&\hspace{1cm}\times  e^{-\frac{1}{4}(|\xi_{\perp}|^2+|\eta_{\perp}|^2)} \left[|u_{\parallel}|^2 + |u_{\perp}|^2\right]^{\frac{\gamma}{2}}   f^\beta_1(v + u_{\perp}+\frac{m^{\beta}-m^{\alpha}}{m^{\alpha}+m^{\beta}}u_{\parallel})\, du_{\perp} du_{\parallel} \notag\\
			&\hspace{0.4cm}= \int_{\mathbb{R}^3} \frac{1}{|u_{\parallel}|} e^{-\frac{1}{4}(|\xi_{\parallel}|^2+|\eta_{\parallel}|^2)}\int_{\mathbb{R}^2}\mathbf{1}_{|v|+|v_*|>m} \frac{b^{ \alpha \beta}(\theta)}{|\cos \theta|}\chi\big(\sqrt{|u_{\parallel}|^2 + |u_{\perp}|^2}\big)  \notag\\
			&\hspace{1cm}\times  e^{-\frac{1}{4}(|\xi_{\perp}|^2+|\eta_{\perp}|^2) } \left[|u_{\parallel}|^2 + |u_{\perp}|^2\right]^{\frac{\gamma - 1}{2}}f^\beta_1(v + u_{\perp}+\frac{m^{\beta}-m^{\alpha}}{m^{\alpha}+m^{\beta}}u_{\parallel})  \, du_{\perp} du_{\parallel}.\label{CEs2BDSHH224D2}
		\end{align}
		We introduce the integral kernel $(\alpha \neq \beta)$:
		\begin{align}
			k_{\alpha \beta}^{(2),r}(v,u_{\perp},u_{\parallel})	&=:  \frac{1}{|u_{\parallel}|} e^{-\frac{1}{4}(|\xi_{\parallel}|^2+|\eta_{\parallel}|^2)} \chi\big(\sqrt{|u_{\parallel}|^2 + |u_{\perp}|^2}\big)\mathbf{1}_{|v|+|v_*|>m} \notag\\
			&\hspace{3cm}\times  e^{-\frac{1}{4}(|\xi_{\perp}|^2+|\eta_{\perp}|^2) } \left[|u_{\parallel}|^2 + |u_{\perp}|^2\right]^{\frac{\gamma - 1}{2}}\frac{b^{ \alpha \beta}(\theta)}{|\cos \theta|}.\label{ker2rrr}
		\end{align}
			Then \eqref{CE2hadPT} can be further written as
			\begin{equation*}
				\sum_{\beta=A,B}\iint_{\mathbb{R}^3\times\mathbb{R}^2}k_{\alpha \beta}^{(2),r}(v,u_{\perp},u_{\parallel}) \,f^\beta(v + u_{\perp}+\frac{m^{\beta}-m^{\alpha}}{m^{\alpha}+m^{\beta}}u_{\parallel})\,du_{\perp}\,d u_{\parallel}.
			\end{equation*}
		
		 If $m^\alpha>m^\beta$, we perform a change of variable:
		$-u\rightarrow u$, $-v\rightarrow v$.
		Without loss of generality, we assume $m^\beta\geq m^\alpha$, and we will divide the proof of \eqref{Lem222maineqv} for $\mathbf{K}_{2,r}^{\chi}$ part into two steps.
		
		$\mathbf{Step 1}$. We consider the decay estimate for the part \eqref{CE2BD221fSesD1} in $\left \langle  w^{2l}K_{2,r}^{\alpha,\chi}\mathbf{f}_1,f^{\alpha}_2\right \rangle_{L^{2}_{v}}$.
		
		 Noting that after performing the variable substitution $(v_*- v)\rightarrow u$ in  \eqref{CE2ESYCGV}, the region of integration becomes 
		\begin{equation*}
			\big(|v|+|v_*|\geq m\big) =  \big(|v|+|u+v|\geq m\big)\subseteq \big(|v|+|u|\geq \frac{m}{2}\big)
		\end{equation*}
		Therefore, we partition the integration domain as follows: 
		\begin{equation*}
			\begin{split}
				\big(|v|+|v_*|\geq m\big)\subseteq \big(|v|+|u|\geq \frac{m}{2}\big)\subseteq\Big[(|v|\geq \frac{m}{10})\cup(|u|\geq \frac{2m}{5})\Big]
			\end{split}
		\end{equation*}
		We further  split 
		\begin{equation*}
			\begin{split}
				\big(|v|\geq \frac{m}{10}\big)=&\Biggl\{\Big[(|v|\geq \frac{m}{10})\cap(|u_\parallel|\geq \frac{|v|}{8})\Big]\bigcup\Big[(|v|\geq \frac{m}{10})\cap(|u_\perp|\geq \frac{|v|}{8})\Big]\\
				&\hspace{0.4cm}\bigcup\Big[(|v|\geq \frac{m}{10})\cap(|v_\parallel|\geq \frac{|v|}{2})\cap(|u_\parallel|\leq \frac{|v|}{8})\cap(|u_\perp|\leq \frac{|v|}{8})\Big]\\
				&\hspace{0.4cm}\bigcup\Big[(|v|\geq \frac{m}{10})\cap(|v_\perp|\geq \frac{|v|}{2})\cap(|u_\parallel|\leq \frac{|v|}{8})\cap(|u_\parallel|\leq \frac{|v|}{8})\Big]\Biggr\}
			\end{split}
		\end{equation*}
		and
		\begin{equation*}
			\begin{split}
				\Big[(|u|\geq \frac{2m}{5})\cap	(|v|\leq \frac{m}{10})\Big]=&\Biggl\{\Big[(|u|\geq \frac{2m}{5})\cap(|v|\leq \frac{m}{10})\cap	(|u_\parallel|\geq \frac{|u|}{2})\Big]\\
				&\hspace{0.4cm}\bigcup\Big[(|u|\geq \frac{2m}{5})\cap(|v|\leq \frac{m}{10})\cap(|u_\perp|\geq \frac{|u|}{2})\Big]\Biggr\}
			\end{split}
		\end{equation*}
		Thus, the estimate of $\left \langle  w^{2l}K_{2,r}^{\alpha,\chi}\mathbf{f}_1,f^{\alpha}_2\right \rangle_{L^{2}_{v}}$ can be divided into the following four cases for discussion.

		\noindent {\bf Case A1. $D_{A1}=\Big[(|v|\geq \frac{m}{10})\cap(|u_\parallel|\geq \frac{|v|}{8})\Big]\bigcup\Big[(|u|\geq \frac{2m}{5})\cap(|v|\leq \frac{m}{10})\cap	(|u_\parallel|\geq \frac{|u|}{2})\Big]$}.

		Since $\chi = \chi_\varepsilon$ vanishes near the origin and
		$|B(\theta)| \leq C|\cos \theta|$,  for the chosen $\varepsilon > 0$ and any given $m > 0$,
		\begin{equation*}
			\begin{split}
				&\int_{\mathbb{R}^2} e^{-\frac{m^\beta}{2}|u_{\perp} + \zeta_{\perp}|^2} \left[|u_{\parallel}|^2 + |u_{\perp}|^2\right]^{\frac{\gamma - 1}{2}} \chi\big(\sqrt{|u_{\parallel}|^2 + |u_{\perp}|^2}\big) \frac{b^{ \alpha \beta}(\theta)}{|\cos \theta|} \, du_{\perp} \\
				&\hspace{0.5cm}\leq C_\varepsilon \int_{\mathbb{R}^2} e^{-\frac{m^\beta}{2}|u_\perp + \zeta_\perp|^2} du_\perp < \infty,
			\end{split}
		\end{equation*}
		Notice that in both case of A1, $|u_\parallel|\geq\frac{|v|}{8}\geq \frac{m}{80}>1$, we have
		\begin{equation}\label{caseA1wgams}
		C e^{-\frac{m^\beta}{16}(\frac{m^\alpha}{m^\alpha+m^\beta})^2|u_{\parallel}|^2}	\leq w^{\gamma-1}(v).
		\end{equation}
		Then, the part of \eqref{CE2BD221fSesD1} in $\left \langle  w^{2l}K_{2,r}^{\alpha,\chi}\mathbf{f}_1,f^{\alpha}_2\right \rangle_{L^{2}_{v}}$ is bounded by 
		\begin{align}
			& \sum_{\beta=A,B}\iint_{(\mathbb{R}^3)^2} \frac{1}{|u_{\parallel}|} e^{-\frac{1}{2}\big[|\zeta_{\parallel}|^2+(\frac{m^\alpha}{m^\alpha+m^\beta})^2|u_{\parallel}|^2\big]} f^\alpha_1(v + \frac{2m^{\beta}}{m^{\alpha}+m^{\beta}}u_{\parallel}) w^{2l}(v)f^\alpha_2(v)\notag\\
			&\hspace{1cm} \times \int_{\mathbb{R}^2} e^{-\frac{1}{2}|u_{\perp} + \zeta_{\perp}|^2} \left[|u_{\parallel}|^2 + |u_{\perp}|^2\right]^{\frac{\gamma - 1}{2}} \chi\big(\sqrt{|u_{\parallel}|^2 + |u_{\perp}|^2}\big) \frac{b^{ \alpha \beta}(\theta)}{|\cos \theta|} \mathbf{1}_{D_{A1}}\, du_{\perp} du_{\parallel}dv\notag\\
			&\hspace{0.4cm}\leq \frac{C_\varepsilon}{m} \iint_{(\mathbb{R}^3)^2}e^{-\frac{1}{2}(\frac{m^\alpha}{m^\alpha+m^\beta})^2|u_{\parallel}|^2}f^\alpha_1(v + \frac{2m^{\beta}}{m^{\alpha}+m^{\beta}}u_{\parallel})w^{\gamma}(v) w^{2l}(v)f^\alpha_2(v)du_{\parallel}dv\notag\\
			&\hspace{0.4cm}\leq \frac{C_\varepsilon}{m} \Bigg(\iint_{(\mathbb{R}^3)^2}e^{-\frac{1}{2}(\frac{m^\alpha}{m^\alpha+m^\beta})^2|u_{\parallel}|^2}[f^\alpha_1(v + \frac{2m^{\beta}}{m^{\alpha}+m^{\beta}}u_{\parallel})]^{2} w^{2l+\gamma}(v)du_{\parallel}dv\Bigg)^{1/2}\label{CE2262BDSesD1}\\
			&\hspace{4.4cm}\times  \Bigg(\int_{\mathbb{R}^3}e^{-\frac{1}{2}(\frac{m^\alpha}{m^\alpha+m^\beta})^2|u_{\parallel}|^2} du_{\parallel}\int_{\mathbb{R}^3}w^{2l+\gamma}(v)[f^\alpha_2(v)]^2dv\Bigg)^{1/2}\notag
		\end{align}
		Utilizing the inequality 
		\begin{equation}\label{convertvvvkgbc}
			w^{2l+\gamma}(v)\leq C w^{2l+\gamma}(v+ \frac{2m^{\beta}}{m^{\alpha}+m^{\beta}}u_{\parallel}) w^{2l+\gamma}( \frac{2m^{\beta}}{m^{\alpha}+m^{\beta}}u_{\parallel}),
		\end{equation}
		and \eqref{caseA1wgams},
		the last inequality in \eqref{CE2262BDSesD1} is bounded by
		\begin{equation*}
			\begin{split}
				& \frac{C_\varepsilon}{m} \Bigg(\int_{\mathbb{R}^3}e^{-\frac{1}{2}(\frac{m^\alpha}{m^\alpha+m^\beta})^2|u_{\parallel}|^2}w^{-2\gamma}(u_\parallel)w^{2l+\gamma}( \frac{2m^{\beta}}{m^{\alpha}+m^{\beta}}u_{\parallel})du_{\parallel}\\
				&\hspace{2.7cm}\times\int_{\mathbb{R}^3}f^\alpha_1(v + \frac{2m^{\beta}}{m^{\alpha}+m^{\beta}}u_{\parallel}) w^{2l+\gamma}(v + \frac{2m^{\beta}}{m^{\alpha}+m^{\beta}}u_{\parallel})dv\Bigg)^{1/2}\\
				&\hspace{0.5cm}\times  \Bigg(\int_{\mathbb{R}^3}e^{-\frac{1}{2}(\frac{m^\alpha}{m^\alpha+m^\beta})^2|u_{\parallel}|^2} du_{\parallel}\int_{\mathbb{R}^3}w^{2l+\gamma}(v)[f^\alpha_2(v)]^2dv\Bigg)^{1/2}\\
				&\leq \frac{C_\varepsilon}{m} \big|w^{l}\mathbf{f}_1\big|_\nu \big|w^{l} \mathbf{f}_2\big|_\nu.
			\end{split}
		\end{equation*}

		\noindent {\bf Case A2. $D_{A2}=\Big[(|v|\geq \frac{m}{10})\cap(|u_\perp|\geq \frac{|v|}{8})\Big]\bigcup\Big[(|u|\geq \frac{2m}{5})\cap	(|v|\leq \frac{m}{10})\cap	(|u_\perp|\geq \frac{|u|}{2})\Big]$}.
		
		In both case of A2, $|u_\perp|\geq \frac{m}{80}>1$, it holds that 
		\begin{equation}\label{LinfA2}
			\begin{split}
				&\int_{\mathbb{R}^2} e^{-\frac{m^\beta}{2}|u_{\perp} + \zeta_{\perp}|^2} \left[|u_{\parallel}|^2 + |u_{\perp}|^2\right]^{\frac{\gamma - 1}{2}} \chi\big(\sqrt{|u_{\parallel}|^2 + |u_{\perp}|^2}\big) \frac{b^{ \alpha \beta}(\theta)}{|\cos \theta|}\mathbf{1}_{D_{A2}} \, du_{\perp} \\
				&\hspace{0.4cm}\leq  \frac{C_\varepsilon}{1+|v|}\int_{\mathbb{R}^2} e^{-\frac{m^\beta}{2}|u_\perp + \zeta_\perp|^2} |u_\perp|^{\gamma}\mathbf{1}_{D_{A2}} du_\perp \\
				&\hspace{0.4cm}\leq  \frac{C_\varepsilon}{m}\int_{\mathbb{R}^2} e^{-\frac{m^\beta}{2}|u_\perp + \zeta_\perp|^2} |u_\perp|^{\gamma}\mathbf{1}_{D_{A2}} du_\perp \\
				&\hspace{0.4cm}\leq \frac{C_\varepsilon}{m}w^\gamma(v)\int_{\mathbb{R}^2} e^{-\frac{m^\beta}{2}|u_\perp + \zeta_\perp|^2} du_\perp 
			\end{split}
		\end{equation}
		Then, the part of \eqref{CE2BD221fSesD1} in $\left \langle  w^{2l}K_{2,r}^{\alpha,\chi}\mathbf{f}_1,f^{\alpha}_2\right \rangle_{L^{2}_{v}}$ is bounded by 
		\begin{align}
			& \sum_{\beta=A,B}\iint_{(\mathbb{R}^3)^2} \frac{1}{|u_{\parallel}|} e^{-\frac{1}{2}[|\zeta_{\parallel}|^2+(\frac{m^\alpha}{m^\alpha+m^\beta})^2|u_{\parallel}|^2]} f^\alpha_1(v + \frac{2m^{\beta}}{m^{\alpha}+m^{\beta}}u_{\parallel}) w^{2l}(v)f^\alpha_2(v)\notag\\
			&\hspace{0.9cm} \times \int_{\mathbb{R}^2} e^{-\frac{1}{2}|u_{\perp} + \zeta_{\perp}|^2} \left[|u_{\parallel}|^2 + |u_{\perp}|^2\right]^{\frac{\gamma - 1}{2}} \chi\big(\sqrt{|u_{\parallel}|^2 + |u_{\perp}|^2}\big) \frac{b^{ \alpha \beta}(\theta)}{|\cos \theta|} \mathbf{1}_{D_{A2}}\, du_{\perp} du_{\parallel}dv\notag\\
			&\hspace{0.4cm}\leq \frac{C_\varepsilon}{m} \iint_{(\mathbb{R}^3)^2}\frac{1}{|u_{\parallel}|}e^{-\frac{1}{2}(\frac{m^\alpha}{m^\alpha+m^\beta})^2|u_{\parallel}|^2}f^\alpha_1(v + \frac{2m^{\beta}}{m^{\alpha}+m^{\beta}}u_{\parallel})w^{\gamma}(v) w^{2l}(v)f^\alpha_2(v)du_{\parallel}dv\notag\\
			&\hspace{0.4cm}\leq \frac{C_\varepsilon}{m} \Bigg(\iint_{(\mathbb{R}^3)^2}\frac{1}{|u_{\parallel}|}e^{-\frac{1}{2}(\frac{m^\alpha}{m^\alpha+m^\beta})^2|u_{\parallel}|^2}[f^\alpha_1(v + \frac{2m^{\beta}}{m^{\alpha}+m^{\beta}}u_{\parallel})]^{2} w^{2l+\gamma}(v)du_{\parallel}dv\Bigg)^{1/2}\label{CwE2B45970DSesD1}\\
			&\hspace{3.3cm}\times  \Bigg(\int_{\mathbb{R}^3}\frac{1}{|u_{\parallel}|}e^{-\frac{1}{2}(\frac{m^\alpha}{m^\alpha+m^\beta})^2|u_{\parallel}|^2} du_{\parallel}\int_{\mathbb{R}^3}w^{2l+\gamma}(v)[f^\alpha_2(v)]^2dv\Bigg)^{1/2}\notag
		\end{align}
		By \eqref{convertvvvkgbc}, it can be concluded that the last inequality in \eqref{CwE2B45970DSesD1} is bounded by
		\begin{align*}
				& \frac{C_\varepsilon}{m} \Bigg(\int_{\mathbb{R}^3}\frac{1}{|u_{\parallel}|}e^{-\frac{1}{2}(\frac{m^\alpha}{m^\alpha+m^\beta})^2|u_{\parallel}|^2}w^{2l+\gamma}( \frac{2m^{\beta}}{m^{\alpha}+m^{\beta}}u_{\parallel})du_{\parallel}\\
				&\hspace{3.5cm}\times\int_{\mathbb{R}^3}f^\alpha_1(v + \frac{2m^{\beta}}{m^{\alpha}+m^{\beta}}u_{\parallel}) w^{2l+\gamma}(v + \frac{2m^{\beta}}{m^{\alpha}+m^{\beta}}u_{\parallel})dv\Bigg)^{1/2}\\
				&\hspace{0.4cm}\times  \Bigg(\int_{\mathbb{R}^3}e^{-\frac{1}{2}(\frac{m^\alpha}{m^\alpha+m^\beta})^2|u_{\parallel}|^2} du_{\parallel}\int_{\mathbb{R}^3}w^{2l+\gamma}(v)[f^\alpha_2(v)]^2dv\Bigg)^{1/2}\\
				&\leq \frac{C_\varepsilon}{m} \big|w^{l}\mathbf{f}_1\big|_\nu \big|w^{l} \mathbf{f}_2\big|_\nu.
		\end{align*}

		\noindent {\bf Case A3. $D_{A3}=\Big(|v|\geq \frac{m}{10}\Big)\cap\Big(|v_\parallel|\geq \frac{|v|}{2}\Big)\cap\Big(|u_\parallel|\leq \frac{|v|}{8}\Big)\cap\Big(|u_\perp|\leq \frac{|v|}{8}\Big)$}.
		
		From \eqref{CEPT2ACA1vb}, we have 
		\begin{equation}\label{CET2zetabds}
			\zeta_{\parallel}=\frac{m^\beta}{m^\alpha+m^\beta}u_{\parallel}+v_\parallel, \qquad \zeta_\perp=v_\perp,
		\end{equation}
		with the assumption $m^{\beta} >m^{\alpha}$, so that $|\zeta_{\parallel}|\geq\frac{|v|}{4}\geq\frac{m}{40}$. We deduce that
		\begin{equation}\label{LifA3}
			e^{-\frac{m^\beta}{2}|\zeta_{\parallel}|^2}\leq \frac{C}{m}w^{\gamma-1} (v).
		\end{equation}
		Then, the part of \eqref{CE2BD221fSesD1} in $\left \langle  w^{2l}K_{2,r}^{\alpha,\chi}\mathbf{f}_1,f^{\alpha}_2\right \rangle_{L^{2}_{v}}$ is bounded by 
		\begin{align*}
			& \sum_{\beta=A,B}\iint_{(\mathbb{R}^3)^2} \frac{1}{|u_{\parallel}|} e^{-\frac{1}{2}\big[|\zeta_{\parallel}|^2+(\frac{m^\alpha}{m^\alpha+m^\beta})^2|u_{\parallel}|^2\big]} f^\alpha_1(v + \frac{2m^{\beta}}{m^{\alpha}+m^{\beta}}u_{\parallel}) w^{2l}(v)f^\alpha_2(v)\\
			&\hspace{1cm} \times \int_{\mathbb{R}^2} e^{-\frac{1}{2}|u_{\perp} + \zeta_{\perp}|^2} \left[|u_{\parallel}|^2 + |u_{\perp}|^2\right]^{\frac{\gamma - 1}{2}} \chi\big(\sqrt{|u_{\parallel}|^2 + |u_{\perp}|^2}\big) \frac{b^{ \alpha \beta}(\theta)}{|\cos \theta|}\mathbf{1}_{D_{A3}} \, du_{\perp} du_{\parallel}dv\\
			&\hspace{0.4cm}\leq \frac{C_\varepsilon}{m} \iint_{(\mathbb{R}^3)^2}\frac{1}{|u_{\parallel}|}e^{-\frac{1}{2}(\frac{m^\alpha}{m^\alpha+m^\beta})^2|u_{\parallel}|^2}f^\alpha_1(v + \frac{2m^{\beta}}{m^{\alpha}+m^{\beta}}u_{\parallel}) w^{2l+\gamma}(v)f^\alpha_2(v)du_{\parallel}dv\\
			&\hspace{0.4cm}\leq \frac{C_\varepsilon}{m} \Bigg(\iint_{(\mathbb{R}^3)^2}\frac{1}{|u_{\parallel}|}e^{-\frac{1}{2}(\frac{m^\alpha}{m^\alpha+m^\beta})^2|u_{\parallel}|^2}[f^\alpha_1(v + \frac{2m^{\beta}}{m^{\alpha}+m^{\beta}}u_{\parallel})]^{2} w^{2l+\gamma}(v)du_{\parallel}dv\Bigg)^{1/2}\\
			&\hspace{3.3cm}\times  \Bigg(\int_{\mathbb{R}^3}\frac{1}{|u_{\parallel}|}e^{-\frac{1}{2}(\frac{m^\alpha}{m^\alpha+m^\beta})^2|u_{\parallel}|^2} du_{\parallel}\int_{\mathbb{R}^3}w^{2l+\gamma}(v)[f^\alpha_2(v)]^2dv\Bigg)^{1/2}\notag\\
			&\hspace{0.4cm}\leq \frac{C_\varepsilon}{m} \big|w^{l}\mathbf{f}_1\big|_\nu \big|w^{l} \mathbf{f}_2\big|_\nu.
		\end{align*}

		\noindent {\bf Case A4. $D_{A4}=\Big(|v|\geq \frac{m}{10}\Big)\cap\Big(|v_\perp|\geq \frac{|v|}{2}\Big)\cap\Big(|u_\parallel|\leq \frac{|v|}{8}\Big)\cap\Big(|u_\perp|\leq \frac{|v|}{8}\Big)$}.
		
		In both cases of A4,  we have
		\begin{equation*}
			|u_{\perp} + \zeta_{\perp}|\geq \frac{|v|}{4} \geq \frac{m}{40}>1.
		\end{equation*}
		Then, we get 
		\begin{equation}\label{LifA4}
			e^{-\frac{m^\beta}{8}|u_{\perp} + \zeta_{\perp}|^2}\leq \frac{C}{m}w^{\gamma-1} (v).
		\end{equation}
		The part of \eqref{CE2BD221fSesD1} in $\left \langle  w^{2l}K_{2,r}^{\alpha,\chi}\mathbf{f}_1,f^{\alpha}_2\right \rangle_{L^{2}_{v}}$ is bounded by 
		\begin{align*}
			& \sum_{\beta=A,B}\int_{\mathbb{R}^3}\int_{\mathbb{R}^3} \frac{1}{|u_{\parallel}|} e^{-\frac{1}{2}\big[|\zeta_{\parallel}|^2+(\frac{m^\alpha}{m^\alpha+m^\beta})^2|u_{\parallel}|^2\big]} f^\alpha_1(v + \frac{2m^{\beta}}{m^{\alpha}+m^{\beta}}u_{\parallel}) w^{2l}(v)f^\alpha_2(v)\\
			&\quad \times \int_{\mathbb{R}^2} e^{-\frac{1}{2}|u_{\perp} + \zeta_{\perp}|^2} \left[|u_{\parallel}|^2 + |u_{\perp}|^2\right]^{\frac{\gamma - 1}{2}} \chi\left(\sqrt{|u_{\parallel}|^2 + |u_{\perp}|^2}\right) \frac{b^{ \alpha \beta}(\theta)}{|\cos \theta|}\mathbf{1}_{D_{A4}} \, du_{\perp} du_{\parallel}dv\\
			&\leq \frac{C_\varepsilon}{m} \int_{(\mathbb{R}^3)^2}\frac{1}{|u_{\parallel}|}e^{-\frac{1}{2}(\frac{m^\alpha}{m^\alpha+m^\beta})^2|u_{\parallel}|^2}f^\alpha_1(v + \frac{2m^{\beta}}{m^{\alpha}+m^{\beta}}u_{\parallel}) w^{2l+\gamma}(v)f^\alpha_2(v)du_{\parallel}dv\\
			&\leq \frac{C_\varepsilon}{m} \Bigg(\int_{(\mathbb{R}^3)^2}\frac{1}{|u_{\parallel}|}e^{-\frac{1}{2}(\frac{m^\alpha}{m^\alpha+m^\beta})^2|u_{\parallel}|^2}[f^\alpha_1(v + \frac{2m^{\beta}}{m^{\alpha}+m^{\beta}}u_{\parallel})]^{2} w^{2l+\gamma}(v)du_{\parallel}dv\Bigg)^{1/2}\\
			&\hspace{2.9cm}\times  \Bigg(\int_{\mathbb{R}^3}\frac{1}{|u_{\parallel}|}e^{-\frac{1}{2}(\frac{m^\alpha}{m^\alpha+m^\beta})^2|u_{\parallel}|^2} du_{\parallel}\int_{\mathbb{R}^3}w^{2l+\gamma}(v)[f^\alpha_2(v)]^2dv\Bigg)^{1/2}\notag\\
			&\leq \frac{C_\varepsilon}{m} \big|w^{l}\mathbf{f}_1\big|_\nu \big|w^{l} \mathbf{f}_2\big|_\nu.
		\end{align*}
		Hence, we have obtained  the estimates for the part of \eqref{CE2BD221fSesD1} in $\left \langle  w^{2l}K_{2,r}^{\alpha,\chi}\mathbf{f}_1,f^{\alpha}_2\right \rangle_{L^{2}_{v}}$.

		$\mathbf{Step 2}$. We consider the decay estimate for the part \eqref{CEs2BDSHH224D2} in $\left \langle  w^{2l}K_{2,r}^{\alpha,\chi}\mathbf{f}_1,f^{\alpha}_2\right \rangle_{L^{2}_{v}}$.
		
		By \eqref{lem2and222cgv}, it holds that
		\begin{eqnarray}\label{lem2230cgvb}
			\begin{split}
				\xi_{\parallel} &= \frac{\sqrt{m^\beta}+\sqrt{m^\alpha}}{2}v_{\parallel} + (\frac{\sqrt{m^\beta}}{2}+\frac{\sqrt{m^{\alpha}}m^{\beta}}{m^{\alpha}+m^{\beta}})u_{\parallel} \\
				\xi_{\perp} &= \frac{\sqrt{m^\beta}+\sqrt{m^\alpha}}{2}v_{\perp} +\frac{\sqrt{m^\beta}}{2}u_{\perp},
			\end{split}
		\end{eqnarray}
		By direct calculation, it shows that  $(\frac{\sqrt{m^\beta}}{2}-\frac{\sqrt{m^{\alpha}}m^{\beta}}{m^{\alpha}+m^{\beta}})>0$, and 
		\begin{equation}\label{CET2PARALCOMPAR}
			\big(\frac{\sqrt{m^\beta}-\sqrt{m^\alpha}}{2}\big):\big(\frac{\sqrt{m^\beta}}{2}-\frac{\sqrt{m^{\alpha}}m^{\beta}}{m^{\alpha}+m^{\beta}}\big)>\big(\frac{\sqrt{m^\beta}+\sqrt{m^\alpha}}{2}\big):\big(\frac{\sqrt{m^\beta}}{2}+\frac{\sqrt{m^{\alpha}}m^{\beta}}{m^{\alpha}+m^{\beta}}\big).
		\end{equation}
		Then, \eqref{lem2223cgvb3} and \eqref{lem2230cgvb} indicate that if we regard $u_{\parallel}$ and $v_{\parallel}$ as a set of basis, the two vectors $\xi_{\parallel}$ and $\eta_{\parallel}$ are linearly independent. Similarly, we can infer that under the basis $u_{\perp},v_{\perp}$, vectors $\xi_{\perp}$ and $\eta_{\perp}$ are also linearly independent. 
		
		By comparing the expressions for $\xi_{\parallel}$ and $\eta_{\parallel}$ in \eqref{lem2223cgvb3} and \eqref{lem2230cgvb}, one can see that there exist constants $0<c_{v_\parallel},c_{u_\parallel}<1$ such that
		\begin{equation}
			\begin{split}
				c_{v_\parallel}\big(\frac{\sqrt{m^\beta}+\sqrt{m^\alpha}}{2}\big)=&\frac{\sqrt{m^\beta}-\sqrt{m^\alpha}}{2},\\
				c_{u_\parallel}\big(\frac{\sqrt{m^\beta}}{2}+\frac{\sqrt{m^{\alpha}}m^{\beta}}{m^{\alpha}+m^{\beta}}\big)=&\frac{\sqrt{m^\beta}}{2}-\frac{\sqrt{m^{\alpha}}m^{\beta}}{m^{\alpha}+m^{\beta}}.
			\end{split}
		\end{equation}
		From \eqref{CET2PARALCOMPAR}, it can be obtained that
		\begin{equation*}
			\begin{split}
				c_{u,-}:=&c_{v_\parallel}\big(\frac{\sqrt{m^\beta}}{2}+\frac{\sqrt{m^{\alpha}}m^{\beta}}{m^{\alpha}+m^{\beta}}\big)-\big(\frac{\sqrt{m^\beta}}{2}-\frac{\sqrt{m^{\alpha}}m^{\beta}}{m^{\alpha}+m^{\beta}}\big)>0,\\
				c_{v,-}:=&\big(\frac{\sqrt{m^\beta}-\sqrt{m^\alpha}}{2}\big)-c_{u_\parallel}\big(\frac{\sqrt{m^\beta}+\sqrt{m^\alpha}}{2}\big)>0.
			\end{split}
		\end{equation*}
		We denote
		\begin{equation*}
			\begin{split}
				\mathbf{c}_{u,+}=\frac{1}{\sqrt{2}}(c_{v_\parallel}\xi_{\parallel}+\eta_{\parallel}), \qquad \mathbf{c}_{u,-}=\frac{1}{\sqrt{2}}(c_{v_\parallel}\xi_{\parallel}-\eta_{\parallel})=\frac{c_{u,-}}{\sqrt{2}}u_{\parallel},\\
				\mathbf{c}_{v,+}=\frac{1}{\sqrt{2}}(c_{u_\parallel}\xi_{\parallel}+\eta_{\parallel}), \qquad \mathbf{c}_{v,-}=\frac{1}{\sqrt{2}}(\eta_{\parallel}-c_{u_\parallel}\xi_{\parallel})=\frac{c_{v,-}}{\sqrt{2}}v_{\parallel},
			\end{split}
		\end{equation*}
		which gives the equality that
		\begin{equation*}
			\begin{split}
				(c_{v_\parallel})^2|\xi_{\parallel}|^2+|\eta_{\parallel}|^2= |\mathbf{c}_{u,+}|^2+|\mathbf{c}_{u,-}|^2=|\mathbf{c}_{u,+}|^2+\frac{(c_{u,-})^2}{2}|u_{\parallel}|^2,\\
				(c_{u_\parallel})^2|\xi_{\parallel}|^2+|\eta_{\parallel}|^2= |\mathbf{c}_{v,+}|^2+|\mathbf{c}_{v,-}|^2=|\mathbf{c}_{v,+}|^2+\frac{(c_{v,-})^2}{2}|v_{\parallel}|^2.
			\end{split}
		\end{equation*}
		Similarly, it can be observed that
		\begin{equation*}
			\big(\frac{\sqrt{m^\beta}-\sqrt{m^\alpha}}{2}\big) :\frac{\sqrt{m^\beta}}{2}< \big(\frac{\sqrt{m^\beta}+\sqrt{m^\alpha}}{2}\big):\frac{\sqrt{m^\beta}}{2}.
		\end{equation*}
		Through a process analogous to the above that there exist constants 
		$0<c_{v_{\perp}}<1$ and $\tilde{c}_{u,-}, \tilde{c}_{v,-}>0$, as well as vectors $\tilde{\mathbf{c}}_{u,+},\tilde{\mathbf{c}}_{v,+}$, such that
		\begin{equation*}
			\begin{split}
				(c_{v_\perp})^2|\xi_{\perp}|^2+|\eta_{\perp}|^2&= |\mathbf{c}_{u,+}|^2+|\mathbf{c}_{u,-}|^2=|\mathbf{c}_{u,+}|^2+\frac{(c_{u,-})^2}{2}|u_{\perp}|^2,\\
				|\xi_{\perp}|^2+|\eta_{\perp}|^2&= |\mathbf{c}_{v,+}|^2+|\mathbf{c}_{v,-}|^2=|\mathbf{c}_{v,+}|^2+\frac{(c_{v,-})^2}{2}|v_{\perp}|^2.
			\end{split}
		\end{equation*}
		The exponential term about $\xi_{\parallel},\xi_{\perp},\eta_{\parallel},\eta_{\perp}$, in \eqref{CEs2BDSHH224D2} can be bounded as follows:
		\begin{equation}\label{k2alpneqbetdecayzb}
			e^{-\frac{1}{4}\big(|\xi_{\parallel}|^2+|\xi_{\perp}|^2+|\eta_{\parallel}|^2+|\eta_{\perp}|^2\big)} \leq e^{-\bar{c}\big(|v_{\parallel}|^2+|v_{\perp}|^2+|u_{\parallel}|^2+|u_{\perp}|^2\big)},
		\end{equation}
		where 
		\begin{equation*}
			\bar{c}=\frac{1}{64}\min\Big\{\frac{1}{4},(c_{u,-})^2,(c_{v,-})^2,(\tilde{c}_{u,-})^2,(\tilde{c}_{v,-})^2\Big\}>0.
		\end{equation*}
	
		Since $u=v_*- v$, we partition the integration domain $(|v|+|v_*|\geq m)$ as follows:
		\begin{equation*}
			\big(|v|+|u+v|\geq m\big)\subseteq \big(|v|+|u|\geq \frac{m}{2}\big)\subseteq\Big[(|v|\geq \frac{m}{4})\cup(|u|\geq \frac{m}{4})\Big],
		\end{equation*}
		which could further be split into
		\begin{equation*}
			\begin{split}
				\big(|v|\geq \frac{m}{4}\big)&=\Big[(|v|\geq \frac{m}{4})\cap(|v_{\parallel}|\geq \frac{|v|}{2})\Big]\cup\Big[(|v|\geq \frac{m}{4})\cap(|v_{\perp}|\geq \frac{|v|}{2})\Big],\\
				\big(|u|\geq \frac{m}{4}\big)&=\Big[(|u|\geq \frac{m}{4})\cap(|u_{\parallel}|\geq \frac{|u|}{2})\Big]\cup\Big[(|u|\geq \frac{m}{4})\cap(|u_{\perp}|\geq \frac{|u|}{2})\Big].
			\end{split}
		\end{equation*}
		Now, we consider the decay estimate for the part of \eqref{CEs2BDSHH224D2}.
		
		\noindent {\bf Case B1. $\big(|v|\geq \frac{m}{4}\big)\cap\big(|v_{\parallel}|\geq \frac{|v|}{2}\big)$}.
		
		In this case $|v_{\parallel}|\geq \frac{m}{8}$, so we get 
		\begin{equation}\label{sec2CaseB1}
			e^{-\frac{\bar{c}}{4}|v_{\parallel}|^2} \leq \frac{C}{m}, \hspace{1.9cm} e^{-\frac{\bar{c}}{4}[|v_{\parallel}|^2+|v_{\perp}|^2]} \leq C w^{\gamma}(v).
		\end{equation}
		Then, the  part about \eqref{CEs2BDSHH224D2} in $\left \langle  w^{2l}K_{2,r}^{\alpha,\chi}\mathbf{f}_1,f^{\alpha}_2\right \rangle_v$ is bounded by
		\begin{align}
			&\int_{\mathbb{R}^3} \int_{\mathbb{R}^3} \frac{1}{|u_{\parallel}|} e^{-\bar{c}(|v_{\parallel}|^2+|u_{\parallel}|^2)}\int_{\mathbb{R}^2} \chi\big(\sqrt{|u_{\parallel}|^2 + |u_{\perp}|^2}\big) w^{2l}(v)f^\alpha(v) \notag\\
			&\hspace{1.4cm} \times  e^{-\bar{c}(|v_{\perp}|^2+|u_{\perp}|^2) } \left[|u_{\parallel}|^2 + |u_{\perp}|^2\right]^{\frac{\gamma - 1}{2}}f^\beta(v + u_{\perp}+\frac{m^{\beta}-m^{\alpha}}{m^{\alpha}+m^{\beta}}u_{\parallel})  \, du_{\perp} du_{\parallel} dv \notag\\
			&\hspace{0.5cm}\leq \frac{C}{m}\int_{\mathbb{R}^3} \int_{\mathbb{R}^3} \frac{1}{|u_{\parallel}|} e^{-\frac{\bar{c}}{2}(|v_{\parallel}|^2+|u_{\parallel}|^2)}\int_{\mathbb{R}^2} \chi\big(\sqrt{|u_{\parallel}|^2 + |u_{\perp}|^2}\big) w^{2l+\gamma}(v)f^\alpha(v) \notag\\
			&\hspace{1.6cm}\times  e^{-\frac{\bar{c}}{2}(|v_{\perp}|^2+|u_{\perp}|^2) } \left[|u_{\parallel}|^2 + |u_{\perp}|^2\right]^{\frac{\gamma - 1}{2}}f^\beta(v + u_{\perp}+\frac{m^{\beta}-m^{\alpha}}{m^{\alpha}+m^{\beta}}u_{\parallel})  \, du_{\perp} du_{\parallel} dv \notag\\
			&\hspace{0.5cm}\leq \frac{C_{\varepsilon}}{m} \Bigg(\int_{\mathbb{R}^2}  
			e^{-\frac{\bar{c}}{2}|u_{\perp}|^2 } 
			\int_{\mathbb{R}^3}\frac{1}{|u_{\parallel}|}e^{-\frac{\bar{c}}{2}|u_{\parallel}|^2} \int_{\mathbb{R}^3}w^{2l+\gamma}(v)\notag\\
			&\hspace{5.7cm}\times \big[f^\beta(v + u_{\perp}+\frac{m^{\beta}-m^{\alpha}}{m^{\alpha}+m^{\beta}}u_{\parallel})\big]^2dvdu_{\parallel}du_{\perp}\Bigg)^{1/2}\label{sec2socal237}\\
			&\hspace{2.3cm}\times  \Bigg(\int_{\mathbb{R}^2}  
			e^{-\frac{\bar{c}}{2}|u_{\perp}|^2 } du_{\perp}
			\int_{\mathbb{R}^3}\frac{1}{|u_{\parallel}|}e^{-\frac{\bar{c}}{2}|u_{\parallel}|^2} du_{\parallel}\int_{\mathbb{R}^3}w^{2l+\gamma}(v)[f^\alpha(v)]^2dv\Bigg)^{1/2}\label{sec2socal238}
		\end{align}
		Because of
		\begin{equation*}
			\int_{\mathbb{R}^2}  
			e^{-\frac{\bar{c}}{2}|u_{\perp}|^2 } du_{\perp} \leq C, \qquad  \int_{\mathbb{R}^3}\frac{1}{|u_{\parallel}|}e^{-\frac{\bar{c}}{2}|u_{\parallel}|^2} du_{\parallel} \leq C,
		\end{equation*}
		clearly \eqref{sec2socal238} is  bounded by $|w^{l} \mathbf{f}_2|_\nu$. 
		
		By the inequality that
		\begin{equation*}
			w^{2l+\gamma}(v)\leq C w^{2l+\gamma}\big(v + u_{\perp}+\frac{m^{\beta}-m^{\alpha}}{m^{\alpha}+m^{\beta}}u_{\parallel}\big) w^{2l+\gamma}(u_{\perp}) w^{2l+\gamma}\big(\frac{m^{\beta}-m^{\alpha}}{m^{\alpha}+m^{\beta}}u_{\parallel}\big), 
		\end{equation*}
		thus \eqref{sec2socal237} is  bounded by
		\begin{align}
			&\frac{C_{\varepsilon}}{m} \Bigg(\int_{\mathbb{R}^2}  w^{2l+\gamma}(u_{\perp})
			e^{-\frac{\bar{c}}{2}|u_{\perp}|^2 } d u_{\perp}
			\int_{\mathbb{R}^3}\frac{1}{|u_{\parallel}|}w^{2l+\gamma}(\frac{m^{\beta}-m^{\alpha}}{m^{\alpha}+m^{\beta}}u_{\parallel})e^{-\frac{\bar{c}}{2}|u_{\parallel}|^2} du_{\parallel}\notag\\
			&\hspace{1cm}\times \int_{\mathbb{R}^3} w^{2l+\gamma}(v + u_{\perp}+\frac{m^{\beta}-m^{\alpha}}{m^{\alpha}+m^{\beta}}u_{\parallel})[f^\beta(v + u_{\perp}+\frac{m^{\beta}-m^{\alpha}}{m^{\alpha}+m^{\beta}}u_{\parallel})]^2dv\Bigg)^{1/2}\label{malneqmbetuprupeeq}
		\end{align}
		Since 
		\begin{equation*}
			\int_{\mathbb{R}^2}  w^{2l+\gamma}(u_{\perp})
			e^{-\frac{\bar{c}}{2}|u_{\perp}|^2 } du_{\perp} \leq C, \qquad  \int_{\mathbb{R}^3}\frac{1}{|u_{\parallel}|}w^{2l+\gamma}(\frac{m^{\beta}-m^{\alpha}}{m^{\alpha}+m^{\beta}}u_{\parallel})e^{-\frac{\bar{c}}{2}|u_{\parallel}|^2} du_{\parallel} \leq C,
		\end{equation*}
		\eqref{sec2socal237} can be further bounded by $\frac{C_{\varepsilon}}{m} |w^{l} \mathbf{f}_1|_\nu$.
		
		Consequently, the  part about \eqref{CEs2BDSHH224D2} in $\left \langle  w^{2l}K_{2,r}^{\alpha,\chi}\mathbf{f}_1,f^{\alpha}_2\right \rangle_v$ is bounded by $\frac{C_{\varepsilon}}{m} |w^{l} \mathbf{f}_1|_\nu |w^{l} \mathbf{f}_2|_\nu$.

		\noindent {\bf Case B2. $\big(|v|\geq \frac{m}{4}\big)\cap\big(|v_{\perp}|\geq \frac{|v|}{2}\big)$}.
		
		In this case $|v_{\perp}|\geq \frac{m}{8}>1$, then we have 
		\begin{equation}\label{sec2CaseB2}
			e^{-\frac{\bar{c}}{4}|v_{\perp}|^2} \leq \frac{C}{m}, \hspace{1.9cm} e^{-\frac{\bar{c}}{4}(|v_{\parallel}|^2+|v_{\perp}|^2)} \leq C w^{\gamma}(v).
		\end{equation}
		\noindent {\bf Case B3. $\big(|u|\geq \frac{m}{4}\big)\cap\big(|u_{\parallel}|\geq \frac{|u|}{2}\big)$}.
		
		In this case $|u_{\parallel}|\geq \frac{m}{8}>1$, then we have 
		\begin{equation}\label{sec2CaseB3}
			e^{-\frac{\bar{c}}{4}|u_{\parallel}|^2} \leq \frac{C}{m}, \hspace{1.9cm} e^{-\frac{\bar{c}}{4}(|v_{\parallel}|^2+|v_{\perp}|^2)} \leq C w^{\gamma}(v).
		\end{equation}
		\noindent {\bf Case B4. $\big(|u|\geq \frac{m}{4}\big)\cap\big(|u_{\perp}|\geq \frac{|u|}{2}\big)$}.
		
		In this case $|u_{\perp}|\geq \frac{m}{8}>1$, then we have 
		\begin{equation}\label{sec2CaseB4}
			e^{-\frac{\bar{c}}{4}|u_{\perp}|^2} \leq \frac{C}{m}, \hspace{1.9cm} e^{-\frac{\bar{c}}{4}(|v_{\parallel}|^2+|v_{\perp}|^2)} \leq C w^{\gamma}(v).
		\end{equation}
		By replacing \eqref{sec2CaseB1} with \eqref{sec2CaseB2} or \eqref{sec2CaseB3} or \eqref{sec2CaseB4}, we follow the analysis process in Case B1. For Case B2, Case B3, and Case B4, the same conclusion as in Case B1 can be concluded.
		
		So we conclude the estimates for the part of \eqref{CEs2BDSHH224D2} with $m^{\alpha}\neq m^{\beta}$ in $\left \langle  w^{2l}K_{2,r}^{\alpha,\chi}\mathbf{f}_1,f^{\alpha}_2\right \rangle_{L^{2}_{v}}$.
		
	\noindent\textbf{The degenerate part*}.\hspace{0.1cm}	Next, we turn to the case of $\alpha=\beta$.
		
		$(1)$ For the part of $k_{\alpha \beta}^{(1),r}$, the proof for case $\alpha\neq\beta$ can be fully applied to the proof for case $\alpha=\beta$, yielding the same conclusion.
		
		$(2)^{*}$  The integration kernel‌ $k_{\alpha \alpha}^{(2),r}$ no longer adheres to exponential decay \eqref{ker2rrr} with \eqref{k2alpneqbetdecayzb},
		but instead satisfies the decay  analogous to $k_{\alpha \beta}^{(1),r}$.
		\begin{proof}
			Notice by \eqref{lem2223cgvb3} and \eqref{lem2230cgvb}, when $\alpha=\beta$, we obtain
			\begin{eqnarray*}
				\begin{split}
					\xi_{\parallel} &= \sqrt{m^\alpha}(v_{\parallel}+u_{\parallel}), \hspace{0.93cm}  \eta_{\parallel}=0\\
					\xi_{\perp} &= \sqrt{m^\alpha}(v_{\perp}+\frac{1}{2}u_{\perp}),\hspace{0.5cm}\eta_{\perp}=\frac{1}{2}u_{\perp}
				\end{split}
			\end{eqnarray*}
			Therefore, $\xi_{\parallel}$ and $\eta_{\parallel}$ are not linearly independent. We cannot obtain exponential decay for $v_{\parallel}$ and $u_{\parallel}$ individually; only  exhibits exponential decay with respect to $(v_{\parallel}+u_{\parallel})$. This results in $k_{\alpha \alpha}^{(2),r}$ no longer possessing exponential decay with respect to $v$.
			
			The part of $\alpha=\beta$ in \eqref{lem22K2schimainf} is
			\begin{align}
				&\Bigg(\int_{|v|+|v_*|>m}\int_{\mathbb{S}^2} |u|^{\gamma} [\mu^{\alpha}(u+v)]^{\frac{1}{2}}\chi(|u|)\big[\mu^{\alpha}(v + u_{\perp})\big]^{\frac{1}{2}}\label{CE2ek1asyPT}\\
				&\hspace{7cm}\times f^\alpha_1(v + u_{\parallel})b^{ \alpha \alpha}(\theta) \, du \, d\omega \notag\\
				& \hspace{0.4cm}+\int_{|v|+|v_*|>m}\int_{\mathbb{S}^2} |u|^{\gamma} [\mu^{\alpha}(u+v)]^{\frac{1}{2}}\chi(|u|)\big[\mu^{\alpha}(v + u_{\parallel})\big]^{\frac{1}{2}}\label{CE2hadPk2T}\\
				&\hspace{7cm}\times f^\alpha_1(v + u_{\perp})b^{ \alpha \alpha}(\theta) \, du \, d\omega \Bigg)\notag
			\end{align}
			where \eqref{CE2ek1asyPT} is the part for $k_{\alpha \alpha}^{(1),r}$, and \eqref{CE2hadPk2T} for $k_{\alpha \alpha}^{(2),r}$. ‌We note that the independent variables in function $f^\alpha_1(\cdot)$ are $v+u_{\parallel}$ and $v+u_{\perp}$ in the two different cases, respectively. ``$du\,d\omega$'' has five degrees of freedom. Therefore, when the independent variable involves $u_{\parallel}$, we can assign $u_{\parallel}$ to have all three degrees of freedom while $u_{\perp}$ retains two degrees of freedom. When the independent variable involves $u_{\perp}$, we must assign $u_{\perp}$ to have all three degrees of freedom while $u_{\parallel}$ retains two degrees of freedom.
			
			For \eqref{CE2ek1asyPT}, we assign $u_{\parallel}$ to have all three degrees of freedom,  $u \in \mathbb{R}^3$, $\omega \in \mathbb{S}^2$, $u_{\parallel} \in \mathbb{R}^3$, $u_{\perp} \in \mathbb{R}^2$, and
			\begin{equation*}
				|u_{\parallel}|^{2}(|u_{\parallel}|^{-1}d|u_{\parallel}|)d\omega = du_{\parallel} ,\qquad du = 2(|u_{\parallel}|^{-1} d|u_{\parallel}|) du_{\perp}. 
			\end{equation*}
			But for \eqref{CE2hadPk2T}, we assign $u_{\perp}$ to have all three degrees of freedom,  $u \in \mathbb{R}^3$, $\omega' \in \mathbb{S}^2$, $u_{\perp} \in \mathbb{R}^3$, $u_{\parallel} \in \mathbb{R}^2$, and
			\begin{equation}\label{cgvbuprepdd}
				|u_{\perp}|^{2}(|u_{\perp}|^{-1}d|u_{\perp}|)d\omega' = du_{\perp} ,\qquad du =2 (|u_{\perp}|^{-1} d|u_\perp|) du_{\parallel}. 
			\end{equation}
			We denote the three coordinates of the spherical coordinate transformation as $(\tilde{r},\tilde{\theta},\tilde{\phi})$, where $\tilde{r}\geq0$ is radial distance, $0\leq\tilde{\theta}\leq \pi$ is polar angle and $0\leq\tilde{\phi}\leq 2 \pi$ is azimuthal angle.  
			
			In case of \eqref{CE2ek1asyPT}, we take $(|u_{\parallel}|,\theta,\tilde{\phi})\rightarrow(\tilde{r},\tilde{\theta},\tilde{\phi})$. 
			
			The differential area element on the unit sphere: $d\omega=\sin\theta d\theta d\tilde{\phi}$.
		
			In case of \eqref{CE2hadPk2T}, we take $(|u_{\perp}|,\varphi,\tilde{\phi})\rightarrow(\tilde{r},\tilde{\theta},\tilde{\phi})$. 
			
			The differential area element on the unit sphere: $d\omega'=\sin\varphi d\varphi d\tilde{\phi}$.

			\begin{figure}
				\centering
				\includegraphics[width=9cm,height=5.5cm]{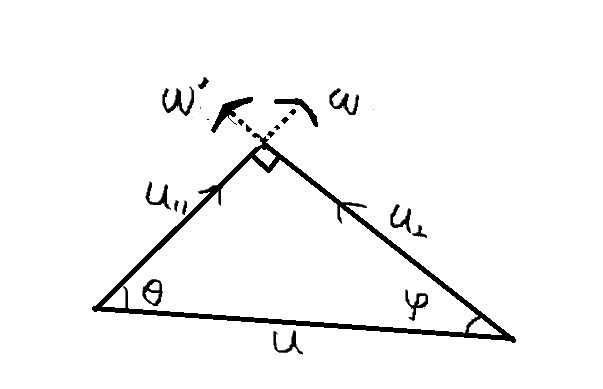}
				\vspace{-0.2cm}
				\caption{The illustration of variable substitution }
			\end{figure}
			From the picture (Figure 1), we have $|u_{\parallel}|=|u\cos\theta|$, $|u_{\perp}|=|u\cos\varphi|$, and $\varphi+\theta=\frac{\pi}{2}$. Therefore, it yields 
			\begin{equation*}
				du_{\perp}=|u_{\perp}|^{2}(|u_{\perp}|^{-1}d|u_{\perp}|)d\omega' = |u_{\perp}||u_{\parallel}|(|u_{\perp}|^{-1}d|u_{\perp}|)d\omega
			\end{equation*}
			Together with \eqref{cgvbuprepdd}, in the case of \eqref{CE2hadPk2T}, we obtain
			\begin{equation}\label{casaeqbedwdu}
				du d\omega = 2|u_{\parallel}|^{-1}|u_{\perp}|^{-1} du_{\perp} du_{\parallel}
			\end{equation}
			By \eqref{CE2easyPT} and \eqref{CEPT2ACA1vb}, \eqref{CE2ek1asyPT} can be written as
			\begin{align}
				& \int_{\mathbb{R}^3} \frac{1}{|u_{\parallel}|} e^{-\frac{m^\alpha}{2}\big[|\zeta_{\parallel}|^2+\frac{1}{4}|u_{\parallel}|^2\big]} f^\alpha_1(v + u_{\parallel}) \notag\\
				&\quad \times \int_{\mathbb{R}^2} e^{-\frac{m^\alpha}{2}|u_{\perp} + \zeta_{\perp}|^2} \mathbf{1}_{|v|+|v_*|>m}\left[|u_{\parallel}|^2 + |u_{\perp}|^2\right]^{\frac{\gamma - 1}{2}} \chi\big(\sqrt{|u_{\parallel}|^2 + |u_{\perp}|^2}\big) \frac{b^{ \alpha \beta}(\theta)}{|\cos \theta|} \, du_{\perp} du_{\parallel}\label{CE2BD2q21fSesD1}
			\end{align}
			where we take
			\begin{equation*}
				\zeta_{\perp}=v_{\perp},\hspace{0.5cm}\zeta_{\parallel}=v_{\parallel}+\frac{1}{2}u_{\parallel},\hspace{0.5cm}u_{\parallel}\in \mathbb{R}^3,\hspace{0.5cm}u_{\perp}\in \mathbb{R}^2.
			\end{equation*}
			Then the integral kernel in \eqref{ker1rrr} becomes
			\begin{align}
				k_{\alpha \alpha}^{(1),r}(v,u_{\parallel})	&=: \frac{1}{|u_{\parallel}|} e^{-\frac{m^\alpha}{2}\big[|\zeta_{\parallel}|^2+\frac{1}{4}|u_{\parallel}|^2\big]} \int_{\mathbb{R}^2} e^{-\frac{m^\alpha}{2}|u_{\perp} + \zeta_{\perp}|^2} \mathbf{1}_{|v|+|v_*|>m}\label{deftmambddk1f}\\
				&\hspace{2.5cm}\times  \left[|u_{\parallel}|^2 + |u_{\perp}|^2\right]^{\frac{\gamma - 1}{2}} \chi\big(\sqrt{|u_{\parallel}|^2 + |u_{\perp}|^2}\big) \frac{b^{ \alpha \beta}(\theta)}{|\cos \theta|} \, du_{\perp}. \notag 
			\end{align}
		 When $\alpha=\beta$ a process analogous to \eqref{CEPT2ACA1vb}--\eqref{CE2BD221fSesD1} shows that‌  ‌by combining \eqref{casaeqbedwdu}, \eqref{CE2hadPk2T} can be expressed as
			\begin{align}
				& \int_{\mathbb{R}^3} \frac{1}{|u_{\perp}|} e^{-\frac{m^\alpha}{2}\big[|\zeta^{\perp}|^2+\frac{1}{4}|u_{\perp}|^2\big]} f^\alpha_1(v + u_{\perp}) \notag\\
				&\quad \times \int_{\mathbb{R}^2} e^{-\frac{m^\alpha}{2}|u_{\parallel} + \zeta^{\parallel}|^2} \mathbf{1}_{|v|+|v_*|>m}\left[|u_{\parallel}|^2 + |u_{\perp}|^2\right]^{\frac{\gamma - 1}{2}} \chi\big(\sqrt{|u_{\parallel}|^2 + |u_{\perp}|^2}\big) \frac{b^{ \alpha \beta}(\theta)}{|\cos \theta|} \,  du_{\parallel}\,du_{\perp},\label{CE2BD2qg21fSesD1}
			\end{align}
			where 
			\begin{equation*}
				\zeta^{\parallel}=v_{\parallel},\hspace{0.5cm}\zeta^{\perp}=v_{\perp}+\frac{1}{2}u_{\perp},\hspace{0.5cm}u_{\perp}\in \mathbb{R}^3,\hspace{0.5cm}u_{\parallel}\in \mathbb{R}^2.
			\end{equation*}
			We define the integral kernel:
			\begin{align}
				k_{\alpha \alpha}^{(2),r}(v,u_{\perp})	&=: \frac{1}{|u_{\perp}|} e^{-\frac{m^\alpha}{2}\big[|\zeta^{\perp}|^2+\frac{1}{4}|u_{\perp}|^2\big]} \int_{\mathbb{R}^2} e^{-\frac{m^\alpha}{2}|u_{\parallel} + \zeta^{\parallel}|^2} \mathbf{1}_{|v|+|v_*|>m}\notag\\
				&\hspace{2.5cm}\times  \left[|u_{\parallel}|^2 + |u_{\perp}|^2\right]^{\frac{\gamma - 1}{2}} \chi\big(\sqrt{|u_{\parallel}|^2 + |u_{\perp}|^2}\big) \frac{b^{ \alpha \beta}(\theta)}{|\cos \theta|} \, du_{\parallel}.  \label{deftmambddk2}
			\end{align}
			Comparing the two expressions \eqref{CE2BD2q21fSesD1} and \eqref{CE2BD2qg21fSesD1} above, we observe that they become identical if we interchange the symbols ``$\parallel$'' and ``$\perp$'' in \eqref{CE2BD2qg21fSesD1}. Thus, the analysis of $Case A_1-Case A_4$ for $\eqref{CE2BD221fSesD1}$ is directly applicable to the case of \eqref{CE2hadPk2T}.
			
			Therefore, we can state that when $m^{\alpha}=m^{\beta}$, the integral kernel $k_{\alpha \beta}^{(2),r}(v,u_{\perp},u_{\parallel})$ degenerates into the integral kernel $k_{\alpha \alpha}^{(2),r}(v,u_{\perp})$, which is equal to $k_{\alpha \alpha}^{(1),r}(v,u_{\parallel})$ in the symmetric sense.
			
		\end{proof}
		
		According to \eqref{2141214Q1569} and \eqref{KS1KS2DPPFJ}, $K_{s}^{\alpha}$ is expressed as 
		\begin{equation*}
			K_{s}^{\alpha}=K_{1,s}^{\alpha,1-\chi}+K_{1,r}^{\alpha,\chi}+K_{2,s}^{\alpha,1-\chi}+K_{2,r}^{\alpha,\chi}
		\end{equation*}
		we now present the estimates for all these parts derived thus far as follows:
		
		By \eqref{k1ssing}, \eqref{k1sregu} and \eqref{lem22cclssig}, we obtain
		\begin{align*}
			\big \langle  w^{2l}K_{1,s}^{\alpha,1-\chi}\mathbf{f}_1,f^{\alpha}_2\big \rangle_{L^{2}_{v}} &\leq C \varepsilon^{3+\gamma} \big|w^{l} \mathbf{f}_1\big|_\nu \big|w^{l} \mathbf{f}_2\big|_\nu,\\
				\big \langle  w^{2l}K_{1,r}^{\alpha,\chi}\mathbf{f}_1,f^{\alpha}_2\big \rangle_{L^{2}_{v}} &\leq C \big[\mu^s(m)\big]^{\frac{1}{8}} \big|w^{l} \mathbf{f}_1\big|_\nu \big|w^{l} \mathbf{f}_2\big|_\nu,\\
				\big \langle  w^{2l}K_{2,s}^{\alpha,1-\chi}\mathbf{f}_1,f^{\alpha}_2\big \rangle_{L^{2}_{v}} &\leq C \varepsilon^{3+\gamma} \big|w^{l} \mathbf{f}_1\big|_\nu \big|w^{l} \mathbf{f}_2\big|_\nu.
		\end{align*}
		Based on all these analytical processes from \eqref{lem22K2schimainf} to \eqref{deftmambddk2}, it can be concluded that 
		\begin{equation*}
			\left \langle  w^{2l}K_{2,r}^{\alpha,\chi}\mathbf{f}_1,f^{\alpha}_2\right \rangle_{L^{2}_{v}} \leq \frac{C_\varepsilon}{m} \big|w^{l} \mathbf{f}_1\big|_\nu \big|w^{l} \mathbf{f}_2\big|_\nu.
		\end{equation*}
		\noindent For any $\eta \in (0,\frac{1}{2})$ we first choose sufficiently $\varepsilon$ small such that
		\begin{equation*}
			C\varepsilon^{3+\gamma} \leq \frac{\eta}{2}.
		\end{equation*}
		Then, for this fixed $1>\varepsilon>0$, there exists a sufficiently large $m>1$ such that 
		\begin{equation*}
			C \big[\mu^s(m)\big]^{\frac{1}{8}}+\frac{C_\varepsilon}{m} \leq \frac{\eta}{2}.
		\end{equation*}
	 Consequently, this completes the proof of \eqref{Lem222maineqv}.
	 
	\noindent	\textbf{Proof of Compactness}.\hspace{0.05cm}	In the last step, we prove that $K^{\alpha}_c = K_{1,z}^{\alpha,\chi}+K_{2,z}^{\alpha,\chi}$ is a compact operator. 
	Recall the expression that
		\begin{align*}
			 K_{1,z}^{\alpha,\chi} \mathbf{f}=  \frac{1}{\sqrt{\mu^\alpha}} \sum_{\beta=A,B} \iint_{\mathbb{R}^3 \times \mathbb{S}^2}\mathbf{1}_{|v|+|v_*|\leq m}
			b^{ \alpha \beta}(\theta)|v-v_*|^{\gamma} \chi(|v-v_*|) \mu^\alpha(v) \sqrt{\mu^\beta(v_*)}f^\beta(v_*)   d\omega dv_*
		\end{align*}
		Since $|b^{ \alpha \beta}(\theta)|\leq C$, for fixed small $\varepsilon>0$, 
		\begin{align*}
			|K_{1,z}^{\alpha,\chi} \mathbf{f}|\leq C_{\varepsilon} \sum_{\beta=A,B} \int_{\{|v_*|\leq m\}}
			 \sqrt{\mu^\beta(v_*)} \big|f^\beta(v_*)\big|  dv_*\sqrt{\mu^\alpha(v)}.
		\end{align*}
		Therefore, Hilbert-Schmidt theorem guarantees that $K_{1,z}^{\alpha,\chi}$ is a compact operator in $L^2_{\nu}(\mathbb{R}^3)$. 
		
		For the part of $K_{2,z}^{\alpha,\chi}$, we have
		\begin{align}
		K_{2,z}^{\alpha,\chi} \mathbf{f}&= - \frac{1}{\sqrt{\mu^\alpha}} \sum_{\beta=A,B} \iint_{\mathbb{R}^3 \times \mathbb{S}^2}\mathbf{1}_{|v|+|v_*|\leq m}
		b^{ \alpha \beta}(\theta)|v-v_*|^{\gamma}\chi(|v-v_*|)\notag\\ 
		&\hspace{3.3cm}\times \Big[\mu^\alpha(v')\sqrt{\mu^\beta(v_*')}f^\beta(v_*')
		  +\mu^{\beta}(v_*') \sqrt{\mu^{\alpha}(v')}\, f^\alpha(v')\Big] d\omega dv_*.\label{lem2KZZZC}
		\end{align}
		Replace ‌$\mathbf{1}_{|v|+|v_*|> m}$ with ‌$\mathbf{1}_{|v|+|v_*|\leq m}$ in ‌\eqref{ker1rrr} and \eqref{ker2rrr}, the resulting expression defines the integral kernels $k_{\alpha \beta}^{(1),z}(v,u_{\parallel})$ and $k_{\alpha \beta}^{(2),z}(v,u_{\perp},u_{\parallel})$.
		 Through analysis similar to that in the preceding discussion, we can prove that
		\begin{align}
			-K_{2,z}^{\alpha,\chi} \mathbf{f}&=   \sum_{\beta=A,B} \Bigg(\int_{\mathbb{R}^3}k_{\alpha \beta}^{(1),z}(v,u_{\parallel})\,f^\alpha(v + \frac{2m^{\beta}}{m^{\alpha}+m^{\beta}}u_{\parallel}) \,d u_{\parallel}\notag\\ 
			&\hspace{1.7cm}+\iint_{\mathbb{R}^3\times\mathbb{R}^2}k_{\alpha \beta}^{(2),z}(v,u_{\perp},u_{\parallel}) \,f^\beta(v + u_{\perp}+\frac{m^{\beta}-m^{\alpha}}{m^{\alpha}+m^{\beta}}u_{\parallel})\,du_{\perp}\,d u_{\parallel}\Bigg).\label{lem2KZZZC}
		\end{align}
		For the part of $k_{\alpha \beta}^{(1),z}(v,u_{\parallel})$, 
		\begin{align*}
			\big|k_{\alpha \beta}^{(1),z}(v,u_{\parallel})\big|\leq C_{\varepsilon}\frac{1}{|u_{\parallel}|} e^{-\frac{m^\beta}{2}\big[(\frac{m^\alpha}{m^\alpha+m^\beta})^2|u_{\parallel}|^2\big]}\mathbf{1}_{|v|+|v_*|\leq m}.
		\end{align*}
		
		Notice by \eqref{CEs2BDSHH224D2} and \eqref{k2alpneqbetdecayzb}, when $\alpha \neq \beta$, the part of $k_{\alpha \beta}^{(2),z}(v,u_{\perp},u_{\parallel})$ satisfies
		\begin{align*}
			\big|k_{\alpha \beta}^{(2),r}(v,u_{\perp},u_{\parallel})\big|	\leq C_{\varepsilon}  \frac{1}{|u_{\parallel}|} e^{-\bar{c}(|v|^2+|u|^2)} \mathbf{1}_{|v|+|v_*|\leq m} .
		\end{align*}
		Since $\frac{1}{|u_{\parallel}|}\in L^2_{loc}(\mathbb{R}^3)$, by Hilbert-Schmidt theorem, we know the part of $k_{\alpha \beta}^{(2),z}(v,u_{\perp},u_{\parallel})$ with $\alpha \neq \beta$ in $K_{2,z}^{\alpha,\chi}$ is compact.  
		
		When $\alpha = \beta$, for the part of $k_{\alpha \alpha}^{(2),z}(v,u_{\perp})$,
		\begin{align*}
			\big|k_{\alpha \alpha}^{(2),r}(v,u_{\perp})\big|	\leq C_{\varepsilon}  \frac{1}{|u_{\parallel}|} e^{-\frac{1}{4}(|\xi_{\parallel}|^2+|\eta_{\parallel}|^2+|\xi_{\perp}|^2+|\eta_{\perp}|^2)} \mathbf{1}_{|v|+|v_*|\leq m} .
		\end{align*}
		Here $u_{\perp}\in \mathbb{R}^3$ and $\frac{1}{|u_{\perp}|}\in L^2_{loc}(\mathbb{R}^3)$, by Hilbert-Schmidt theorem, this part of $K_{2,z}^{\alpha,\chi}$ is also compact.
		
		Consequently, we have completed the entire proof of Lemma \ref{lemma2.2}.
	\end{proof}
	
	\begin{remark}\label{RK2211}
		Since we need to discuss the compactness of operators within the $L^2$ framework, we thus introduce the truncation functions $\mathbf{1}_{|v|+|v_*|\geq m}$ . Disregard the truncation function related to compactness. Let $K_{2}^{\alpha,\chi}=K_{2,r}^{\alpha,\chi}+K_{2,z}^{\alpha,\chi}$, which can be decomposed as follows:
		\begin{equation}
			K_{2}^{\alpha,\chi}\mathbf{f} = \underbrace{K_{2,(1)}^{\alpha,\chi}\mathbf{f}}_{\text{Typical part}} + \underbrace{K_{2,(2)}^{\alpha,\chi}\mathbf{f}}_{\text{Hybrid part}},
		\end{equation}
		where 
		\begin{align}
			&K_{2,(1)}^{\alpha,\chi}\mathbf{f}=\sum_{\beta=A,B}	\int_{\mathbb{R}^3}k_{\alpha \beta}^{(1)}(v,u_{\parallel})\,f^\alpha(v + \frac{2m^{\beta}}{m^{\alpha}+m^{\beta}}u_{\parallel}) \,d u_{\parallel},\\
			&K_{2,(2)}^{\alpha,\chi}\mathbf{f}= \iint_{\mathbb{R}^3\times\mathbb{R}^2}k_{\alpha \beta}^{(2)}(v,u_{\perp},u_{\parallel}) \,f^\beta(v + u_{\perp}+\frac{m^{\beta}-m^{\alpha}}{m^{\alpha}+m^{\beta}}u_{\parallel})\,du_{\perp}\,d u_{\parallel} \quad \alpha \neq \beta \notag\\
			&\hspace{2cm}+\int_{\mathbb{R}^3}k_{\alpha \alpha}^{(2)}(v,u_{\perp}) \,f^\beta(v + u_{\perp})\,du_{\perp}. \label{HBD264}
		\end{align}
		These integral kernels satisfy
		\begin{align}
			k_{\alpha \beta}^{(1)}(v,u_{\parallel}) &=:  \frac{1}{|u_{\parallel}|} e^{-\frac{m^\beta}{2}\big[|\zeta_{\parallel}|^2+(\frac{m^\alpha}{m^\alpha+m^\beta})^2|u_{\parallel}|^2\big]} \int_{\mathbb{R}^2} e^{-\frac{m^\beta}{2}|u_{\perp} + \zeta_{\perp}|^2} \notag\\
			&\hspace{2.5cm}\times  \left[|u_{\parallel}|^2 + |u_{\perp}|^2\right]^{\frac{\gamma - 1}{2}} \chi\big(\sqrt{|u_{\parallel}|^2 + |u_{\perp}|^2}\big) \frac{b^{ \alpha \beta}(\theta)}{|\cos \theta|} \, du_{\perp}.\label{ker1rnomrr}
		\end{align}
		If $\alpha \neq \beta$, $($ the part with exponential decay in v $)$
		\begin{align}
			k_{\alpha \beta}^{(2)}(v,u_{\perp},u_{\parallel})	&=:  \frac{1}{|u_{\parallel}|} e^{-\frac{1}{4}(|\xi_{\parallel}|^2+|\eta_{\parallel}|^2)} \chi\big(\sqrt{|u_{\parallel}|^2 + |u_{\perp}|^2}\big) \notag\\
			&\hspace{3cm}\times  e^{-\frac{1}{4}(|\xi_{\perp}|^2+|\eta_{\perp}|^2) } \left[|u_{\parallel}|^2 + |u_{\perp}|^2\right]^{\frac{\gamma - 1}{2}}\frac{b^{ \alpha \beta}(\theta)}{|\cos \theta|}.\label{ker2rrnomr}
		\end{align}
		If $\alpha = \beta$, $($the degenerate part $)$
		\begin{align}
			k_{\alpha \alpha}^{(2)}(v,u_{\perp})	&=: \frac{1}{|u_{\perp}|} e^{-\frac{m^\alpha}{2}\big[|\zeta^{\perp}|^2+\frac{1}{4}|u_{\perp}|^2\big]} \int_{\mathbb{R}^2} e^{-\frac{m^\alpha}{2}|u_{\parallel} + \zeta^{\parallel}|^2}\notag \\
			&\hspace{2.5cm}\times  \left[|u_{\parallel}|^2 + |u_{\perp}|^2\right]^{\frac{\gamma - 1}{2}} \chi\big(\sqrt{|u_{\parallel}|^2 + |u_{\perp}|^2}\big) \frac{b^{ \alpha \beta}(\theta)}{|\cos \theta|} \, du_{\parallel}. \label{deftmanommbddk2}
		\end{align}
		In \eqref{ker1rnomrr} and \eqref{ker2rrnomr}, $u_{\parallel} \in \mathbb{R}^3, u_{\perp} \in \mathbb{R}^2$, while in \eqref{deftmanommbddk2}, $u_{\perp} \in \mathbb{R}^3, u_{\parallel} \in \mathbb{R}^2$. According to the analysis in Lemma \ref{lemma2.2}, we know \eqref{ker2rrnomr} has exponential decay in $v$, while \eqref{ker1rnomrr} and \eqref{deftmanommbddk2} do not.  Since \eqref{HBD264} contains integral kernels of two different types, we call it ``Hybrid part''. Furthermore, based on the previous  description,  $k_{\alpha \alpha}^{(1)}(v,u_{\parallel})$ and $k_{\alpha \alpha}^{(2)}(v,u_{\perp})$ are symmetric with respect to $u_{\parallel}$ and $u_{\perp}$. Moreover, $K_{2,(2)}^{\alpha,\chi}\mathbf{f}$ can be also expressed by
		\begin{align*}
			&K_{2,(2)}^{\alpha,\chi}\mathbf{f}= \iint_{\mathbb{R}^3\times\mathbb{R}^2}\hat{k}_{\alpha \beta}^{(2)}(v,u_{\parallel},u_{\perp}) \,f^\beta(v + u_{\parallel}+\frac{m^{\beta}-m^{\alpha}}{m^{\alpha}+m^{\beta}}u_{\perp})\,d u_{\parallel}\,du_{\perp} \quad \alpha \neq \beta \notag\\
			&\hspace{2cm}+\int_{\mathbb{R}^3}k_{\alpha \alpha}^{(2)}(v,u_{\perp}) \,f^\beta(v + u_{\perp})\,du_{\perp}. 
		\end{align*}
		where $u_{\perp} \in \mathbb{R}^3, u_{\parallel} \in \mathbb{R}^2$ and
		\begin{align*}
			\hat{k}_{\alpha \beta}^{(2)}(v,u_{\parallel},u_{\perp})	&=:  \frac{1}{|u_{\perp}|} e^{-\frac{1}{4}(|\xi_{\parallel}|^2+|\eta_{\parallel}|^2)} \chi\big(\sqrt{|u_{\parallel}|^2 + |u_{\perp}|^2}\big) \notag\\
			&\hspace{3cm}\times  e^{-\frac{1}{4}(|\xi_{\perp}|^2+|\eta_{\perp}|^2) } \left[|u_{\parallel}|^2 + |u_{\perp}|^2\right]^{\frac{\gamma - 1}{2}}\frac{b^{ \alpha \beta}(\theta)}{|\cos \theta|}.
		\end{align*}
		Due to $m^{\alpha} \neq m^{\beta}$, $u_{\parallel}$ and $u_{\perp}$ hold equal status, allowing us to interchange their order of integration in \eqref{sec2socal237}-\eqref{malneqmbetuprupeeq} to obtain the same result.
	\end{remark}
	
	\begin{remark}\label{RK2222} 
		``Typical part" can be formally transformed to coincide with the Boltzmann equation for a single species of particles, and it possesses the same decay estimates\,$($see \eqref{FSC16},\eqref{FSC18}$)$. 
		
		Let $v+\frac{2m^{\beta}}{m^{\alpha}+m^{\beta}}u_{\parallel}\rightarrow v^*$, $s<0$, it follows that
		\begin{equation*}
			\int_{\mathbb{R}^3}k_{\alpha \beta}^{(1)}(v,u_{\parallel})\frac{w^s(v)}{w^s(v+\frac{2m^{\beta}}{m^{\alpha}+m^{\beta}}u_{\parallel})}g^{\alpha}(v+\frac{2m^{\beta}}{m^{\alpha}+m^{\beta}}u_{\parallel}) d u_{\parallel}=	\int_{\mathbb{R}^3}\tilde{k}_{\alpha \beta}^{(1)}(v,v^*)\frac{w^s(v)}{w^s(v^*)}g^{\alpha}(v^*) d v^*,
		\end{equation*}
		which satisfies
		\begin{eqnarray}\label{222272}
			\begin{split}
				\frac{w^s(v)}{w^s(v_*)}|\tilde{k}_{\alpha \beta}^{(1)}(v,v_*)|\leq C_{\varepsilon} \frac{(1+|v|+|v_*|)^{\gamma-1}}{|v-v_*|} \, e^{-c\{|v-v_*|^2+\frac{||v|^2-|v_*|^2|^2}{|v-v_*|^2}\}},
			\end{split}
		\end{eqnarray}
		and
		\begin{eqnarray}\label{222273}
			\begin{split}
			\int_{\mathbb{R}^3}	\frac{w^s(v)}{w^s(v_*)}|\tilde{k}_{\alpha \beta}^{(1)}(v,v_*)|\leq C_{\varepsilon} \left \langle v \right\rangle^{\gamma-2}.
			\end{split}
		\end{eqnarray}
	\end{remark}
	\begin{proof}
	\noindent ‌Since the cases with $|u_{\parallel}|\leq 1$ or $|v|\leq 1$ are much simpler, we now consider the cases with  $|u_{\parallel}|\geq 1$ and $|v|\geq 1$. Notice by the expression \eqref{ker1rrr} for $k_{\alpha \beta}^{(1)}(v,u_{\parallel})$, and \eqref{zetperzetpara}, there exists sufficiently small $c>0$, such that
	\begin{align*}
		|k_{\alpha \beta}^{(1),r}(v,u_{\parallel})| &\leq:  \frac{1}{|u_{\parallel}|} e^{-2c\big[|v_{\parallel}|^2+|u_{\parallel}|^2\big]} \int_{\mathbb{R}^2} e^{-2c|u_{\perp} + \zeta_{\perp}|^2} \mathbf{1}_{|v|+|v_*|>m}\notag\\
		&\hspace{2.5cm}\times  \left[|u_{\parallel}|^2 + |u_{\perp}|^2\right]^{\frac{\gamma - 1}{2}} \chi\big(\sqrt{|u_{\parallel}|^2 + |u_{\perp}|^2}\big) \frac{b^{ \alpha \beta}(\theta)}{|\cos \theta|} \, du_{\perp},
	\end{align*} 
	and 
	\begin{align*}
		e^{-c|u_{\parallel}|^2} \leq C \frac{w^s(v)}{w^s(v+\frac{2m^{\beta}}{m^{\alpha}+m^{\beta}}u_{\parallel})} (1+|u_{\parallel}|)^{\gamma-1}.
	\end{align*} 
	Combining with \eqref{caseA1wgams} in Case A1; \eqref{LinfA2} in Case A2; \eqref{LifA3} in Case A3; and \eqref{LifA4} in Case A4, it yields that
	\begin{align}
		\frac{w^s(v)}{w^s(v+\frac{2m^{\beta}}{m^{\alpha}+m^{\beta}}u_{\parallel})}|k_{\alpha \beta}^{(1),r}(v,u_{\parallel})| &\leq  \frac{C_{\varepsilon}}{|u_{\parallel}|} (1+|u_{\parallel}|)^{\gamma-1}(1+|v|)^{\gamma-1}\notag\\
		&\leq  \frac{C_{\varepsilon}}{|u_{\parallel}|} (1+|u_{\parallel}|+|v|)^{\gamma-1}.\label{F247q}
	\end{align} 
	 Since $|v-v_*|=\frac{2m^{\beta}}{m^{\alpha}+m^{\beta}}|u_{\parallel}|$, we adjust the constant $c$ to obtain
	\begin{eqnarray}\label{rek4444}
		\begin{split}
			\frac{w(v)}{w(v_*)}|\tilde{k}_{\alpha \beta}^{(1)}(v,v_*)|&=\frac{w(v)}{w(v+\frac{2m^{\beta}}{m^{\alpha}+m^{\beta}}u_{\parallel})}|k_{\alpha \beta}^{(1)}(v,u_{\parallel})|\\
			&\leq C_{\varepsilon} \frac{(1+|v|+|v_*-v|)^{\gamma-1}}{|v-v_*|} \, e^{-c\{|v-v_*|^2+|v_{\parallel}|^2\}}.
		\end{split}
	\end{eqnarray}
	Since
	\begin{equation*}
		v+v_*=2v+\frac{2m^{\beta}}{m^{\alpha}+m^{\beta}}u_{\parallel}, \qquad  (|v_{\parallel}|^2+|u_{\parallel}|^2) \thicksim (|2v_{\parallel}+\frac{2m^{\beta}}{m^{\alpha}+m^{\beta}}u_{\parallel}|^2+|u_{\parallel}|^2),
	\end{equation*}
	 the exponential in \eqref{rek4444} can be replaced by
	\begin{equation*}
		e^{-c\{|v-v_*|^2+|2v_{\parallel}+\frac{2m^{\beta}}{m^{\alpha}+m^{\beta}}u_{\parallel}|^2\}}.
	\end{equation*}
	Then,  we further write $2v_{\parallel}+\frac{2m^{\beta}}{m^{\alpha}+m^{\beta}}u_{\parallel}$ as
	\begin{equation*}
		\big|2v_{\parallel}+\frac{2m^{\beta}}{m^{\alpha}+m^{\beta}}u_{\parallel}\big|=\frac{\big|\left \langle 2v+\frac{2m^{\beta}}{m^{\alpha}+m^{\beta}}u_{\parallel}, \frac{2m^{\beta}}{m^{\alpha}+m^{\beta}}u_{\parallel} \right \rangle_{v}\big|}{|\frac{2m^{\beta}}{m^{\alpha}+m^{\beta}}u_{\parallel}|}=\frac{|\left \langle v+v_*, v-v_* \right \rangle_{v}|}{|v-v_*|}=\frac{||v|^2-|v_*|^2|}{|v-v_*|}.
	\end{equation*}
	Therefore,
	\begin{equation*}
		\big(|v-v_*|^2+|v_{\parallel}|^2\big) \thicksim \big(|v-v_*|^2+\frac{||v|^2-|v_*|^2|^2}{|v-v_*|^2}\big).
	\end{equation*}
	Combine with
	\begin{equation*}
		(1+|v|+|v_*-v|) \thicksim (1+|v|+|v_*|),
	\end{equation*}
	then, \eqref{222272} follows. 
	
	Notice the inequality that  $|u_{\parallel}|^2+|v_{\parallel}|^2 \geq \frac{1}{2} (|u_{\parallel}|^2+|u_{\parallel}|\cdot|v_{\parallel}|)$, by \eqref{F247q}, it yields that
	\begin{align*}
		&\int_{\mathbb{R}^3}|k_{M,2}^{\alpha \beta(1)}(v,u_{\parallel})|\frac{w(v)}{w(v+\frac{2m^{\beta}}{m^{\alpha}+m^{\beta}}u_{\parallel})} d u_{\parallel} \\
		&\hspace{0.4cm}\leq C_{\varepsilon} \int_{\mathbb{R}^3}\frac{(1+|v|+|u_{\parallel}|)^{\gamma-1}}{|u_{\parallel}|} \, e^{-c\{|u_{\parallel}|^2+|u_{\parallel}|\cdot|v_{\parallel}|\}} d u_{\parallel}.
	\end{align*}
	Since $|u_{\parallel}|\cdot|v_{\parallel}|=|v\cdot u_{\parallel}|$, and $u_{\parallel}\in \mathbb{R}^3$, we decompose $u_{\parallel}$ by
	\begin{equation*}
		u_{\parallel}=u^{0}_{\parallel}+u^{1}_{\parallel},\qquad  u^{0}_{\parallel} \parallel v, \qquad u^{1}_{\parallel} \perp v,
	\end{equation*}
	where $u^{0}_{\parallel}\in\mathbb{R}^1$, $u^{1}_{\parallel}\in\mathbb{R}^2$. So, it yields that
	\begin{align}
		&C_{\varepsilon} \int_{\mathbb{R}^1}\frac{(1+|v|+|u_{\parallel}|)^{\gamma-1}}{|u_{\parallel}|} \, e^{-c\{|u_{\parallel}|^2+|u_{\parallel}|\cdot|v_{\parallel}|\}} d u_{\parallel} \notag\\
		&\hspace{0.4cm}\leq C_{\varepsilon}(1+|v|)^{\gamma-1} \int_{\mathbb{R}^1} \, e^{-c|v\cdot u_{\parallel}|} d u^0_{\parallel}\int_{\mathbb{R}^2} \frac{1}{|u^1_{\parallel}|}e^{-c|u^1_{\parallel}|^2}d u^1_{\parallel}\notag\\
		&\hspace{0.4cm}\leq C_{\varepsilon}(1+|v|)^{\gamma-1} \int_{\mathbb{R}^1} \, e^{-c\{|v| u^0_{\parallel}\}} d u^0_{\parallel}\notag\\
		&\hspace{0.4cm}\leq C_{\varepsilon}(1+|v|)^{\gamma-2} \int_{\mathbb{R}^1} \, e^{-c\{|v| u^0_{\parallel}\}} d \{|v|u^0_{\parallel}\}\notag\\
		&\hspace{0.4cm} \leq C_{\varepsilon}(1+|v|)^{\gamma-2}.\label{lstintg270}
	\end{align}
	This completes the proof of remark \ref{RK2222} .
	\end{proof}
	The integral kernels $k_{\alpha \alpha}^{(2)}$ and $k_{\alpha \beta}^{(2)}$ likewise satisfy \eqref{222272}, \eqref{222273}, since $k_{\alpha \alpha}^{(2)}$ is symmetric with respect to $k_{\alpha \alpha}^{(1)}$, and $k_{\alpha \beta}^{(2)}$ possesses exponential decay in $v$, clearly satisfying these decay estimates. Consequently, the vector-valued integral kernel $\bm{k}(v,v_*)$ satisfies estimates \eqref{222272} and \eqref{222273}.

	\begin{lema}\label{LeMcoevlem} 
		There exists a $\delta>0$, such that
		\begin{equation}\label{LeMcoev}
			\left\langle \mathbf{L}\mathbf{f}, \mathbf{f}  \right\rangle_{L_v^2}\geq \delta |(\mathbf{I}-\mathbf{P}_0)\mathbf{f}|^2_{\nu}
		\end{equation}
		\begin{proof}
			Assume the conclusion does not hold, then there exists a sequence of vector functions $\mathbf{f}_n$ satisfying 
			\begin{equation}\label{lem23hgnj0}
				\left\langle \mathbf{f}_n, \bm{\chi}_j   \right\rangle_{L_v^2}=0, \quad n=1,2,\cdots, \quad  j=1,2,\cdots 6.
			\end{equation}
			such that $\left\langle \mathbf{L}\mathbf{f}_n, \mathbf{f}_n  \right\rangle_{L_v^2}\leq \frac{1}{n}$, and 
			\begin{equation*}
				\left\langle (\bm{\nu}:\mathbf{f}_n), \mathbf{f}_n  \right\rangle_{L_v^2}=|\mathbf{f}_n|^2_{\nu} \equiv 1. 
			\end{equation*}
			Then, there exists a subsequence of $\mathbf{f}_n$
			that converges weakly to $\mathbf{f}_0$ with respect to the inner product $\left\langle \cdot, \cdot \right\rangle_{\nu}$. The weak limit $\mathbf{f}_0$ satisfies
			\begin{equation*}
				|\mathbf{f}_0|^2_{\nu} \leq 1.
			\end{equation*}
			Note that
			\begin{equation*}
				\left\langle \mathbf{L}\mathbf{f}_n, \mathbf{f}_n  \right\rangle_{L_v^2}=\left\langle (\bm{\nu}:\mathbf{f}_n), \mathbf{f}_n  \right\rangle_{L_v^2}-\left\langle \mathbf{K}\mathbf{f}_n, \mathbf{f}_n  \right\rangle_{L_v^2}=1-\left\langle \mathbf{K}\mathbf{f}_n, \mathbf{f}_n  \right\rangle_{L_v^2}. 
			\end{equation*}
			By choosing $\varepsilon$ small, we split $\mathbf{K}=\mathbf{K}_s+\mathbf{K}_c$, where $\mathbf{K}_s$ satisfies 
			\begin{equation*}
				|\left\langle \mathbf{K}_s\mathbf{f}_n, \mathbf{f}_n  \right\rangle_{L_v^2}|\leq \varepsilon.
			\end{equation*}
			The compactness of $\mathbf{K}_c$ for $\gamma\leq 0$ implies
			\begin{equation*}
				\lim_{n \to \infty}|\mathbf{K}_c\mathbf{f}_n-\mathbf{K}_c\mathbf{f}_0|_{L_v^2}=0,
			\end{equation*}
			which gives 
			\begin{equation*}
				\lim_{n \to \infty}\left\langle \mathbf{K}_c\mathbf{f}_n, \mathbf{f}_n  \right\rangle_{L_v^2}=\left\langle \mathbf{K}_c\mathbf{f}_0, \mathbf{f}_0  \right\rangle_{L_v^2}.
			\end{equation*}
			Since $\varepsilon$ can be choosen arbitrarily, one gets 
			\begin{equation*}
				\lim_{n \to \infty}\left\langle \mathbf{K}\mathbf{f}_n, \mathbf{f}_n  \right\rangle_{L_v^2}=\left\langle \mathbf{K}\mathbf{f}_0, \mathbf{f}_0  \right\rangle_{L_v^2}.
			\end{equation*}
			We take limit $n\rightarrow \infty$ to obtain
			\begin{equation*}
				\left\langle \mathbf{L}\mathbf{f}_0, \mathbf{f}_0  \right\rangle_{L_v^2}=\left\langle (\bm{\nu}:\mathbf{f}_0), \mathbf{f}_0  \right\rangle_{L_v^2}-\left\langle \mathbf{K}\mathbf{f}_0, \mathbf{f}_0  \right\rangle_{L_v^2}\leq1-\lim_{n \to \infty}\left\langle \mathbf{K}\mathbf{f}_n, \mathbf{f}_n  \right\rangle_{L_v^2}=\lim_{n \to \infty}\left\langle \mathbf{L}\mathbf{f}_n, \mathbf{f}_n  \right\rangle_{L_v^2}=0. 
			\end{equation*}
			But $\mathbf{L}$ is a non-negative operator.
			Therefore, $\mathbf{f}_0 = \mathbf{P}_0 \mathbf{f}_0$, together with \eqref{lem23hgnj0}, we get $\mathbf{f}_0=\mathbf{0}$. 
			
			It contradicts $|\mathbf{f}_0|^2_{\nu} = 1$. 
		\end{proof}
	\end{lema}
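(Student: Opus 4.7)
The plan is to prove the coercivity by the standard contradiction argument, leveraging the decomposition $\mathbf{L}=(\bm{\nu}:\cdot)+\mathbf{K}$ from \eqref{Ldeopvec} together with the fine splitting $\mathbf{K}=\mathbf{K}_s+\mathbf{K}_c$ provided by Lemma \ref{lemma2.2}. Since $\mathbf{L}$ vanishes on $\mathrm{Range}(\mathbf{P}_0)$, it suffices to show that there is $\delta>0$ with $\langle\mathbf{L}\mathbf{f},\mathbf{f}\rangle_{L_v^2}\ge\delta|\mathbf{f}|_\nu^2$ whenever $\mathbf{P}_0\mathbf{f}=0$. Assume this fails. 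Then one extracts a sequence $\mathbf{f}_n$ with $\mathbf{P}_0\mathbf{f}_n=0$, $|\mathbf{f}_n|_\nu=1$, and $\langle\mathbf{L}\mathbf{f}_n,\mathbf{f}_n\rangle_{L_v^2}\to 0$, which in particular gives orthogonality to the orthonormal basis $\tilde{\bm{\chi}}_j$ from \eqref{deforthonormalB}.

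Next I pass to a weak limit. Since $\{\mathbf{f}_n\}$ is bounded in the Hilbert space $L^2_\nu$, a subsequence satisfies $\mathbf{f}_n\rightharpoonup\mathbf{f}_0$ weakly in $L^2_\nu$, with $|\mathbf{f}_0|_\nu\le 1$. The key quantitative step is to use the identity
\begin{equation*}
    \langle\mathbf{L}\mathbf{f}_n,\mathbf{f}_n\rangle_{L_v^2}
    =|\mathbf{f}_n|_\nu^2-\langle\mathbf{K}_s\mathbf{f}_n,\mathbf{f}_n\rangle_{L_v^2}-\langle\mathbf{K}_c\mathbf{f}_n,\mathbf{f}_n\rangle_{L_v^2}.
\end{equation*}
For a prescribed small $\eta>0$, Lemma \ref{lemma2.2} (applied with $l=0$) yields $|\langle\mathbf{K}_s\mathbf{f}_n,\mathbf{f}_n\rangle_{L_v^2}|\le\eta$ uniformly in $n$. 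The compactness of $\mathbf{K}_c$ on $L^2_\nu$, also granted by Lemma \ref{lemma2.2}, converts the weak convergence of $\mathbf{f}_n$ into strong convergence of $\mathbf{K}_c\mathbf{f}_n$, hence $\langle\mathbf{K}_c\mathbf{f}_n,\mathbf{f}_n\rangle_{L_v^2}\to\langle\mathbf{K}_c\mathbf{f}_0,\mathbf{f}_0\rangle_{L_v^2}$. Passing to the limit in the identity above gives $\langle\mathbf{K}_c\mathbf{f}_0,\mathbf{f}_0\rangle_{L_v^2}\ge 1-\eta>0$, so in particular $\mathbf{f}_0\not\equiv\mathbf{0}$.

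To close the argument, I invoke the weak lower semicontinuity of the non-negative quadratic form $\mathbf{f}\mapsto\langle\mathbf{L}\mathbf{f},\mathbf{f}\rangle_{L_v^2}$ (equivalently, write $\mathbf{L}=\bm{\nu}\cdot I-\mathbf{K}$ and note that $|\mathbf{f}|_\nu^2$ is lower semicontinuous while $\langle\mathbf{K}_s\mathbf{f},\mathbf{f}\rangle$ is controlled by $\eta$ and $\langle\mathbf{K}_c\mathbf{f},\mathbf{f}\rangle$ is continuous). Sending $n\to\infty$ gives $0\le\langle\mathbf{L}\mathbf{f}_0,\mathbf{f}_0\rangle_{L_v^2}\le\liminf_n\langle\mathbf{L}\mathbf{f}_n,\mathbf{f}_n\rangle_{L_v^2}=0$, so $\langle\mathbf{L}\mathbf{f}_0,\mathbf{f}_0\rangle_{L_v^2}=0$. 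Because $\mathbf{L}$ is a non-negative self-adjoint operator on $(L^2_v(\mathbb{R}^3))^2$ whose null space is exactly $\mathrm{Range}(\mathbf{P}_0)$, this forces $\mathbf{f}_0=\mathbf{P}_0\mathbf{f}_0$. Finally, the constraints $\langle\mathbf{f}_n,\tilde{\bm{\chi}}_j\rangle_{L_v^2}=0$ pass to the weak limit, so $\mathbf{P}_0\mathbf{f}_0=\mathbf{0}$ and hence $\mathbf{f}_0=\mathbf{0}$, contradicting $\mathbf{f}_0\not\equiv\mathbf{0}$.

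The main obstacle I expect is the compactness step: for the soft potential range $-3<\gamma<0$ there is no spectral gap for $\mathbf{L}$, so classical Hilbert--Schmidt arguments on $\mathbf{K}$ fail directly. That is why the careful splitting $\mathbf{K}=\mathbf{K}_c+\mathbf{K}_s$ with an $\varepsilon$-small piece $\mathbf{K}_s$ and a truncation-confined piece $\mathbf{K}_c$ (made compact via the decay estimates \eqref{222272}, \eqref{222273} together with the $|u_\parallel|^{-1}, |u_\perp|^{-1}\in L^2_{\mathrm{loc}}$ Hilbert--Schmidt argument applied species-by-species, handling both the typical and hybrid parts in Remark \ref{RK2211}) is essential. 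Once compactness and smallness are in hand, the weak-limit argument closes as above, and crucially, since the proof uses only compactness of $\mathbf{K}_c$ rather than a spectral gap for $\mathbf{L}$, it works uniformly across soft and hard potentials, which is exactly the improvement over \cite{[60]Briant2016ARM} highlighted in the introduction.
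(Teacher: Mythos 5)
Your proposal is correct and follows essentially the same route as the paper: a contradiction argument on a normalized sequence with $|\mathbf{f}_n|_\nu=1$, weak compactness in $L^2_\nu$, the splitting $\mathbf{K}=\mathbf{K}_s+\mathbf{K}_c$ from Lemma \ref{lemma2.2} with $\mathbf{K}_s$ arbitrarily small and $\mathbf{K}_c$ compact, and the identification of $\ker\mathbf{L}$ with the range of $\mathbf{P}_0$. Your way of closing the contradiction --- first extracting $\langle\mathbf{K}_c\mathbf{f}_0,\mathbf{f}_0\rangle_{L_v^2}\ge 1-\eta$ to conclude $\mathbf{f}_0\neq\mathbf{0}$ before showing $\mathbf{f}_0=\mathbf{0}$ --- is in fact slightly cleaner than the paper's final line, which appeals to $|\mathbf{f}_0|^2_\nu=1$ even though the weak limit only gives $|\mathbf{f}_0|^2_\nu\le 1$.
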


	The following lemma is used to treat the high order derivatives.

	\begin{lema}\label{222maxdel}
		Let $\tilde{\beta}$ be  a multi-index and $l\geq0$. For any small positive $\eta$, there exists a  constant $C_{\eta}>0$ such that
		\begin{align}
			\big\langle w^{2l}  \partial_{\tilde{\beta}}[(\bm{\nu}:\mathbf{f})], \partial_{\tilde{\beta}} \mathbf{f} \big\rangle_{L_v^2} 
			&\geq  \big|w^{l}\partial_{\tilde{\beta}}\mathbf{f}\big|^2_{\nu}-\eta \sum_{|\tilde{\beta_1}|\leq|\tilde{\beta}|}\big|w^{l}\partial_{\tilde{\beta}_1}\mathbf{f}\big|^2_{\nu}-C_{\eta}\big|w^{l}\mathbf{f}\big|^2_{\nu};\label{lem24252tep}\\
			\big| \big\langle w^{2l} \partial_{\tilde{\beta}}[\mathbf{K} \mathbf{f}_1], \partial_{\tilde{\beta}} \mathbf{f}_2 \big\rangle\big| 
			&\leq \Bigg(\eta\sum_{|\tilde{\beta_1}|\leq|\tilde{\beta}|}|w^{l}\partial_{\tilde{\beta}_1}\mathbf{f}_1|_{\nu}+C_{\eta}|w^{l}\mathbf{f}_1|_{\nu}\Bigg)\big|w^{l}\partial_{\tilde{\beta}}\mathbf{f}_2\big|_{\nu}.\label{lem24253tep}
		\end{align}
	\end{lema}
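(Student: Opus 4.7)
The plan is to reduce both inequalities to the base case $|\tilde\beta|=0$ via Leibniz's rule in $v$, controlling the lower-order remainders by Cauchy-Schwarz and a Young-type absorption argument.

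For \eqref{lem24252tep}, I would expand $\partial_{\tilde\beta}(\bm\nu:\mathbf{f}) = (\bm\nu:\partial_{\tilde\beta}\mathbf{f}) + \sum_{\tilde\beta_1<\tilde\beta} C^{\tilde\beta_1}_{\tilde\beta}(\partial_v^{\tilde\beta-\tilde\beta_1}\bm\nu):\partial_{\tilde\beta_1}\mathbf{f}$. By differentiating the explicit formula \eqref{nuKsuanzidef} for $\bm\nu$ and translating $v_*\to v_*-v$ so that all $v$-derivatives land on the Maxwellian $\mu^\beta(v_*-v)$, one verifies $|\partial_v^k\bm\nu(v)|\leq C_k(1+|v|)^\gamma\sim w$ for every $k\geq 0$. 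Hence the leading term, paired with $\partial_{\tilde\beta}\mathbf{f}$ under $\langle w^{2l}\cdot,\cdot\rangle_{L^2_v}$, yields (a constant times) $|w^l\partial_{\tilde\beta}\mathbf{f}|^2_\nu$, while each cross term is bounded by $C|w^l\partial_{\tilde\beta_1}\mathbf{f}|_\nu|w^l\partial_{\tilde\beta}\mathbf{f}|_\nu$. Young's inequality with parameter $\eta$ then routes a small piece into the $\eta$-sum on the right-hand side, and an iteration on $|\tilde\beta_1|$ ultimately leaves only the zeroth-order term $C_\eta|w^l\mathbf{f}|^2_\nu$.

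For \eqref{lem24253tep}, I would begin with the integral representation of $\mathbf{K}$ derived in Lemma \ref{lemma2.2} and Remark \ref{RK2211}, where the change of variables \eqref{CE2ESYCGV} makes the argument of $\mathbf{f}_1$ an affine translate of $v$. By Leibniz, $\partial_{\tilde\beta}[\mathbf{K}\mathbf{f}_1]=\sum_{\tilde\beta_1\leq\tilde\beta}C^{\tilde\beta_1}_{\tilde\beta}\mathbf{K}^{[\tilde\beta-\tilde\beta_1]}[\partial_{\tilde\beta_1}\mathbf{f}_1]$, where $\mathbf{K}^{[\tilde\beta_2]}$ is the integral operator whose kernel is $\partial_v^{\tilde\beta_2}$ applied to the kernel of $\mathbf{K}$. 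Since these $v$-derivatives strike only Maxwellian factors (the exponents $|\zeta_\parallel|^2+|\zeta_\perp|^2$, $|\xi_\parallel|^2+|\eta_\parallel|^2$, etc.\ being affine in $v$), they produce polynomial prefactors that are absorbed by an infinitesimal fraction of the remaining Gaussian; hence each $\mathbf{K}^{[\tilde\beta_2]}$ satisfies the estimate of Lemma \ref{lemma2.1}. Pairing with $\partial_{\tilde\beta}\mathbf{f}_2$ produces terms $C|w^l\partial_{\tilde\beta_1}\mathbf{f}_1|_\nu|w^l\partial_{\tilde\beta}\mathbf{f}_2|_\nu$ for $\tilde\beta_1<\tilde\beta$. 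For the leading term $\tilde\beta_1=\tilde\beta$ I would split $\mathbf{K}=\mathbf{K}_s+\mathbf{K}_c$: the $\mathbf{K}_s$ part yields $\eta|w^l\partial_{\tilde\beta}\mathbf{f}_1|_\nu|w^l\partial_{\tilde\beta}\mathbf{f}_2|_\nu$ by Lemma \ref{lemma2.2}, while the smooth, velocity-compactly supported kernel of $\mathbf{K}_c$ allows integration by parts in $v_*$ to transfer the derivatives off $\partial_{\tilde\beta}\mathbf{f}_1$, leaving a bound of the form $C_\eta|w^l\mathbf{f}_1|_\nu|w^l\partial_{\tilde\beta}\mathbf{f}_2|_\nu$.

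The main technical obstacle is carefully verifying that differentiating the kernels $k^{(1)}_{\alpha\beta}, k^{(2)}_{\alpha\beta}, k^{(2)}_{\alpha\alpha}$ of Remark \ref{RK2211} in $v$ preserves the decay estimates \eqref{222272}--\eqref{222273}, since in the unequal-mass regime $v$ enters both the Gaussian exponents (with $m^A, m^B$-dependent coefficients) and the integration region. This requires re-traversing the case-by-case analysis of Lemma \ref{lemma2.2} (Cases A1--A4 and B1--B4) and confirming that the extra polynomial factors emitted by each $\partial_v$ are always dominated by a residual Gaussian; once this bookkeeping is in place, the two-step scheme above closes the proof.
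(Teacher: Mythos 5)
Your overall architecture (Leibniz rule in $v$, the kernel representations, and a small/compact splitting) matches the paper's, but there are two concrete gaps that prevent the argument from closing in the stated form.

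First, for \eqref{lem24252tep} your bound $|\partial_v^k\bm{\nu}|\leq C(1+|v|)^{\gamma}$ is true but too weak. The paper's proof rests on the sharper estimate \eqref{nus24derivty}, namely $|\partial_{\tilde{\beta}_1}\bm{\nu}|\leq C(1+|v|)^{\gamma-1}$ for $\tilde{\beta}_1\neq 0$ (a cancellation coming from $\int v_*\,\mu\,dv_*=0$ after translating $v_*\mapsto v_*+v$). That one extra power of decay is what makes the cross terms restricted to $\{|v|>m\}$ carry the small factor $C/m\leq\eta/2$; the complementary region $\{|v|\leq m\}$ is then handled by compact interpolation, which legitimately places the large constant only on $|w^{l}\mathbf{f}|_\nu^2$. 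With your weaker bound, Young's inequality leaves $C_\eta|w^{l}\partial_{\tilde{\beta}-\tilde{\beta}_1}\mathbf{f}|^2_\nu$ with a \emph{large} constant on an intermediate-order derivative over the unbounded region, which the right-hand side of \eqref{lem24252tep} does not permit; your ``iteration on $|\tilde{\beta}_1|$'' would require a global \emph{weighted} interpolation inequality of the form $|w^{l}\partial_{\tilde{\beta}'}\mathbf{f}|^2_\nu\leq\eta|w^{l}\partial_{\tilde{\beta}}\mathbf{f}|^2_\nu+C_\eta|w^{l}\mathbf{f}|^2_\nu$ on all of $\mathbb{R}^3$, which you neither state nor prove.

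The same issue recurs in \eqref{lem24253tep}: you apply the $\mathbf{K}_s+\mathbf{K}_c$ splitting (and the integration by parts that strips the derivatives off $\mathbf{f}_1$) only to the top-order Leibniz term $\tilde{\beta}_1=\tilde{\beta}$, and you accept bounds $C|w^{l}\partial_{\tilde{\beta}_1}\mathbf{f}_1|_\nu\,|w^{l}\partial_{\tilde{\beta}}\mathbf{f}_2|_\nu$ with $O(1)$ constants for $0<|\tilde{\beta}_1|<|\tilde{\beta}|$. But the target estimate allows a large constant only in front of $|w^{l}\mathbf{f}_1|_\nu$; every term carrying at least one derivative of $\mathbf{f}_1$ must come with the small factor $\eta$. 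In the paper this is achieved by running the full decomposition --- the near-singular part $(1-\chi)$ contributing $C\varepsilon^{3+\gamma}$, the large-velocity part $\mathbf{1}_{|v|+|v_*|>m}$ contributing $C_\varepsilon/m$, and a compact remainder on which integration by parts in $v_*$ removes \emph{all} derivatives from $\mathbf{f}_1$ --- for \emph{every} Leibniz term, not just the leading one. Your bookkeeping for the differentiated kernels (Gaussian absorption of the polynomial prefactors) is the right idea for showing each differentiated operator is bounded, but boundedness alone does not deliver the $\eta$-smallness structure the lemma asserts.
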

	\begin{proof}
		By the chain rule of differentiation, 
		\begin{equation}\label{zzzdbbl}
			\big\langle w^{2l}  \partial_{\tilde{\beta}}[(\bm{\nu}:\mathbf{f})], \partial_{\tilde{\beta}} \mathbf{f} \big\rangle_{L_v^2}=\big\langle w^{2l}  [(\bm{\nu}:\partial_{\tilde{\beta}}\mathbf{f})], \partial_{\tilde{\beta}} \mathbf{f} \big\rangle_{L_v^2}+C_{\tilde{\beta}}^{\tilde{\beta_1}}\big\langle w^{2l}  [(\partial_{\tilde{\beta_1}}\bm{\nu}:\partial_{\tilde{\beta}-\tilde{\beta_1}}\mathbf{f})], \partial_{\tilde{\beta}} \mathbf{f} \big\rangle_{L_v^2}
		\end{equation}
		where $\tilde{\beta}\geq\tilde{\beta_1}$. Moreover, a direct calculation shows that
		\begin{equation}\label{nus24derivty}
			|\partial_{\tilde{\beta_1}}\bm{\nu}| \leq C (1+|v|)^{\gamma-1}.
		\end{equation}
		Then we obtain
		\begin{align*}
			\big\langle w^{2l}  [(\partial_{\tilde{\beta_1}}\bm{\nu}:\partial_{\tilde{\beta}-\tilde{\beta_1}}\mathbf{f})], \partial_{\tilde{\beta}} \mathbf{f} \big\rangle_{L_v^2} =& \big\langle w^{2l}  [(\partial_{\tilde{\beta_1}}\bm{\nu}:\partial_{\tilde{\beta}-\tilde{\beta_1}}\mathbf{f})], \mathbf{1}_{\{|v|>m\}}\partial_{\tilde{\beta}} \mathbf{f} \big\rangle_{L_v^2}\\
			&\hspace{0.4cm}+\big\langle w^{2l}  [(\partial_{\tilde{\beta_1}}\bm{\nu}:\partial_{\tilde{\beta}-\tilde{\beta_1}}\mathbf{f})], \mathbf{1}_{\{|v|\leq m\}}\partial_{\tilde{\beta}} \mathbf{f} \big\rangle_{L_v^2}.
		\end{align*}
		By \eqref{nus24derivty}, the part of $|v|\geq m$ is bounded by
		\begin{align*}
			\big\langle w^{2l}  [(\partial_{\tilde{\beta_1}}\bm{\nu}:\partial_{\tilde{\beta}-\tilde{\beta_1}}\mathbf{f})], \mathbf{1}_{\{|v|>m\}}\partial_{\tilde{\beta}} \mathbf{f} \big\rangle_{L_v^2} &\leq \frac{C}{m} \big|w^{l}\partial_{\tilde{\beta}-\tilde{\beta_1}}\mathbf{f}\big|_{\nu} \big|w^{l}\partial_{\tilde{\beta}}\mathbf{f}\big|_{\nu} \\
			& \leq  \frac{\eta}{2} \big|w^{l}\partial_{\tilde{\beta}-\tilde{\beta_1}}\mathbf{f}\big|_{\nu} \big|w^{l}\partial_{\tilde{\beta}}\mathbf{f}\big|_{\nu}.
		\end{align*}
		For the $m$ chosen above, by Cauchy-Schwarz inequality as well as the compact interpolation in the Sobolev space, the part of $|v|\leq m$ is bounded by
		\begin{align*}
			\big\langle w^{2l}  [(\partial_{\tilde{\beta_1}}\bm{\nu}:\partial_{\tilde{\beta}-\tilde{\beta_1}}\mathbf{f})], \mathbf{1}_{\{|v|\leq m\}}\partial_{\tilde{\beta}} \mathbf{f} \big\rangle_{L_v^2} \leq \frac{\eta}{2} \sum_{|\tilde{\beta_1}|=|\tilde{\beta}|} \big|w^{l}\partial_{\tilde{\beta_1}}\mathbf{f}\big|_{\nu}^2 + C_{\eta} \big|w^{l}\mathbf{f}\big|_{\nu}^2.
		\end{align*}
		Therefore, this concludes \eqref{lem24252tep}.
		
		Next we turn to the estimate for \eqref{lem24253tep}. From the decomposition $\mathbf{K}=\mathbf{K}_1+\mathbf{K}_2$, first we consider $\big\langle w^{2l} \partial_{\tilde{\beta}}\mathbf{K}_1 \mathbf{f}_1, \partial_{\tilde{\beta}}\mathbf{f}_2 \big\rangle_{L^{2}_{v}}$, which is bounded by
			\begin{align}
				 & \sum_{\alpha=A,B}\sum_{\beta=A,B} C_{\tilde{\beta}}^{\tilde{\beta_1}} \iiint_{\mathbb{S}^2\times(\mathbb{R}^3)^2} w^{2l}(v)|v_*|^\gamma  \partial_{\tilde{\beta_1}}\big(\sqrt{\mu^\alpha(v)} \sqrt{\mu^\beta(v_*+v)}\big)\notag\\
				 &\hspace{6cm}\times\partial_{\tilde{\beta}-\tilde{\beta_1}}f^\beta_1(v_*+v)\partial_{\tilde{\beta}}f^\alpha_2(v)b^{\alpha \beta}(\theta)\,d\omega \, dv_*\,dv \notag\\
				&\hspace{0.4cm}\leq\sum_{\alpha=A,B}\sum_{\beta=A,B} C_{\tilde{\beta}}^{\tilde{\beta_1}} \Bigg(\iiint_{\mathbb{S}^2\times(\mathbb{R}^3)^2} w^{2l}(v)|v_*|^\gamma  \partial_{\tilde{\beta_1}}\big(\sqrt{\mu^\alpha(v)} \sqrt{\mu^\beta(v_*+v)}\big)\label{lem24k11}\\
				&\hspace{5.5cm}\times\chi(|v_{*}|)\partial_{\tilde{\beta}-\tilde{\beta_1}}f^\beta_1(v_*+v)\partial_{\tilde{\beta}}f^\alpha_2(v)b^{\alpha \beta}(\theta)\,d\omega \, dv_*\,dv \notag\\
				&\hspace{1cm}+ \iiint_{\mathbb{S}^2\times(\mathbb{R}^3)^2} w^{2l}(v)|v_*|^\gamma  \partial_{\tilde{\beta_1}}\big(\sqrt{\mu^\alpha(v)} \sqrt{\mu^\beta(v_*+v)}\big)\label{lem24k12}\\
				&\hspace{5.2cm}\times\big[1-\chi(|v_{*}|)\big]\partial_{\tilde{\beta}-\tilde{\beta_1}}f^\beta_1(v_*+v)\partial_{\tilde{\beta}}f^\alpha_2(v)b^{\alpha \beta}(\theta)\,d\omega \, dv_*\,dv \Bigg)\notag,
			\end{align}
		where we have changed variable $v_*-v\rightarrow v_*$. Note that $\cos\theta=\frac{\omega\cdot(v-v_*)}{|v-v_*|}$ and $|b^{\alpha \beta}(\theta)|\leq C$, it yields that
		\begin{equation*}
			\Big|\partial_{\tilde{\beta_1}}\big(\sqrt{\mu^\alpha(v)} \sqrt{\mu^\beta(v_*+v)}\big)\Big|\leq C e^{-\frac{m_s}{8}(|v+v_*|^2+|v|^2)},
		\end{equation*}
		where we denote $m_s=\min\{m^A,m^B\}$.
		Applying a further change of variable and integrating $\omega$ over $\mathbb{S}^2$. \eqref{lem24k12} is bounded by
		\begin{align*}
			&C\sum_{\alpha=A,B}\sum_{\beta=A,B}\sum_{\tilde{\beta_1}}\iint_{(\mathbb{R}^3)^2} e^{-\frac{m_s}{8}(|v_*|^2+|v|^2)}|v_*-v|^\gamma w^{2l}(v) \\
			&\hspace{6cm}\times\big[1-\chi(|v-v_{*}|)\big]\big|\partial_{\tilde{\beta}-\tilde{\beta_1}}f^\beta_1(v_*)\partial_{\tilde{\beta}}f^\alpha_2(v)\big| \, dv_*\,dv \\
			&\hspace{0.5cm}\leq C\sum_{\alpha=A,B}\sum_{\beta=A,B}\sum_{\tilde{\beta_1}}\Bigg(\int_{\mathbb{R}^3} \int_{\mathbb{R}^3}e^{-\frac{m_s|v|^2}{8}}|v_*-v|^\gamma w^{2l}(v)\big[1-\chi(|v-v_{*}|)\big] dv \\
			&\hspace{8.2cm}\times e^{-\frac{m_s|v_*|^2}{8}}|\partial_{\tilde{\beta}-\tilde{\beta_1}}f^\beta_1(v_*)|^2 \, dv_*\,\Bigg)^{1/2}\\
			&\hspace{1cm} \times \Bigg(\int_{\mathbb{R}^3} \int_{\mathbb{R}^3}e^{-\frac{m_s|v_*|^2}{8}}|v_*-v|^\gamma \big[1-\chi(|v-v_{*}|)\big] dv_* \,e^{-\frac{m_s|v|^2}{8}}w^{2l}(v)|\partial_{\tilde{\beta}}f^\alpha_2(v)|^2 dv\Bigg)^{1/2}\\
			&\hspace{0.5cm}\leq  C \varepsilon^{3+\gamma}\sum_{\tilde{\beta_1}}\big|w^{l}\partial_{\tilde{\beta}-\tilde{\beta}_1}\mathbf{f}_1\big|_{\nu}\,\big|w^{l}\partial_{\tilde{\beta}}\mathbf{f}_2\big|_{\nu}\\ &\hspace{0.5cm}\leq \frac{\eta}{2}\Big(\sum_{\tilde{\beta}_1\leq\tilde{\beta}}\big|w^{l}\partial_{\tilde{\beta}_1}\mathbf{f}_1\big|_{\nu}\Big)\big|w^{l}\partial_{\tilde{\beta}}\mathbf{f}_2\big|_{\nu}
		\end{align*}
		For the small $\varepsilon>0$ chosen above, since $\chi_{\varepsilon}$ vanishes near the origin, an integration over $v_*$ in \eqref{lem24k11} yields
		\begin{align*}
			& \sum_{\alpha=A,B}\sum_{\beta=A,B} C_{\tilde{\beta}}^{\tilde{\beta_1}} \iiint_{\mathbb{S}^2\times(\mathbb{R}^3)^2} \partial_{\tilde{\beta}-\tilde{\beta_1}}^{v_*}\Big[|v_*|^\gamma \chi(|v_{*}|) \partial_{\tilde{\beta_1}}\big(\sqrt{\mu^\alpha(v)} \sqrt{\mu^\beta(v_*+v)}\big)\Big]\\
			&\hspace{6.5cm}\times w^{2l}(v)f^\beta_1(v_*+v)\partial_{\tilde{\beta}}f^\alpha_2(v)b^{\alpha \beta}(\theta)\,d\omega \, dv_*\,dv \\
			&\hspace{0.5cm}\leq \sum_{\alpha=A,B}\sum_{\beta=A,B}C_{\varepsilon} \iint_{(\mathbb{R}^3)^2} e^{-\frac{m_s}{8}(|v+v_*|^2+|v|^2)}|f^\beta_1(v_*+v)\partial_{\tilde{\beta}}f^\alpha_2(v)| dv_* \, dv \\
			&\hspace{0.5cm}\leq  \, C_{\varepsilon}\, \big|w^{l}\mathbf{f}_1\big|_{\nu} \big|w^{l}\partial_{\tilde{\beta}}\mathbf{f}_2\big|_{\nu}
		\end{align*}
		where we have used notation $\partial_{\tilde{\beta}-\tilde{\beta_1}}^{v_*}$ to denote the derivative with respect to $v_*$, in order to distinguish it from notation $\partial_{\tilde{\beta}-\tilde{\beta_1}}$ which represents the derivative with respect to $v$. Therefore, the estimate for the part of $\mathbf{K}_1$ can be derived.
		
		Now we consider the part for $\mathbf{K}_2$. $\mathbf{K}_2\mathbf{f}_1$ takes the form
		\begin{align}
			\sum_{\beta=A,B}&\Bigg(\iint_{\mathbb{S}^2\times\mathbb{R}^3} |u|^{\gamma} [\mu^{\beta}(u+v)]^{\frac{1}{2}}\big[1-\chi(|u|)\big]\big[\mu^{\beta}(v + u_{\perp}+\frac{m^{\beta}-m^{\alpha}}{m^{\alpha}+m^{\beta}}u_{\parallel})\big]^{\frac{1}{2}}\label{K2f1essigf11}\\
			&\hspace{5.5cm}\times f^\alpha_1(v + \frac{2m^{\beta}}{m^{\alpha}+m^{\beta}}u_{\parallel})b^{ \alpha \beta}(\theta) \, du \, d\omega \notag\\
			&\hspace{0.3cm}+ \iint_{\mathbb{S}^2\times\mathbb{R}^3} |u|^{\gamma} [\mu^{\beta}(u+v)]^{\frac{1}{2}}\big[1-\chi(|u|)\big]\big[\mu^{\alpha}(v + \frac{2m^{\beta}}{m^{\alpha}+m^{\beta}}u_{\parallel})\big]^{\frac{1}{2}}\label{K2f1essigf12}\\
			&\hspace{5cm}\times f^\beta_1(v + u_{\perp}+\frac{m^{\beta}-m^{\alpha}}{m^{\alpha}+m^{\beta}}u_{\parallel})b^{ \alpha \beta}(\theta) \, du \, d\omega \notag\\
			&\hspace{0.3cm}+\iint_{\mathbb{S}^2\times\mathbb{R}^3} |u|^{\gamma} [\mu^{\beta}(u+v)]^{\frac{1}{2}}\chi(|u|)\big[\mu^{\beta}(v + u_{\perp}+\frac{m^{\beta}-m^{\alpha}}{m^{\alpha}+m^{\beta}}u_{\parallel})\big]^{\frac{1}{2}}\label{K2f1etssigf13}\\
			&\hspace{5.5cm}\times f^\alpha_1(v + \frac{2m^{\beta}}{m^{\alpha}+m^{\beta}}u_{\parallel})b^{ \alpha \beta}(\theta) \, du \, d\omega \notag\\
			&\hspace{0.3cm}+ \iint_{\mathbb{S}^2\times\mathbb{R}^3} |u|^{\gamma} [\mu^{\beta}(u+v)]^{\frac{1}{2}}\chi(|u|)\big[\mu^{\alpha}(v + \frac{2m^{\beta}}{m^{\alpha}+m^{\beta}}u_{\parallel})\big]^{\frac{1}{2}}\label{K2f1ressigf14}\\
			&\hspace{5.3cm}\times f^\beta_1(v + u_{\perp}+\frac{m^{\beta}-m^{\alpha}}{m^{\alpha}+m^{\beta}}u_{\parallel})b^{ \alpha \beta}(\theta) \, du \, d\omega \Bigg). \notag
		\end{align}
		To compute $\partial_{\tilde{\beta}} \mathbf{K}_2\mathbf{f}_1$, differentiate the second term above directly with respect to $\beta$ using the product rule. By a further change of variables, this approach bypasses the need to compute derivatives of the singular kernel $|u - v|^\gamma$ in \eqref{K2f1essigf11} and \eqref{K2f1essigf12}. 
		Consequently, the term $\langle w^{2l} \partial_{\tilde{\beta}} \mathbf{K}_2 \mathbf{f}_1, \partial_{\tilde{\beta}} \mathbf{f}_2 \rangle$ can be decomposed as follows:
			\begin{align}
			&\sum_{\alpha=A,B}\sum_{\beta=A,B}\sum_{\tilde{\beta}_{i}}C_{\tilde{\beta}}\Bigg(\iiint_{\mathbb{S}^2\times(\mathbb{R}^3)^2} w^{2l}(v) |v-v_*|^{\gamma} \partial_{\tilde{\beta}_1}[\mu^{\beta}(v_*)]^{\frac{1}{2}}\big[1-\chi(|v-v_*|)\big]\partial_{\tilde{\beta}_2}[\mu^{\beta}(v_*')]^{\frac{1}{2}}\notag\\
			&\hspace{8.2cm}\times \partial_{\tilde{\beta}_3}f^\alpha_1(v')\partial_{\tilde{\beta}}f^\alpha_2(v)b^{ \alpha \beta}(\theta) \, du \,dv\, d\omega \label{sma248}\\
			&\hspace{0.3cm}+ \iiint_{\mathbb{S}^2\times(\mathbb{R}^3)^2} w^{2l}(v) |v-v_*|^{\gamma} \partial_{\tilde{\beta}_1}[\mu^{\beta}(v_*)]^{\frac{1}{2}}\big[1-\chi(|v-v_*|)\big]\partial_{\tilde{\beta}_2}[\mu^{\alpha}(v')]^{\frac{1}{2}}\notag\\
			&\hspace{9cm}\times \partial_{\tilde{\beta}_3}f^\beta(v_*')\partial_{\tilde{\beta}}f^\alpha_2(v)b^{ \alpha \beta}(\theta) \, dv_* \, dv\, d\omega \Bigg)\label{sma249}\\
			&\hspace{0.3cm}+\sum_{\alpha=A,B}\sum_{\beta=A,B}\Bigg(\int_{\mathbb{R}^3} \partial_{\tilde{\beta}}\Big[\iint_{\mathbb{S}^2\times\mathbb{R}^3}|v-v_*|^{\gamma} [\mu^{\beta}(v_*)]^{\frac{1}{2}}\chi(|v-v_*|)[\mu^{\beta}(v_*')]^{\frac{1}{2}} f^\alpha(v')b^{ \alpha \beta}(\theta) \, du \, d\omega \Big]\notag\\
				&\hspace{10.5cm}\times w^{2l}(v) \partial_{\tilde{\beta}}f^\alpha_2(v) \, dv\label{bga250}\\
			&\hspace{0.3cm}+ \int_{\mathbb{R}^3} \partial_{\tilde{\beta}}\Big( \iint_{\mathbb{S}^2\times\mathbb{R}^3} |v-v_*|^{\gamma} [\mu^{\beta}(v_*)]^{\frac{1}{2}}\chi(|v-v_*|)[\mu^{\alpha}(v')]^{\frac{1}{2}}
			 f^\beta(v_*')b^{ \alpha \beta}(\theta) \, du \, d\omega \Big)
			 w^{2l}(v) \partial_{\tilde{\beta}}f^\alpha_2(v) \, dv\Bigg), \label{bga251}
		\end{align}
		where $\tilde{\beta}=\tilde{\beta}_1+\tilde{\beta}_2+\tilde{\beta}_3$. Since 
		\begin{equation*}
			\big|\partial_{\tilde{\beta}}[\mu^{\beta}(\cdot)]^{\frac{1}{2}}\big| \leq C e^{-\frac{m_s|\cdot|^2}{8}}.
		\end{equation*}
		Following a similar procedure as in \eqref{lem22k2ssig} to \eqref{lem22cclssig}, the terms \eqref{sma248} and \eqref{sma249} are bounded by
		\begin{equation*}
			\varepsilon^{3+\gamma}\sum_{\tilde{\beta_1}}|w^{l}\partial_{\tilde{\beta}-\tilde{\beta}_1}\mathbf{f}_1|_{\nu}\,|w^{l}\partial_{\tilde{\beta}}\mathbf{f}_2|_{\nu} \leq \frac{\eta}{2}\Big(\sum_{\tilde{\beta}_1\leq\tilde{\beta}}|w^{l}\partial_{\tilde{\beta}_1}\mathbf{f}_1|_{\nu}\Big)|w^{l}\partial_{\tilde{\beta}}\mathbf{f}_2|_{\nu}.
		\end{equation*}
		For the small $\varepsilon>0$ above, \eqref{bga250}, \eqref{bga251}can be written as
		\begin{align}
		\sum_{\alpha,\beta}\sum_{\tilde{\beta}_{i}}&C_{\tilde{\beta}}	\Bigg( \int_{\mathbb{R}^3} \frac{1}{|u_{\parallel}|} \partial_{\tilde{\beta}_1}\big(e^{-\frac{m^\beta}{2}|\zeta_{\parallel}|^2}\big) e^{-\frac{m^\beta}{2}(\frac{m^\alpha}{m^\alpha+m^\beta})^2|u_{\parallel}|^2}\partial_{\tilde{\beta}_2}f^\alpha(v + \frac{2m^{\beta}}{m^{\alpha}+m^{\beta}}u_{\parallel}) \label{CE2BDSefsD1}\\
			&\hspace{0.2cm}\times \int_{\mathbb{R}^2} \partial_{\tilde{\beta}_1}\big(e^{-\frac{m^\beta}{2}|u_{\perp} + \zeta_{\perp}|^2}\big) \left[|u_{\parallel}|^2 + |u_{\perp}|^2\right]^{\frac{\gamma - 1}{2}} \chi\big(\sqrt{|u_{\parallel}|^2 + |u_{\perp}|^2}\big) \frac{b^{ \alpha \beta}(\theta)}{|\cos \theta|} \, du_{\perp} du_{\parallel}\notag\\
			 +&\int_{\mathbb{R}^3} \frac{1}{|u_{\parallel}|} e^{-\frac{1}{4}[|\xi_{\parallel}|^2+|\eta_{\parallel}|^2]}\int_{\mathbb{R}^2} \frac{b^{ \alpha \beta}(\theta)}{|\cos \theta|}\chi\big(\sqrt{|u_{\parallel}|^2 + |u_{\perp}|^2}\big)  \label{CE2BDfSHHD2}\\
			&\hspace{1.3cm} \times  e^{-\frac{1}{4}(|\xi_{\perp}|^2+|\eta_{\perp}|^2) } \left[|u_{\parallel}|^2 + |u_{\perp}|^2\right]^{\frac{\gamma - 1}{2}}f^\beta(v + u_{\perp}+\frac{m^{\beta}-m^{\alpha}}{m^{\alpha}+m^{\beta}}u_{\parallel})  \, du_{\perp} du_{\parallel} \Bigg)\notag.
		\end{align}
		By the expression of  $\eta_{\parallel},\xi_{\perp},\eta_{\perp},\xi_{\parallel}$ and $\zeta_{\perp}, \zeta_{\parallel}$, we deduce that
		\begin{equation*}
			|\nabla_v \eta_{\parallel}|+|\nabla_v \eta_{\perp}|+|\nabla_v \xi_{\parallel}|+|\nabla_v \xi_{\perp}|+|\nabla_v \zeta_{\parallel}|+|\nabla_v \zeta_{\perp}| \leq C,
		\end{equation*}
		which further gives
		\begin{align*}
			\big|\partial_{\tilde{\beta}_i}\big(e^{-\frac{m^\beta}{2}|\zeta_{\parallel}|^2}\big)\big|&\leq C e^{-\frac{m^\beta}{4}|\zeta_{\parallel}|^2},\\
			\big|\partial_{\tilde{\beta}_i}\big(e^{-\frac{m^\beta}{2}|u_{\perp} + \zeta_{\perp}|^2}\big)\big|&\leq C e^{-\frac{m^\beta}{4}|u_{\perp} + \zeta_{\perp}|^2},\\
			\big|\partial_{\tilde{\beta}_1}\big(e^{-\frac{1}{4}(|\xi_{\parallel}|^2+|\eta_{\parallel}|^2)}\big)\big|&\leq C e^{-\frac{1}{8}(|\xi_{\parallel}|^2+|\eta_{\parallel}|^2)},\\
			\big|\partial_{\tilde{\beta}_1}\big(e^{-\frac{1}{4}(|\xi_{\perp}|^2+|\eta_{\perp}|^2) }\big)\big|&\leq C e^{-\frac{1}{8}(|\xi_{\perp}|^2+|\eta_{\perp}|^2) }.
		\end{align*}
		Consequently, apart from the factors in the exponents, equation \eqref{CE2BDSefsD1},\eqref{CE2BDfSHHD2} is almost identical to \eqref{CE2easyPT},\eqref{CE2hadPT}. By choosing $m>0$ large enough, the integration domain is further partitioned into $(|v|+|v_*| \leq m)$ and $(|v|+|v_*| \geq m)$. 
		
		Applying analogous estimation techniques for  $\mathbf{K}_{2,r}^{\chi}\mathbf{f}_1$ as demonstrated in  \eqref{caseA1wgams}--\eqref{lem2230cgvb} and \eqref{sec2CaseB1}--\eqref{sec2CaseB4},   the contribution from the region $(|v|+|v_*| \geq m)$ can be bounded by
		\begin{equation*}
			\frac{C_{\varepsilon}}{m}\Bigg(\sum_{\tilde{\beta_1}}\big|w^{l}\partial_{\tilde{\beta}-\tilde{\beta}_1}\mathbf{f}_1\big|_{\nu}\Bigg)\,\big|w^{l}\partial_{\tilde{\beta}}\mathbf{f}_2\big|_{\nu} \leq \frac{\eta}{2}\Bigg(\sum_{\tilde{\beta}_1\leq\tilde{\beta}}\big|w^{l}\partial_{\tilde{\beta}_1}\mathbf{f}_1\big|_{\nu}\Bigg)\big|w^{l}\partial_{\tilde{\beta}}\mathbf{f}_2\big|_{\nu}.
		\end{equation*}
		Finally, we consider the remaining part, the operator  $\mathbf{K}_{2,z}^{\chi}$
			\begin{align*}
			&\sum_{\alpha,\beta}\sum_{\tilde{\beta}_{i}}C_{\tilde{\beta}}	\Bigg( \int_{\mathbb{R}^3} \frac{1}{|u_{\parallel}|} \partial_{\tilde{\beta}_1}\big(e^{-\frac{m^\beta}{2}|\zeta_{\parallel}|^2}\big) e^{-\frac{m^\beta}{2}(\frac{m^\alpha}{m^\alpha+m^\beta})^2|u_{\parallel}|^2}\partial_{\tilde{\beta}_2}f^\alpha(v + \frac{2m^{\beta}}{m^{\alpha}+m^{\beta}}u_{\parallel}) \\
			&\hspace{0.5cm}\times \int_{\mathbb{R}^2} \partial_{\tilde{\beta}_1}\big(e^{-\frac{m^\beta}{2}|u_{\perp} + \zeta_{\perp}|^2}\big) \left[|u_{\parallel}|^2 + |u_{\perp}|^2\right]^{\frac{\gamma - 1}{2}} \chi\big(\sqrt{|u_{\parallel}|^2 + |u_{\perp}|^2}\big) \frac{b^{ \alpha \beta}(\theta)}{|\cos \theta|}\mathbf{1}_{|v|+|v_*|\leq m} \, du_{\perp} du_{\parallel}\\
			&\hspace{0.2cm}+\int_{\mathbb{R}^3} \frac{1}{|u_{\parallel}|} e^{-\frac{1}{4}(|\xi_{\parallel}|^2+|\eta_{\parallel}|^2)}\int_{\mathbb{R}^2} \frac{b^{ \alpha \beta}(\theta)}{|\cos \theta|}\chi\big(\sqrt{|u_{\parallel}|^2 + |u_{\perp}|^2}\big)  \\
			& \hspace{0.5cm}\times  e^{-\frac{1}{4}(|\xi_{\perp}|^2+|\eta_{\perp}|^2) } \left[|u_{\parallel}|^2 + |u_{\perp}|^2\right]^{\frac{\gamma - 1}{2}}f^\beta(v + u_{\perp}+\frac{m^{\beta}-m^{\alpha}}{m^{\alpha}+m^{\beta}}u_{\parallel})\mathbf{1}_{|v|+|v_*|\leq m}  \, du_{\perp} du_{\parallel} \Bigg)
		\end{align*}
		is compact since $\frac{1}{|u_{\parallel}|} \in L^2_{loc}(\mathbb{R}^3)$. Hence, there exists a $C_{\eta}>0$ such that $\left \langle  w^{2l}\mathbf{K}_{2,r}^{\chi}\mathbf{f}_1,\mathbf{f}_2 \right \rangle_{\nu}$ is bounded by
		\begin{align*}
			 \Bigg(\frac{\eta}{4} \sum_{|\tilde{\beta_1}|\leq|\tilde{\beta}|} \big|w^{l}\partial_{\tilde{\beta_1}}\mathbf{f}\big|_{\nu}^2 + C_{\eta} \big|w^{l}\mathbf{f}\big|_{\nu}^2\Bigg)\big|w^{l}\partial_{\tilde{\beta}}\mathbf{f}_2\big|_{\nu}.
		\end{align*}
		Based on the above analysis, we have completed the proof of this lemma.
	\end{proof}

	\section{The nonlinear collision operator}
	This section is primarily devoted to discussing derivative estimates in both space and velocity for the nonlinear collision operator $\bf{\Gamma}\big[f_1,f_2\big]$. 
	\begin{align*}
	&\partial_{\tilde{\beta}}^{\tilde{\alpha}} \Gamma^{\alpha \beta} \big[f^{\alpha}_1, f^{\beta}_2\big]\\
	 &\hspace{1.2cm}= 
		\partial_{\tilde{\beta}}^{\tilde{\alpha}} \Bigg( 
	\int_{\mathbb{R}^3} \int_{S^2} |u|^\gamma e^{-\frac{m^{\beta}|u+v|^2}{4}} 
	f^{\alpha}_1(v + \frac{2m^{\beta}}{m^{\alpha}+m^{\beta}}u_{\parallel}) \\
	&\hspace{4.8cm} \times f^{\beta}_2(v + u_\perp+\frac{m^{\beta}-m^{\alpha}}{m^{\alpha}+m^{\beta}}u_{\parallel}) b^{\alpha\beta}(\theta) \, du \, d\omega 
	\Bigg) \\
	&\hspace{1.9cm}- 	\partial_{\tilde{\beta}}^{\tilde{\alpha}} \Bigg( 
	\int_{\mathbb{R}^3} \int_{S^2} |u|^\gamma e^{-\frac{m^{\beta}|u+v|^2}{4}} 
	f^{\alpha}_1(v) f^{\beta}_2(v+u) b^{\alpha\beta}(\theta) \, du \, d\omega 
	\Bigg) \\
	&\hspace{1.2cm}= \sum_{\tilde{\beta}=\tilde{\beta}_0+\tilde{\beta}_1+\tilde{\beta}_2} 
	\sum_{\tilde{\alpha}=\tilde{\alpha}_1+\tilde{\alpha}_2} C_{\tilde{\beta}}^{\tilde{\beta}_0 \tilde{\beta}_1 \tilde{\beta}_2} C_{\tilde{\alpha}}^{\tilde{\alpha}_1 \tilde{\alpha}_2} 
	\Gamma^{\alpha \beta}_0 \big[\partial_{\tilde{\beta}_1}^{\tilde{\alpha}_1}f^{\alpha}_1, \partial_{\tilde{\beta}_2}^{\tilde{\alpha}_2}f^{\beta}_2\big]
	\end{align*}
	 where we have  changed a variable $v_*-v \rightarrow u$. Subsequently, by the product rule and an inverse substitution of variables,
	\begin{align}
	&\Gamma^{\alpha \beta}_0 \big[\partial_{\tilde{\beta}_1}^{\tilde{\alpha}_1}f^{\alpha}_1, \partial_{\tilde{\beta}_2}^{\tilde{\alpha}_2}f^{\beta}_2\big]\notag\\
		&\hspace{1.2cm}= \int_{\mathbb{R}^3} \int_{S^2} |v_* - v|^\gamma 
		\partial_{\tilde{\beta}_0}  \big(e^{-\frac{m^{\beta}|v_*|^2}{4}}\big) 
		\partial_{\tilde{\beta}_1}^{\tilde{\alpha}_1}f^{\alpha}_1 (v') \partial_{\tilde{\beta}_2}^{\tilde{\alpha}_2}f^{\beta}_2 (v_*') 
		b^{\alpha\beta}(\theta) \, dv_* \, d\omega\notag\\
		&\hspace{1.6cm}- \partial_{\tilde{\beta}_1}^{\tilde{\alpha}_1}f^{\alpha}_1(v) 
		\int_{\mathbb{R}^3} \int_{S^2} |u - v|^\gamma 
		\partial_{\tilde{\beta}_0}  \big(e^{-\frac{m^{\beta}|v_*|^2}{4}}\big) 
		\partial_{\tilde{\beta}_2}^{\tilde{\alpha}_2}f^{\beta}_2 (v_*)
		b^{\alpha\beta}(\theta) \, dv_* \, d\omega \notag\\
		&\hspace{1.2cm}=: \Gamma_{0,\text{gain}}^{\alpha \beta} - \Gamma_{0,\text{loss}}^{\alpha \beta}. \label{Dsocald57}
	\end{align}

	\begin{lemma}\label{estimatesofnonlinears}
		Let $-3 < \gamma < 0$, $N \geq 8$. The multiple indicators $\tilde{\beta}_0, \tilde{\beta}_1, \tilde{\beta}_2$ and $\tilde{\alpha}_1, \tilde{\alpha}_2$ satisfy $\tilde{\beta}_0 + \tilde{\beta}_1 + \tilde{\beta}_2 = \beta$ and $\tilde{\alpha}_1 + \tilde{\alpha}_2 = \tilde{\alpha}$. Suppose  that $|\tilde{\beta}|\leq l$.  If $|\tilde{\alpha}_1|+|\tilde{\beta}_1|\geq \frac{N}{2}$, then  there exists some constant $C$ such that 
		\begin{align}
		&	\Big| \left\langle w^{2l}\Gamma^{\alpha \beta}_0 [\partial_{\tilde{\beta}_1}^{\tilde{\alpha}_1}f^{\alpha}_1, \partial_{\tilde{\beta}_2}^{\tilde{\alpha}_2}f^{\beta}_2],  \partial_{\tilde{\beta}}^{\tilde{\alpha}}f^{\alpha}_3 \right\rangle_{L_{x,v}^2} \Big|\notag	\\
			& \hspace{0.4cm} \leq C \Bigg(\sum_{|\tilde{\alpha}_i|+|\tilde{\beta}_i|\leq N}\big\Vert w^{|\tilde{\beta}_i|}\partial_{\tilde{\beta}_i}^{\tilde{\alpha}_i}\mathbf{f}_2\big\Vert_{L^{2}_{x,v}} \Bigg)\big\Vert w^{l}\partial_{\tilde{\beta}_1}^{\tilde{\alpha}_1}\mathbf{f}_1\big\Vert_{\nu} \big\Vert w^{l}\partial_{\tilde{\beta}}^{\tilde{\alpha}}\mathbf{f}_3\big\Vert_{\nu}\label{lem31nonl3f} \\
			& \hspace{0.7cm} + C \Bigg(\sum_{|\tilde{\alpha}_i|+|\tilde{\beta}_i|\leq N}\big\Vert w^{|\tilde{\beta}_2|}\partial_{\tilde{\beta}_2}^{\tilde{\alpha}_i}\mathbf{f}_2\big\Vert_{L^{2}_{x,v}} \Bigg)\big\Vert w^{l-|\tilde{\beta}_2|}\partial_{\tilde{\beta}_1}^{\tilde{\alpha}_1}\mathbf{f}_1\big\Vert_{\nu} \big\Vert w^{l}\partial_{\tilde{\beta}}^{\tilde{\alpha}}\mathbf{f}_3\big\Vert_{\nu}.\notag
		\end{align}
		Moreover, if $|\tilde{\alpha}_1|+|\tilde{\beta}_1|\leq \frac{N}{2}$, then
		\begin{align}
			&	\Big| \left\langle w^{2l}\Gamma^{\alpha \beta}_0 [\partial_{\tilde{\beta}_1}^{\tilde{\alpha}_1}f^{\alpha}_1, \partial_{\tilde{\beta}_2}^{\tilde{\alpha}_2}f^{\beta}_2],  \partial_{\tilde{\beta}}^{\tilde{\alpha}}f^{\alpha}_3 \right\rangle_{L_{x,v}^2} \Big|	\notag\\
			& \hspace{0.4cm} \leq C \Bigg(\sum_{|\tilde{\alpha}_i|+|\tilde{\beta}_i|\leq N}\big\Vert w^{|\tilde{\beta}_i|}\partial_{\tilde{\beta}_i}^{\tilde{\alpha}_i}\mathbf{f}_1\big\Vert_{L^{2}_{x,v}} \Bigg)\big\Vert w^{l}\partial_{\tilde{\beta}_2}^{\tilde{\alpha}_2}\mathbf{f}_2\big\Vert_{\nu} \big\Vert w^{l}\partial_{\tilde{\beta}}^{\tilde{\alpha}}\mathbf{f}_3\big\Vert_{\nu}\label{lem31nonl3s} \\
			& \hspace{0.7cm} + C \Bigg(\sum_{|\tilde{\alpha}_i|+|\tilde{\beta}_i|\leq N}\big\Vert w^{|\tilde{\beta}_1|}\partial_{\tilde{\beta}_1}^{\tilde{\alpha}_i}\mathbf{f}_1\big\Vert_{L^{2}_{x,v}} \Bigg)\big\Vert w^{l-|\tilde{\beta}_1|}\partial_{\tilde{\beta}_2}^{\tilde{\alpha}_2}\mathbf{f}_2\big\Vert_{\nu} \big\Vert w^{l}\partial_{\tilde{\beta}}^{\tilde{\alpha}}\mathbf{f}_3\big\Vert_{\nu}.\notag
		\end{align}
		\end{lemma}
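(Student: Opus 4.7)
The plan is to decompose $\Gamma^{\alpha\beta}_0=\Gamma^{\alpha\beta}_{0,\mathrm{gain}}-\Gamma^{\alpha\beta}_{0,\mathrm{loss}}$ as in \eqref{Dsocald57} and bound each piece by a trilinear Cauchy--Schwarz argument of Guo type. The index constraints $|\tilde{\alpha}_1|+|\tilde{\beta}_1|\ge N/2$ versus $\le N/2$ are the standard Sobolev trichotomy: whichever of $\mathbf{f}_1,\mathbf{f}_2$ carries at most $N/2$ total derivatives also has at most $N/2+2\le N$ derivatives bounded in $L^2_{x,v}$, so by $H^2(\mathbb{T}^3)\hookrightarrow L^{\infty}(\mathbb{T}^3)$ it goes into an $L^{\infty}_xL^2_v$ slot. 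The other two factors are kept in $\|\cdot\|_\nu$, producing the norms displayed on the right of \eqref{lem31nonl3f} and \eqref{lem31nonl3s}.

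For the loss term $\Gamma^{\alpha\beta}_{0,\mathrm{loss}}$, the integrand factors pointwise as $\partial^{\tilde{\alpha}_1}_{\tilde{\beta}_1}f^\alpha_1(v)\cdot L(v)$, where $L(v)=\iint |v_*-v|^\gamma\,\partial_{\tilde{\beta}_0}\!\big(e^{-m^\beta|v_*|^2/4}\big)\,\partial^{\tilde{\alpha}_2}_{\tilde{\beta}_2}f^\beta_2(v_*)\,b^{\alpha\beta}(\theta)\,d\omega\,dv_*$. A Cauchy--Schwarz in $v_*$ together with the Gaussian decay of $\partial_{\tilde{\beta}_0}(e^{-m^\beta|v_*|^2/4})$ (which absorbs both the polynomial generated by the derivative and the singular factor $|v-v_*|^\gamma$ into $w^{1/2}(v)$) yields the pointwise bound $|L(v)|\lesssim w^{1/2}(v)\,|w^{|\tilde{\beta}_2|}\partial^{\tilde{\alpha}_2}_{\tilde{\beta}_2}f^\beta_2|_{L^2_v}$. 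A Cauchy--Schwarz in $(x,v)$ then gives the required estimate, pulling the low-derivative factor out in $L^{\infty}_x$ by Sobolev. For the gain term I would apply the pre-post collision change of variables $(v,v_*)\mapsto(v',v_*')$ with unit Jacobian and $|v-v_*|=|v'-v_*'|$ (Lemma \ref{lemma2.1}, see \eqref{ddstardstar}--\eqref{yipievvstar}) and then perform the classical Cauchy--Schwarz trick on the $(v,v_*,\omega)$ integrand: split the kernel $|v-v_*|^\gamma\,\partial_{\tilde{\beta}_0}(e^{-m^\beta|v_*|^2/4})$ into two balanced halves so that the pairing factors into the product of two integrals, each restorable via the inverse change of variable to a $\nu$-weighted $L^2_{x,v}$ norm of two of the three functions $\{\mathbf{f}_1,\mathbf{f}_2,\mathbf{f}_3\}$.

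The main obstacle will be the bookkeeping of the velocity weight $w^{2l}(v)$ across the gain-term change of variables, which is what produces the two distinct summands on the right-hand sides of \eqref{lem31nonl3f} and \eqref{lem31nonl3s}. Using the bound $(1+|v|)\lesssim (1+|v'|)(1+|v_*'|)$, which follows from $m^\alpha|v|^2\le m^\alpha|v'|^2+m^\beta|v_*'|^2$ via energy conservation (with mass-ratio constants), the factor $w^l(v)$ can be split as a product of weights at $v'$ and $v_*'$ in two natural ways: either concentrating $w^{|\tilde{\beta}_2|}$ on the $L^{\infty}_x$ factor $\mathbf{f}_2$ and leaving $w^l$ on the high-derivative $L^2_\nu$ factor $\mathbf{f}_1$ (first summand), or redistributing so that the weight on $\mathbf{f}_1$ becomes $w^{l-|\tilde{\beta}_2|}$ (second summand); the extra polynomial in $v_*$ coming from $\partial_{\tilde{\beta}_0}(e^{-m^\beta|v_*|^2/4})$ after the change of variable is precisely what generates the $w^{|\tilde{\beta}_2|}$ weight on the $L^{\infty}_x$ factor. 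The symmetric argument (with the roles of $\mathbf{f}_1$ and $\mathbf{f}_2$ swapped) yields \eqref{lem31nonl3s} in the regime $|\tilde{\alpha}_1|+|\tilde{\beta}_1|\le N/2$. Collecting the loss and gain contributions in each regime and summing over the multi-indices $\tilde{\beta}_0+\tilde{\beta}_1+\tilde{\beta}_2=\tilde{\beta}$, $\tilde{\alpha}_1+\tilde{\alpha}_2=\tilde{\alpha}$ completes the proof.
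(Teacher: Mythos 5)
Your overall architecture matches the paper's: split $\Gamma^{\alpha\beta}_0$ into gain and loss, run the Sobolev trichotomy to decide which factor goes into the $L^\infty_x$ slot, use the pre--post collisional change of variables with unit Jacobian for the gain term, and close with Cauchy--Schwarz. However, two steps as you state them would fail. First, the weight-distribution inequality points the wrong way for soft potentials. Since $w(v)=(1+|v|)^\gamma$ with $\gamma<0$, the bound $(1+|v|)\lesssim(1+|v'|)(1+|v_*'|)$ raised to the power $l\gamma<0$ gives $w^{l}(v)\gtrsim w^{a}(v')\,w^{b}(v_*')$, i.e.\ a \emph{lower} bound on $w^l(v)$, which is useless for controlling the gain term. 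What is actually needed is $w^{2l}(v)\le C\,w^{2l-2|\tilde{\beta}_1|}(v_*')\,w^{2|\tilde{\beta}_1|}(v')$, and this requires the \emph{upper} bounds $(1+|v'|),(1+|v_*'|)\le C(1+|v|)$, which hold only after restricting to the region $|v_*|\le |v|/2$; the complementary region $|v_*|\ge|v|/2$ must be treated separately by transferring the Gaussian decay from $v_*$ to $v$ (as in \eqref{socaled6262d} and the surrounding argument). Relatedly, the $w^{|\tilde{\beta}_2|}$ weight on the $L^\infty_x$ factor does not come from the polynomial generated by $\partial_{\tilde{\beta}_0}\big(e^{-m^\beta|v_*|^2/4}\big)$ (that polynomial is simply reabsorbed into the Gaussian); it comes from the redistribution $w^l=w^{l-|\tilde{\beta}_1|}\cdot w^{|\tilde{\beta}_1|}$ matched against the weights built into the energy norms.

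Second, your loss-term argument only covers the regime in which the factor evaluated at $v_*$ carries few derivatives. In the opposite regime ($|\tilde{\alpha}_1|+|\tilde{\beta}_1|\le N/2$, so $\partial_{\tilde{\beta}_2}^{\tilde{\alpha}_2}f^\beta_2$ must remain in $\|\cdot\|_\nu$ and $\partial_{\tilde{\beta}_1}^{\tilde{\alpha}_1}f^\alpha_1$ goes to $L^\infty_xL^2_v$), a single global Cauchy--Schwarz in $v_*$ against the Gaussian does not suffice: the kernel $|v-v_*|^\gamma$ with $\gamma$ near $-3$ is critically singular when $|v|\le 1$ and $v_*$ is close to $v$. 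The paper handles this with the decomposition $D_1=(|v_*-v|\le|v|/2)$, $D_2=(|v_*-v|\ge|v|/2)\cap(|v|\ge1)$, $D_3=(|v_*-v|\ge|v|/2)\cap(|v|\le1)$, and it is precisely in $D_3$ that the hypothesis $\gamma>-3$ is used to make $\int|v-v_*|^\gamma e^{-c|v_*|^2}dv_*$ finite. A corresponding split ($|v|\ge1$ versus $|v|\le1$ inside $|v_*|\le|v|/2$) is also needed for the gain term. Without these domain decompositions the singularity is not actually controlled, so you should add them before the argument closes.
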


		\begin{proof}
		First we consider the loss term $\Gamma_{0,\text{loss}}^{\alpha \beta}$. Since  $\partial_{\tilde{\beta}_0} \big(e^{-\frac{m^{\beta}|v_*|^2}{4}}\big)  \leq C  e^{-\frac{m^{\beta}|v_*|^2}{8}} $, then we have
		\begin{align*}
			&\int_{\mathbb{R}^3}  |v_* - v|^\gamma 
			\big| \partial_{\tilde{\beta}_0}  \big(e^{-\frac{m^{\beta}|v_*|^2}{4}}\big) 
			 \partial_{\tilde{\beta}_2}^{\tilde{\alpha}_2}f^{\beta}_2 (x,v_*)\big| 
			 \, dv_* \\
			&\hspace{0.5cm} \leq C \Big( \int_{\mathbb{R}^3} |v_* - v|^{\gamma} e^{-\frac{m^{\beta}|v_*|^2}{8}} \big| \partial_{\tilde{\beta}_2}^{\tilde{\alpha}_2}f^{\beta}_2 (x,v_*) \big|^2 dv_* \Big)^{1/2} 
			 \Big( \int_{\mathbb{R}^3} |v_* - v|^{\gamma} e^{-\frac{m^{\beta}|v_*|^2}{8}} dv_* \Big)^{1/2} \\
			&\hspace{0.5cm} \leq C \sup_{x,v_*} \Big| e^{-\frac{m^{\beta}|v_*|^2}{8}} \partial_{\tilde{\beta}_2}^{\tilde{\alpha}_2}f^{\beta}_2 (x,v_*) \Big| \Big( \int_{\mathbb{R}^3} |v_* - v|^{\gamma} e^{-\frac{m^{\beta}|v_*|^2}{8}} dv_* \Big)^{1/2} \\
			&\hspace{0.5cm} \leq C \sup_{x,v_*} \Big| e^{-\frac{m^{\beta}|v_*|^2}{8}} \partial_{\tilde{\beta}_2}^{\tilde{\alpha}_2}f^{\beta}_2 (x,v_*) \Big| \times \big(1 + |v|\big)^\gamma
		\end{align*}
		Notice that $|\tilde{\alpha}_1|+|\tilde{\alpha}_2|\leq \frac{N}{2}$, $N\geq 8$, the Sobolev embedding theorem implies $H^4(T^3 \times \mathbb{R}^3) \subset L^\infty
		$, then
		\begin{equation*}
			 \sup_{x,v_*} \Big| e^{-\frac{m^{\beta}|v_*|^2}{8}} \partial_{\tilde{\beta}_2}^{\tilde{\alpha}_2}f^{\beta}_2 (x,v_*) \Big| \leq C \sum_{|\tilde{\alpha}_i|+|\tilde{\beta}_i|\leq N}\big\Vert w^{|\tilde{\beta}_i|}\partial_{\tilde{\beta}_i}^{\tilde{\alpha}_i}\mathbf{f}_2\big\Vert_{L^{2}_{x,v}}
		\end{equation*}
		Since $b^{\alpha\beta}(\theta)\leq C$, it follows that
		\begin{align*}
			&	\Big| \left\langle w^{2l} \Gamma_{0,\text{loss}}^{\alpha \beta} [\partial_{\tilde{\beta}_1}^{\tilde{\alpha}_1}f^{\alpha}_1, \partial_{\tilde{\beta}_2}^{\tilde{\alpha}_2}f^{\beta}_2],  \partial_{\tilde{\beta}}^{\tilde{\alpha}}f^{\alpha}_3 \right\rangle_{L_{x,v}^2} \Big|	\\
			& \hspace{0.5cm} \leq C \iint_{\mathbb{R}^3\times\mathbb{T}^3} (1 + |v|)^\gamma w^{2l}(v) \big|\partial_{\tilde{\beta}_1}^{\tilde{\alpha}_1}f^{\alpha}_1 (x,v) \partial_{\tilde{\beta}}^{\tilde{\alpha}}f^{\alpha}_3 (x,v)\big|dv\, dx\\
			& \hspace{6.3cm}\times \Bigg(\sum_{|\tilde{\alpha}_i|+|\tilde{\beta}_i|\leq N}\big\Vert w^{|\tilde{\beta}_i|}\partial_{\tilde{\beta}_i}^{\tilde{\alpha}_i}\mathbf{f}_2\big\Vert_{L^{2}_{x,v}} \Bigg)\\
			& \hspace{0.5cm} \leq C \Bigg(\sum_{|\tilde{\alpha}_i|+|\tilde{\beta}_i|\leq N}\big\Vert w^{|\tilde{\beta}_i|}\partial_{\tilde{\beta}_i}^{\tilde{\alpha}_i}\mathbf{f}_2\big\Vert_{L^{2}_{x,v}} \Bigg)\big\Vert w^{l}\partial_{\tilde{\beta}_1}^{\tilde{\alpha}_1}\mathbf{f}_1\big\Vert_{\nu} \big\Vert w^{l}\partial_{\tilde{\beta}}^{\tilde{\alpha}}\mathbf{f}_3\big\Vert_{\nu}
		\end{align*}
		The case of $|\tilde{\alpha}_1|+|\tilde{\alpha}_1|\leq \frac{N}{2}$ is thereby proven for $\Gamma_{0,\text{loss}}^{\alpha \beta}$.
		
		Next we turn to the case of $|\tilde{\alpha}_2|+|\tilde{\beta}_2|\leq \frac{N}{2}$, and split the integration domain into three parts 
		\begin{align*}
			&D_1=\big(|v_*-v|\leq \frac{|v|}{2}\big),\hspace{1.2cm} D_2=\Big[\big(|v_*-v|\geq \frac{|v|}{2}\big)\cap \big(|v|\geq 1\big)\Big],\\
			& D_3=\Big[\big(|v_*-v|\geq \frac{|v|}{2}\big)\cap \big(|v|\leq 1\big)\Big].
		\end{align*}
		For the first part $(v_*,v)\in D_1$, $|v_*|\geq |v|-|v_*-v|\geq \frac{|v|}{2}$, which gives 
		\begin{equation*}
			e^{-\frac{m^{\beta}|v_*|^2}{8}} \leq e^{-\frac{m^{\beta}|v_*|^2}{16}}e^{-\frac{m^{\beta}|v|^2}{64}}.
		\end{equation*}
		Then, $\left\langle w^{2l}\Gamma^{\alpha \beta}_0 \big[\partial_{\tilde{\beta}_1}^{\tilde{\alpha}_1}f^{\alpha}_1, \partial_{\tilde{\beta}_2}^{\tilde{\alpha}_2}f^{\beta}_2\big],  \partial_{\tilde{\beta}}^{\tilde{\alpha}}f^{\alpha}_3 \right\rangle_{L_{x,v}^2}$ in $D_1$ can be bounded by
		\begin{align*}
			&C \iiint_{(\mathbb{R}^3)^2\times\mathbb{T}^3} \left| v_* - v \right|^\gamma e^{-\frac{m^{\beta}|v_*|^2}{16}}e^{-\frac{m^{\beta}|v|^2}{64}} w^{2l}(v)\\ 
			& \hspace{5cm}\times\big|\partial_{\tilde{\beta}_1}^{\tilde{\alpha}_1}f^{\alpha}_1 (x,v) \partial_{\tilde{\beta}_2}^{\tilde{\alpha}_2}f^{\beta}_2(x,v_*) \partial_{\tilde{\beta}}^{\tilde{\alpha}}f^{\alpha}_3 (x,v)\big| \,dv_*\,dv\,dx \\
			&\hspace{0.6cm}\leq C \Bigg( 
			\iiint_{(\mathbb{R}^3)^2\times\mathbb{T}^3} \left| v_* - v \right|^\gamma e^{-\frac{m^{\beta}|v_*|^2}{16}}e^{-\frac{m^{\beta}|v|^2}{64}} w^{2l}(v) 
			\big| \partial_{\tilde{\beta}_2}^{\tilde{\alpha}_2}f^{\beta}_2(x,v_*) \big|^2 \,dv_*\,dv\,dx 
			\Bigg)^{1/2} \\
			&\hspace{2cm} \times \Bigg( 
			\iiint_{(\mathbb{R}^3)^2\times\mathbb{T}^3} \left| v_* - v \right|^\gamma e^{-\frac{m^{\beta}|v_*|^2}{16}}e^{-\frac{m^{\beta}|v|^2}{64}} w^{2l}(v) \\
			&\hspace{6.3cm}\times\big| \partial_{\tilde{\beta}_1}^{\tilde{\alpha}_1}f^{\alpha}_1 (x,v) \big|^2 
			\big| \partial_{\tilde{\beta}}^{\tilde{\alpha}}f^{\alpha}_3 (x,v) \big|^2 \,dv_*\,dv\,dx 
			\Bigg)^{1/2}.
		\end{align*}
		‌Integration over $v \in \mathbb{R}^3$
		 in the first factor establishes an upper bound
		 \begin{align*}
		 &C \Bigg( \iint_{\mathbb{R}^3\times\mathbb{T}^3} \Big[ \int_{\mathbb{R}^3} |v_* - v|^\gamma e^{-\frac{m^{\beta}|v|^2}{32}} dv_* \Big] e^{-\frac{m^{\beta}|v|^2}{64}} w^{2l} \big| \partial_{\tilde{\beta}_1}^{\tilde{\alpha}_1}f^{\alpha}_1 (x,v) \big|^2 
		 \big| \partial_{\tilde{\beta}}^{\tilde{\alpha}}f^{\alpha}_3 (x,v) \big|^2 dv dx \Bigg)^{1/2}\\
		 &\hspace{10cm}\times\big\| w^l \partial_{\tilde{\beta}_2}^{\tilde{\alpha}_2}f^{\beta}_2 \big\|_{\nu}\\
		 &\hspace{0.5cm}\leq C \Bigg( \iint_{\mathbb{R}^3\times\mathbb{T}^3} e^{-\frac{m^{\beta}|v|^2}{64}} w^{2l} (1 + |v|)^\gamma \big| \partial_{\tilde{\beta}_1}^{\tilde{\alpha}_1}f^{\alpha}_1 (x,v) \big|^2 
		 \big| \partial_{\tilde{\beta}}^{\tilde{\alpha}}f^{\alpha}_3 (x,v) \big|^2 dv dx \Bigg)^{1/2}\\
		 &\hspace{10cm}\times\big\| w^l \partial_{\tilde{\beta}_2}^{\tilde{\alpha}_2}f^{\beta}_2 \big\|_{\nu}
		 \end{align*}
		 Once again applying the Sobolev embedding theorem,  
		 \begin{equation*}
		 	\sup_{x,v} \Big| e^{-\frac{m^{\beta}|v_*|^2}{64}} \partial_{\tilde{\beta}_1}^{\tilde{\alpha}_1}f^{\alpha}_1 (x,v) \Big| \leq C \sum_{|\tilde{\alpha}_i|+|\tilde{\beta}_i|\leq N}\big\Vert w^{|\tilde{\beta}_i|}\partial_{\tilde{\beta}_i}^{\tilde{\alpha}_i}\mathbf{f}_1\big\Vert_{L^{2}_{x,v}}
		 \end{equation*}
		 Thus we conclude the  case in $D_1=\big(|v_*-v|\leq \frac{|v|}{2}\big)$.
		 
		 Next consider $\Gamma^0_{\text{loss}}$ in $D_2=\Big[\big(|v_*-v|\geq \frac{|v|}{2}\big)\cap \big(|v|\geq 1\big)\Big]$. Since $\gamma < 0$, this implies $|u - v|^\gamma \le C|v|^\gamma$, and
		 \begin{align*}
		 &\int_{D_2} |v_* - v|^\gamma | \partial_{\tilde{\beta}_0} \big( e^{-\frac{m^{\beta}|v_*|^2}{4}}\big)  \partial_{\tilde{\beta}_1}^{\tilde{\alpha}_1}f^{\alpha}_1 (x,v)w^{2l} \partial_{\tilde{\beta}_2}^{\tilde{\alpha}_2}f^{\beta}_2(x,v_*) \partial_{\tilde{\beta}}^{\tilde{\alpha}}f^{\alpha}_3 (x,v)| dv_* dv dx \\
		 &\hspace{0.3cm}\leq C \int_{\mathbb{T}^3} \left(\int_{\mathbb{R}^3} e^{-\frac{m^{\beta}|v_*|^2}{8}} \big| \partial_{\tilde{\beta}_2}^{\tilde{\alpha}_2}f^{\beta}_2(x,v_*) \big| du \right) \left( \int_{|v| \ge 1} |v|^\gamma w^{2l} \big|\partial_{\tilde{\beta}_1}^{\tilde{\alpha}_1}f^{\alpha}_1 (x,v)  \partial_{\tilde{\beta}}^{\tilde{\alpha}}f^{\alpha}_3 (x,v)\big| dv \right) dx\\
		 &\hspace{0.3cm}\leq C \int_{\mathbb{T}^3} \left( \int_{\mathbb{R}^3} (1 + |v|)^\gamma w^{2l} | \partial_{\tilde{\beta}_1}^{\tilde{\alpha}_1}f^{\alpha}_1 |^2 dv \right)^{1/2}
		  \left( \int_{|v| \ge 1} |v|^\gamma w^{2l} | \partial_{\tilde{\beta}}^{\tilde{\alpha}}f^{\alpha}_3 |^2 dv \right)^{1/2}\big| w^l \partial_{\tilde{\beta}_2}^{\tilde{\alpha}_2}f^{\beta}_2 \big|_{\nu} dx  .
		\end{align*}
		 Since $|\alpha_2| + |\beta_2| \le N/2$, $N \ge 8$, and $W^{4,1}(\mathbb{T}^3) \subseteq L^\infty$,
		 $$
		 \sup_x \int_{\mathbb{R}^3} (1 + |v|)^\gamma w^{2l} | \partial_{\tilde{\beta}_1}^{\tilde{\alpha}_1}f^{\alpha}_1(x, v) |^2 dv \le C \sum_{|\alpha_i| + |\beta_1| \le N} \big\| w^l \partial_{\tilde{\beta}_1}^{\tilde{\alpha}_i}f^{\alpha}_1 \big\|_{L^{2}_{x,v}}^2. 
		 $$
		 Therefore, the part for $D_2=\Big[\big(|v_*-v|\geq \frac{|v|}{2}\big)\cap \big(|v|\geq 1\big)\Big]$ is bounded by
		 $$
		  C \Bigg( \sum_{|\tilde{\alpha}_i| + |\tilde{\beta}_1| \le N} \big\| w^l \partial_{\tilde{\beta}_1}^{\tilde{\alpha}_i}\mathbf{f}_1 \big\|_{L^{2}_{x,v}} \Bigg) \big\| w^l \partial_{\tilde{\beta}_2}^{\tilde{\alpha}_2}\mathbf{f}_2 \big\|_{\nu} \big\| w^l \partial_{\tilde{\beta}}^{\tilde{\alpha}}\mathbf{f}_3 \big\|_{\nu}.
		 $$
		 
		 Now we consider the part $D_3=\Big[\big(|v_*-v|\geq \frac{|v|}{2}\big)\cap \big(|v|\leq 1\big)\Big]$, it holds that $|u - v|^\gamma \le C|v|^\gamma$ and
		 \begin{align*}
		 	&\iiint_{D_3} |v_* - v|^\gamma | \partial_{\tilde{\beta}_0} \big( e^{-\frac{m^{\beta}|v_*|^2}{4}}\big) w^{2l} \big|\partial_{\tilde{\beta}_1}^{\tilde{\alpha}_1}f^{\alpha}_1 (x,v) \partial_{\tilde{\beta}_2}^{\tilde{\alpha}_2}f^{\beta}_2(x,v_*) \partial_{\tilde{\beta}}^{\tilde{\alpha}}f^{\alpha}_3 (x,v)\big| dv_* dv dx \\
		 	&\hspace{0.5cm}\leq C \int_{\mathbb{T}^3} \left( \int_{\mathbb{R}^3} |v_* - v|^\gamma e^{-\frac{m^{\beta}|v_*|^2}{8}} | \partial_{\tilde{\beta}_2}^{\tilde{\alpha}_2}f^{\beta}_2(x,v_*) | du \right) \\
		 	&\hspace{4.8cm}\quad \times \left( \int_{|v| \le 1} |v|^\gamma w^{2l} |\partial_{\tilde{\beta}_1}^{\tilde{\alpha}_1}f^{\alpha}_1 (x,v)  \partial_{\tilde{\beta}}^{\tilde{\alpha}}f^{\alpha}_3 (x,v) | dv \right) dx \\
		 	&\hspace{0.5cm}\leq C \int_{\mathbb{T}^3} \left( \int_{\mathbb{R}^3} |v_* - v|^\gamma e^{-\frac{m^{\beta}|v_*|^2}{8}} du \right)^{1/2} \left( \int_{\mathbb{R}^3} e^{-\frac{m^{\beta}|v_*|^2}{8}} | \partial_{\tilde{\beta}_2}^{\tilde{\alpha}_2}f^{\beta}_2(x,v_*) |^2 du \right)^{1/2} \\
		 	&\hspace{3.4cm} \times \left( \int_{|v| \le 1} |v|^\gamma w^{2l} | \partial_{\tilde{\beta}_1}^{\tilde{\alpha}_1}f^{\alpha}_1 |^2 dv \right)^{1/2} \left( \int_{|v| \le 1} w^{2l} | \partial_{\tilde{\beta}}^{\tilde{\alpha}}f^{\alpha}_3 |^2 dv \right)^{1/2} dx \\
		 	&\hspace{0.5cm}\leq C \int_{\mathbb{T}^3} \big| w^\theta \partial_{\tilde{\beta}_2}^{\tilde{\alpha}_2}f^{\beta}_2 \big|_{\nu} \left( \int_{|v| \le 1} |v|^\gamma w^{2l} | \partial_{\tilde{\beta}_1}^{\tilde{\alpha}_1}f^{\alpha}_1 |^2 dv \right)^{1/2} \left( \int_{|v| \le 1} w^{2l} | \partial_{\tilde{\beta}}^{\tilde{\alpha}}f^{\alpha}_3 |^2 dv \right)^{1/2} dx.
		 \end{align*}
		 Since $|\tilde{\alpha}_1| + |\tilde{\beta}_1| \leq \frac{N}{2}$ and $H^4(\mathbb{T}^3 \times \mathbb{R}^3) \subset L^\infty$,
		 $$
		 \int_{|v| \le 1} |v|^\gamma w^{2l} | \partial_{\tilde{\beta}_1}^{\tilde{\alpha}_1}f^{\alpha}_1 |^2 dv \le C \sup_{|v| \le 1, x \in \mathbb{T}^3} | \partial_{\tilde{\beta}_1}^{\tilde{\alpha}_1}f^{\alpha}_1 |^2 \le C \sum_{|\tilde{\alpha}_i| + |\tilde{\beta}_i| \le N} \big\| w^{|\tilde{\beta}_i|}  \partial_{\tilde{\beta}_i}^{\tilde{\alpha}_i}f^{\alpha}_1  \big\|_{L^{2}_{x,v}}^2.
		 $$
		 From the condition $\gamma > -3$, we can establish the following bound for the integral in $D_3$
		 \begin{equation*}
		 \iiint_{D_3} \leq C \Bigg( \sum_{|\tilde{\alpha}_i| + |\tilde{\beta}_i| \le N} \big\| w^{|\tilde{\beta}_i|}  \partial_{\tilde{\beta}_i}^{\tilde{\alpha}_i}\mathbf{f}_1  \big\|_{L^{2}_{x,v}} \Bigg) \big\| w^l \partial_{\tilde{\beta}_2}^{\tilde{\alpha}_2}\mathbf{f}_2 \big\|_{\nu} \big\| w^l \partial_{\tilde{\beta}}^{\tilde{\alpha}}\mathbf{f}_3 \big\|_{\nu}.
		\end{equation*}
		 This completes the estimate for  $\Gamma_{0,\text{loss}}^{\alpha \beta}$.
		 
		 Next, we turn to estimate the gain term $\Gamma_{0,\text{gain}}^{\alpha \beta}$, for which the integration domain $(v_*, v)$ is split into two parts:
		 $$
		 \big(|v_*| \ge \frac{|v|}{2}\big) \cup \big(|v_*| \le \frac{|v|}{2}\big).
		 $$
		 For the first part $\big(|v_*| \ge \frac{|v|}{2}\big)$,
		 \begin{equation*}
		 	e^{-\frac{m^{\beta}|v_*|^2}{8}} \leq e^{-\frac{m^{\beta}|v_*|^2}{16}}e^{-\frac{m^{\beta}|v|^2}{64}}.
		 \end{equation*}
		 Then,  $\left\langle w^{2l}\Gamma_{0,\text{gain}}^{\alpha \beta} [\partial_{\tilde{\beta}_1}^{\tilde{\alpha}_1}f^{\alpha}_1, \partial_{\tilde{\beta}_2}^{\tilde{\alpha}_2}f^{\beta}_2],  \partial_{\tilde{\beta}}^{\tilde{\alpha}}f^{\alpha}_3\right\rangle_{L_{x,v}^2}$ in this region is bounded by
		 $$
		 \begin{aligned}
		 	&\iiiint_{|v_*| \geq \frac{|v|}{2}} b^{\alpha\beta}(\theta)|v_* - v|^\gamma e^{-\frac{m^{\beta}|v_*|^2}{16}}e^{-\frac{m^{\beta}|v|^2}{64}} w^{2l} \big|\partial_{\tilde{\beta}_1}^{\tilde{\alpha}_1}f^{\alpha}_1 (v') \partial_{\tilde{\beta}_2}^{\tilde{\alpha}_2}f^{\beta}_2(v_*') \partial_{\tilde{\beta}}^{\tilde{\alpha}}f^{\alpha}_3 (v)\big| d\omega dv_*dvdx \\
		 	&\hspace{0.4cm}\leq C \left( \iiint_{(\mathbb{R}^3)^2\times\mathbb{T}^3} |v_* - v|^\gamma e^{-\frac{m^{\beta}|v_*|^2}{16}}e^{-\frac{m^{\beta}|v|^2}{64}} \big|\partial_{\tilde{\beta}_1}^{\tilde{\alpha}_1}f^{\alpha}_1 (v')\big|^2 \big|\partial_{\tilde{\beta}_2}^{\tilde{\alpha}_2}f^{\beta}_2(v_*')\big|^2 dv_*dvdx\right)^{1/2} \\
		 	&\hspace{2.5cm}\quad \times \left( \iiint_{(\mathbb{R}^3)^2\times\mathbb{T}^3} |v_* - v|^\gamma e^{-\frac{m^{\beta}|v_*|^2}{16}}e^{-\frac{m^{\beta}|v|^2}{64}} w^{2l}(v) \big|\partial_{\tilde{\beta}}^{\tilde{\alpha}}f^{\alpha}_3 (v)\big|^2 dv_*dvdx\right)^{1/2} \\
		 	&\hspace{0.4cm}\leq C \left( \iiint_{(\mathbb{R}^3)^2\times\mathbb{T}^3} |v_*' - v'|^\gamma e^{-\frac{m^{\beta}|v_*'|^2}{64}}e^{-\frac{m^{\beta}|v'|^2}{64}} \big|\partial_{\tilde{\beta}_1}^{\tilde{\alpha}_1}f^{\alpha}_1 (v')\big|^2 \big|\partial_{\tilde{\beta}_2}^{\tilde{\alpha}_2}f^{\beta}_2(v_*')\big|^2 dv_*'dv'dx \right)^{1/2} \\
		 	 &\hspace{11.1cm}\times\left\| w^l \partial_{\tilde{\beta}}^{\tilde{\alpha}}f^{\alpha}_3 (v)\right\|_{\nu}.
		 \end{aligned}
		 $$
	Since $l\geq0$ and $b^{\alpha\beta}\leq C$, we first  integrate with respect to $\omega$  in the first line above. Subsequently, we change variables  $(v_*,v)\rightarrow(v_*,v)$ in the second line.
	Based on observation, $f^{\alpha}_1$ and $f^{\beta}_2$ are symmetric in the second line.  Without loss of generality, we can assume that $|\tilde{\alpha}_1| + |\tilde{\beta}_1| \geq \frac{N}{2}$. Then it yields that
	 $$
	\sup_{x,v_*'} \Big\{ e^{-\frac{m^{\beta}|v_*'|^2}{128}} |\partial_{\tilde{\beta}_2}^{\tilde{\alpha}_2}f^{\beta}_2(v_*')|^2 \Big\} \leq C \sum_{|\tilde{\alpha}_i| + |\tilde{\beta}_i| \le N} \big\| w^{|\tilde{\beta}_i|} \partial_{\tilde{\beta}_i}^{\tilde{\alpha}_i} f^{\beta}_2 \big\|_{L^{2}_{x,v}}^2. 
	$$
	As a result, first integrating over $v_*$ in the first factor leads to
	\begin{align*}
		&\int_{\mathbb{R}^3} |v_*' - v'|^\gamma e^{-\frac{m^{\beta}|v_*'|^2}{64}} \big|\partial_{\tilde{\beta}_2}^{\tilde{\alpha}_2}f^{\beta}_2(v_*')\big|^2 dv_*' \\
		&\hspace{0.4cm} \leq C \Bigg(\sum_{|\tilde{\alpha}_i| + |\tilde{\beta}_i| \le N} \big\| w^{|\tilde{\beta}_i|} \partial_{\tilde{\beta}_i}^{\tilde{\alpha}_i} f^{\beta}_2 \big\|_{L^{2}_{x,v}}^2\Bigg) \int_{\mathbb{R}^3} |v_*' - v'|^\gamma e^{-\frac{m^{\beta}|v_*'|^2}{128}} dv_*' \\
		&\hspace{0.4cm}\leq C \sum_{|\tilde{\alpha}_i| + |\tilde{\beta}_i| \le N} \big\| w^{|\tilde{\beta}_i|} \partial_{\tilde{\beta}_i}^{\tilde{\alpha}_i} f^{\beta}_2 \big\|_{L^{2}_{x,v}}^2.
	\end{align*}
	and the first part in $\{|v_*| \geq \frac{|v|}{2}\}$  is bounded by
	 \begin{equation*}
	\int_{|v_*| \ge \frac{|v|}{2}} \leq C \Bigg( \sum_{|\tilde{\alpha}_i| + |\tilde{\beta}_i| \le N} \big\| w^{|\tilde{\beta}_i|}  \partial_{\tilde{\beta}_i}^{\tilde{\alpha}_i}\mathbf{f}_2  \big\|_{L^{2}_{x,v}} \Bigg) \big\| w^l \partial_{\tilde{\beta}_1}^{\tilde{\alpha}_1}\mathbf{f}_1 \big\|_{\nu} \big\| w^l \partial_{\tilde{\beta}}^{\tilde{\alpha}}\mathbf{f}_3 \big\|_{\nu}
	\end{equation*}

	Now we consider the second part $\big(|u| \leq \frac{|v|}{2}\big)$. If $|v - v_*| < \frac{|v|}{2}$, then we will get $|v_*| \ge |v| - |v - v_*| > \frac{|v|}{2}$. Therefore, 
	$$
	|v - v_*| \geq \frac{|v|}{2}.
	$$
	If further 
	$$
	|v| \ge 1,
	$$
	then  $\left\langle w^{2l} \Gamma_{0,\text{gain}}^{\alpha \beta} [\partial_{\tilde{\beta}_1}^{\tilde{\alpha}_1}f^{\alpha}_1, \partial_{\tilde{\beta}_2}^{\tilde{\alpha}_2}f^{\beta}_2]  \partial_{\tilde{\beta}}^{\tilde{\alpha}}f^{\alpha}_3\right\rangle_{L_{x,v}^2}$  is bounded by 

	\begin{align*}
		&\iiiint_{D_2\cap \{|u|\leq |v|\}} w^{2l} |v_* - v|^\gamma|\partial_{\tilde{\beta}_0}  \big(e^{-\frac{m^{\beta}|v_*|^2}{4}}\big) b^{\alpha\beta}(\theta)\\
		&\hspace{6cm}\times\big|\partial_{\tilde{\beta}_1}^{\tilde{\alpha}_1}f^{\alpha}_1 (v') \partial_{\tilde{\beta}_2}^{\tilde{\alpha}_2}f^{\beta}_2 (v_*') \partial_{\tilde{\beta}}^{\tilde{\alpha}}f^{\alpha}_3 (v)\big|
		 d \omega dv_* dv dx \\
		&\hspace{0.5cm}\le C \left( \iiint_{|v| \geq 1} |v|^\gamma e^{-\frac{m^{\beta}|v_*|^2}{4}} w^{2l} \big| \partial_{\tilde{\beta}_1}^{\tilde{\alpha}_1}f^{\alpha}_1 (v') \partial_{\tilde{\beta}_2}^{\tilde{\alpha}_2}f^{\beta}_2 (v_*') \big|^2 dv_* dv dx\right)^{1/2} \\
		&\hspace{5.5cm}\quad \times \left( \iiint_{|v| \ge 1} |v|^\gamma w^{2l} | \partial_{\tilde{\beta}}^{\tilde{\alpha}}f^{\alpha}_3 (v) |^2 dv_* dv dx \right)^{1/2} \\
		&\hspace{0.5cm}\leq C \left( \iiint_{|v| \ge 1} |v|^\gamma w^{2l}(v) \big| \partial_{\tilde{\beta}_1}^{\tilde{\alpha}_1}f^{\alpha}_1 (v') \partial_{\tilde{\beta}_2}^{\tilde{\alpha}_2}f^{\beta}_2 (v_*') \big|^2 dv_*'dv'dx \right)^{1/2} \big\| w^l \partial_{\tilde{\beta}}^{\tilde{\alpha}}f^{\alpha}_3 \big\|_{\nu},
	\end{align*}
	where we have used the fact that $|v_* - v|^\gamma \le 2^{-\gamma}|v|^\gamma$ and $dv_*'dv'=dv_*dv$. To estimate the first factor,  since $|v_*| \leq \frac{|v|}{2}$, from \eqref{F1vuvusacl}, we obtain
	\begin{equation*}
	|v_*'| + |v'| \le C(|v_*| + |v|) \le C|v|.
	\end{equation*}
	The condition $-3<\gamma < 0$, implies
	\begin{equation}\label{socaled6262d}
		|v|^\gamma \le C|v_*'|^\gamma,\ |v|^\gamma \le C|v'|^\gamma,
	\end{equation}
	and $w^{2l}(v) \le C \min\big\{w^{2l}(v_*'), w^{2l}(v')\big\}$, $\theta \ge 0$. Therefore, it holds that 
	\begin{equation*}
	\begin{aligned}
		&\iiint_{|v| \ge 1} |v|^\gamma w^{2l}(v) | \partial_{\tilde{\beta}_1}^{\tilde{\alpha}_1}f^{\alpha}_1 (v') \partial_{\tilde{\beta}_2}^{\tilde{\alpha}_2}f^{\beta}_2 (v_*') |^2 dv_*'dv'dx \\
		&\hspace{0.4cm}\le C \iiint_{|v| \ge 1} \min\big\{|v'|^\gamma, |v_*'|^\gamma, 1\big\}\times \min\big\{w^{2l}(v'), w^{2l}(v_*')\big\} \\
	 &\hspace{6.5cm}\times  \big| \partial_{\tilde{\beta}_1}^{\tilde{\alpha}_1}f^{\alpha}_1 (v')\big|^2 \big|\partial_{\tilde{\beta}_2}^{\tilde{\alpha}_2}f^{\beta}_2 (v_*') \big|^2 dv_*'dv'dx.
	\end{aligned}
	\end{equation*}
	On the other hand, we can also  assume $|\tilde{\alpha}_1| + |\tilde{\beta}_1| \leq \frac{N}{2}$, which gives
	\begin{align*}
		&\iiint_{D_2\cap \{|v_*|\leq |v|\}} w^{2l} |v_* - v|^\gamma|\partial_{\tilde{\beta}_0} \big(e^{-\frac{m^{\beta}|v_*|^2}{4}}\big) 
		\big|\partial_{\tilde{\beta}_1}^{\tilde{\alpha}_1}f^{\alpha}_1 (v')\big|^2 \big|\partial_{\tilde{\beta}_2}^{\tilde{\alpha}_2}f^{\beta}_2 (v_*') |^2
		dv dv_* dx \\
		&\hspace{0.4cm}\leq \iiint_{(\mathbb{R}^3)^2\times\mathbb{T}^3} (1 + |v_*'|)^\gamma w^{2l - 2|\tilde{\beta}_1|}(v_*') w^{2|\tilde{\beta}_1|}(v') 
		 \big| \partial_{\tilde{\beta}_1}^{\tilde{\alpha}_1}f^{\alpha}_1 (v') \big|^2 \big| \partial_{\tilde{\beta}_2}^{\tilde{\alpha}_2}f^{\beta}_2 (v_*') \big|^2 dv_*' dv' dx \\
		&\hspace{0.4cm}\leq \int_{\mathbb{T}^3} \left( \int_{\mathbb{R}^3} w^{2|\tilde{\beta}_1|} \big| \partial_{\tilde{\beta}_1}^{\tilde{\alpha}_1}f^{\alpha}_1  \big|^2 dv' \right)  \left( \int_{\mathbb{R}^3} (1 + |v_*'|)^\gamma w^{2l - 2|\tilde{\beta}_1|}(v_*') \big| \partial_{\tilde{\beta}_2}^{\tilde{\alpha}_2}f^{\beta}_2 \big|^2 dv_*' \right) dx \\
	&\hspace{0.4cm}\leq \sup_x \left( \int_{\mathbb{R}^3} w^{2|\tilde{\beta}_1|} \big| \partial_{\tilde{\beta}_1}^{\tilde{\alpha}_1}f^{\alpha}_1 \big|^2 dv' \right) \big\| w^{l - |\tilde{\beta}_1|} \partial_{\tilde{\beta}_2}^{\tilde{\alpha}_2}f^{\beta}_2 \big\|_{\nu}^2 \\
	&\hspace{0.4cm}\leq C \Bigg(\sum_{|\tilde{\alpha}_i| + |\tilde{\beta}_1| \le N} \big\| w^{|\tilde{\beta}_1|} \partial_{\tilde{\beta}_1}^{\tilde{\alpha}_i}f^{\alpha}_1 \big\|_{L^{2}_{x,v}}^2\Bigg) \big\| w^{l - |\beta_2|} \partial_{\tilde{\beta}_2}^{\tilde{\alpha}_2}f^{\beta}_2 \big\|_{\nu}^2,
	\end{align*}
	from $W^{4,1}(\mathbf{T}^3) \subset L^\infty$. Thus the proof for the case of $\big(|v_*| \leq \frac{|v|}{2}\big)\cap D_2$ is completed.
	
	Now the remaining case is $\big(|v_*| \leq \frac{|v|}{2}\big)\cap D_3$. In this region $|v_*| \leq \frac{1}{2}$ is valid. It holds that
	\begin{align*}
		&\iiint_{|v|\leq1,|v_*|\leq \frac{1}{2}} |v_* - v|^\gamma e^{-\frac{m^{\beta}|v_*|^2}{8}} w^{2l} \Big| \partial_{\tilde{\beta}_1}^{\tilde{\alpha}_1}f^{\alpha}_1 (v') \partial_{\tilde{\beta}_2}^{\tilde{\alpha}_2}f^{\beta}_2 (v_*') \partial_{\tilde{\beta}}^{\tilde{\alpha}}f^{\alpha}_3 (v) \Big| dv_* dv dx\\
		&\hspace{0.3cm}\leq C \iint_{|v|\leq1} \left( \int_{|v_*|\leq\frac{1}{2}} |v_* - v|^\gamma e^{-\frac{m^{\beta}|v_*|^2}{8}} \big| \partial_{\tilde{\beta}_1}^{\tilde{\alpha}_1}f^{\alpha}_1 (v') \partial_{\tilde{\beta}_2}^{\tilde{\alpha}_2}f^{\beta}_2 (v_*') \big| d\omega du \right) \big| \partial_{\tilde{\beta}}^{\tilde{\alpha}}f^{\alpha}_3 (v) \big| dvdx \\
		&\hspace{0.3cm}\leq C \iint_{|v|\leq1} \left( \int_{|v_*|\leq\frac{1}{2}} |v_* - v|^\gamma e^{-\frac{m^{\beta}|v_*|^2}{8}} dv_* \right)^{1/2} \\
		&\hspace{3.7cm}\quad \times \Bigg( \int |v|^\gamma \big| \partial_{\tilde{\beta}_1}^{\tilde{\alpha}_1}f^{\alpha}_1 (v') \big|^2 \big| \partial_{\tilde{\beta}_2}^{\tilde{\alpha}_2}f^{\beta}_2 (v_*') \big|^2 dv_* \Bigg)^{1/2} \big| \partial_{\tilde{\beta}}^{\tilde{\alpha}}f^{\alpha}_3 (v) \big| dvdx \\
		&\hspace{0.3cm}\leq C \int_{\mathbb{T}^3} \left( \int_{|v|\leq1,|v_*|\leq \frac{1}{2}} |v|^\gamma \big| \partial_{\tilde{\beta}_1}^{\tilde{\alpha}_1}f^{\alpha}_1 (v') \big|^2 \big| \partial_{\tilde{\beta}_2}^{\tilde{\alpha}_2}f^{\beta}_2 (v_*') \big|^2  dv_* \right)^{1/2}  \left( \int_{|v|\leq1} \big| \partial_{\tilde{\beta}}^{\tilde{\alpha}}f^{\alpha}_3 (v) \big|^2 dv \right)^{1/2} dx. \tag{63}
	\end{align*}
	But from \eqref{socaled6262d},
	\begin{align*}
		&\int_{|v|\leq1,|v_*|\leq \frac{1}{2}} |v|^\gamma \big| \partial_{\tilde{\beta}_1}^{\tilde{\alpha}_1}f^{\alpha}_1 (v') \big|^2 \big| \partial_{\tilde{\beta}_2}^{\tilde{\alpha}_2}f^{\beta}_2 (v_*') \big|^2  dv_* dv \\
		&\hspace{0.4cm}\leq C \int_{|v'|\leq C,|v_*'|\leq C} \min\big\{ |v'|^\gamma, |v_*'|^\gamma \big\} \times \big| \partial_{\tilde{\beta}_1}^{\tilde{\alpha}_1}f^{\alpha}_1 (v') \big|^2 \big| \partial_{\tilde{\beta}_2}^{\tilde{\alpha}_2}f^{\beta}_2 (v_*') \big|^2 dv_*'dv'.
	\end{align*}
	Assuming  $|\tilde{\alpha}_1| + |\tilde{\beta}_1| \geq \frac{N}{2}$,  the above is bounded by 
	\begin{align*}
		&C \Big( \int_{|v_*'| \le C} |v_*'|^\gamma | \partial_{\tilde{\beta}_2}^{\tilde{\alpha}_2}f^{\beta}_2 (v_*') |^2 dv_*' \Big) \int_{|v'| \leq C} \big|\partial_{\tilde{\beta}_1}^{\tilde{\alpha}_1}f^{\alpha}_1 (v')\big|^2 dv' \\
		&\hspace{0.4cm}\leq C \Big(\sup_{|v_*'| \le C} \big| \partial_{\tilde{\beta}_2}^{\tilde{\alpha}_2}f^{\beta}_2 (v_*') \big|^2\Big) \big|w^{l}\partial_{\tilde{\beta}_1}^{\tilde{\alpha}_1}f^{\alpha}_1 (v')\big|^2_{\nu} \\
		&\hspace{0.4cm}\leq C \Bigg(\sum_{|\tilde{\alpha}_i| + |\tilde{\beta}_i| \le N} \left\| w^{|\tilde{\beta}_i|} \partial_{\tilde{\beta}_2}^{\tilde{\alpha}_2}\mathbf{f}_2 \right\|_{L^{2}_{x,v}}^2 \Bigg) \big|w^{l}\partial_{\tilde{\beta}_1}^{\tilde{\alpha}_1}\mathbf{f}_1 \big|^2_{\nu}
	\end{align*}
	Through a further integration over $x$, then this lemma can be concluded.
	\end{proof}
		
	\begin{lemma}
		Let $|\tilde{\alpha}_1|+|\tilde{\alpha}_2|=N$, $N \geq 8$. \\
	$(1)$
		Let $\chi_{1}(x, v)$ be a smooth function with compact support, then 
		\begin{align}
			&\left\| \Gamma^{\alpha \beta} \big[\partial^{\tilde{\alpha}_1}f^{\alpha}_1, \partial^{\tilde{\alpha}_2}f^{\beta}_2\big] \chi_1 \right\|_{L^{2}_{x,v}}\notag\\
			&\hspace{1.8cm} \leq C \left( \sum_{| \tilde{\alpha}_i |+| \tilde{\beta}_i | \le N} \left\| w^{| \tilde{\beta}_i |} \partial_{\tilde{\beta}_i}^{\tilde{\alpha}_i}f^{\beta}_2 \right\|_{L^{2}_{x,v}} \right) \left\| \partial^{\tilde{\alpha}_1}f^{\alpha}_1 \right\|_{\nu},\hspace{0.4cm} \text{if}\ \left| \tilde{\alpha}_1 \right| \geq \frac{N}{2};\label{soacl6464}\\
			&\left\| \Gamma^{\alpha \beta} \big[\partial^{\tilde{\alpha}_1}f^{\alpha}_1, \partial^{\tilde{\alpha}_2}f^{\beta}_2\big] \chi_1 \right\|_{L^{2}_{x,v}}\notag\\
			&\hspace{1.8cm} \leq C \left( \sum_{| \tilde{\alpha}_i |+| \tilde{\beta}_i | \leq N} \left\| w^{| \tilde{\beta}_i |} \partial_{\tilde{\beta}_i}^{\tilde{\alpha}_i} f^{\alpha}_1\right\|_{L^{2}_{x,v}} \right) \left\| \partial^{\tilde{\alpha}_2}f^{\beta}_2 \right\|_{\nu},\hspace{0.4cm} \text{if}\ \left| \tilde{\alpha}_1 \right| \leq \frac{N}{2}.\label{soacl64642}
		\end{align}
		$(2)$
		Let $\bar{\chi}_1(v)$ be a smooth function such that
		$$
		\sup\left\{ \left| \bar{\chi}_1 \right|+\left| \nabla \bar{\chi}_1 \right|+\left| \nabla^2 \bar{\chi}_1 \right| \right\} \leq C \min \{\mu^A ,\mu^B \},
		$$
		then it holds that
		\begin{align}
			&\left| \int_{\mathbb{R}^3} \Gamma^{\alpha \beta} [\partial^{\tilde{\alpha}_1}f^{\alpha}_1, \partial^{\tilde{\alpha}_2}f^{\beta}_2] \bar{\chi}_1 dv \right|_{L^{2}_{x}} \notag\\
			&\hspace{1.8cm}\leq C \left( \sum_{| \tilde{\alpha}_i |+| \tilde{\beta}_i | \le N} \left\| w^{| \tilde{\beta}_i |} \partial_{\tilde{\beta}_i}^{\tilde{\alpha}_i}f^{\beta}_2 \right\|_{L^{2}_{x,v}} \right) \left\| \partial^{\tilde{\alpha}_1}f^{\alpha}_1 \right\|_{\nu},\hspace{0.4cm} \text{if}\ \left| \tilde{\alpha}_1 \right| \geq \frac{N}{2};\label{soacl6565}\\
			&\left| \int_{\mathbb{R}^3} \Gamma^{\alpha \beta} [\partial^{\tilde{\alpha}_1}f^{\alpha}_1, \partial^{\tilde{\alpha}_2}f^{\beta}_2]\bar{\chi}_1 dv \right|_{L^{2}_{x}}\notag\\
			&\hspace{1.8cm} \leq C \left( \sum_{| \tilde{\alpha}_i |+| \tilde{\beta}_i | \le N} \left\| w^{| \tilde{\beta}_i |} \partial_{\tilde{\beta}_i}^{\tilde{\alpha}_i} f^{\alpha}_1\right\|_{L^{2}_{x,v}} \right) \left\| \partial^{\tilde{\alpha}_2}f^{\beta}_2 \right\|_{\nu},\hspace{0.4cm} \text{if}\ \left| \tilde{\alpha}_1 \right| \leq \frac{N}{2}.\label{soacl65652}
		\end{align}
	\end{lemma}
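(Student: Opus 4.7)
The plan is to reduce both estimates to Lemma~\ref{estimatesofnonlinears} via $L^2$-duality. An inspection of the proof of that lemma reveals that the test function $\partial_{\tilde\beta}^{\tilde\alpha}f^\alpha_3$ is always controlled through its $\|w^l\cdot\|_\nu$-norm via Cauchy--Schwarz in $v$, with no use of a differential structure. Hence the same proof extends verbatim to arbitrary test functions: for any $h(x,v)$ with $\|w^l h\|_\nu<\infty$,
\begin{equation*}
\bigl|\bigl\langle w^{2l}\Gamma_0^{\alpha\beta}[\partial_{\tilde\beta_1}^{\tilde\alpha_1}f^\alpha_1,\partial_{\tilde\beta_2}^{\tilde\alpha_2}f^\beta_2],\,h\bigr\rangle_{L^2_{x,v}}\bigr|\leq C\cdot(\text{RHS of \eqref{lem31nonl3f}/\eqref{lem31nonl3s}})\cdot\|w^l h\|_\nu.
\end{equation*}
I will invoke this generalisation with $\tilde\beta_1=\tilde\beta_2=\tilde\beta=0$ and $l=0$, so that (up to constants) $\Gamma_0^{\alpha\beta}=\Gamma^{\alpha\beta}$.

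For part~(1), I dualise the $L^2_{x,v}$-norm:
\begin{equation*}
\bigl\|\Gamma^{\alpha\beta}[\partial^{\tilde\alpha_1}f^\alpha_1,\partial^{\tilde\alpha_2}f^\beta_2]\chi_1\bigr\|_{L^2_{x,v}}=\sup_{\|g\|_{L^2_{x,v}}\leq 1}\bigl\langle\Gamma^{\alpha\beta}[\partial^{\tilde\alpha_1}f^\alpha_1,\partial^{\tilde\alpha_2}f^\beta_2],\,g\chi_1\bigr\rangle_{L^2_{x,v}}.
\end{equation*}
Because $\chi_1$ has compact support in $(x,v)$ and $w=(1+|v|)^\gamma\leq 1$ when $\gamma<0$, one has $\|g\chi_1\|_\nu\leq C_{\chi_1}\|g\|_{L^2_{x,v}}$. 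Applying the generalised estimate with $h=g\chi_1$ in the two regimes $|\tilde\alpha_1|\geq N/2$ and $|\tilde\alpha_1|\leq N/2$, and taking the supremum over $g$, yields \eqref{soacl6464} and \eqref{soacl64642} respectively.

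For part~(2), I dualise the $L^2_x$-norm:
\begin{equation*}
\Bigl|\int_{\mathbb{R}^3}\Gamma^{\alpha\beta}[\partial^{\tilde\alpha_1}f^\alpha_1,\partial^{\tilde\alpha_2}f^\beta_2]\bar\chi_1\,dv\Bigr|_{L^2_x}=\sup_{|g|_{L^2_x}\leq 1}\bigl\langle\Gamma^{\alpha\beta}[\partial^{\tilde\alpha_1}f^\alpha_1,\partial^{\tilde\alpha_2}f^\beta_2],\,\bar\chi_1(v)g(x)\bigr\rangle_{L^2_{x,v}}.
\end{equation*}
Testing the generalised estimate with $h(x,v)=\bar\chi_1(v)g(x)$, the hypothesis $|\bar\chi_1|\leq C\min\{\mu^A,\mu^B\}$ gives
\begin{equation*}
\|\bar\chi_1 g\|_\nu^2=|g|_{L^2_x}^2\int_{\mathbb{R}^3}|\bar\chi_1(v)|^2(1+|v|)^\gamma\,dv\leq C|g|_{L^2_x}^2,
\end{equation*}
the $v$-integral converging despite $\gamma\in(-3,0)$ because $\bar\chi_1$ is dominated by a Maxwellian. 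This produces \eqref{soacl6565} and \eqref{soacl65652}. The only possible obstacle is the bookkeeping verification that the proof of Lemma~\ref{estimatesofnonlinears} indeed passes to an arbitrary test function $h$; this is routine, since every subcase in the gain/loss decomposition and the $D_1,D_2,D_3$ splitting isolates the test function through a single Cauchy--Schwarz in $v$ weighted by $w^l(1+|v|)^{\gamma/2}$, so no new estimate is needed.
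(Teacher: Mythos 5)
Your proof is correct, but it takes a genuinely different route from the paper's. The paper proves this lemma directly: it splits $\Gamma^{\alpha\beta}$ into gain and loss parts and redoes the Cauchy--Schwarz/Sobolev case analysis from scratch, using the compact support of $\chi_1$ to make $\int|v_*-v|^\gamma(1+|v|)^\gamma\chi_1\,dv$ finite and to dominate $\chi_1$ by $\min\{\mu^A,\mu^B\}$ in the gain term, and using the Maxwellian bound on $\bar\chi_1$ analogously for part (2). You instead obtain both parts as corollaries of Lemma \ref{estimatesofnonlinears} by $L^2$-duality, after observing that in the proof of that lemma the test function $\partial_{\tilde\beta}^{\tilde\alpha}f_3^\alpha$ is isolated by a single Cauchy--Schwarz and so enters only through its $\|w^l\cdot\|_\nu$-norm. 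I checked this observation against each sub-case of that proof (loss term over $D_1,D_2,D_3$; gain term over $|v_*|\gtrless|v|/2$ and $|v|\gtrless 1$) and it holds, so the extension to arbitrary test functions $h$ with finite $\|w^l h\|_\nu$ is legitimate; note, however, that this is a claim about the \emph{proof} of Lemma \ref{estimatesofnonlinears} rather than its statement, so a careful write-up should record the generalised inequality explicitly rather than cite \eqref{lem31nonl3f}--\eqref{lem31nonl3s} as a black box. Your approach is shorter and avoids duplicating the region analysis; the paper's direct proof is self-contained and yields the marginally sharper component-wise norm $\|\partial^{\tilde\alpha_1}f_1^\alpha\|_\nu$ on the right-hand side, whereas the duality argument naturally produces the vector norm $\|\partial^{\tilde\alpha_1}\mathbf{f}_1\|_\nu$ --- a difference that is immaterial in every application in the paper, where the bounds are summed over both species anyway. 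Two minor remarks: the bound $\|g\chi_1\|_\nu\le C\|g\|_{L^2_{x,v}}$ only needs $w\le 1$ (true since $\gamma<0$) or the compactness of the $v$-support, as you say; and neither your argument nor the paper's actually uses the hypotheses on $\nabla\bar\chi_1$ and $\nabla^2\bar\chi_1$ in part (2).
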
	
	
	\begin{proof}
		We note that
		\begin{align*}
			\Gamma^{\alpha \beta} [\partial^{\tilde{\alpha}_1}f^{\alpha}_1, \partial^{\tilde{\alpha}_2}f^{\beta}_2]	&\equiv \int_{\mathbb{R}^3} \int_{S^2} |v_* - v|^\gamma 
			 e^{-\frac{m^{\beta}|v_*|^2}{4}}
			\partial^{\tilde{\alpha}_1}f^{\alpha}_1 (v') \partial^{\tilde{\alpha}_2}f^{\beta}_2 (v_*') 
			b^{\alpha\beta}(\theta) \, dv_* \, d\omega\\
			&\hspace{1cm} - \partial^{\tilde{\alpha}_1}f^{\alpha}_1(v) 
			\int_{\mathbb{R}^3} \int_{S^2} |u - v|^\gamma 
			e^{-\frac{m^{\beta}|v_*|^2}{4}}
			\partial^{\tilde{\alpha}_2}f^{\beta}_2 (v_*)
			b^{\alpha\beta}(\theta) \, dv_* \, d\omega \\
			&=: \Gamma_{\text{gain}}^{\alpha \beta} - \Gamma_{\text{loss}}^{\alpha \beta}.
		\end{align*}
		In order to obtain \eqref{soacl6464} and \eqref{soacl64642}, we first estimate the second loss term $ \Gamma_{\text{loss}}^{\alpha \beta} \big[\partial^{\tilde{\alpha}_1}f^{\alpha}_1, \partial^{\tilde{\alpha}_2}f^{\beta}_2\big] $.
		
		If $ |\tilde{\alpha}_1| \geq \frac{N}{2} $, by $ H^4(\mathbf{T}^3 \times \mathbf{R}^3) \subset L^\infty $ and $ N \geq 8 $,
		\begin{align*}
		&\int_{\mathbb{R}^3} |v_* - v|^\gamma e^{-\frac{m^{\beta}|v_*|^2}{4}} \big|\partial^{\tilde{\alpha}_2}f^{\beta}_2 (x,v_*)\big| dv_*\\
		&\hspace{0.5cm}\leq C \sup_{x, v_*} \left| e^{-\frac{m^{\beta}|v_*|^2}{8}} \partial^{\tilde{\alpha}_2}f^{\beta}_2 (x,v_*) \right| \big(1 + |v|\big)^\gamma\\
		&\hspace{0.5cm}\leq C  \left(\sum_{| \tilde{\alpha}_i |+| \tilde{\beta}_i | \le N} \left\| w^{| \tilde{\beta}_i |} \partial_{\tilde{\beta}_i}^{\tilde{\alpha}_i} f^{\beta}_2\right\|_{L^{2}_{x,v}}\right) \big(1 + |v|\big)^\gamma.
		\end{align*}
		Therefore, $\left\| \Gamma_{\text{loss}}^{\alpha \beta} [\partial^{\tilde{\alpha}_1}f^{\alpha}_1, \partial^{\tilde{\alpha}_2}f^{\beta}_2] \chi_1 \right\|_{L^{2}_{x,v}}$ is bounded by
		\begin{align*}
		&\left(\iint_{\mathbb{R}^3\times\mathbb{T}^3} \left| \Gamma_{\text{loss}}^{\alpha \beta} [\partial^{\tilde{\alpha}_1}f^{\alpha}_1, \partial^{\tilde{\alpha}_2}f^{\beta}_2] \chi_1 \right|^2 dv dx\right)^{1/2}\\
		&\hspace{0.5cm}\leq C \left(\sum_{| \tilde{\alpha}_i |+| \tilde{\beta}_i | \le N} \left\| w^{| \tilde{\beta}_i |} \partial_{\tilde{\beta}_i}^{\tilde{\alpha}_i} f^{\beta}_2\right\|_{L^{2}_{x,v}}\right) \iint_{\mathbb{R}^3\times\mathbb{T}^3} (1 + |v|)^{\gamma} \chi_1^2 |\partial^{\tilde{\alpha}_1}f^{\alpha}_1|^2 dv dx\\
		&\hspace{0.5cm}\leq C \left(\sum_{| \tilde{\alpha}_i |+| \tilde{\beta}_i | \le N} \left\| w^{| \tilde{\beta}_i |} \partial_{\tilde{\beta}_i}^{\tilde{\alpha}_i} f^{\beta}_2\right\|_{L^{2}_{x,v}}\right) \left\| \partial^{\tilde{\alpha}_1}f^{\alpha}_1 \right\|_\nu.
		\end{align*}
		
		This concludes the case of $ |\tilde{\alpha}_1| \geq \frac{N}{2} $. If $ |\tilde{\alpha}_1| \leq \frac{N}{2} $, then the Cauchy-Schwarz inequality leads to
		\begin{align*}
		&\int_{\mathbb{R}^3} |v_* - v|^\gamma e^{-\frac{m^{\beta}|v_*|^2}{4}} |\partial^{\tilde{\alpha}_2}f^{\beta}_2 (x,v_*)| dv_*\\
		&\hspace{0.5cm}\leq \left( \int_{\mathbb{R}^3} |v_* - v|^\gamma e^{-\frac{m^{\beta}|v_*|^2}{4}} |\partial^{\tilde{\alpha}_2}f^{\beta}_2 (x,v_*)|^2 dv_* \right)^{1/2} (1 + |v|)^{\frac{\gamma}{2}}.
		\end{align*}
		Since \( \gamma > -3 \), and \( \chi_1 \) has compact support,
		\begin{equation*}
			\int_{\mathbb{R}^3}  |v_* - v|^\gamma (1 + |v|)^\gamma \chi_1 dv < \infty.
		\end{equation*}	
		Then we get 
		\begin{align*}
		&\iint_{\mathbb{R}^3\times\mathbb{T}^3}  \left| \Gamma_{\text{loss}}^{\alpha \beta} [\partial^{\tilde{\alpha}_1}f^{\alpha}_1, \partial^{\tilde{\alpha}_2}f^{\beta}_2] \chi_1 \right|^2  dv dx \\
		&\hspace{0.5cm}\leq C \iiint_{(\mathbb{R}^3)^2\times\mathbb{T}^3} |v_* - v|^\gamma e^{-\frac{m^{\beta}|v_*|^2}{4}} |\partial^{\tilde{\alpha}_1}f^{\alpha}_1(x,v)|^2 |\partial^{\tilde{\alpha}_2}f^{\beta}_2 (x,v_*)|^2  (1 + |v|)^\gamma \chi_1^2 dv_* dv dx,
		\end{align*}
		and it can be further bounded by
		\begin{align*}
		&\iint_{\mathbb{R}^3\times\mathbb{T}^3} e^{-\frac{m^{\beta}|v_*|^2}{4}} |\partial^{\tilde{\alpha}_2}f^{\beta}_2 (x,v_*)|^2\\
		&\hspace{0.8cm}\times \sup_{x,v} \Big\{ |\partial^{\tilde{\alpha}_1}f^{\alpha}_1(x,v)|^2 \chi_1(x, v)\Big\} \left( \int_{\mathbb{R}^3} |v_* - v|^\gamma (1 + |v|)^\gamma \chi_1 dv \right) dv_* dx\\
		&\hspace{0.5cm}\leq C  \left( \sum_{| \tilde{\alpha}_i |+| \tilde{\beta}_i | \leq N} \left\| w^{| \tilde{\beta}_i |} \partial_{\tilde{\beta}_i}^{\tilde{\alpha}_i} f^{\alpha}_1\right\|_{L^{2}_{x,v}} \right) \|\partial^{\tilde{\alpha}_2}f^{\beta}_2\|_{\nu}^2.
		\end{align*}
		The next step is to estimate the gain term  $\Gamma_{\text{gain}}^{\alpha \beta} [\partial^{\tilde{\alpha}_1}f^{\alpha}_1, \partial^{\tilde{\alpha}_2}f^{\beta}_2]$ in \eqref{soacl6464} and \eqref{soacl64642}. We notice the condition that $\chi_1(x,v)$ has a compact support, so one gets $\chi_1(x,v)\leq C\min \{\mu^A ,\mu^B \}$, and then

		\begin{align}
			&\iint_{\mathbb{R}^3\times\mathbb{T}^3}  \left| \Gamma_{\text{gain}}^{\alpha \beta} [\partial^{\tilde{\alpha}_1}f^{\alpha}_1, \partial^{\tilde{\alpha}_2}f^{\beta}_2] \chi_1 \right|^2  dv dx\notag\\
			&\hspace{0.5cm}\leq C\iiint_{(\mathbb{R}^3)^2\times\mathbb{T}^3} |u-v|^\gamma  e^{-\frac{m^{\beta}|v_*|^2}{4}}|\partial^{\tilde{\alpha}_1}f^{\alpha}_1(x,v')|^2 |\partial^{\tilde{\alpha}_2}f^{\beta}_2 (x,v_*')|^2 \chi_1^2(v) dv_*dv dx\notag\\
			&\hspace{8cm}\times\left(\int_{\mathbb{R}^3}|v_*-v|^\gamma e^{-\frac{m^{\beta}|v_*|^2}{4}} dv_*\right)\notag\\
			&\hspace{0.5cm}\leq C\iiint_{(\mathbb{R}^3)^2\times\mathbb{T}^3}|v_*'-v'|^\gamma e^{-\frac{m^{\beta}|v'|^2}{4}}e^{-\frac{m^{\beta}|v_*'|^2}{4}}|\partial^{\tilde{\alpha}_1}f^{\alpha}_1(x,v')|^2 |\partial^{\tilde{\alpha}_2}f^{\beta}_2 (x,v_*')|^2dv_*'dv'dx.\label{socal6666}
		\end{align}
		Based on the symmetry between $\partial^{\tilde{\alpha}_1}f^{\alpha}_1$ and $\partial^{\tilde{\alpha}_2}f^{\beta}_2$, without loss of generality, we assume $\left|\tilde{\alpha}_1 \right| \geq \frac{N}{2}$, so it yields that
		\begin{equation*}
		\sup_{x,v_*'}\Big\{e^{-\frac{m^{\beta}|v_*'|^2}{4}}|\partial^{\tilde{\alpha}_2}f^{\beta}_2 (x,v_*')|^2\Big\}\leq C \sum_{| \tilde{\alpha}_i |+| \tilde{\beta}_i | \le N} \left\| w^{| \tilde{\beta}_i |} \partial_{\tilde{\beta}_i}^{\tilde{\alpha}_i}f^{\beta}_2 \right\|_{L^{2}_{x,v}}^2.
		\end{equation*}
		Together with the fact $\int_{\mathbb{R}^3}|v_*'-v'|^\gamma e^{-\frac{m^{\beta}|v_*|^2}{4}} dv_*'<\infty$, $\gamma>-3$, we can bound  \eqref{socal6666} by
		
		\begin{equation}
		\begin{aligned}
			&\iint_{(\mathbb{R}^3)^2}\left(\int|v_*'-v'|^\gamma e^{-\frac{m^{\beta}|v_*'|^2}{4}}|\partial^{\tilde{\alpha}_2}f^{\beta}_2 (x,v_*')|^2dv_*'\right)e^{-\frac{m^{\beta}|v'|^2}{4}}|\partial^{\tilde{\alpha}_1}f^{\alpha}_1(x,v')|^2dv'dx\\
			&\hspace{0.5cm}\leq C\left( \sum_{| \tilde{\alpha}_i |+| \tilde{\beta}_i | \le N} \left\| w^{| \tilde{\beta}_i |} \partial_{\tilde{\beta}_i}^{\tilde{\alpha}_i}f^{\beta}_2 \right\|_{L^{2}_{x,v}} \right)^2\big\|\partial^{\tilde{\alpha}_1}f^{\alpha}_1\big\|^2_{\nu}.\label{socal666777}
		\end{aligned}
		\end{equation}
		This completes the proof of \eqref{soacl6464} and \eqref{soacl64642}.
		
		In order to verify \eqref{soacl6565} and \eqref{soacl65652}, since \( \bar{\chi}_1(v) \) exhibits exponential decay, \( \Gamma_{\text{loss}}^{\alpha \beta} [\partial^{\tilde{\alpha}_1}f^{\alpha}_1, \partial^{\tilde{\alpha}_2}f^{\beta}_2] \) can be estimated by
		\begin{align*}
		&\int_{\mathbb{R}^3} \Gamma_{\text{loss}}^{\alpha \beta} \big[\partial^{\tilde{\alpha}_1}f^{\alpha}_1, \partial^{\tilde{\alpha}_2}f^{\beta}_2\big] \bar{\chi}_1(v) dv\\
			&\hspace{0.5cm}\le C \int_{\mathbb{R}^3} \big|\partial^{\tilde{\alpha}_1}f^{\alpha}_1\bar{\chi}_1(v)\big| \int_{\mathbb{R}^3} |v_* - v|^\gamma e^{-\frac{m^{\beta}|v_*|^2}{4}} \big|\partial^{\tilde{\alpha}_2}f^{\beta}_2(v_*)\big|dv_*dv\\
			&\hspace{0.5cm}\le C \int_{\mathbb{R}^3} \big|\partial^{\tilde{\alpha}_1}f^{\alpha}_1\bar{\chi}_1(v)\big| \left( \int_{\mathbb{R}^3} |v_* - v|^\gamma e^{-\frac{m^{\beta}|v_*|^2}{4}} \big|\partial^{\tilde{\alpha}_2}f^{\beta}_2(v_*)\big|^2 dv_* \right)^{1/2} \\
			&\hspace{5.6cm}\times \left( \int_{\mathbb{R}^3} |v_* - v|^\gamma e^{-\frac{m^{\beta}|v_*|^2}{4}} dv_* \right)^{1/2}dv\\
			&\hspace{0.5cm}\le C \left( \iint_{(\mathbb{R}^3)^2} |v_* - v|^\gamma e^{-\frac{m^{\beta}|v_*|^2}{4}} \big|\partial^{\tilde{\alpha}_1}f^{\alpha}_1(v)\big|^2 \big|\bar{\chi}_1(v)\big|\big|\partial^{\tilde{\alpha}_2}f^{\beta}_2(v_*)\big|^2 dv_*dv \right)^{1/2},
		\end{align*}
		where we have used the Cauchy-Schwarz inequality for the \( v \) integration. We notice the symmetry between $\partial^{\tilde{\alpha}_1}f^{\alpha}_1$ and $\partial^{\tilde{\alpha}_2}f^{\beta}_2$. Without loss of generality, we assume $\left|\tilde{\alpha}_1 \right| \geq \frac{N}{2}$ , which gives
		\[
		\sup_{x,v_*} \left\{ e^{-\frac{m^{\beta}|v_*|^2}{8}} |\partial^{\tilde{\alpha}_2}f^{\beta}_2(v_*)|^2 \right\} \le C \sum_{| \tilde{\alpha}_i |+| \tilde{\beta}_i | \le N} \big\| w^{| \tilde{\beta}_i |} \partial_{\tilde{\beta}_i}^{\tilde{\alpha}_i}f^{\beta}_2 \big\|^2_{L^{2}_{x,v}}
		\]
		Since \( 0>\gamma > -3 \), we first integrate over \( u \), and then take the \( L^2 \) norm in the spatial variable \( x \) above concludes \eqref{soacl6565} and \eqref{soacl65652} for the loss term.
		
		To prove \eqref{soacl6565} and \eqref{soacl65652} for the gain term, since 
		$$
		\sup_{v}\left\{ \left| \bar{\chi}_1 \right|+\left| \nabla \bar{\chi}_1 \right|+\left| \nabla^2 \bar{\chi}_1 \right| \right\} \leq C \min \big\{\mu^A ,\mu^B \big\},
		$$
		we obtain
		\begin{align*}
		&\int_{\mathbb{R}^3} \Gamma_{\text{gain}}^{\alpha \beta} [\partial^{\tilde{\alpha}_1}f^{\alpha}_1, \partial^{\tilde{\alpha}_2}f^{\beta}_2] \bar{\chi}_1(v) dv\\
		&\hspace{0.5cm}\leq \iint_{(\mathbb{R}^3)^2} |v_* - v|^\gamma e^{-\frac{m^{\beta}|v_*|^2}{32}}|
		\partial^{\tilde{\alpha}_1}f^{\alpha}_1 (v') \partial^{\tilde{\alpha}_2}f^{\beta}_2 (v_*') \bar{\chi}_1(v)|\,dv_* dv  \\
		&\hspace{0.5cm}\le \int_{\mathbb{R}^3}\left(\int_{\mathbb{R}^3} |v_* - v|^\gamma e^{-\frac{m^{\beta}|v_*|^2}{32}}|\partial^{\tilde{\alpha}_1}f^{\alpha}_1 (v')|^2 |\partial^{\tilde{\alpha}_2}f^{\beta}_2 (v_*')|^2\,dv_*\right)^{1/2}\\
		&\hspace{5.3cm}\times\left(\int_{\mathbb{R}^3} |v_* - v|^\gamma e^{-\frac{m^{\beta}|v_*|^2}{32}}\,dv_*\right)^{1/2}|\bar{\chi}_1(v)|\,d v \\
		&\hspace{0.5cm}\le C\left(\iint_{(\mathbb{R}^3)^2} |v_* - v|^\gamma e^{-\frac{m^{\beta}|v_*|^2}{32}}|\partial^{\tilde{\alpha}_1}f^{\alpha}_1 (v')|^2 |\partial^{\tilde{\alpha}_2}f^{\beta}_2 (v_*')|^2|\chi_1|\,dv_* dv \right)^{1/2}\\
		&\hspace{0.5cm}\le C\left(\iint_{(\mathbb{R}^3)^2} |v_*' - v'|^\gamma e^{-\frac{m^{\beta}|v'|^2}{32}}e^{-\frac{m^{\beta}|v_*'|^2}{32}}|\partial^{\tilde{\alpha}_1}f^{\alpha}_1 (v')|^2 |\partial^{\tilde{\alpha}_2}f^{\beta}_2 (v_*')|^2\,dv_*' dv' \right)^{1/2},
		\end{align*}
		where we have applied the Cauchy-Schwarz inequality for \( v_* \). By a similar argument as  \eqref{socal666777}, it is bounded by
		\[
		C \left( \sum_{| \tilde{\alpha}_i |+| \tilde{\beta}_i | \le N} \big\| w^{| \tilde{\beta}_i |} \partial_{\tilde{\beta}_i}^{\tilde{\alpha}_i}f^{\beta}_2 \big\|_{L^{2}_{x,v}} \right) \big| \partial^{\tilde{\alpha}_1}f^{\alpha}_1 \big|_{\nu}.
		\]
		By a further integration in the spatial variable \( x \), we can conclude \eqref{soacl6565}. We notice the symmetry between $\partial^{\tilde{\alpha}_1}f^{\alpha}_1$ and $\partial^{\tilde{\alpha}_2}f^{\beta}_2$, the proof of case $\left|\tilde{\alpha}_1 \right| \leq \frac{N}{2}$ will be the same. Therefore, we complete the proof of this lemma.
	\end{proof}

		\begin{lemma}\label{lem333scd6}
			Let $|\tilde{\alpha}_1| + |\tilde{\alpha}_2| = N$, $N\geq 8$, and \(\chi_1(x, v)\) be a smooth function with  compact support. For any \(\eta > 0\), then it holds \\
			$(1)$
			If $\left|\tilde{\alpha}_1 \right| \geq \frac{N}{2}$, there exists \(f^{\alpha}_{1,\tilde{\alpha}_1}(x,v_*)\) such that \(\|f^{\alpha}_{1,\tilde{\alpha}_1}\| \leq C_\eta\|\partial^{\tilde{\alpha}_1}f^{\alpha}_1\|_{\nu}\), and
			\begin{align}
			&\iint_{\mathbb{R}^3\times\mathbb{T}^3} \Gamma^{\alpha \beta} [\partial^{\tilde{\alpha}_1}f^{\alpha}_1, \partial^{\tilde{\alpha}_2}f^{\beta}_2] \chi_1 dv dx\label{lem33F1}\\
			&\hspace{0.5cm}= \iint_{\mathbb{R}^3\times\mathbb{T}^3} f^{\alpha}_{1,\tilde{\alpha}_1} \partial^{\tilde{\alpha}_2}f^{\beta}_2\, dv_*dx + O(\eta^{3+\gamma}) \left( \sum_{| \tilde{\alpha}_i |+| \tilde{\beta}_i | \le N} \big\| w^{| \tilde{\beta}_i |} \partial_{\tilde{\beta}_i}^{\tilde{\alpha}_i}f^{\beta}_2 \big\|_{L^{2}_{x,v}} \right) \left\| \partial^{\tilde{\alpha}_1}f^{\alpha}_1 \right\|_{\nu}.\notag
			\end{align}
			$(2)$
			If \(\left|\tilde{\alpha}_1 \right| \leq \frac{N}{2}\), there exists \(f^{\beta}_{2,\tilde{\alpha}_2}(x,v)\) such that \(\|f^{\beta}_{2,\tilde{\alpha}_2}\| \leq C_\eta\|\partial^{\tilde{\alpha}_2}f^{\beta}_2\|_{\nu}\), and
			\begin{align}
			&\iint_{\mathbb{R}^3\times\mathbb{T}^3} \Gamma^{\alpha \beta} [\partial^{\tilde{\alpha}_1}f^{\alpha}_1, \partial^{\tilde{\alpha}_2}f^{\beta}_2] \chi_1 dv dx \label{lem33S2}\\
			&\hspace{0.5cm}= \iint_{\mathbb{R}^3\times\mathbb{T}^3} f^{\beta}_{2,\tilde{\alpha}_2} \partial^{\tilde{\alpha}_1}f^{\alpha}_1 \, dvdx + O(\eta^{3+\gamma}) \left( \sum_{| \tilde{\alpha}_i |+| \tilde{\beta}_i | \leq N} \big\| w^{| \tilde{\beta}_i |} \partial_{\tilde{\beta}_i}^{\tilde{\alpha}_i} f^{\alpha}_1\big\|_{L^{2}_{x,v}} \right) \big\| \partial^{\tilde{\alpha}_2}f^{\beta}_2 \big\|_{\nu}.\notag
			\end{align}
		\end{lemma}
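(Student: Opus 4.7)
The strategy is to introduce a smooth cutoff $\chi_\eta$ of the same type as \eqref{cutoffuctiondef}, with $\chi_\eta(r)=1$ for $r\ge 2\eta$ and $\chi_\eta(r)=0$ for $r\le \eta$, and split
\begin{equation*}
\Gamma^{\alpha\beta}\;=\;\Gamma^{\alpha\beta,\chi_\eta}\;+\;\Gamma^{\alpha\beta,1-\chi_\eta},
\end{equation*}
where the two operators carry the kernel factors $|v-v_*|^\gamma\chi_\eta(|v-v_*|)$ and $|v-v_*|^\gamma(1-\chi_\eta(|v-v_*|))$ respectively. On the support of $\chi_\eta$ the singular factor is bounded by $\eta^\gamma =: C_\eta$, while on the support of $1-\chi_\eta$ the kernel variable is confined to $|v-v_*|\le 2\eta$, a region with $|v-v_*|^\gamma$-weighted volume of order $\eta^{3+\gamma}$.

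\textbf{Step 1 (Singular part; the remainder).} For $\Gamma^{\alpha\beta,1-\chi_\eta}$, repeat the Cauchy--Schwarz manipulations used for $K_{2,s}^{\alpha,1-\chi}$ in Lemma \ref{lemma2.2} and for $\Gamma_{0,\mathrm{loss}}^{\alpha\beta}$, $\Gamma_{0,\mathrm{gain}}^{\alpha\beta}$ in Lemma \ref{estimatesofnonlinears}. The pre/post-collision change of variables \eqref{ddstardstar}--\eqref{yipievvstar} symmetrizes gain and loss, and $\int_{|u|\le 2\eta}|u|^\gamma\,du\le C\eta^{3+\gamma}$ produces, in case $(1)$,
\begin{equation*}
\Big|\iint_{\mathbb{R}^3\times\mathbb{T}^3}\Gamma^{\alpha\beta,1-\chi_\eta}[\partial^{\tilde{\alpha}_1}f_1^\alpha,\partial^{\tilde{\alpha}_2}f_2^\beta]\chi_1\,dv\,dx\Big|\le C\eta^{3+\gamma}\big\|\partial^{\tilde{\alpha}_1}f_1^\alpha\big\|_\nu\Big(\sum_{|\tilde{\alpha}_i|+|\tilde{\beta}_i|\le N}\big\|w^{|\tilde{\beta}_i|}\partial^{\tilde{\alpha}_i}_{\tilde{\beta}_i}\mathbf{f}_2\big\|_{L^2_{x,v}}\Big),
\end{equation*}
and its obvious analogue in case $(2)$. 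This supplies the $O(\eta^{3+\gamma})$ error term.

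\textbf{Step 2 (Regular part; kernel extraction).} For $\Gamma^{\alpha\beta,\chi_\eta}$, Fubini gives the loss identity
\begin{equation*}
\iint\chi_1\,\Gamma^{\alpha\beta,\chi_\eta}_{\mathrm{loss}}\,dv\,dx=\iint\partial^{\tilde{\alpha}_2}f_2^\beta(x,v_*)\,f^{\alpha,\mathrm{loss}}_{1,\tilde{\alpha}_1}(x,v_*)\,dv_*\,dx,
\end{equation*}
\begin{equation*}
f^{\alpha,\mathrm{loss}}_{1,\tilde{\alpha}_1}(x,v_*):=e^{-m^\beta|v_*|^2/4}\iint|v-v_*|^\gamma\chi_\eta(|v-v_*|)\chi_1(x,v)b^{\alpha\beta}(\theta)\partial^{\tilde{\alpha}_1}f_1^\alpha(x,v)\,dv\,d\omega.
\end{equation*}
For the gain term, the involution $(v,v_*)\mapsto(v',v_*')$ with unit Jacobian \eqref{ddstardstar} and $|v-v_*|=|v'-v_*'|$ from \eqref{yipievvstar} allows a relabelling placing $\partial^{\tilde{\alpha}_2}f_2^\beta$ again at the outer integration variable, producing an analogous $f^{\alpha,\mathrm{gain}}_{1,\tilde{\alpha}_1}(x,v_*)$ in which $\partial^{\tilde{\alpha}_1}f_1^\alpha$ and $\chi_1$ are evaluated at the corresponding pre-collisional velocities given through \eqref{F1vuvusacl}. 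Setting $f^\alpha_{1,\tilde{\alpha}_1}:=f^{\alpha,\mathrm{gain}}_{1,\tilde{\alpha}_1}-f^{\alpha,\mathrm{loss}}_{1,\tilde{\alpha}_1}$, Cauchy--Schwarz in $(v,\omega)$ combined with $|v-v_*|^\gamma\chi_\eta\le C_\eta$ and the compact support of $\chi_1$ yields $\|f^\alpha_{1,\tilde{\alpha}_1}\|_{L^2_{x,v_*}}\le C_\eta\|\partial^{\tilde{\alpha}_1}f_1^\alpha\|_\nu$, which proves \eqref{lem33F1}. Case $(2)$ is symmetric: integrate out $(v_*,\omega)$ in place of $(v,\omega)$ to define $f^\beta_{2,\tilde{\alpha}_2}(x,v)$, giving \eqref{lem33S2}.

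\textbf{Main obstacle.} The delicate point is the gain piece of the kernel extraction: after the change of variables, the Maxwellian $e^{-m^\beta|v_*|^2/4}$ becomes $e^{-m^\beta|v_{*,\mathrm{pre}}|^2/4}$, whose $v_*$-dependence is mediated by the $\omega$-dependent formulas \eqref{F1vuvusacl}. To conclude $L^2_{v_*}$-integrability of $f^{\alpha,\mathrm{gain}}_{1,\tilde{\alpha}_1}(x,v_*)$ uniformly in $x$ one must exploit the energy identity $m^\alpha|v|^2+m^\beta|v_*|^2=m^\alpha|v_{\mathrm{pre}}|^2+m^\beta|v_{*,\mathrm{pre}}|^2$ together with the compact $v$-support of $\chi_1(x,v_{\mathrm{pre}})$ to control $|v_{*,\mathrm{pre}}|^2$ from below by $c(|v|^2+|v_*|^2)-C$, thereby preserving a genuine Gaussian in $v_*$ after integrating in $v$ and $\omega$. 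This is the same mechanism that powered the exponential-decay step \eqref{k2alpneqbetdecayzb} in Lemma \ref{lemma2.2}, so it does not introduce any new difficulty beyond bookkeeping.
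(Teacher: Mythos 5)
Your proposal is correct and follows essentially the same route as the paper: split off the near-diagonal region $|v-v_*|\le 2\eta$ with a cutoff to produce the $O(\eta^{3+\gamma})$ remainder, and on the complement apply the pre/post-collision change of variables (unit Jacobian, $|v-v_*|=|v'-v_*'|$) to the gain term so that $\partial^{\tilde{\alpha}_2}f_2^\beta$ sits at the outer variable, then define $f^{\alpha}_{1,\tilde{\alpha}_1}$ as the resulting kernel (the paper writes it as the single bracket $e^{-m^\beta|v_*'|^2/4}\chi_1(x,v')-e^{-m^\beta|v_*|^2/4}\chi_1(x,v)$ integrated against $|v-v_*|^\gamma\chi\,\partial^{\tilde{\alpha}_1}f_1^\alpha(v)$, rather than your separate gain/loss pieces). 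Your "main obstacle" is resolved exactly as you describe — compact support of $\chi_1$ at the primed velocity plus energy conservation yields the Gaussian decay $e^{-c|v_*|^2}$ needed for the $L^2$ bound on $f^{\alpha}_{1,\tilde{\alpha}_1}$ — though note the paper keeps $\partial^{\tilde{\alpha}_1}f_1^\alpha$ at the unprimed $v$, which makes that Cauchy–Schwarz step slightly cleaner than evaluating it at the pre-collisional velocity as you suggest.
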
	
		\begin{proof}
			To verify \eqref{lem33F1}, we apply changes of variables $(v_*, v) \to (v_*', v') \to (v_*, v)$ for the gain term to obtain
			\begin{align}
			&\iint_{\mathbb{R}^3\times\mathbb{T}^3} \Gamma^{\alpha \beta} [\partial^{\tilde{\alpha}_1}f^{\alpha}_1, \partial^{\tilde{\alpha}_2}f^{\beta}_2] \chi_1(x, v) \, dvdx\notag\\
			&\hspace{0.3cm}= \iiiint_{\mathbb{S}^2\times(\mathbb{R}^3)^2\times\mathbb{T}^3} b^{\alpha \beta}(\theta)|v_*' - v'|^\gamma e^{-\frac{m^{\beta}|v_*|^2}{4}} \chi_1(x, v) \partial^{\tilde{\alpha}_1}f^{\alpha}_1(v') \partial^{\tilde{\alpha}_2}f^{\beta}_2(v_*') d \omega dv_* dv dx\notag\\
			&\hspace{1cm}- \iiiint_{\mathbb{S}^2\times(\mathbb{R}^3)^2\times\mathbb{T}^3} b^{\alpha \beta}(\theta)|v_* - v|^\gamma e^{-\frac{m^{\beta}|v_*|^2}{4}} \chi_1(x, v) \partial^{\tilde{\alpha}_1}f^{\alpha}_1(v) \partial^{\tilde{\alpha}_2}f^{\beta}_2(v_*)d \omega dv_* dv dx\notag\\
			&\hspace{0.3cm}= \iiiint_{\mathbb{S}^2\times(\mathbb{R}^3)^2\times\mathbb{T}^3} b^{\alpha \beta}(\theta)|v_* - v|^\gamma \left[ e^{-\frac{m^{\beta}|v_*'|^2}{4}} \chi_1(x, v') - e^{-\frac{m^{\beta}|v_*|^2}{4}} \chi_1(x, v) \right]\label{Socal312ORI}\\
			& \hspace{8cm}\times\partial^{\tilde{\alpha}_1}f^{\alpha}_1(v) \partial^{\tilde{\alpha}_2}f^{\beta}_2(v_*)d \omega dv_* dv dx. \notag
			\end{align}
			If $\left|\tilde{\alpha}_1 \right| \geq \frac{N}{2}$, by the cutoff function $\chi$ in \eqref{cutoffuctiondef} with $\varepsilon = \eta$, we define
			\begin{align}
			f^{\alpha}_{1,\tilde{\alpha}_1}(x,v_*) =& \iint_{\mathbb{S}^2\times\mathbb{R}^3} b^{\alpha \beta}(\theta)|v_* - v|^\gamma \chi(|v_* - v|) \partial^{\tilde{\alpha}_1}f^{\alpha}_1(v)\label{NefAalph1}\\
			&\hspace{0.4cm}\times \left[ e^{-\frac{m^{\beta}|v_*'|^2}{4}} \chi_1(x, v') - e^{-\frac{m^{\beta}|v_*|^2}{4}} \chi_1(x, v) \right] d\omega dv.\notag
			\end{align}
			Using \eqref{F1vuvusacl},
			\begin{equation*}
				\left| e^{-\frac{m^{\beta}|v_*'|^2}{4}} \chi_1(x, v') \right| \le C e^{-\frac{m^{\beta}|v_*'|^2}{4}}e^{-\frac{m^{\beta}|v'|^2}{4}} = C e^{-\frac{m^{\beta}(|v_*|^2+|v|^2)}{4}}.
			\end{equation*}
			Since $\chi$ vanishes near the origin,
			\begin{equation*}
				|f^{\alpha}_{1,\tilde{\alpha}_1}(x,v_*)| \le C_\eta e^{-\frac{m^{\beta}|v_*|^2}{8}} |\partial^{\tilde{\alpha}_1}f^{\alpha}_1|_{\nu} .
			\end{equation*}
			So, it yields 
			\begin{equation}\label{socal313behNE}
				\|f^{\alpha}_{1,\tilde{\alpha}_1}\| \le C_\eta \| \partial^{\tilde{\alpha}_1}f^{\alpha}_1\|_{\nu}.
			\end{equation}
			Notice by \eqref{Socal312ORI}, \eqref{NefAalph1} and \eqref{socal313behNE},
			\begin{equation*}
				\Big|\iint_{\mathbb{R}^3\times\mathbb{T}^3} \Gamma^{\alpha \beta} [\partial^{\tilde{\alpha}_1}f^{\alpha}_1, \partial^{\tilde{\alpha}_2}f^{\beta}_2] \chi_1 dv dx -\iint_{\mathbb{R}^3\times\mathbb{T}^3} f^{\alpha}_{1,\tilde{\alpha}_1} \partial^{\tilde{\alpha}_2}f^{\beta}_2\, dv_*dx\Big|
			\end{equation*}
			is bounded by
			\begin{align*}
			&C \iiint_{(\mathbb{R}^3)^2\times\mathbb{T}^3} |v_* - v|^\gamma \big[1 - \chi(|v_* - v|)\big] \big|\partial^{\tilde{\alpha}_1}f^{\alpha}_1(v) \partial^{\tilde{\alpha}_2}f^{\beta}_2(v_*)\big|e^{-\frac{m^{\beta}|v_*|^2}{4}}e^{-\frac{m^{\beta}|v|^2}{4}} dv_* dv dx\\
			&\hspace{0.4cm}\leq \sup_{x,v_*}\big\{e^{-\frac{m^{\beta}|v_*|^2}{16}}|\partial^{\tilde{\alpha}_2}f^{\beta}_2(v_*)|\big\} \int_{|v_* - v| \leq 2\eta} |v_*- v|^\gamma e^{-\frac{m^{\beta}|v_*|^2}{16}} dv_*\\
			&\hspace{7.3cm}\times \iint_{\mathbb{R}^3\times\mathbb{T}^3} e^{-\frac{m^{\beta}|v|^2}{8}}|\partial^{\tilde{\alpha}_1}f^{\alpha}_1(v)| dv dx\\
			&\hspace{0.4cm}\leq C\eta^{3 + \gamma} \left( \sum_{| \tilde{\alpha}_i |+| \tilde{\beta}_i | \le N} \big\| w^{| \tilde{\beta}_i |} \partial_{\tilde{\beta}_i}^{\tilde{\alpha}_i}f^{\beta}_2 \big\|_{L^{2}_{x,v}} \right) \big\| \partial^{\tilde{\alpha}_1}f^{\alpha}_1 \big\|_{\nu}.
			\end{align*}
			Thus we have proved \eqref{lem33F1}.
			
			Next, we turn to prove \eqref{lem33S2}, with $\left|\tilde{\alpha}_1 \right| \leq \frac{N}{2}$. By the same changes of variables, we define
			\begin{align}
				f^{\beta}_{2,\tilde{\alpha}_2}(x,v) =& \iint_{\mathbb{S}^2\times\mathbb{R}^3} b^{\alpha \beta}(\theta)|v_* - v|^\gamma \chi(|v_* - v|) \partial^{\tilde{\alpha}_2}f^{\beta}_2(v_*)\label{simalrTis313}\\
				&\hspace{0.4cm}\times \left( e^{-\frac{m^{\beta}|v_*'|^2}{4}} \chi_1(x, v') - e^{-\frac{m^{\beta}|v_*|^2}{4}} \chi_1(x, v) \right) d\omega dv_*.\notag
			\end{align}
			It satisfies
			\begin{equation}\label{estFsl315}
				|f^{\beta}_{2,\tilde{\alpha}_2}(x,v)| \le C_\eta e^{-\frac{m^{\beta}|v_*|^2}{8}} |\partial^{\tilde{\alpha}_2}f^{\beta}_2|_{\nu}.
			\end{equation}
			By \eqref{simalrTis313} and \eqref{estFsl315},
			\begin{equation*}
				\Big|\iint_{\mathbb{R}^3\times\mathbb{T}^3} \Gamma^{\alpha \beta} [\partial^{\tilde{\alpha}_1}f^{\alpha}_1, \partial^{\tilde{\alpha}_2}f^{\beta}_2] \chi_1 dv dx - \iint_{\mathbb{R}^3\times\mathbb{T}^3} f^{\beta}_{2,\tilde{\alpha}_2} \partial^{\tilde{\alpha}_1}f^{\alpha}_1\, dvdx\Big|
			\end{equation*}
			is bounded by
			\begin{align*}
				&C \iiint_{(\mathbb{R}^3)^2\times\mathbb{T}^3} |v_* - v|^\gamma \big[1 - \chi(|v_* - v|)\big]\Big|\partial^{\tilde{\alpha}_1}f^{\alpha}_1(v) \partial^{\tilde{\alpha}_2}f^{\beta}_2(v_*)\Big|\times \Big|e^{-\frac{m^{\beta}|v_*'|^2}{4}} \chi_1(x, v')\\
				&\hspace{8.5cm} - e^{-\frac{m^{\beta}|v_*|^2}{4}} \chi_1(x, v)\Big|dv_* dv dx\\
				&\hspace{0.4cm}\leq \sup_{\substack{x, v}} \big\{e^{-\frac{m^{\beta}|v|^2}{16}} |\partial^{\tilde{\alpha}_1}f^{\alpha}_1(v)|\big\} \int_{|v_* - v| \geq 2\eta} |v_* - v|^\gamma e^{-\frac{m^{\beta}|v|^2}{16}} dv\\
				&\hspace{7.3cm} \times \iint_{\mathbb{R}^3\times\mathbb{T}^3} e^{-\frac{m^{\beta}|v_*|^2}{8}} |\partial^{\tilde{\alpha}_2}f^{\beta}_2(v_*)| dv_* dx\\
				&\hspace{0.4cm}\le C \eta^{3 + \gamma} \Bigg( \sum_{| \tilde{\alpha}_i |+| \tilde{\beta}_i | \leq N} \big\| w^{| \tilde{\beta}_i |} \partial_{\tilde{\beta}_i}^{\tilde{\alpha}_i} f^{\alpha}_1\big\|_{L^{2}_{x,v}} \Bigg) \big\| \partial^{\tilde{\alpha}_2}f^{\beta}_2 \big\|_{\nu}.
			\end{align*}
			This concludes \eqref{lem33S2}. Therefore, we have proved this lemma.
		\end{proof}
	
	\section{Energy estimates}
	This section establishes a time-local solution for the Boltzmann equation. Our approach, therefore, relies on a uniform energy estimate for the following sequence of iterating approximate solutions.  
	
	\begin{align}
		&\left(\partial_t + v \cdot \nabla_x\right) F_{n+1}^{\alpha} + \sum_{\beta=A,B}F^{\alpha}_{n+1} \iint_{\mathbb{S}^2\times\mathbb{R}^3}b^{\alpha \beta}(\theta) |v-v_*|^{\gamma} F^{\beta}_n(v_*)  d\omega dv_* \notag \\
		&\quad = \sum_{\beta=A,B}\iint_{\mathbb{S}^2\times\mathbb{R}^3}b^{\alpha \beta}(\theta) |v-v_*|^{\gamma} F^{\alpha}_n(v') F^{\beta}_n(v'_*)d\omega dv_*, \label{socald70F}
	\end{align}
	with initial data 
	\begin{equation*}
	\mathbf{F}_{n+1}(0, x, v) = \mathbf{F}_{0}(x, v).
	\end{equation*}
	 Since \(  \mathbf{F}_{n+1} = \bm{\mu} + (\sqrt{\bm{\mu}}:  \mathbf{f}_{n+1}) \), and \( \mathbf{f}_{n+1}=(f_{n+1}^A,f_{n+1}^B)^T \) satisfies
	\begin{align}
		&\left(\partial_t + v \cdot \nabla_x + \nu^{\alpha}\right) f_{n+1}^{\alpha} + K^{\alpha} \mathbf{f}_{n} = \sum_{\beta=A,B} \Big(\Gamma^{\alpha\beta}_{\text{gain}}[f_n^{\alpha}, f_n^{\beta}] - \Gamma^{\alpha\beta}_{\text{loss}}[f^{\alpha}_n, f^{\beta}_{n+1}]\Big), \label{socal71f} \\
		&\hspace{3.4cm} f_{n+1}^{\alpha}(0, x, v) = f_{0}^{\alpha}(x, v). \notag
	\end{align}
	The most important thing is to get a uniform-in-$n$ estimate for the energy \( \mathcal{E}[\mathbf{f}_{n+1}(t)] \).

	\begin{lemma}\label{socedlem7}
		The sequence of approximate solutions $\{\mathbf{F}_{n}(t,x,v)\}$ is non-negative. For  sufficiently small ${\mathcal{E}}(\mathbf{f}_0)$, there exists $T^*({\mathcal{E}}(\mathbf{f}_0)) > 0$, such that 
		\begin{equation}\label{socald72}
			\sup_{\substack{0 \leq t \leq T^*,\ 1 \leq k \leq \infty}} {\mathcal{E}}[\mathbf{f}_k(t)] \leq C{\mathcal{E}}(\mathbf{f}_0).
		\end{equation}
	\end{lemma}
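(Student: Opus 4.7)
The plan is to argue by induction on $n$, establishing non-negativity of $\mathbf{F}_n$ and the uniform energy bound for $\mathbf{f}_n$ simultaneously on a common time interval $[0, T^*]$.

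For non-negativity, assume $\mathbf{F}_n \geq 0$; then \eqref{socald70F} is linear in $F^\alpha_{n+1}$ of the form $(\partial_t + v\cdot\nabla_x + a_n^\alpha) F^\alpha_{n+1} = g_n^\alpha$, where the absorption coefficient $a_n^\alpha = \sum_\beta \iint b^{\alpha\beta}|v-v_*|^\gamma F_n^\beta(v_*)\,d\omega\,dv_* \geq 0$ and the source $g_n^\alpha = \sum_\beta \iint b^{\alpha\beta}|v-v_*|^\gamma F_n^\alpha(v') F_n^\beta(v_*')\,d\omega\,dv_* \geq 0$. Solving along the characteristic $X(s)=x-(t-s)v$ by Duhamel shows $F^\alpha_{n+1}$ is the sum of a non-negative exponential factor times $F^\alpha_0$ and a non-negative source integral, so $\mathbf{F}_{n+1} \geq 0$, and the induction closes starting from $\mathbf{F}_0 \geq 0$.

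For the energy estimate, I differentiate \eqref{socal71f} by $\partial^{\tilde{\alpha}}_{\tilde{\beta}}$ for each pair with $|\tilde{\alpha}|+|\tilde{\beta}|\leq N$, test against $w^{2|\tilde{\beta}|}\partial^{\tilde{\alpha}}_{\tilde{\beta}} f_{n+1}^\alpha$, and sum over $\alpha \in \{A,B\}$. The transport term contributes $\tfrac12\tfrac{d}{dt}\|w^{|\tilde{\beta}|}\partial^{\tilde{\alpha}}_{\tilde{\beta}} \mathbf{f}_{n+1}\|_{L^2_{x,v}}^2$, while commuting $\partial_{\tilde{\beta}}$ past $v\cdot\nabla_x$ produces lower-order spatial-derivative terms that the full dissipation absorbs. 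Estimate \eqref{lem24252tep} of Lemma \ref{222maxdel} converts the collision-frequency contribution to the dissipation $\|w^{|\tilde{\beta}|}\partial^{\tilde{\alpha}}_{\tilde{\beta}}\mathbf{f}_{n+1}\|_\nu^2$ modulo $\eta$-small losses and $C_\eta\|w^{|\tilde{\beta}|}\mathbf{f}_{n+1}\|_\nu^2$, and \eqref{lem24253tep} controls $\mathbf{K}\mathbf{f}_n$ analogously. The nonlinear contributions $\partial^{\tilde{\alpha}}_{\tilde{\beta}}\Gamma^{\alpha\beta}[f_n^\alpha,f_n^\beta]$ and $\partial^{\tilde{\alpha}}_{\tilde{\beta}}\Gamma^{\alpha\beta}[f_n^\alpha,f_{n+1}^\beta]$, expanded by Leibniz, are handled term-by-term using Lemma \ref{estimatesofnonlinears}, splitting on whether $|\tilde{\alpha}_1|+|\tilde{\beta}_1|\geq N/2$ or $<N/2$; each piece is dominated by $\sqrt{\mathcal{E}[\mathbf{f}_n]}$ (or $\sqrt{\mathcal{E}[\mathbf{f}_{n+1}]}$) times a product of $\nu$-norms, so Cauchy--Schwarz with a small $\eta$-coefficient hides the $\mathbf{f}_{n+1}$ dissipation on the left.

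Summing over all $(\tilde{\alpha},\tilde{\beta})$ yields a schematic differential inequality
$$\tfrac{d}{dt}\mathcal{E}[\mathbf{f}_{n+1}(t)] + (1-c\eta)\mathcal{D}[\mathbf{f}_{n+1}(t)] \leq C_\eta\mathcal{E}[\mathbf{f}_{n+1}(t)] + C\sqrt{\mathcal{E}[\mathbf{f}_n(t)]}\,\mathcal{D}[\mathbf{f}_{n+1}(t)] + C\mathcal{E}[\mathbf{f}_n(t)]\,\mathcal{D}[\mathbf{f}_n(t)],$$
where $\mathcal{D}$ denotes the full dissipation $\sum_{|\tilde{\alpha}|+|\tilde{\beta}|\leq N}\|w^{|\tilde{\beta}|}\partial^{\tilde{\alpha}}_{\tilde{\beta}}\mathbf{f}\|_\nu^2$. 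Under the inductive hypothesis $\sup_{[0,T^*]}\mathcal{E}[\mathbf{f}_k]\leq 2C_0\mathcal{E}(\mathbf{f}_0)$ for $k\leq n$, smallness of $\mathcal{E}(\mathbf{f}_0)$ forces $C\sqrt{\mathcal{E}[\mathbf{f}_n]}\leq \tfrac12$, allowing $\mathcal{D}[\mathbf{f}_{n+1}]$ to be absorbed. A Gronwall estimate on $[0,T^*]$, with $T^*$ chosen so that $e^{C_\eta T^*}\bigl(\mathcal{E}(\mathbf{f}_0) + C T^* (2C_0 \mathcal{E}(\mathbf{f}_0))^2\bigr) \leq 2C_0 \mathcal{E}(\mathbf{f}_0)$, then closes the induction and gives \eqref{socald72}. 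The main obstacle is the coupled loss term $\Gamma^{\alpha\beta}_{\mathrm{loss}}[f_n^\alpha,f_{n+1}^\beta]$, which places the new iterate $\mathbf{f}_{n+1}$ inside the collision integral; a naive estimate would yield $\|f_{n+1}\|_\nu^2$ with an uncontrolled coefficient, and it is precisely the purpose of Lemma \ref{estimatesofnonlinears} to produce the coefficient $\sqrt{\mathcal{E}[\mathbf{f}_n]}$ instead, so that the inductive smallness of $\mathcal{E}[\mathbf{f}_n]$ rescues the closure. Carefully tracking the weights $w^{|\tilde{\beta}|}$ through the Leibniz expansion of $\partial^{\tilde{\alpha}}_{\tilde{\beta}}\Gamma$, distributing them between the factors via the two regimes of Lemma \ref{estimatesofnonlinears}, and checking that all residual terms fall on the dissipation side with small coefficients, constitutes the technically delicate bookkeeping.
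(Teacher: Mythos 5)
Your proposal is correct and follows essentially the same route as the paper: induction on $n$, non-negativity from the damped-transport structure of the iteration, weighted energy estimates closed via Lemma \ref{222maxdel} and Lemma \ref{estimatesofnonlinears}, absorption of the $\mathbf{f}_{n+1}$-dissipation using the inductive smallness of $\mathcal{E}[\mathbf{f}_n]$, and smallness of $T^*$ to handle the terms that cannot be absorbed. The only cosmetic difference is that you run a pointwise differential inequality plus Gronwall (folding the uncontrolled $C_\eta\|\cdot\|_\nu^2$ contributions into $C_\eta\mathcal{E}[\mathbf{f}_{n+1}]$, which is legitimate for soft potentials since $w\le 1$), whereas the paper integrates directly in time, runs a secondary induction on $|\tilde{\beta}|$ to reduce mixed derivatives to pure $x$-derivatives, and bounds the residual $\mathbf{K}$-term by $Ct\sup\mathcal{E}$.
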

	
	\begin{proof}
		We prove this by induction over k. ‌It is straightforward to observe that \eqref{socald72} satisfies the case of $k=0$. We assume \eqref{socald72} holds for $k=n$. The linear equation \eqref{socald70F} admits a solution $\mathbf{F}_{n+1}$ given any $\mathbf{F}_{n} \ge 0$. Furthermore, the non-negativity of $F_0(x, v)$ ensures $\mathbf{F}_{n+1}(t, x, v) \ge 0$.
		
		To prove \eqref{socald72} for $k = n+1$, we need to estimate the mixed $x$ and $v$ derivatives of $f^{n+1}$. Taking $\partial_{\tilde{\beta}}^{\tilde{\alpha}}$ ($\tilde{\beta} \ne 0$) of \eqref{socal71f}, we obtain
		\[
		\begin{aligned}
			&[\partial_t + v \cdot \nabla_x]\partial_{\tilde{\beta}}^{\tilde{\alpha}} f_{n+1}^{\alpha} + \partial_{\tilde{\beta}}[\nu^{\alpha}\partial^{\tilde{\alpha}} f_{n+1}^{\alpha}] + \partial_{\tilde{\beta}}[K^{\alpha}\partial^{\tilde{\alpha}} \mathbf{f}_n] + C^{\tilde{\beta}}_{\tilde{\beta}_1}\partial_{\tilde{\beta}_1}v_j\partial^j \partial^{\tilde{\alpha}}_{\tilde{\beta}-\tilde{\beta}_1} f_{n+1}^{\alpha}\\
			&\hspace{0.7cm}=\sum_{\beta=A,B}\Big( \partial_{\tilde{\beta}}^{\tilde{\alpha}}\Gamma^{\alpha\beta}_{\text{gain}}[f_n^{\alpha}, f_n^{\beta}] - \partial_{\tilde{\beta}}^{\tilde{\alpha}}\Gamma^{\alpha\beta}_{\text{loss}}[f_{n+1}^{\alpha}, f_{n}^{\beta}]\Big),
		\end{aligned}
		\]
		where \(|\tilde{\beta}| = 1\). We multiply the above equation with \(w^{2|\tilde{\beta}|}\partial_{\tilde{\beta}}^{\tilde{\alpha}} f^{\alpha}_{n+1}\), integrating it over \(\mathbb{T}^3 \times \mathbb{R}^3\)  and then sum over $\alpha=A,B$ to obtain
		\begin{align}
			&\frac{1}{2}\frac{d}{dt}\big\|w^{|\tilde{\beta}|}\partial_{\tilde{\beta}}^{\tilde{\alpha}} \mathbf{f}_{n+1}\big\|^2_{L^{2}_{x,v}} + \sum_{\alpha=A,B}\left\langle w^{2|\tilde{\beta}|}\partial_{\tilde{\beta}} (\nu^{\alpha}\partial^{\tilde{\alpha}} f_{n+1}^{\alpha}), \partial_{\tilde{\beta}}^{\tilde{\alpha}} f_{n+1}^{\alpha} \right\rangle_{L^{2}_{x,v}} \notag\\
			&\hspace{1cm}+ \left\langle w^{2|\tilde{\beta}|}\partial_{\tilde{\beta}} (\mathbf{K}\partial^{\tilde{\alpha}} \mathbf{f}_n), \partial_{\tilde{\beta}}^{\tilde{\alpha}} \mathbf{f}_{n+1}\right\rangle_{L^{2}_{x,v}} + \left\langle w^{2|\tilde{\beta}|}C^{\tilde{\beta}_1}(\partial_{\tilde{\beta}_1}v_j)\partial^j \partial^{\tilde{\alpha}}_{\tilde{\beta}-\tilde{\beta}_1} \mathbf{f}_{n+1}, \partial_{\tilde{\beta}}^{\tilde{\alpha}} \mathbf{f}_{n+1}\right\rangle_{L^{2}_{x,v}} \notag\\
			&\hspace{0.5cm}= \sum_{\alpha=A,B}\sum_{\beta=A,B}\left\langle w^{2|\tilde{\beta}|}\Big( \partial_{\tilde{\beta}}^{\tilde{\alpha}}\Gamma^{\alpha\beta}_{\text{gain}}[f_n^{\alpha}, f_n^{\beta}] - \partial_{\tilde{\beta}}^{\tilde{\alpha}}\Gamma^{\alpha\beta}_{\text{loss}}[f_{n+1}^{\alpha}, f_{n}^{\beta}]\Big), \partial_{\tilde{\beta}}^{\tilde{\alpha}} f^{\alpha}_{n+1}\right\rangle_{L^{2}_{x,v}}.\label{socald73}
		\end{align}

		‌Next, we estimate the terms in \eqref{socald73} term by term. For any \(\eta > 0\), using Lemma \ref{222maxdel}, then integrating over \(\mathbb{T}^3\) yields
		\begin{align*}
			&\sum_{\alpha=A,B}\left\langle w^{2|\tilde{\beta}|}\partial_{\tilde{\beta}} (\nu^{\alpha}\partial^{\tilde{\alpha}} f_{n+1}^{\alpha}), \partial_{\tilde{\beta}}^{\tilde{\alpha}} f_{n+1}^{\alpha} \right\rangle_{L^{2}_{x,v}} \\
			&\hspace{3cm}\geq \big\|w^{|\tilde{\beta}|}\partial_{\tilde{\beta}}^{\tilde{\alpha}} \mathbf{f}_{n+1}\big\|^2_{\nu} - \eta \sum_{|\tilde{\beta}_1| \leq |\tilde{\beta}|} \big\|w^{|\tilde{\beta}_1|}\partial_{\tilde{\beta}_1}^{\tilde{\alpha}} \mathbf{f}_{n+1}\big\|^2_{\nu} - C_\eta \big\|\partial^{\tilde{\alpha}} \mathbf{f}_{n+1}\big\|^2_{\nu},
		\end{align*}
		and
		\begin{align*}
		&\left\langle w^{2|\tilde{\beta}|}\partial_{\tilde{\beta}} (\mathbf{K}\partial^{\tilde{\alpha}} \mathbf{f}_n), \partial_{\tilde{\beta}}^{\tilde{\alpha}} \mathbf{f}_{n+1}\right\rangle_{L^{2}_{x,v}} \le \left( \eta\sum_{|\tilde{\beta}_1| \le |\tilde{\beta}|} \big\|w^{|\tilde{\beta}_1|}\partial_{\tilde{\beta}_1}^{\tilde{\alpha}} \mathbf{f}_{n}\big\|_{\nu} + C_\eta \big\|\partial^\alpha \mathbf{f}_{n}\big\|_{\nu} \right)\\ 
		&\hspace{9.6cm}\times \big\|w^{|\tilde{\beta}|}\partial_{\tilde{\beta}}^{\tilde{\alpha}} \mathbf{f}_{n+1}\big\|_{\nu}.
		\end{align*}
		Since \(|\tilde{\beta}_1| = 1\), the fourth term in \eqref{socald73} is bounded by
		\begin{align*}
		&C\left\langle w^{2|\tilde{\beta}|}(\partial_{\tilde{\beta}_1}v_j)\partial^j \partial^{\tilde{\alpha}}_{\tilde{\beta}-\tilde{\beta}_1} \mathbf{f}_{n+1}, \partial_{\tilde{\beta}}^{\tilde{\alpha}} \mathbf{f}_{n+1}\right\rangle_{L^{2}_{x,v}}\\
		&\hspace{0.7cm}\leq \iint_{(\mathbb{R}^3)^2} w^{2|\tilde{\beta}|} \left|(\partial^j \partial^{\tilde{\alpha}}_{\tilde{\beta}-\tilde{\beta}_1} \mathbf{f}_{n+1}: \partial_{\tilde{\beta}}^{\tilde{\alpha}} \mathbf{f}_{n+1})\right| dv dx\\
		&\hspace{0.7cm}\leq \big\|w^{\frac{1}{2} + |\tilde{\beta}|}\partial_{\tilde{\beta}}^{\tilde{\alpha}} \mathbf{f}_{n+1}\big\|_{L^{2}_{x,v}} \times \big\|w^{\frac{1}{2} + (|\tilde{\beta}| - 1)}\partial^j \partial^{\tilde{\alpha}}_{\tilde{\beta}-\tilde{\beta}_1} \mathbf{f}_{n+1}\big\|_{L^{2}_{x,v}}\\
		&\hspace{0.7cm}\leq \eta \big\|w^{ |\tilde{\beta}|}\partial_{\tilde{\beta}}^{\tilde{\alpha}} \mathbf{f}_{n+1}\big\|_{\nu}^2 + C_\eta \big\|w^{(|\tilde{\beta}| - 1)}\partial^j \partial^{\tilde{\alpha}}_{\tilde{\beta}-\tilde{\beta}_1} \mathbf{f}_{n+1}\big\|^2_{\nu},
		\end{align*}
		where \(\|\cdot\|_\nu\) is equivalent to \(\|w^{\frac{1}{2}}\cdot\|_{L^{2}_{x,v}}\) because of \(w = (1 + |v|)^\gamma\).
		
		According to expression \eqref{Dsocald57} for \(\partial^\alpha_\beta \Gamma_{\text{loss}}[f^n, f^{n+1}]\) and \(\partial^\alpha_\beta \Gamma_{\text{gain}}[f^n, f^{n+1}]\), we apply Lemma \ref{estimatesofnonlinears} with \(l = |\beta|\), then it holds that
		\begin{align*}
			&\Big|\sum_{\alpha=A,B}\sum_{\beta=A,B}\left\langle w^{2|\tilde{\beta}|}\Big( \partial_{\tilde{\beta}}^{\tilde{\alpha}}\Gamma^{\alpha\beta}_{\text{gain}}[f_n^{\alpha}, f_n^{\beta}] - \partial_{\tilde{\beta}}^{\tilde{\alpha}}\Gamma^{\alpha\beta}_{\text{loss}}[f_{n+1}^{\alpha}, f_{n}^{\beta}]\Big), \partial_{\tilde{\beta}}^{\tilde{\alpha}} f^{\alpha}_{n+1}\right\rangle_{L^{2}_{x,v}}\Big| \\
			&\hspace{0.4cm}\le C\left[\Big(\sum_{| \tilde{\alpha}_i |+| \tilde{\beta}_i | \leq N} \left\| w^{| \tilde{\beta}_i |} \partial_{\tilde{\beta}_i}^{\tilde{\alpha}_i} \mathbf{f}_{n}\right\|_{L^{2}_{x,v}}\Big)\Big(\sum_{| \tilde{\beta}_1 | \leq | \tilde{\beta} |} \left\| w^{| \tilde{\beta}_1 |} \partial_{\tilde{\beta}_1}^{\tilde{\alpha}_1} \mathbf{f}_{n+1}\right\|_{\nu}\Big)\right. \\
			&\hspace{1.5cm} + \Big(\sum_{| \tilde{\alpha}_i |+| \tilde{\beta}_i | \leq N} \left\| w^{| \tilde{\beta}_i |} \partial_{\tilde{\beta}_i}^{\tilde{\alpha}_i} \mathbf{f}_{n}\right\|_{L^{2}_{x,v}}\Big)\Big(\sum_{| \tilde{\beta}_1 | \leq | \tilde{\beta} |} \left\| w^{| \tilde{\beta}_1 |} \partial_{\tilde{\beta}_1}^{\tilde{\alpha}_1} \mathbf{f}_{n+1}\right\|_{\nu}\Big) \\
			&\hspace{1.5cm} + \left.\Big(\sum_{| \tilde{\alpha}_i |+| \tilde{\beta}_i | \leq N} \left\| w^{| \tilde{\beta}_i |} \partial_{\tilde{\beta}_i}^{\tilde{\alpha}_i} \mathbf{f}_{n}\right\|_{L^{2}_{x,v}}\Big)\Big(\sum_{| \tilde{\beta}_1 | \leq | \tilde{\beta} |} \left\| w^{| \tilde{\beta}_1 |} \partial_{\tilde{\beta}_1}^{\tilde{\alpha}_1} \mathbf{f}_{n}\right\|_{\nu}\Big) \right] \times \left\|w^{ |\tilde{\beta}|}\partial_{\tilde{\beta}}^{\tilde{\alpha}} \mathbf{f}_{n+1}\right\|_{\nu}
		\end{align*}
		where we have chosen \(l - |\tilde{\beta}_1| \le |\tilde{\beta}_2|\) in Lemma \ref{estimatesofnonlinears}, with  \(\tilde{\alpha}_1 \le \tilde{\alpha}\), \(\tilde{\beta}_1 \le \tilde{\beta}\), and \(|\tilde{\alpha}_i| + |\tilde{\beta}_i| \le N\).
		
		 By \eqref{DFenergynorm}, we notice that
		\[
		\sup_{0 \le s \le t}\big\|w^{|\tilde{\beta}|}\partial_{\tilde{\beta}}^{\tilde{\alpha}} \mathbf{f}(s)\big\| \le C \sup_{0 \le s \le t}\mathcal{E}^{\frac{1}{2}}[\mathbf{f}(s)]
		\]
		then integrating \eqref{socald73} over [0, t], we derive
			\begin{align}
				&\frac{1}{2}\big\|w^{|\tilde{\beta}|}\partial_{\tilde{\beta}}^{\tilde{\alpha}} \mathbf{f}_{n+1}(t)\big\|^2_{L^{2}_{x,v}}+\int_{0}^{t}\big\|w^{|\tilde{\beta}|}\partial_{\tilde{\beta}}^{\tilde{\alpha}} \mathbf{f}_{n+1}(s)\big\|^2_{\nu} ds -\frac{1}{2}\big\|w^{|\tilde{\beta}|}\partial_{\tilde{\beta}}^{\tilde{\alpha}} \mathbf{f}_{n+1}(0)\big\|^2_{L^{2}_{x,v}} \notag\\
				&\hspace{0.3cm}\leq \eta \int_{0}^{t} \sum_{\big|\tilde{\beta}_{1}\big|\leq|\tilde{\beta}|}\left(\big\|w^{|\tilde{\beta}_1|}\partial_{\tilde{\beta}_1}^{\tilde{\alpha}} \mathbf{f}_{n+1}(s)\big\|^2_{\nu} + \big\|w^{|\tilde{\beta}_1|}\partial_{\tilde{\beta}_1}^{\tilde{\alpha}} \mathbf{f}_{n}(s)\big\|^2_{\nu} \right)ds \notag\\
				&\hspace{0.6cm}+C_{\eta} \sum_{\left|\tilde{\alpha}_{j}\right|\leq|\tilde{\alpha}|}\sum_{\left|\tilde{\beta}_{j}\right|<|\tilde{\beta}|} \int_{0}^{t} \big\|w^{|\tilde{\beta}_j|}\partial_{\tilde{\beta}_j}^{\tilde{\alpha}_j} \mathbf{f}_{n+1}(s)\big\|^2_{\nu}ds+C_{\eta} \int_{0}^{t}\left\|\partial^{\tilde{\alpha}} \mathbf{f}_{n}(s)\right\|_{\nu}^{2} ds \notag\\
				&\hspace{0.6cm}+C \sup _{0 \leq s \leq t} \mathcal{E}^{\frac{1}{2}}\left(\mathbf{f}_{n}(s)\right) \sum_{\left|\tilde{\alpha}_{1}\right|\leq|\tilde{\alpha}|}\sum_{\left|\tilde{\beta}_{1}\right|\leq|\tilde{\beta}|}\int_{0}^{t} \left(\big\|w^{|\tilde{\beta}_1|}\partial_{\tilde{\beta}_1}^{\tilde{\alpha}_1} \mathbf{f}_{n+1}(s)\big\|_{\nu}+\big\|w^{|\tilde{\beta}_1|}\partial_{\tilde{\beta}_1}^{\tilde{\alpha}_1} \mathbf{f}_{n}(s)\big\|_{\nu}\right) \notag\\
				&\hspace{9.5cm}\times\big\|w^{|\tilde{\beta}|}\partial_{\tilde{\beta}}^{\tilde{\alpha}} \mathbf{f}_{n+1}(s)\big\|_{\nu} ds \notag\\
				&\hspace{0.6cm}+C \sup _{0 \leq s \leq t} \mathcal{E}^{\frac{1}{2}}\left(\mathbf{f}_{n+1}(s)\right) \sum_{\left|\tilde{\alpha}_{1}\right|\leq|\tilde{\alpha}|}\sum_{\left|\tilde{\beta}_{1}\right|\leq|\tilde{\beta}|}\int_{0}^{t} \big\|w^{|\tilde{\beta}_1|}\partial_{\tilde{\beta}_1}^{\tilde{\alpha}_1} \mathbf{f}_{n}(s)\big\|_{\nu}\big\|w^{|\tilde{\beta}|}\partial_{\tilde{\beta}}^{\tilde{\alpha}} \mathbf{f}_{n+1}(s)\big\|_{\nu}ds, \label{socaed74}
			\end{align}
			 where $| \tilde{\alpha}_j |+| \tilde{\beta}_j | \leq N$, $\left|\tilde{\alpha}_{1}\right|\leq|\tilde{\alpha}|$. Since $|\tilde{\beta}_j| < |\tilde{\beta}|$, $\partial_{\tilde{\beta}}^{\tilde{\alpha}} \mathbf{f}_{n+1}$ can be estimated only in term of the $x$ derivatives $\partial^{\tilde{\alpha}} \mathbf{f}_{n+1}$ via an induction starting from $|\tilde{\beta}| = 1, 2,  \dots$ in \eqref{socaed74}. Through this induction process, one gets 
			\begin{align*}
				&\frac{1}{2}\big\|w^{|\tilde{\beta}|}\partial_{\tilde{\beta}}^{\tilde{\alpha}} \mathbf{f}_{n+1}(t)\big\|^2_{L^{2}_{x,v}}+\int_{0}^{t}\big\|w^{|\tilde{\beta}|}\partial_{\tilde{\beta}}^{\tilde{\alpha}} \mathbf{f}_{n+1}(s)\big\|^2_{\nu} ds \\
				&\hspace{0.5cm}\leq C \big\|w^{|\tilde{\beta}|}\partial_{\tilde{\beta}}^{\tilde{\alpha}} \mathbf{f}_{n+1}(0)\big\|^2_{L^{2}_{x,v}} \\
				&\hspace{0.8cm}+ C\eta \int_0^t \sum_{\left|\tilde{\beta}_{1}\right|\leq|\tilde{\beta}|}\left(\big\|w^{|\tilde{\beta}_1|}\partial_{\tilde{\beta}_1}^{\tilde{\alpha}} \mathbf{f}_{n+1}(s)\big\|^2_{\nu} + \big\|w^{|\tilde{\beta}_1|}\partial_{\tilde{\beta}_1}^{\tilde{\alpha}} \mathbf{f}_{n}(s)\big\|^2_{\nu} \right)ds \\
				&\hspace{0.8cm}+ C_\eta \sum_{|\tilde{\alpha}_j| \leq N} \int_0^t  \big\| \partial^{\tilde{\alpha}_j} \mathbf{f}_{n+1}(s) \big\|_{\nu}^2 ds + C_\eta \int_0^t \big\| \partial^{\tilde{\alpha}} \mathbf{f}_{n} (s)\big\|_{\nu}^2 ds\\
				&\hspace{0.8cm}+ C \sup_{0 \leq s \leq t} \mathcal{E}(\mathbf{f}_{n+1}(s)) \sup_{0 \leq s \leq t} \mathcal{E}^{\frac{1}{2}}(\mathbf{f}_{n}(s)) 
			+ C \sup_{0 \leq s \leq t} \mathcal{E}(\mathbf{f}_{n}(s)) \sup_{0 \leq s \leq t} \mathcal{E}^{\frac{1}{2}}(\mathbf{f}_{n+1}(s))
			\end{align*}
			where $|\tilde{\alpha}_j| \leq N$, $\tilde{\beta}_1 \leq \tilde{\beta}$, $\tilde{\alpha}_1 \leq \tilde{\alpha}$, and the inequality 
			\[
			\int_0^t \big\| w^{|\tilde{\beta}|}\partial_{\tilde{\beta}}^{\tilde{\alpha}} \mathbf{f}(s) \big\|^2 ds \leq C \sup_{0 \leq s \leq t} \mathcal{E}(\mathbf{f}(s))
			\]
			is used to estimate the last two terms in \eqref{socaed74}. Summing over $|\tilde{\alpha}| + |\tilde{\beta}| \leq N$,  $(\tilde{\beta} \neq 0)$, then it yields that
				\begin{align}
					&\sum_{|\tilde{\alpha}| + |\tilde{\beta}| \leq N}\left(\frac{1}{2}\big\|w^{|\tilde{\beta}|}\partial_{\tilde{\beta}}^{\tilde{\alpha}} \mathbf{f}_{n+1}(t)\big\|^2_{L^{2}_{x,v}}+\int_{0}^{t}\big\|w^{|\tilde{\beta}|}\partial_{\tilde{\beta}}^{\tilde{\alpha}} \mathbf{f}_{n+1}(s)\big\|^2_{\nu} ds\right)\notag\\
					&\hspace{0.3cm}\leq \sum_{|\tilde{\alpha}| + |\tilde{\beta}| \leq N}\left(C\big\|w^{|\tilde{\beta}|}\partial_{\tilde{\beta}}^{\tilde{\alpha}} \mathbf{f}_{n+1}(0)\big\|^2_{L^{2}_{x,v}} +C\eta \int_{0}^{t} \big\|w^{|\tilde{\beta}|}\partial_{\tilde{\beta}}^{\tilde{\alpha}} \mathbf{f}_{n+1}(s)\big\|^2_{\nu}ds  \right)\notag\\
					&\hspace{0.3cm}\quad + C_\eta \sum_{|\tilde{\alpha}| \leq N} \int_0^t \left( \big\| \partial^{\tilde{\alpha}} \mathbf{f}_{n+1}(s) \big\|_{\nu}^2 + \big\| \partial^{\tilde{\alpha}} \mathbf{f}_{n}(s) \big\|_{\nu}^2\right)ds \notag \\
					&\hspace{0.3cm}\quad + C \sup_{0 \le s \le t} \mathcal{E}(\mathbf{f}_{n+1}(s)) \sup_{0 \le s \le t} \mathcal{E}^{\frac{1}{2}}(\mathbf{f}_{n}(s)) 
					+ C\sup_{0 \le s \le t} \mathcal{E}(\mathbf{f}_{n}(s)) \sup_{0 \le s \le t} \mathcal{E}^{\frac{1}{2}}(\mathbf{f}_{n+1}(s)). \label{sced75}
				\end{align}
		Thus we complete the estimate for the mixed $x,v$- derivatives.
		
		The second step is to deal with  $x$-derivatives of $\mathbf{f}_{n+1}$. Taking $\partial^{\tilde{\alpha}}$ of \eqref{socal71f} gives
		\begin{align}
		&(\partial_t + v \cdot \nabla_x) \partial^{\tilde{\alpha}} f_{n+1}^\alpha + \nu^{\alpha} (\partial^{\tilde{\alpha}} f_{n+1}^\alpha) + K^\alpha(\partial^{\tilde{\alpha}} \mathbf{f}_{n}) \notag\\
		&\hspace{2cm}= \sum_{\beta=A,B}\Big(\partial^{\tilde{\alpha}} \Gamma^{\alpha\beta}_{\text{gain}}[f^{\alpha}_n, f^{\beta}_n] - \partial^{\tilde{\alpha}} \Gamma^{\alpha\beta}_{\text{loss}}[f^{\alpha}_{n+1}, f^{\beta}_n]\Big).
		\end{align}
		By Lemma\ref{estimatesofnonlinears} with $l = 0$, the nonlinear collision terms are bounded by
		\begin{align}
		&\frac{1}{2}\frac{d}{dt}\|\partial^{\tilde{\alpha}} \mathbf{f}_{n+1}\|^2_{L^{2}_{x,v}} + \sum_{\alpha=A,B}\left\langle \nu^{\alpha}\partial^{\tilde{\alpha}} f_{n+1}^{\alpha}, \partial^{\tilde{\alpha}} f_{n+1}^{\alpha} \right\rangle_{L^{2}_{x,v}} + \left\langle  (\mathbf{K}\partial^{\tilde{\alpha}} \mathbf{f}_n), \partial^{\tilde{\alpha}} \mathbf{f}_{n+1}\right\rangle_{L^{2}_{x,v}}  \notag\\
		&\hspace{0.5cm}= \sum_{\alpha=A,B}\sum_{\beta=A,B}\left\langle \big( \partial^{\tilde{\alpha}}\Gamma^{\alpha\beta}_{\text{gain}}[f_n^{\alpha}, f_n^{\beta}] - \partial^{\tilde{\alpha}}\Gamma^{\alpha\beta}_{\text{loss}}[f_{n+1}^{\alpha}, f_{n}^{\beta}]\big), \partial^{\tilde{\alpha}} f^{\alpha}_{n+1}\right\rangle_{L^{2}_{x,v}} \notag\\
		&\hspace{0.6cm}\le C\left[\Big(\sum_{| \tilde{\alpha}_i |+| \tilde{\beta}_i | \leq N} \big\| w^{| \tilde{\beta}_i |} \partial_{\tilde{\beta}_i}^{\tilde{\alpha}_i} \mathbf{f}_{n}\big\|_{L^{2}_{x,v}}\Big)\Big(\sum_{| \tilde{\alpha}_1 | \leq | \tilde{\alpha} |} \left\|  \partial^{\tilde{\alpha}_1} \mathbf{f}_{n}\right\|_{\nu}\Big)\right. \notag\\
		&\hspace{1.7cm} + \Big(\sum_{| \tilde{\alpha}_i |+| \tilde{\beta}_i | \leq N} \big\| w^{| \tilde{\beta}_i |} \partial_{\tilde{\beta}_i}^{\tilde{\alpha}_i} \mathbf{f}_{n}\big\|_{L^{2}_{x,v}}\Big)\Big(\sum_{| \tilde{\alpha}_1 | \leq | \tilde{\alpha} |} \left\|  \partial^{\tilde{\alpha}_1} \mathbf{f}_{n+1}\right\|_{\nu}\Big) \notag\\
		&\hspace{1.7cm} + \left.\Big(\sum_{| \tilde{\alpha}_i |+| \tilde{\beta}_i | \leq N} \big\| w^{| \tilde{\beta}_i |} \partial_{\tilde{\beta}_i}^{\tilde{\alpha}_i} \mathbf{f}_{n+1}\big\|_{L^{2}_{x,v}}\Big)\Big(\sum_{| \tilde{\alpha}_1 | \leq | \tilde{\alpha} |} \big\| \partial^{\tilde{\alpha}_1} \mathbf{f}_{n}\big\|_{\nu}\Big) \right] \times \big\|\partial^{\tilde{\alpha}} \mathbf{f}_{n+1}\big\|_{\nu}.\notag
		\end{align}
		Integrating the above equation over $[0, t]$ leads to
		\begin{align}
			&\frac{1}{2}\|\partial^{\tilde{\alpha}} \mathbf{f}_{n+1}(t)\|^2_{L^{2}_{x,v}} - \frac{1}{2}\|\partial^{\tilde{\alpha}} \mathbf{f}_{n+1}(0)\|^2_{L^{2}_{x,v}} + \int_0^t \|\partial^{\tilde{\alpha}} \mathbf{f}_{n+1}(s)\|^2_{\nu} ds \notag\\
			&\hspace{0.4cm}\leq C \sup_{0 \leq s \leq t} \mathcal{E}^{\frac{1}{2}}(\mathbf{f}_{n}(s)) \int_0^t \Big( \sum_{| \tilde{\alpha}_1 | \leq | \tilde{\alpha} |} \left\|  \partial^{\tilde{\alpha}_1} \mathbf{f}_{n}(s)\right\|_{\nu} + \left\|  \partial^{\tilde{\alpha}_1} \mathbf{f}_{n+1}(s)\right\|_{\nu} \Big) \|\partial^{\tilde{\alpha}} \mathbf{f}_{n+1}(s)\|_{\nu} ds \notag\\
			&\hspace{0.4cm}\quad + C \sup_{0 \leq s \leq t} \mathcal{E}^{\frac{1}{2}}(\mathbf{f}_{n+1}(s)) \int_0^t \Big( \sum_{| \tilde{\alpha}_1 | \leq | \tilde{\alpha} |} \left\|  \partial^{\tilde{\alpha}_1} \mathbf{f}_{n}(s)\right\|_{\nu}\Big) \|\partial^{\tilde{\alpha}} \mathbf{f}_{n+1}(s)\|_{\nu} ds \notag\\
			&\hspace{0.4cm}\quad - \int_0^t \left\langle  (\mathbf{K}\partial^{\tilde{\alpha}} \mathbf{f}_n)(s) , \partial^{\tilde{\alpha}} \mathbf{f}_{n+1}(s) \right\rangle_{L^{2}_{x,v}}ds,\label{scad7777}
		\end{align}
		We notice  from \eqref{socaed1616} that
		\[
		\begin{aligned}
			\Big|\int_0^t \left\langle  (\mathbf{K}\partial^{\tilde{\alpha}} \mathbf{f}_n)(s) , \partial^{\tilde{\alpha}} \mathbf{f}_{n+1}(s) \right\rangle_{L^{2}_{x,v}}ds\Big| &\leq C \int_0^t  \left\|  \partial^{\tilde{\alpha}} \mathbf{f}_{n}(s)\right\|^2_{\nu} + \left\|  \partial^{\tilde{\alpha}} \mathbf{f}_{n+1}(s)\right\|^2_{\nu} ds \\
			&\leq C t \sup_{0 \leq s \leq t} \left( \left\|  \partial^{\tilde{\alpha}} \mathbf{f}_{n}(s)\right\|^2_{L^{2}_{x,v}} + \left\|  \partial^{\tilde{\alpha}} \mathbf{f}_{n+1}(s)\right\|^2_{L^{2}_{x,v}} \right).
		\end{aligned}
		\]
		Together with \eqref{sced75}, this implies that for sufficiently small $\eta$
		\begin{align*}
			\mathcal{E}(\mathbf{f}_{n+1}(t)) &\leq C_0\mathcal{E}(\mathbf{f}_0) + Ct \left( \sup_{0 \leq s \leq t} \mathcal{E}(\mathbf{f}_{n+1})(s) + \sup_{0 \leq s \leq t} \mathcal{E}(\mathbf{f}_n)(s) \right)  \\
			&\quad + C \sup_{0 \leq s \leq t} \mathcal{E}(\mathbf{f}_{n+1})(s) \times\sup_{0 \leq s \leq t} \mathcal{E}^{\frac{1}{2}}(\mathbf{f}_n)(s)  \\
			&\quad + C \sup_{0 \leq s \leq t} \mathcal{E}(\mathbf{f}_n)(s) \times \sup_{0 \leq s \leq t} \mathcal{E}^{\frac{1}{2}}(\mathbf{f}_{n+1})(s)
		\end{align*}
		So, the last term is bounded by
		\begin{equation*}
			\frac{1}{4} \sup_{0 \leq s \leq t} \mathcal{E}(\mathbf{f}_{n+1})(s) + C \sup_{0 \leq s \leq t} \mathcal{E}^{\frac{3}{2}}(\mathbf{f}_n)(s)
		\end{equation*}
		Let $\sup_{0 \leq s \leq t} \mathcal{E}(\mathbf{f}_n(s)) \leq 2C_0\mathcal{E}(\mathbf{f}_0)$. Solving $\mathcal{E}(f^{n+1}(t))$ from above yields
		\begin{equation*}
			\left( \frac{3}{4} - CT^* - C\mathcal{E}^{\frac{1}{2}}(\mathbf{f}_0) \right)\sup_{0 \leq t \leq T^*} \mathcal{E}(\mathbf{f}_{n+1})(t) \leq \left( C_0 + C\mathcal{E}^{\frac{1}{2}}(\mathbf{f}_0) \right)\mathcal{E}^{\frac{1}{2}}(\mathbf{f}_0)
		\end{equation*}
		where $T^*$ and $\mathcal{E}(\mathbf{f}_0)$ are chosen sufficiently small. Therefore, this lemma follows.
		
	\end{proof}

	\begin{theorem}\label{soceadTm4}
		For any small $M > 0$, there exists $T^*(M) > 0$ and $M_1 > 0$, such that if the initial data satisfies 
		\[
		\mathcal{E}(\mathbf{f}^{\rm{in}}) = \sum_{|\tilde{\alpha}| + |\tilde{\beta}| \leq N} \big\| w^{|\tilde{\beta}|} \partial^{\tilde{\alpha}}_{\tilde{\beta}} \mathbf{f}^{\rm{in}} \big\|_2^2 \leq M_1,
		\]
		then there is a unique solution $\mathbf{f}(t, x, v)$ to equations \eqref{socal555EQSO}, such that
		\[
		\sup_{0 \leq t \leq T^*} \mathcal{E}\big[\mathbf{f}(t)\big] \leq M,
		\]
		where $\mathcal{E}\big[\mathbf{f}(t)\big]$ is continuous over $[0, T^*(M))$. If $\mathbf{F}^{\rm{in}}(x, v) = \bm{\mu} + (\bm{\sqrt{\mu}} : \mathbf{f}^{\rm{in}}) \geq 0$, then
		\[
		\mathbf{F}(t, x, v) = \bm{\mu} + (\bm{\sqrt{\mu}} : \mathbf{f}) \geq 0.
		\]
		Furthermore, for $\beta \neq 0$, the following energy estimate holds:
		\begin{align}
			&\sum_{|\tilde{\alpha}| + |\tilde{\beta}| \leq N} \left(\frac{1}{2} \big\|w^{|\tilde{\beta}|}\partial_{\tilde{\beta}}^{\tilde{\alpha}} \mathbf{f}(t)\big\|^2_{L^{2}_{x,v}} + \int_0^t  \big\|w^{|\tilde{\beta}|}\partial_{\tilde{\beta}}^{\tilde{\alpha}} \mathbf{f}(s)\big\|^2_{\nu} ds \right) \notag\\
			&\hspace{0.5cm}\leq C \sum_{|\tilde{\alpha}| + |\tilde{\beta}| \leq N} \left(\big\|w^{|\tilde{\beta}|}\partial_{\tilde{\beta}}^{\tilde{\alpha}} \mathbf{f}^{\rm{in}}\big\|^2_{L^{2}_{x,v}} +  \int_0^t \big\| \partial^{\tilde{\alpha}} \mathbf{f}(s) \big\|^2_{\nu} ds \right)
			 + C \sup_{0 \leq s \leq t} \mathcal{E}\big[\mathbf{f}(s)\big] \sup_{0 \leq s \leq t} \mathcal{E}^{\frac{1}{2}}\big[\mathbf{f}(s)\big].\label{socad7878}
		\end{align}
	\end{theorem}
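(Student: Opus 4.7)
The plan is to pass to the limit in the iterating scheme \eqref{socald70F}--\eqref{socal71f} whose uniform bound was supplied by Lemma \ref{socedlem7}. First, fix $M>0$ and choose $M_1=\mathcal{E}(\mathbf{f}^{\rm{in}})$ so small (and $T^*(M)$ so small) that Lemma \ref{socedlem7} yields
\[
 \sup_{0\le t\le T^*,\,n\ge 0}\mathcal{E}[\mathbf{f}_n(t)]\le M,
\]
together with non-negativity $\mathbf{F}_n=\bm{\mu}+(\sqrt{\bm{\mu}}:\mathbf{f}_n)\ge 0$ for every $n$. The main task is then to show that $\{\mathbf{f}_n\}$ is Cauchy in a suitable norm. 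Setting $\mathbf{g}_n:=\mathbf{f}_{n+1}-\mathbf{f}_n$, the subtraction of two consecutive equations \eqref{socal71f} gives
\[
 (\partial_t+v\cdot\nabla_x+\nu^\alpha)g_n^\alpha+K^\alpha\mathbf{g}_{n-1}
  =\sum_{\beta}\!\Big(\Gamma^{\alpha\beta}_{\rm gain}[\mathbf{g}_{n-1},f_n^\beta]+\Gamma^{\alpha\beta}_{\rm gain}[f_{n-1}^\alpha,g_{n-1}^\beta]-\Gamma^{\alpha\beta}_{\rm loss}[g_n^\alpha,f_n^\beta]-\Gamma^{\alpha\beta}_{\rm loss}[f_n^\alpha,g_{n-1}^\beta]\Big).
\]
Testing against $\mathbf{g}_n$ in $L^2_{x,v}$ (no derivatives) and using Lemma \ref{lemma2.1} to control $\langle\mathbf{K}\mathbf{g}_{n-1},\mathbf{g}_n\rangle$, Lemma \ref{222maxdel} for the $\bm\nu$ part, and Lemma \ref{estimatesofnonlinears} (with $\tilde\alpha=\tilde\beta=0$) for the bilinear terms, one derives
\[
 \tfrac{d}{dt}\|\mathbf{g}_n\|_{L^2_{x,v}}^2+\|\mathbf{g}_n\|_\nu^2\le C\sqrt{M}\,\|\mathbf{g}_n\|_\nu^2+C\sqrt{M}\,\|\mathbf{g}_{n-1}\|_\nu^2.
\]
For small enough $M$ this implies a contraction $\sup_{[0,T^*]}\|\mathbf{g}_n\|_{L^2_{x,v}}^2+\int_0^{T^*}\|\mathbf{g}_n\|_\nu^2\le \tfrac12(\sup\|\mathbf{g}_{n-1}\|_{L^2_{x,v}}^2+\int\|\mathbf{g}_{n-1}\|_\nu^2)$, so $\{\mathbf{f}_n\}$ converges strongly in $C([0,T^*];L^2_{x,v})\cap L^2([0,T^*];L^2_\nu)$ to a limit $\mathbf{f}$.

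Next, by the uniform bound on $\mathcal{E}[\mathbf{f}_n]$ together with Banach--Alaoglu, a subsequence converges weakly-$*$ in $L^\infty_t H^N_{x,v}$ with weights and weakly in $L^2_t$ for the $\nu$-weighted higher derivatives. Strong convergence in the low norm identifies this weak limit with $\mathbf{f}$, and the lower semicontinuity of norms yields
\[
 \sup_{0\le t\le T^*}\mathcal{E}[\mathbf{f}(t)]\le \liminf_n\sup_{0\le t\le T^*}\mathcal{E}[\mathbf{f}_n(t)]\le M.
\]
Interpolation with the uniform high-regularity bound upgrades the strong convergence to any intermediate space, which is enough to pass to the limit in each nonlinear collision term via the continuity estimates of Lemma \ref{estimatesofnonlinears}; hence $\mathbf{f}$ solves \eqref{socal555EQSO}, and $\mathbf{F}=\bm\mu+(\sqrt{\bm\mu}:\mathbf{f})\ge 0$ as a pointwise limit of non-negative $\mathbf{F}_n$. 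Uniqueness follows by repeating the $L^2_{x,v}$ difference estimate above for two solutions and invoking Gronwall.

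The remaining energy inequality \eqref{socad7878} is obtained by reproducing the induction of Lemma \ref{socedlem7} on $|\tilde\beta|$ directly for the limit $\mathbf{f}$: apply $\partial^{\tilde\alpha}_{\tilde\beta}$ (with $\tilde\beta\neq 0$) to \eqref{socal555EQSO}, test against $w^{2|\tilde\beta|}\partial^{\tilde\alpha}_{\tilde\beta}\mathbf{f}$, use \eqref{lem24252tep} to absorb $\tfrac12\|w^{|\tilde\beta|}\partial^{\tilde\alpha}_{\tilde\beta}\mathbf{f}\|_\nu^2$ from the $\bm\nu$ term, and handle the commutator $\partial_{\tilde\beta_1}v_j\cdot\partial^j\partial^{\tilde\alpha}_{\tilde\beta-\tilde\beta_1}\mathbf{f}$ and $\partial^{\tilde\alpha}_{\tilde\beta}\mathbf{K}\mathbf{f}$ by \eqref{lem24253tep}. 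The nonlinear part $\partial^{\tilde\alpha}_{\tilde\beta}\bm\Gamma(\mathbf{f},\mathbf{f})$ is controlled by Lemma \ref{estimatesofnonlinears}, producing the cubic $\mathcal{E}\cdot\mathcal{E}^{1/2}$ factor after separating a small multiple $\eta\,\|w^{|\tilde\beta|}\partial^{\tilde\alpha}_{\tilde\beta}\mathbf{f}\|_\nu^2$ and terms with $|\tilde\beta_1|<|\tilde\beta|$, which are closed by the $|\tilde\beta|=0,1,\dots$ induction; the $|\tilde\beta|=0$ base case contributes the pure $x$-derivative term $\int_0^t\|\partial^{\tilde\alpha}\mathbf{f}\|_\nu^2\,ds$ on the right-hand side of \eqref{socad7878}. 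Continuity of $\mathcal{E}[\mathbf{f}(t)]$ in $t$ follows from \eqref{socad7878} by a standard weak-continuity $+$ norm-continuity argument once the initial data is matched. The main obstacle is the lower-order commutator generated by $v\cdot\nabla_x$ in the $v$-derivative step, whose weighted $\nu$-norm must be reabsorbed only after the $|\tilde\beta|$-induction is carried out in the correct order; this is exactly what the scheme from \eqref{socaed74} to \eqref{sced75} handles in the iterated setting, and reproducing it for the limit $\mathbf{f}$ closes \eqref{socad7878}.
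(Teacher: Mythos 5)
Your overall route coincides with the paper's: both construct the solution as the limit of the iterates of Lemma \ref{socedlem7}, obtain uniqueness from an $L^2_{x,v}$ difference estimate plus Gronwall, get \eqref{socad7878} from the $|\tilde\beta|$-induction of \eqref{socaed74}--\eqref{sced75} (the paper passes to the limit in \eqref{sced75}, you re-run the induction on the limit $\mathbf{f}$ -- equivalent), and deduce continuity of $\mathcal{E}[\mathbf{f}(t)]$ from the integrated estimates. You are in fact more explicit than the paper about \emph{why} the iterates converge, which is a point the paper compresses to ``take $n\to\infty$ in Lemma \ref{socedlem7}.''

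However, your displayed contraction inequality is not correct as written, and the step would fail if taken literally. Lemma \ref{lemma2.1} gives $|\langle \mathbf{K}\mathbf{g}_{n-1},\mathbf{g}_n\rangle|\le C\,\|\mathbf{g}_{n-1}\|_\nu\|\mathbf{g}_n\|_\nu$ with an \emph{absolute} constant $C$ (depending only on the kernel and the masses), not a constant of size $O(\sqrt M)$. After Cauchy--Schwarz and absorbing $\tfrac12\|\mathbf{g}_n\|_\nu^2$ into the left-hand side, you are left with $C\|\mathbf{g}_{n-1}\|_\nu^2$ on the right with $C=O(1)$, so the claim ``for small enough $M$ this implies a contraction'' does not follow from your displayed bound: the dangerous term is linear, not quadratic, in the unknowns and is insensitive to $M$. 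The correct closing device -- which is exactly what the paper uses for the $\mathbf{K}$-term in \eqref{scad7777} and again in its uniqueness argument -- is to integrate in time and exploit that for soft potentials $w=(1+|v|)^\gamma\le 1$, hence $\|\cdot\|_\nu\le\|\cdot\|_{L^2_{x,v}}$, so that $\int_0^{T^*}\|\mathbf{g}_{n-1}\|_\nu^2\,ds\le T^*\sup_{[0,T^*]}\|\mathbf{g}_{n-1}\|_{L^2_{x,v}}^2$; the contraction then comes from the smallness of $T^*$ (together with $\sqrt M$ small for the bilinear terms), not from $M$ alone. With that correction your argument closes, and the same remark applies to your uniqueness step, where the $\langle\mathbf{L}[\mathbf{f}-\mathbf{g}],\mathbf{f}-\mathbf{g}\rangle$ contribution must likewise be converted to $C\int_0^t\|\mathbf{f}-\mathbf{g}\|_{L^2_{x,v}}^2$ before Gronwall is invoked.
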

	\begin{proof}
		Let $\mathbf{F}_{0}(t, x, v)\equiv \mathbf{F}^{\rm{in}}(x, v)$. By taking $n \to \infty$ in Lemma \ref{socedlem7}, we can obtain a classical solution $\mathbf{f}$, such that $\mathbf{F}(t, x, v) = \bm{\mu} + (\bm{\sqrt{\mu}} : \mathbf{f}) \geq 0$.
		
		To discuss the uniqueness of the solution, we suppose there is ‌another solution $\mathbf{g}$, satisfying $\sup_{0 \le t \le T^{*}} \mathcal{E}(\mathbf{g}) \le M$. Under the notation \eqref{Ldeopvec}, \eqref{Blackgamdef}, taking the difference of $\mathbf{f}$ and $\mathbf{g}$ leads to
		\begin{equation}\label{socaled7979}
		\left( \partial_{t} + v \cdot \nabla_{x} \right)[\mathbf{f} - \mathbf{g}] + \bm{L}[\mathbf{f} - \mathbf{g}] = \bm{\Gamma}[\mathbf{f} - \mathbf{g}, \mathbf{f}] + \bm{\Gamma}[\mathbf{g}, \mathbf{f} - \mathbf{g}],
		\end{equation}
		with 
		\begin{equation*}
			\mathbf{f}^{\rm{in}}(x, v) = \mathbf{g}^{\rm{in}}(x, v)
		\end{equation*}
		Notice that there are no derivatives on the right-hand side in \eqref{socaled7979}, we apply \eqref{lem31nonl3f} and \eqref{lem31nonl3s} in Lemma \ref{estimatesofnonlinears} with $\theta = 0$ to obtain
		\begin{align*}
		&\big\langle \bm{\Gamma}[\mathbf{f} - \mathbf{g}, \mathbf{f}] + \bm{\Gamma}[\mathbf{g}, \mathbf{f} - \mathbf{g}], \mathbf{f} - \mathbf{g} \big\rangle_{L^{2}_{x,v}} \\
		&\hspace{0.6cm}\leq C \left( \sum_{|\tilde{\alpha}| + |\tilde{\beta}| \leq 4} \big\| w^{| \tilde{\beta} |} \partial_{\tilde{\beta}}^{\tilde{\alpha}} \mathbf{f}\big\|_{L^{2}_{x,v}} + \big\| w^{| \tilde{\beta} |} \partial_{\tilde{\beta}}^{\tilde{\alpha}} \mathbf{g}\big\|_{L^{2}_{x,v}} \right) \big\|\mathbf{f} - \mathbf{g}\big\|_{\nu}^{2}.
		\end{align*}
		By \eqref{socaed1616} with $l = 0$, it holds that
		\[
		\left\langle \mathbf{L}[\mathbf{f} - \mathbf{g}], \mathbf{f} - \mathbf{g} \right\rangle_{L^{2}_{x,v}} \ge -C \|\mathbf{f} - \mathbf{g}\|_{\nu}^{2},
		\]
		Taking the $L^2$ inner product of both sides of equations \eqref{socaled7979} with the vector $(\mathbf{f} - \mathbf{g})$ leads to
		\begin{align*}
		&\big\|\mathbf{f}(t) - \mathbf{g}(t)\big\|_{L^{2}_{x,v}}^{2} + \int_{0}^{t} \big\|\mathbf{f}(s) - \mathbf{g}(s)\big\|_{\nu}^{2} ds \\
		&\hspace{0.5cm}\leq (C\sqrt{M} + C) \int_{0}^{t} \big\|\mathbf{f}(s) - \mathbf{g}(s)\big\|_{\nu}^{2} ds \\
		&\hspace{0.5cm}\leq C \int_{0}^{t} \big\|\mathbf{f}(s) - \mathbf{g}(s)\big\|_{L^{2}_{x,v}}^{2} ds.
		\end{align*}
	By definition \eqref{DFenergynorm}, $\mathbf{f} \equiv \mathbf{g}$ follows directly by Gronwall's inequality.
		
		To prove the continuity of 
		 $\mathcal{E}\big[\mathbf{f}(t)\big]$ with respect to $t$, take the limit  $n \to \infty$ in both \eqref{socaed74} and \eqref{scad7777} to get
		\[
		\Big|\mathcal{E}\big[\mathbf{f}(t)\big] - \mathcal{E}\big[\mathbf{f}(s)\big]\Big| \le C\left(1 + \sup_{s \le \tau \le t} \sqrt{\mathcal{E}\big[\mathbf{f}(\tau)\big]}\right) \sum_{|\tilde{\alpha}| + |\tilde{\beta}| \leq 4}\int_{s}^{t} \big\| w^{|\tilde{\beta}|} \partial_{\tilde{\beta}}^{\tilde{\alpha}} \mathbf{f}(\tau) \big\|_{\nu}^2 d\tau \to 0,
		\]
		as $t \to s$, since $\big\| w^{|\tilde{\beta}|} \partial_{\tilde{\beta}}^{\tilde{\alpha}} \mathbf{f}(\tau) \big\|^2_{\nu}$ is integrable in $\tau$. 
		
		Finally, taking the limit as $n \to \infty$ in \eqref{sced75} yields \eqref{socad7878}.
		\end{proof}
		The energy estimate derived in \eqref{socad7878} is very important because it shows the necessity of controlling the term $\int_{0}^{t} \left\| \partial^{\tilde{\alpha}} \mathbf{f}(s) \right\|_{\nu}^{2} ds$, while also revealing the need for more refined estimates to spatial derivatives.

		\begin{lemma}\label{socadLemm888}
			Assume $\mathbf{f}(t, x, v)$ satisfies equations \eqref{socal555EQSO} for $0 \le t \le T$ with
			\begin{equation}
			\sup_{0 \le s \le T} \sum_{|\tilde{\alpha}| + |\tilde{\beta}| \leq N} \big\| w^{| \tilde{\beta} |} \partial_{\tilde{\beta}}^{\tilde{\alpha}} \mathbf{f}(s)\big\|_{L^{2}_{x,v}} \leq M,\label{socad8080}
			\end{equation}
			for $0<M \leq 1$ small. Then there exists some $C > 0$ such that for $0 \le s \le t \le T$,
			\begin{equation}
			\sum_{|\tilde{\alpha}| \le N} \big\|\partial^{\tilde{\alpha}} \mathbf{f}(t)\big\|_{L^{2}_{x,v}}^2 + \sum_{|\tilde{\alpha}| \le N} \int_s^t \big\|\partial^{\tilde{\alpha}} \mathbf{f}(\tau)\big\|_{\nu}^2 d\tau \le e^{C(t - s)} \sum_{|\tilde{\alpha}| \le N} \big\|\partial^{\tilde{\alpha}} \mathbf{f}(s)\big\|_{L^{2}_{x,v}}^2.\label{socad8181}
			\end{equation}
			Moreover,
			\begin{equation}
			\sum_{|\tilde{\alpha}| \le N} \big\|w^{\frac{1}{2}} \partial^{\tilde{\alpha}} \mathbf{f}(t)\big\|_{L^{2}_{x,v}}^2 \le e^{C(t - s)} \sum_{|\tilde{\alpha}| \le N} \big\|w^{\frac{1}{2}} \partial^{\tilde{\alpha}} \mathbf{f}(s)\big\|^2_{L^{2}_{x,v}},\label{socad8282}
			\end{equation}
			\begin{equation}
			\sum_{|\tilde{\alpha}| \le N} \int_s^t \big\|\partial^{\tilde{\alpha}} \mathbf{f}(\tau)\big\|_{\nu}^2 d\tau \ge C\left[1 - e^{-C(t - s)}\right] \sum_{|\tilde{\alpha}| \le N} \big\|w^{\frac{1}{2}} \partial^{\tilde{\alpha}} \mathbf{f}(s)\big\|^2_{L^{2}_{x,v}}.\label{socad8383}
			\end{equation}
		\end{lemma}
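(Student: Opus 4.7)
\noindent\textit{Proof proposal.} The plan is to apply $\partial^{\tilde\alpha}$ to equation \eqref{socal555EQSO} for each $|\tilde\alpha|\le N$ and run a standard energy argument twice: once in the unweighted $L^2_{x,v}$ pairing for \eqref{socad8181}, and once with the velocity weight $w$ for \eqref{socad8282} and \eqref{socad8383}. The transport term $v\cdot\nabla_x$ drops after integration by parts on $\mathbb{T}^3$ in either pairing. The decisive observation tying the weighted case to the dissipation norm is the definitional identity $\|w^{1/2}\mathbf{g}\|^2_{L^2_{x,v}}=\|\mathbf{g}\|^2_\nu$, which turns \eqref{socad8282} and \eqref{socad8383} into upper and lower bounds on one and the same quantity.

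For \eqref{socad8181}, testing the $\partial^{\tilde\alpha}$-differentiated equation against $\partial^{\tilde\alpha}\mathbf{f}$ and using $\mathbf{L}=\bm{\nu}+\mathbf{K}$ with the splitting $\mathbf{K}=\mathbf{K}_s+\mathbf{K}_c$ of Lemma \ref{lemma2.2} yields
\[
\langle\mathbf{L}\partial^{\tilde\alpha}\mathbf{f},\partial^{\tilde\alpha}\mathbf{f}\rangle_{L^2_{x,v}}\ge(1-\eta)\|\partial^{\tilde\alpha}\mathbf{f}\|^2_\nu-C_\eta\|\partial^{\tilde\alpha}\mathbf{f}\|^2_{L^2_{x,v}},
\]
because the small part is directly $\eta$-small in $\nu$-norm, and the compact part has a bounded, compactly supported integral kernel whose $L^2$-to-$L^2$ operator norm is finite. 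Lemma \ref{estimatesofnonlinears} with $l=0$, $\tilde\beta=0$ combined with the amplitude bound \eqref{socad8080} controls the nonlinear term by $CM\sum_{|\tilde\alpha_1|\le N}\|\partial^{\tilde\alpha_1}\mathbf{f}\|_\nu\|\partial^{\tilde\alpha}\mathbf{f}\|_\nu$. Choosing $\eta,M$ small so that $1-\eta-CM\ge c_0>0$ and summing over $|\tilde\alpha|\le N$ produces
\[
\tfrac{d}{dt}\sum_{|\tilde\alpha|\le N}\|\partial^{\tilde\alpha}\mathbf{f}\|^2_{L^2_{x,v}}+c_0\sum_{|\tilde\alpha|\le N}\|\partial^{\tilde\alpha}\mathbf{f}\|^2_\nu\le C_\eta\sum_{|\tilde\alpha|\le N}\|\partial^{\tilde\alpha}\mathbf{f}\|^2_{L^2_{x,v}},
\]
and Grönwall followed by resubstitution to recover the dissipation integral closes \eqref{socad8181}.

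For \eqref{socad8282} and \eqref{socad8383}, the same equation is tested against $w\partial^{\tilde\alpha}\mathbf{f}$, and the time-derivative term becomes $\tfrac{1}{2}\tfrac{d}{dt}\|\partial^{\tilde\alpha}\mathbf{f}\|^2_\nu$ by the identity above. Since $\bm{\nu}\lesssim w\le 1$, the $\bm{\nu}$-piece of $\langle w\mathbf{L}\partial^{\tilde\alpha}\mathbf{f},\partial^{\tilde\alpha}\mathbf{f}\rangle$ is bounded in absolute value by $C\|\partial^{\tilde\alpha}\mathbf{f}\|^2_\nu$; Lemma \ref{lemma2.1} with $l=1/2$ gives the same bound for the $\mathbf{K}$-piece; and Lemma \ref{estimatesofnonlinears} with $l=1/2$, $\tilde\beta=0$ combined with \eqref{socad8080} and $w\le 1$ bounds the nonlinear pairing by $CM\sum\|\partial^{\tilde\alpha_1}\mathbf{f}\|_\nu\|\partial^{\tilde\alpha}\mathbf{f}\|_\nu$. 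Summing yields the two-sided differential inequality
\[
-C\sum_{|\tilde\alpha|\le N}\|\partial^{\tilde\alpha}\mathbf{f}(\tau)\|^2_\nu\le\tfrac{d}{d\tau}\sum_{|\tilde\alpha|\le N}\|\partial^{\tilde\alpha}\mathbf{f}(\tau)\|^2_\nu\le C\sum_{|\tilde\alpha|\le N}\|\partial^{\tilde\alpha}\mathbf{f}(\tau)\|^2_\nu.
\]
Grönwall on the upper bound is \eqref{socad8282}; integrating the exponential lower bound $\sum\|\partial^{\tilde\alpha}\mathbf{f}(\tau)\|^2_\nu\ge e^{-C(\tau-s)}\sum\|\partial^{\tilde\alpha}\mathbf{f}(s)\|^2_\nu$ over $\tau\in[s,t]$ and reverting $\|\cdot\|^2_\nu$ to $\|w^{1/2}\cdot\|^2_{L^2_{x,v}}$ yields \eqref{socad8383}.

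No genuinely new analysis is required: all of the $m^A\neq m^B$ asymmetry has already been discharged into Lemmas \ref{lemma2.1}, \ref{lemma2.2}, and \ref{estimatesofnonlinears}, and what remains is Grönwall. The only potential pitfall is bookkeeping between the three norms $\|\cdot\|_{L^2_{x,v}}$, $\|\cdot\|_\nu$, and $\|w^{1/2}\cdot\|_{L^2_{x,v}}$, the last two of which coincide; it is precisely this coincidence that converts the weighted upper bound \eqref{socad8282} into the dissipation lower bound \eqref{socad8383} without any further work.
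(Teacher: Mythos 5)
Your proposal is correct and follows essentially the same route as the paper: differentiate in $x$, test first unweighted and then against $w\,\partial^{\tilde{\alpha}}\mathbf{f}$, invoke Lemmas \ref{lemma2.1}, \ref{lemma2.2} and \ref{estimatesofnonlinears} together with \eqref{socad8080}, and close with Gr\"onwall, using $\|w^{1/2}\cdot\|_{L^2_{x,v}}=\|\cdot\|_{\nu}$ to pass between \eqref{socad8282} and \eqref{socad8383}. The only (harmless) deviations are that for \eqref{socad8181} the paper simply drops $\langle\mathbf{L}\partial^{\tilde{\alpha}}\mathbf{f},\partial^{\tilde{\alpha}}\mathbf{f}\rangle\ge 0$ and recovers the dissipation integral from $\|\cdot\|_{\nu}\le\|\cdot\|_{L^2_{x,v}}$ (since $w\le1$ for $\gamma<0$) rather than extracting coercivity from the $\mathbf{K}_s+\mathbf{K}_c$ split, and for \eqref{socad8383} it integrates a first-order ODE for $\mathbf{G}(t)=\sum\int_s^t\|\partial^{\tilde{\alpha}}\mathbf{f}\|_{\nu}^2$ instead of your equivalent reverse-Gr\"onwall bound.
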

		\begin{proof}
			First, we prove \eqref{socad8181}. Notice that
			\begin{equation}
			\frac{1}{2}\frac{d}{dt}\big\|\partial^{\tilde{\alpha}} \mathbf{f}(t)\big\|^2_{L^{2}_{x,v}} +   \left\langle \mathbf{L}[\partial^{\tilde{\alpha}} \mathbf{f}], \partial^{\tilde{\alpha}} \mathbf{f} \right\rangle_{L^{2}_{x,v}} = \left\langle \partial^{\tilde{\alpha}} \bm{\Gamma}[\mathbf{f}, \mathbf{f}], \partial^{\tilde{\alpha}} \mathbf{f} \right\rangle_{L^{2}_{x,v}}. 
			\end{equation}
			Since $L$ is a non-negative operator satisfying \eqref{socad8080}, we apply \eqref{lem31nonl3f} and \eqref{lem31nonl3s} with $l = 0$ and $\mathbf{f}_1 = \mathbf{f}_2 = \mathbf{f}_3 = \mathbf{f}$ to get
			\begin{align}
				\frac{1}{2}\frac{d}{dt}\big\|\partial^{\tilde{\alpha}} \mathbf{f}(t)\big\|_{L^{2}_{x,v}}^2 &\leq C\Bigg( \sum_{|\tilde{\alpha}_1|+|\tilde{\beta}_1|\leq N}\big\Vert w^{|\tilde{\beta}_1|}\partial_{\tilde{\beta}_1}^{\tilde{\alpha}_1}\mathbf{f}\big\Vert_{L^{2}_{x,v}} \Bigg)\\
				&\hspace{3.6cm}\times\Bigg( \sum_{|\tilde{\alpha}_2| \le \tilde{\alpha}} \big\|\partial^{\tilde{\alpha}_2} \mathbf{f} \big\|_{\nu} \Bigg)
				\big\|\partial^{\tilde{\alpha}} \mathbf{f}\big\|_{\nu} \notag\\
				&\le C\sqrt{M}\Bigg(\sum_{|\tilde{\alpha}_2| \le \tilde{\alpha}} \big\|\partial^{\tilde{\alpha}_2} \mathbf{f}\big\|_{\nu} \Bigg)\big\|\partial^{\tilde{\alpha}} \mathbf{f}\big\|_{\nu}. 
			\end{align}
			Summing over $|{\tilde{\alpha}}| \leq N$ yields
			\begin{equation*}
			\sum_{|\tilde{\alpha}| \le N}\frac{d}{dt}\big\|\partial^{\tilde{\alpha}} \mathbf{f} \big\|_{L^{2}_{x,v}}^2 \le C\sum_{|\tilde{\alpha}| \le N}\big\|\partial^{\tilde{\alpha}} \mathbf{f} \big\|_{L^{2}_{x,v}}^2.
			\end{equation*}
			Then, by the Gronwall's inequality,
			\begin{equation*}
				\sum_{|\tilde{\alpha}| \le N}\big\|\partial^{\tilde{\alpha}} \mathbf{f}(t)\big\|_{L^{2}_{x,v}}^2 \le e^{C(t-s)}\sum_{|\tilde{\alpha}| \le N}\big\|\partial^{\tilde{\alpha}} \mathbf{f}(s)\big\|_{L^{2}_{x,v}}^2.
			\end{equation*}
			Substituting this estimate into the right-hand side of \eqref{socad8181} completes the proof.
			
			To establish \eqref{socad8282}, we observe that
			\begin{align}
				&\frac{1}{2}\frac{d}{dt}\big\|w^{\frac{1}{2}} \partial^{\tilde{\alpha}} \mathbf{f}\big\|_{L^{2}_{x,v}}^2 + \big\langle w\mathbf{L}[\partial^{\tilde{\alpha}} \mathbf{f}], \partial^{\tilde{\alpha}} \mathbf{f} \big\rangle_{L^{2}_{x,v}}\notag \\
				&\hspace{1.5cm}= \big\langle w\partial^{\tilde{\alpha}} \Gamma\big[\mathbf{f}, \mathbf{f}\big], \partial^{\tilde{\alpha}} \mathbf{f} \big\rangle_{L^{2}_{x,v}} \notag\\
				&\hspace{1.5cm}\le \Bigg( \sum_{| \tilde{\alpha}_i |+| \tilde{\beta}_i | \leq N} \big\| w^{| \tilde{\beta}_i |} \partial_{\tilde{\beta}_i}^{\tilde{\alpha}_i} \mathbf{f}\big\|_{L^{2}_{x,v}} \Bigg)\Bigg( \sum_{|\tilde{\alpha}_2| \le \tilde{\alpha}} \big\|w^{\frac{1}{2}}\partial^{\tilde{\alpha}_2} \mathbf{f}\big\|_{\nu} \Bigg)\big\|w^{\frac{1}{2}}\partial^{\tilde{\alpha}} \mathbf{f}\big\|_{\nu} \notag\\
				&\hspace{1.5cm}\le C\sqrt{M}\Bigg( \sum_{|\tilde{\alpha}_2| \leq \tilde{\alpha}} \big\|w^{\frac{1}{2}}\partial^{\tilde{\alpha}_2} \mathbf{f}\big\|_{\nu} \Bigg)\big\|w^{\frac{1}{2}}\partial^{\tilde{\alpha}} \mathbf{f}\big\|_{\nu}, \label{socad8686}
			\end{align}
			by \eqref{lem31nonl3f} and \eqref{lem31nonl3s} with $l = \frac{1}{2}$.  Notice from \eqref{socaed1616}, we have
			\begin{equation*}
				\left| \left\langle w\mathbf{L}[\partial^{\tilde{\alpha}} \mathbf{f}], \partial^{\tilde{\alpha}} \mathbf{f} \right\rangle_{L^{2}_{x,v}} \right| \leq C \big\|w^{\frac{1}{2}}\partial^{\tilde{\alpha}} \mathbf{f}\big\|^2_{\nu} 
			\end{equation*}
			Then we obtain
			\begin{equation*}
				\sum_{|\tilde{\alpha}| \le N}\frac{d}{dt} \big\|w^{\frac{1}{2}} \partial^{\tilde{\alpha}} \mathbf{f}\big\|_{L^{2}_{x,v}}^2 \le C \sum_{|\tilde{\alpha}| \le N} \big\|w^{\frac{1}{2}} \partial^{\tilde{\alpha}} \mathbf{f}\big\|_{\nu}^2 \leq C \sum_{|\tilde{\alpha}| \leq N} \big\|w^{\frac{1}{2}} \partial^{\tilde{\alpha}} \mathbf{f}\big\|_{L^{2}_{x,v}}^2.
			\end{equation*}
			Consequently, \eqref{socad8282} is a direct result of Gronwall's lemma.
			
			It remains to prove \eqref{socad8383}. We integrate \eqref{socad8686} over $[s,t]$ to obtain
			\begin{gather*}
			\begin{aligned}
			\|w^{\frac{1}{2}} \partial^{\tilde{\alpha}} \mathbf{f}(t)\|_{L^{2}_{x,v}}^2
			 &\geq \|w^{\frac{1}{2}} \partial^{\tilde{\alpha}} \mathbf{f}(s)\|_{L^{2}_{x,v}}^2 - \int_s^t \left\langle w\mathbf{L}[\partial^{\tilde{\alpha}} \mathbf{f}], \partial^{\tilde{\alpha}} \mathbf{f} \right\rangle_{L^{2}_{x,v}} d \tau \\
			&\hspace{0.4cm}- \int_s^t \Big( \sum_{| \tilde{\alpha}_i |+| \tilde{\beta}_i | \leq N} \big\| w^{| \tilde{\beta}_i |} \partial_{\tilde{\beta}_i}^{\tilde{\alpha}_i} \mathbf{f}\big\|_{L^{2}_{x,v}} \Big)\\
			&\hspace{3cm}\times \Big( \sum_{|\tilde{\alpha}_2| \le \tilde{\alpha}} \big\|w^{\frac{1}{2}}\partial^{\tilde{\alpha}_2} \mathbf{f}\big\|_{\nu} \Big) \big\|w^{\frac{1}{2}}\partial^{\tilde{\alpha}} \mathbf{f}\big\|_{\nu} d \tau\\
			&\ge \big\|w^{\frac{1}{2}} \partial^{\tilde{\alpha}} \mathbf{f}(s)\big\|_{L^{2}_{x,v}}^2 - [C + \sqrt{M}] \sum_{|\tilde{\alpha}| \leq N} \int_s^t \big\|w^{\frac{1}{2}}\partial^{\tilde{\alpha}} \mathbf{f}(\tau)\big\|^2_{\nu} d\tau\\
			&\ge \big\|w^{\frac{1}{2}} \partial^{\tilde{\alpha}} \mathbf{f}(s)\big\|_{L^{2}_{x,v}}^2 - C \sum_{|\tilde{\alpha}| \leq N} \int_s^t  \big\|\partial^{\tilde{\alpha}} \mathbf{f}(\tau)\big\|_{\nu}^2 d\tau
			\end{aligned}
			\end{gather*}
			Finally, letting
			\begin{equation*}
				\mathbf{G}(t) \equiv \sum_{|\tilde{\alpha}| \le N} \int_s^t \big\|\partial^{\tilde{\alpha}} \mathbf{f}(\tau)\big\|_{\nu}^2 d\tau
			\end{equation*}
			it follows that
			\begin{equation*}
			\frac{d}{dt}	\mathbf{G}(t) = \sum_{|\tilde{\alpha}| \le N} \big\|\partial^{\tilde{\alpha}} \mathbf{f}(t)\big\|_{\nu}^2 \ge C \big\|w^{\frac{1}{2}}\partial^{\tilde{\alpha}} \mathbf{f}(t)\big\|_{L^{2}_{x,v}}^2 \ge \big\|w^{\frac{1}{2}}\partial^{\tilde{\alpha}} \mathbf{f}(s)\big\|_{L^{2}_{x,v}}^2 - C \mathbf{G}(t)
			\end{equation*}
			because $\|\partial^{\tilde{\alpha}} \mathbf{f}(t)\|_{\nu}^2$ is equivalent to $\|w^{\frac{1}{2}}\partial^{\tilde{\alpha}} \mathbf{f}(t)\|^2_{L^{2}_{x,v}}$. The proof of \eqref{socad8383} is completed by integrating the above first-order differential inequality across $[s,t]$. 
		\end{proof}
		
		\section{Positivity of $\int_{0}^{1} \big\langle \mathbf{L}[\partial^{\tilde{\alpha}} \mathbf{f} (s)], \partial^{\tilde{\alpha}} \mathbf{f} (s) \big\rangle_{L^{2}_{x,v}} \, ds$}
		In this section, we are going to establish the positivity of $\int_{0}^{1} \big\langle \mathbf{L}[\partial^{\tilde{\alpha}} \mathbf{f} (s)], \partial^{\tilde{\alpha}} \mathbf{f} (s) \big\rangle_{L^{2}_{x,v}} \, ds$ for the solution $\mathbf{f}(t,x,v)$ to the Boltzmann equations \eqref{socal555EQSO}. Additionally, it is important to emphasize that conservation laws prove crucial in the proof process.

		\begin{lemma}\label{socedlem9}
		Assume $ \mathbf{f}(t, x, v) $ satisfies \eqref{socal555EQSO} for $ 0 \leq t \leq T $,$ (T \geq 1) $. Suppose that the initial data $ \mathbf{f}^{\rm{in}}(x, v) $ satisfies the conservation laws \eqref{conseerlaw}, and the small amplitude assumption \eqref{socad8080} is also valid for sufficiently small $ 0 <M<1 $. Then there exists a constant $ 0 < \delta_M < 1 $, such that
		\begin{equation}\label{socedlem9ct}
		\sum_{|\tilde{\alpha}| \le N} \int_{0}^{1} \big\langle \mathbf{L}[\partial^{\tilde{\alpha}} \mathbf{f} (s)], \partial^{\tilde{\alpha}} \mathbf{f} (s) \big\rangle_{L^{2}_{x,v}} \, ds \geq \delta_M \sum_{|\alpha| \le N} \int_0^1 \|\partial^{\tilde{\alpha}} \mathbf{f}(s)\|_{\nu}^2 \,ds.
		\end{equation}
		\end{lemma}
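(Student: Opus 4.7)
The plan is to argue by contradiction, following the roadmap indicated in the remark after Theorem~\ref{maintheorem22}. Suppose \eqref{socedlem9ct} fails for every $\delta_M>0$. Then there exists a sequence of solutions $\mathbf f_n$ of \eqref{socal555EQSO} satisfying the conservation law \eqref{conseerlaw} and the amplitude bound \eqref{socad8080} such that
\[
\sum_{|\tilde\alpha|\le N}\int_0^1 \big\langle \mathbf L\partial^{\tilde\alpha}\mathbf f_n,\partial^{\tilde\alpha}\mathbf f_n\big\rangle_{L^2_{x,v}} ds \ \le\ \tfrac{1}{n}\sum_{|\tilde\alpha|\le N}\int_0^1 \|\partial^{\tilde\alpha}\mathbf f_n\|_\nu^2\, ds.
\]
Normalizing by the right-hand side produces $\mathbf Z_n$ with $\sum_{|\tilde\alpha|\le N}\int_0^1\|\partial^{\tilde\alpha}\mathbf Z_n\|_\nu^2\, ds=1$ and $\sum_{|\tilde\alpha|\le N}\int_0^1\langle \mathbf L\partial^{\tilde\alpha}\mathbf Z_n,\partial^{\tilde\alpha}\mathbf Z_n\rangle\,ds\to 0$, while inheriting the conservation law at each time $s\in[0,1]$ and satisfying the linearized-with-source equation $(\partial_t+v\cdot\nabla_x)\mathbf Z_n+\mathbf L\mathbf Z_n=\bm\Gamma(\mathbf f_n,\mathbf Z_n)+\bm\Gamma(\mathbf Z_n,\mathbf f_n)$, whose right-hand side is $O(\sqrt M)$ in the $\|\cdot\|_\nu$-type norm by Lemma~\ref{estimatesofnonlinears} combined with \eqref{socad8080}.

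The first step of the argument is to extract a weak limit and identify it as an element of $\ker\mathbf L$. Pass to a subsequence so that $\partial^{\tilde\alpha}\mathbf Z_n\rightharpoonup \partial^{\tilde\alpha}\mathbf Z$ weakly in $L^2([0,1]\times\mathbb T^3;L^2_\nu)$. By Lemma~\ref{LeMcoevlem}, the pointwise coercivity $\delta\,|({\bf I}-\mathbf P_0)\partial^{\tilde\alpha}\mathbf Z_n|_\nu^2 \le \langle \mathbf L\partial^{\tilde\alpha}\mathbf Z_n,\partial^{\tilde\alpha}\mathbf Z_n\rangle_{L^2_v}$ together with the compactness of $\mathbf K_c$ from Lemma~\ref{lemma2.2} forces $({\bf I}-\mathbf P_0)\partial^{\tilde\alpha}\mathbf Z_n\to 0$ strongly in $L^2([0,1]\times\mathbb T^3;L^2_\nu)$, so the weak limit satisfies $\mathbf Z=\mathbf P_0\mathbf Z$; in particular the whole sequence $\mathbf Z_n$ converges strongly (in the integrated macroscopic norm) to its hydrodynamic part. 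Using the basis \eqref{deforthonormalB}, this forces the representation
\[
\mathbf Z(t,x,v)=(\mathbf a(t,x):\sqrt{\bm\mu}) + (\mathbf b(t,x)\cdot v)(\mathbf m:\sqrt{\bm\mu}) + c(t,x)|v|^2(\mathbf m:\sqrt{\bm\mu}),
\]
with $\mathbf a=(a_1,a_2)$ and $\mathbf b=(b_1,b_2,b_3)$, and the normalization $\sum_{|\tilde\alpha|\le N}\int_0^1\|\partial^{\tilde\alpha}\mathbf Z\|_\nu^2 ds=1$.

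The second step is to derive constraints for $(\mathbf a,\mathbf b,c)$ and show they are $O(\sqrt M)$. Passing to the limit in $(\partial_t+v\cdot\nabla_x)\mathbf Z_n+\mathbf L\mathbf Z_n=\bm\Gamma(\mathbf f_n,\mathbf Z_n)+\bm\Gamma(\mathbf Z_n,\mathbf f_n)$, one obtains $(\partial_t+v\cdot\nabla_x)\mathbf Z=\mathbf h$ in $\mathcal D'$, where $\mathbf h$ inherits the $O(\sqrt M)$ bound. Now the macroscopic nature of $\mathbf Z$ becomes essential: here one treats $Z^A$ and $Z^B$ separately, since $Z^A$ depends on $(a_1,\mathbf b,c)$ and $Z^B$ on $(a_2,\mathbf b,c)$. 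Taking moments of the equation for $Z^\alpha$ against $\sqrt{\mu^\alpha},\,v_j\sqrt{\mu^\alpha},\,v_iv_j\sqrt{\mu^\alpha},\,|v|^2 v_j\sqrt{\mu^\alpha}$ produces a coupled local macroscopic system for the fluid-type unknowns, of a form identical to Guo's argument (Lemma~9 of \cite{[impvR]Guo2003}) but with a block structure dictated by the mass vector $\mathbf m=(m^A,m^B)^T$. Applying $\partial^{\tilde\alpha}$ for $|\tilde\alpha|\le N$, using Poincar\'e on $\mathbb T^3$ together with the conservation laws \eqref{conseerlaw} (which kill the $\tilde\alpha=0$ modes of $\mathbf a,\mathbf b,c$ in the limit), and absorbing the $O(\sqrt M)$ source, yields
\[
\sum_{\tilde\alpha\ne 0,\,|\tilde\alpha|\le N}\int_0^1\Big[|\partial^{\tilde\alpha}\mathbf a|_{L^2_x}^2+|\partial^{\tilde\alpha}\mathbf b|_{L^2_x}^2+|\partial^{\tilde\alpha}c|_{L^2_x}^2\Big](s)\,ds\ \le\ CM.
\]
Combined with the strong-convergence equivalence of $\|\mathbf Z_n\|_\nu$ (which is $1$) with an expression quadratic in $\partial^{\tilde\alpha}\mathbf a,\partial^{\tilde\alpha}\mathbf b,\partial^{\tilde\alpha}c$, and after choosing $M$ small enough so that $CM<1$, this contradicts the normalization and establishes \eqref{socedlem9ct}.

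The main obstacle is precisely this second step. Step one is essentially a soft Hilbert-space argument once Lemmas~\ref{lemma2.2} and~\ref{LeMcoevlem} are in hand; but extracting the Guo-type macroscopic local conservation system in the unequal-mass vectorial setting, and carefully closing the loop so that the $\sqrt M$ smallness supplied by Lemma~\ref{estimatesofnonlinears} beats the constants, is delicate. Because the coefficients $(\mathbf b,c)$ appear in both $Z^A$ and $Z^B$ with prefactors $m^A,m^B$ while $(a_1,a_2)$ do not couple, the algebraic system linking the species-by-species moments must be inverted by hand to read off $\mathbf a,\mathbf b,c$ individually, and this is where unequal masses could in principle destroy the argument if the coefficient matrix were degenerate; verifying its invertibility under the sole assumption $m^A\ne m^B$ (arbitrary ratio) is the technical core.
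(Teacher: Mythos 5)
Your overall architecture matches the paper's: argue by contradiction, normalize to obtain $\mathbf Z_n$ with unit integrated $\nu$-norm and vanishing integrated Dirichlet form, identify the weak limit as a purely hydrodynamic function $(\mathbf a:\sqrt{\bm\mu})+(\mathbf b\cdot v)(\mathbf m:\sqrt{\bm\mu})+c|v|^2(\mathbf m:\sqrt{\bm\mu})$, then estimate $\mathbf a,\mathbf b,c$ species by species via the macroscopic moment equations, Poincar\'e, and the conservation laws to reach $\le CM$, contradicting the normalization. The second step as you sketch it is essentially the paper's; your worry about inverting a coupled coefficient matrix is largely moot, since the paper extracts the equations \eqref{socad10606}--\eqref{socad110100} for each species $\alpha$ separately by Gram--Schmidt against $\sqrt{\mu^\alpha},v_i\sqrt{\mu^\alpha},\dots$, and $m^\alpha$ only enters as a nonzero scalar prefactor.

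There is, however, a genuine gap in your Step 1. You write that the coercivity of $\mathbf L$ plus compactness of $\mathbf K_c$ forces $(\mathbf I-\mathbf P_0)\partial^{\tilde\alpha}\mathbf Z_n\to 0$ strongly, and then assert ``in particular the whole sequence converges strongly (in the integrated macroscopic norm) to its hydrodynamic part.'' That second claim does not follow. The microscopic part indeed vanishes strongly by \eqref{normalized888} and Lemma~\ref{LeMcoevlem}, but the macroscopic part $\mathbf P_0\partial^{\tilde\alpha}\mathbf Z_n=\sum_j\langle\partial^{\tilde\alpha}\mathbf Z_n,\tilde{\bm\chi}_j\rangle_{L^2_v}\tilde{\bm\chi}_j$ is, as a function of $(t,x)$, only weakly convergent a priori; weak convergence in $L^2([0,1]\times\mathbb T^3)$ of the moments is entirely consistent with the limit $\mathbf Z$ being zero, in which case the normalization $\sum_{|\tilde\alpha|\le N}\int_0^1\|\partial^{\tilde\alpha}\mathbf Z\|_\nu^2\,ds=1$ fails and there is no contradiction with the bound $\le CM$. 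The paper closes this gap with a nontrivial compactness argument that is absent from your proposal: it localizes with a cutoff $\chi_1(t,x,v)$, shows that the right-hand side of the transport equation \eqref{socad9999} satisfied by $\chi_1\partial^{\tilde\alpha}\mathbf Z_n$ is uniformly bounded in $L^2([0,1]\times\mathbb T^3\times\mathbb R^3)$, and invokes the DiPerna--Lions velocity averaging lemma to get a uniform $H^{1/4}([0,1]\times\mathbb T^3)$ bound on the moments $\langle\chi_1\partial^{\tilde\alpha}\mathbf Z_n,\tilde{\bm\chi}_j\rangle_{L^2_v}$, hence strong $L^2_{t,x}$ convergence; the tail contributions from $\{t\le\eta\}\cup\{t\ge 1-\eta\}\cup\{|v|\ge 1/\eta\}$ are controlled by the uniform bound \eqref{socad8989}. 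Without this averaging-lemma step (or an equivalent compactness mechanism), the contradiction argument does not close, so this is the missing idea you would need to supply.
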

		
		We prove by contradiction. Assume that this lemma's conclusion does not hold, then there exists a sequence of solutions $ \mathbf{f}_n(t, x, v) $ to \eqref{socal555EQSO}, such that for $ 0 \le t \le T $, 
		\begin{equation*}
			\sup_{0 \le s \le T} \sum_{|\tilde{\alpha}| + |\tilde{\beta}| \leq N} \big\| w^{| \tilde{\beta} |} \partial_{\tilde{\beta}}^{\tilde{\alpha}} \mathbf{f}_n(s)\big\|_{L^{2}_{x,v}} \leq M,
		\end{equation*}
		and
		$$
		0 \leq   \sum_{|\tilde{\alpha}| \leq N} \int_{0}^{1} \big\langle \mathbf{L}[\partial^{\tilde{\alpha}} \mathbf{f}_n (s)], \partial^{\tilde{\alpha}} \mathbf{f}_n (s) \big\rangle_{L^{2}_{x,v}} \, ds\leq \frac{1}{n} \sum_{|\tilde{\alpha}| \le N} \int_0^1 \| \partial^{\tilde{\alpha}} \mathbf{f}_n \|_{\nu}^2\, ds.
		$$
		Now, we define the normalized sequence of $\mathbf{Z}_n=(Z^A_n,Z^B_n)^T$, where
		\begin{equation}\label{normalized8711}
		Z^{\alpha}_n(t, x, v) \equiv \frac{f^{\alpha}_n(t, x, v)}{\sqrt{\sum_{|\tilde{\alpha}| \leq N} \int_0^1 \| \partial^{\tilde{\alpha}} \mathbf{f}_n(s) \|_{\nu}^2 \,ds}}. 
		\end{equation}
		So we have 
		\begin{equation}\label{normalized8722}
			\sum_{|\tilde{\alpha}| \leq N} \int_0^1 \| \partial^{\tilde{\alpha}} \mathbf{Z}_n(s) \|_{\nu}^2 \,ds = 1, 
		\end{equation}
		which leads to
		\begin{equation}\label{normalized888}
		0 \le \int_0^1 \sum_{|\tilde{\alpha}| \leq N} \big\langle \mathbf{L}[\partial^{\tilde{\alpha}} \mathbf{Z}_n (s)], \partial^{\tilde{\alpha}} \mathbf{Z}_n (s) \big\rangle_{L^{2}_{x,v}} \, ds \leq \frac{1}{n}. 
		\end{equation}
		By Lemma \ref{socadLemm888}, it yields that
		$$
		\sum_{|\tilde{\alpha}| \le N} \big\| w^{\frac{1}{2}} \partial^{\tilde{\alpha}} \mathbf{f}_n(t) \big\|_{L^{2}_{x,v}}^2 \le C \sum_{|\tilde{\alpha}| \le N} \big\| w^{\frac{1}{2}} \partial^{\tilde{\alpha}} \mathbf{f}_n(0) \big\|_{L^{2}_{x,v}}^2,
		$$
		and
		$$
		\sum_{|\tilde{\alpha}| \le N} \int_0^1 \big\| \partial^{\tilde{\alpha}} \mathbf{f}_n(s) \big\|_{\nu}^2 ds \ge C \sum_{|\tilde{\alpha}| \leq N} \big\| w^{\frac{1}{2}} \partial^{\tilde{\alpha}} \mathbf{f}_n(0) \big\|_{L^{2}_{x,v}}^2.
		$$
		We know from \eqref{normalized8711} and \eqref{normalized8722} that
		\begin{equation}
		\sup_{0 \le t \le 1} \sum_{|\tilde{\alpha}| \le N} \big\| w^{\frac{1}{2}} \partial^{\tilde{\alpha}} \mathbf{Z}_n(t) \big\|_{L^{2}_{x,v}}^2 \le C, \label{socad8989}
		\end{equation}
		which is uniform in $ n $. Since $ \partial^{\tilde{\alpha}} \mathbf{f}_n $ satisfies
		$$
		(\partial_t + v \cdot \nabla_x) 
		 \partial^{\tilde{\alpha}} \mathbf{f}_n + \mathbf{L}\big[\partial^{\tilde{\alpha}} \mathbf{f}_n\big] \equiv \partial^{\tilde{\alpha}} \bm{\Gamma}\big[\mathbf{f}_n, \mathbf{f}_n\big].
		$$
		We divide the above equations  by $ \sqrt{\sum_{|\tilde{\alpha}| \le N} \int_0^1 \| \partial^{\tilde{\alpha}} \mathbf{f}_n(s) \|_{\nu}^2 \,ds}$\,  to obtain
		\begin{align}
			&(\partial_t + v \cdot \nabla_x) \partial^\alpha \mathbf{Z}_n + L[\partial^\alpha \mathbf{Z}_n]\notag \\
			&\hspace{1.5cm}= \sum_{|\alpha_1| \le N/2} C_{\tilde{\alpha}}^{\tilde{\alpha}_1} \Gamma[\partial^{\alpha_1} \mathbf{f}_n, \partial^{\alpha - \alpha_1} \mathbf{Z}_n] + \sum_{|\alpha_1| > N/2} C_{\tilde{\alpha}}^{\tilde{\alpha}_1} \Gamma[\partial^{\alpha_1} \mathbf{Z}_n, \partial^{\alpha - \alpha_1} \mathbf{f}_n]. \label{socaed9090}
		\end{align}
		Moreover, $\mathbf{Z}_n$ also satisfies the conservation laws:
		\begin{equation}\label{coserforZZZ}
			\begin{aligned}
				\int_{\mathbb{T}^3} \big\langle \mathbf{Z}_n, (\mathbf{e}_i:\sqrt{\bm{\mu}}) \big\rangle_{L_v^2}  dx  &= 0, \quad i=1,2,\\
				\int_{\mathbb{T}^3} \big\langle \mathbf{Z}_n, (v_j \mathbf{m}:\sqrt{\bm{\mu}}) \big\rangle_{L_v^2} dx &= 0, \quad j=1,2,3,\\
				\int_{\mathbb{T}^3} \big\langle \mathbf{Z}_n,( \frac{|v|^2}{2}\mathbf{m}:\sqrt{\bm{\mu}})   \big\rangle_{L_v^2}dx &= 0.
			\end{aligned}
		\end{equation}
		where $\bm{\mu}=(\mu^A,\mu^B)^T$, $\mathbf{m} = (m^A,m^B )^T$, and $ \mathbf{e}_j$ is the $j^{th}$ unit vector in $\mathbb{R}^2$.
		
		From \eqref{socad8080} and \eqref{socad8989}, we know that both $\{\mathbf{Z}_n(t,x,v)\}$ and $\{\mathbf{f}_n(t,x,v)\}$ have weakly convergent subsequences, denoted as 
		$\mathbf{Z}(t,x,v)$ and $\mathbf{f}(t,x,v)$ respectively. It is easy to verify that their weak limits satisfy
		\begin{equation*}
			\partial^\alpha \mathbf{Z}_n(t,x,v) \rightharpoonup \partial^\alpha \mathbf{Z}(t,x,v)\hspace{0.5cm}  \text{as}\hspace{0.5cm}   n \to \infty ,
		\end{equation*}
		  weakly with respect to the inner product $ \int_0^1 \langle \mathbf{f}(s), \mathbf{f}(s) \rangle_{\nu} ds $, and
		\begin{equation*}
		 w^{| \tilde{\beta} |} \partial_{\tilde{\beta}}^{\tilde{\alpha}} \mathbf{f}_n(t,x,v) \rightharpoonup  w^{| \tilde{\beta} |} \partial_{\tilde{\beta}}^{\tilde{\alpha}} \mathbf{f}(t,x,v) \hspace{0.5cm}  \text{as}\hspace{0.5cm}   n \to \infty ,
		\end{equation*}
		weakly with respect to the function space $ \big[L^2([0,1] \times \mathbb{T}^3 \times \mathbb{R}^3)\big]^2$.
		
		The subsequent proof is divided into two main steps.
		The first step is to prove 
		\begin{equation*}
		 \mathbf{Z}(t,x,v) = (\mathbf{a}(t,x):\sqrt{\bm{\mu}}) + (\mathbf{b}(t,x) \cdot v) (\mathbf{m}:\sqrt{\bm{\mu}}) + c(t,x)|v|^2(\mathbf{m}:\sqrt{\bm{\mu}})
		\end{equation*}
		where $\mathbf{a}=(a^A,a^B)^T$, $\mathbf{b}=(b_1,b_2,b_3)^T$ are two vectors. We also split $ \mathbf{Z}_n(t,x,v) $ by
		$$
		\mathbf{Z}_n(t,x,v) = P_0 \mathbf{Z}_n + \{I - P_0\}\mathbf{Z}_n = \sum_{j=1}^6 \big\langle \mathbf{Z}_n(t,x,\cdot), \tilde{\bm{\chi}}_j \big\rangle_{L^{2}_{v}} \tilde{\bm{\chi}}_j(v) + \{I - P_0\}\mathbf{Z}_n,
		$$
		where $\tilde{\bm{\chi}}_j(v)$ is a orthonormal basis of $\mathbb{R}^3(v)$ defined in \eqref{deforthonormalB}. So it also holds that
		\begin{align}
			\partial^{\tilde{\alpha}} \mathbf{Z}_n(t,x,v) &= P_0 \partial^{\tilde{\alpha}} \mathbf{Z}_n + \{I - P_0\}\partial^{\tilde{\alpha}} \mathbf{Z}_n \notag \\
			&= \sum_{j=1}^6 \big\langle \partial^{\tilde{\alpha}} \mathbf{Z}_n(t,x,v), \tilde{\bm{\chi}}_j \big\rangle_{L^{2}_{v}} \tilde{\bm{\chi}}_j(v) + \{I - P_0\}\partial^{\tilde{\alpha}} \mathbf{Z}_n. \label{socade9393}
		\end{align}
		By \eqref{normalized888} and Lemma \ref{lemma2.2}, as $n\rightarrow \infty$, we deduce that
		\begin{align}
		\sum_{|{\tilde{\alpha}}| \leq N} \int_{0}^{1} \left\| \{I - P_{0}\} \partial^{\tilde{\alpha}} \mathbf{Z}_{n} \right\|_{\nu}^{2} ds \leq \frac{1}{\delta} \sum_{|{\tilde{\alpha}}| \leq N} \int_{0}^{1} \big\langle \mathbf{L}\big[\partial^{\tilde{\alpha}} \mathbf{Z}_{n}\big], \partial^{\tilde{\alpha}} \mathbf{Z}_{n} \big\rangle_{L^{2}_{x,v}} ds \to 0. \label{socade9494}
		\end{align}
		To show that $ \mathbf{Z}(t, x, v) $  is not identically zero, it suffices to show that
		\begin{align}
		\sum_{1 \leq j \leq 6} \int_{0}^{1} \left\| \left\langle \partial^{\tilde{\alpha}} \mathbf{Z}_{n}, \tilde{\bm{\chi}}_j \right\rangle_{L^{2}_{v}} \tilde{\bm{\chi}}_j - \left\langle \partial^{\tilde{\alpha}} \mathbf{Z}, \tilde{\bm{\chi}}_j \right\rangle_{L^{2}_{v}} \tilde{\bm{\chi}}_j \right\|_{\nu}^{2} ds \to 0. \label{socade9595}
		\end{align}
		This reduces to proving that
		\begin{align}
		\int_{0}^{1} \int_{\mathbb{T}^{3}} \left| \left\langle \partial^{\tilde{\alpha}} \mathbf{Z}_{n}, \tilde{\bm{\chi}}_j \right\rangle_{L^{2}_{v}} - \left\langle \partial^{\tilde{\alpha}} \mathbf{Z}, \tilde{\bm{\chi}}_j \right\rangle_{L^{2}_{v}} \right|^{2} dxds \to 0. \label{socade9696}
		\end{align}
		\begin{proof}[Proof of \eqref{socade9696}]
		
		For any $ 1>\eta > 0 $, we define a smooth cutoff function 
		in $ [0, 1] \times \mathbb{T}^{3} \times \mathbb{R}^{3} $ 
		 $$ \chi_{1}(t, x,v) \equiv 1,\hspace{0.5cm}  \text{if} \hspace{0.2cm}(t, x,v)\in  [\eta, 1 - \eta] \times \mathbb{T}^{3} \times ( |v| \leq \frac{1}{\eta}). $$
		Then we split $\big\langle \partial^{\tilde{\alpha}} \mathbf{Z}_{n}(t, x, v), \tilde{\bm{\chi}}_j \big\rangle_{L^{2}_{v}}$ into
		\begin{equation}
		\left\langle \partial^{\tilde{\alpha}} \mathbf{Z}_{n}(t, x, v), \tilde{\bm{\chi}}_j \right\rangle_{L^{2}_{v}} = \big\langle (1 - \chi_{1})\partial^{\tilde{\alpha}} \mathbf{Z}_{n}(t, x, v), \tilde{\bm{\chi}}_j \big\rangle_{L^{2}_{v}} + \big\langle \chi_{1}\partial^{\tilde{\alpha}} \mathbf{Z}_{n}(t, x, v), \tilde{\bm{\chi}}_j \big\rangle_{L^{2}_{v}}. \label{socad9797}
		\end{equation}
		For the first term above, we have
		\begin{align}
			&\int_{0}^{1} \int_{\mathbb{T}^{3}} \left| \left\langle (1 - \chi_{1})\partial^{\tilde{\alpha}} \mathbf{Z}_{n}, \tilde{\bm{\chi}}_j \right\rangle_{L^{2}_{v}} \right|^{2} + \left| \left\langle (1 - \chi_{1})\partial^{\tilde{\alpha}} \mathbf{Z}, \tilde{\bm{\chi}}_j \right\rangle_{L^{2}_{v}} \right|^{2} dx\,ds \notag\\
			&\hspace{0.4cm}\leq C \int_{0}^{1} \iint_{\mathbb{T}^{3} \times \mathbb{R}^{3}} (1 - \chi_{1}) |\partial^{\tilde{\alpha}} \mathbf{Z}_{n}|^{2} |\tilde{\bm{\chi}}_j|\,dv\, dx\, dt \notag\\
			&\hspace{1cm}+ C \int_{0}^{1} \iint_{\mathbb{T}^{3} \times \mathbb{R}^{3}} (1 - \chi_{1}) |\partial^{\tilde{\alpha}} \mathbf{Z}|^{2} |\tilde{\bm{\chi}}_j|\,dv\, dx\, dt \notag\\
			&\hspace{0.4cm}\leq C \Bigg\{\iiint_{0 \le s \le \eta} +  \iiint_{1 - \eta \le s \le 1} +  \iiint_{|v| \geq 1/\eta}\Bigg\}\Big[\big(|\partial^{\tilde{\alpha}} \mathbf{Z}|^{2}+|\partial^{\tilde{\alpha}} \mathbf{Z}_{n}|^{2}\big)(1 - \chi_{1})|\tilde{\bm{\chi}}_j|\Big] dv \,dx\, dt.\label{socd97.597.5}
		\end{align}
		Since $\sup_{s} \big\{\| w^{1/2} \mathbf{Z}_{n} \|_{L^{2}_{x,v}} + \| w^{1/2} \mathbf{Z}\|_{L^{2}_{x,v}}\big\} \le C$, and \eqref{socad8989}, it holds that
		$$
		|\tilde{\bm{\chi}}_j(v)| \leq o(\eta)(1 + |v|)^{\gamma}, \quad \text{for } |v| \ge 1/\eta,
		$$
		so \eqref{socd97.597.5} can be further bounded by
		\begin{equation}
		C\eta \times \sup_{s} \iint_{\mathbb{T}^{3} \times \mathbb{R}^{3}} (1 + |v|)^{\gamma} \big(|\partial^{\tilde{\alpha}} \mathbf{Z}_{n}(s, x, v)|^{2} + |\partial^{\tilde{\alpha}} \mathbf{Z}(s, x, v)|^{2}\big) dv\, dx \leq C\eta. \label{socad9898}
		\end{equation}
		
		Next, we need to prove that the second term $\left\langle \chi_{1}\partial^{\tilde{\alpha}} \mathbf{Z}_{n}(t, x, v), \tilde{\bm{\chi}}_j \right\rangle_{L^{2}_{v}}$ in \eqref{socad9797} is uniformly bounded in $\big[H^{\frac{1}{4}}([0, 1] \times \mathbb{T}^{3})\big]^2$. Notice by \eqref{socaed9090}, $\chi_{1}\partial^{\tilde{\alpha}} \mathbf{Z}_{n}$ satisfies
		\begin{align}
			&(\partial_{t} + v \cdot \nabla_{x})[\chi_{1}\partial^{\tilde{\alpha}} \mathbf{Z}_{n}] \notag\\
			&\hspace{1.5cm}= -\chi_{1} \mathbf{L}\big[\partial^{\tilde{\alpha}} \mathbf{Z}_{n}\big] + \partial^{\tilde{\alpha}} \mathbf{Z}_{n} (\partial_{t} + v \cdot \nabla_{x})\chi_{1} + \sum_{\tilde{\alpha}'} C_{\tilde{\alpha}}^{\tilde{\alpha}'} \bm{\Gamma}\big[\partial^{\tilde{\alpha}'} \mathbf{f}_{n}, \partial^{\tilde{\alpha} - \tilde{\alpha}'} \mathbf{Z}_{n}\big]\chi_{1} \notag\\
			&\hspace{1.9cm} + \sum_{\tilde{\alpha}'} C_{\tilde{\alpha}}^{\tilde{\alpha}'} \Gamma[\partial^{\tilde{\alpha}'} \mathbf{Z}_{n}, \partial^{\tilde{\alpha} - \tilde{\alpha}'} \mathbf{f}_{n}]\chi_{1}. \label{socad9999}
		\end{align}
		
		To show that each term on the right-hand side of \eqref{socad9999} is uniformly bounded in $ \big[L^2([0,1] \times \mathbb{T}^3 \times \mathbb{R}^3)\big]^2 $. Firstly, it can be  observed that
		
		$$
		\partial^{\tilde{\alpha}} \mathbf{Z}_n (\partial_t + v \cdot \nabla_x) \chi_1 \in \big[L^2([0,1] \times \mathbb{T}^3 \times \mathbb{R}^3)\big]^2.
		$$
		 Together with \eqref{socad8080} and \eqref{normalized8722}, and using \eqref{soacl6464} - \eqref{soacl65652}, we obtain that
		
		$$
		\left( \sum_{|\tilde{\alpha}'| \leq N/2} \Gamma[\partial^{\tilde{\alpha}'} \mathbf{f}_n, \partial^{\tilde{\alpha} - \tilde{\alpha}'} \mathbf{Z}_n] + \sum_{|\tilde{\alpha}'| > N/2} \Gamma[\partial^{\tilde{\alpha}'} \mathbf{Z}_n, \partial^{\tilde{\alpha} - \tilde{\alpha}'} \mathbf{f}_n] \right) \chi_1
		$$
		is uniformly bounded in $ \big[L^2([0,1] \times \mathbb{T}^3 \times \mathbb{R}^3)\big]^2 $. Moreover, by \eqref{nuKsuanzidef},

		\begin{align*}
			&\int_{\mathbb{R}^{3}} \big|\chi_1 L^{\alpha}\big[\partial^{\tilde{\alpha}} \mathbf{Z}_n\big]\big|^2 dv \\
			&\hspace{0.5cm}\leq \sum_{\beta=A,B} \Bigg(\int_{\mathbb{R}^{3}} \big|\chi_1 \nu^{\alpha} \partial^{\tilde{\alpha}} Z^{\alpha}_n\big|^2 dv + C \int_{\mathbb{R}^{3}} \Big| \int_{\mathbb{R}^{3}} |v_* - v|^\gamma \sqrt{\mu^\alpha(v)} \sqrt{\mu^\beta(v_*)}\partial^{\tilde{\alpha}} Z_n^\beta(v_*)  dv_* \Big|^2 \chi_1^2\, dv \\
			&\hspace{2.2cm} + C \int_{\mathbb{R}^{3}} \Big| \int_{\mathbb{R}^{3}} |v_* - v|^\gamma \sqrt{\mu^\beta(v_*)}\\
			&\hspace{5.3cm}\times \Big[ \sqrt{\mu^\alpha(v')} \partial^{\tilde{\alpha}} Z^{\beta}_n(v_*') + \sqrt{\mu^\beta(v_*')} \partial^{\tilde{\alpha}} Z_n^{\alpha}(v') \Big] d\omega dv_*' \Big|^2 \chi_1^2(v) dv \Bigg) \\
		&\hspace{0.5cm}\leq \sum_{\beta=A,B} \Bigg( \big|\partial^{\tilde{\alpha}} Z^{\alpha}_{n}\big|_{\nu}^{2} + C \int_{\mathbb{R}^{3}} |v_* - v|^{\gamma} \sqrt{\mu^\alpha(v)} \sqrt{\mu^\beta(v_*)}\big|\partial^{\tilde{\alpha}} Z_n^\beta(v_*)\big|^{2} \chi_{1}^{2} \,dv_*dv \\
		&\hspace{1.6cm}+ C \int_{\mathbb{R}^{3}} |v_*' - v'|^{\gamma} \sqrt{\mu^\beta(v_*)} \Big[ \sqrt{\mu^\alpha(v')}\big|\partial^{\tilde{\alpha}} Z^{\beta}_n(v_*')\big|^{2} + \sqrt{\mu^\beta(v_*')}\big|\partial^{\tilde{\alpha}} Z_n^{\alpha}(v')\big|^{2}\Big] \chi_{1}^{2}(v) dv_*'dv' \Bigg)\\
		&\hspace{0.5cm}\leq C  \big|\partial^{\tilde{\alpha}} \mathbf{Z}_{n}\big|_{\nu}^{2},
		\end{align*}
		where $\chi_{1}$ has compact support. 
		
		So, the right-hand side in \eqref{socad9999} is uniformly bounded in $\big[L^{2}([0, 1] \times \mathbb{T}^{3} \times \mathbb{R}^{3})\big]^2$. Applying the averaging lemma in \cite{avrlem}, it yields that
		$$
		\big\langle \chi_{1}\partial^{\tilde{\alpha}} \mathbf{Z}_{n}(t, x, v), \tilde{\bm{\chi}}_j \big\rangle_{L^{2}_{v}} = \int_{\mathbb{R}^{3}} \chi_{1}(t, x, v)\big(\partial^{\tilde{\alpha}} \mathbf{Z}_{n}(t, x, v):\tilde{\bm{\chi}}_j(v)\big)\, dv \,\in \big[H^{\frac{1}{4}}([0, 1] \times \mathbb{T}^{3})\big]^2
		$$
		which is uniform in $n$. Notice by \eqref{socad8080} and $\sum_{|\tilde{\alpha}| \leq N} \int_{0}^{1} \|\partial^{\tilde{\alpha}} \mathbf{Z}_{n}\|_{\nu}^{2} ds = 1$,  we deduce that
		$$
		\big\langle \chi_{1}\partial^{\tilde{\alpha}} \mathbf{Z}_{n}(t, x, v), \tilde{\bm{\chi}}_j \big\rangle_{L^{2}_{v}} \to \big\langle \chi_{1}\partial^{\tilde{\alpha}} \mathbf{Z}(t, x, v), \tilde{\bm{\chi}}_j \big\rangle_{L^{2}_{v}} \hspace{0.5cm} \text{ in }\hspace{0.2cm} \big[L^{2}([0, 1] \times \mathbb{T}^{3})\big]^{2}.
		$$
		The proofs of \eqref{socade9595} and \eqref{socade9696} follow by combining this with \eqref{socad8080} and \eqref{socad9898}. 
		\end{proof} 
		Consequently, it holds that
		$$
		\int_{0}^{1} \|\partial^{\tilde{\alpha}} \mathbf{Z}_{n} - \partial^{\tilde{\alpha}} \mathbf{Z}\|_{\nu}^{2}\, ds \to 0.
		$$
		Then we have
		\begin{equation}
		\int_{0}^{1} \|\partial^{\tilde{\alpha}} \mathbf{Z}\|_{\nu}^{2}\, ds = 1. \label{socaed100100}
		\end{equation}
		Letting $ n \to \infty $ in \eqref{socade9393}, we obtain
		$$
		\begin{aligned}
			\mathbf{Z}(t,x,v) &= P_0 \mathbf{Z} \\
			&= \sum_{j=1}^6 \big\langle \partial^{\tilde{\alpha}} \mathbf{Z}(t,x,v), \tilde{\bm{\chi}}_j \big\rangle_{L^{2}_{v}} \tilde{\bm{\chi}}_j(v) \\
			&= (\mathbf{a}(t,x):\sqrt{\bm{\mu}}) + (\mathbf{b}(t,x) \cdot v) (\mathbf{m}:\sqrt{\bm{\mu}}) + c(t,x)|v|^2(\mathbf{m}:\sqrt{\bm{\mu}}).
		\end{aligned}
		$$
		Notice that
		\begin{equation*}
			(\mathbf{e}_1:\sqrt{\bm{\mu}}),\hspace{0.3cm} (\mathbf{e}_2:\sqrt{\bm{\mu}}),\hspace{0.3cm} v_i(\mathbf{m}:\sqrt{\bm{\mu}})\,(i=1,2,3), \hspace{0.3cm} |v|^2(\mathbf{m}:\sqrt{\bm{\mu}}),
		\end{equation*}
		 these six vectors above are linearly independent.  Therefore, $ \mathbf{a}(t,x),\,\mathbf{b}(t,x),\,c(t,x) $ can be expressed as a linear combinations of
		$$
		\big\langle  \mathbf{Z}(t,x,v), (\mathbf{e}_1:\sqrt{\bm{\mu}}) \big\rangle_{L^{2}_{v}},\,
		\hspace{0.5cm}\big\langle  \mathbf{Z}(t,x,v), v_i(\mathbf{m}:\sqrt{\bm{\mu}}) \big\rangle_{L^{2}_{v}}\,i=1,2,3,
		$$
		and
		$$
		\big\langle  \mathbf{Z}(t,x,v), (\mathbf{e}_2:\sqrt{\bm{\mu}}) \big\rangle_{L^{2}_{v}},\,\hspace{0.5cm}\big\langle  \mathbf{Z}(t,x,v), |v|^2(\mathbf{m}:\sqrt{\bm{\mu}}) \big\rangle_{L^{2}_{v}}.
		$$
		By \eqref{socad8989}, we deduce that
		\begin{equation}
		\sup_{0 \leq t \leq 1} \sum_{|\tilde{\alpha}| \leq N} \left[ \big|\partial^{\tilde{\alpha}} \mathbf{a}(t)\big|_{L^{2}_{x}}^2 + \big|\partial^{\tilde{\alpha}} \mathbf{b}(t)\big|_{L^{2}_{x}}^2 + \big|\partial^{\tilde{\alpha}} c(t)\big|_{L^{2}_{x}}^2 \right] \leq C. \label{socad101101}
		\end{equation}
		To derive the equations for the limiting function $ \mathbf{Z}(t,x,v) $, it is essential to prove 
		\begin{align}
			&\sum_{|\tilde{\alpha}_1| \leq N/2} \bm{\Gamma}\big[\partial^{\tilde{\alpha}_1} \mathbf{f}_n, \partial^{\tilde{\alpha} - \tilde{\alpha}_1} \mathbf{Z}_n\big] + \sum_{|\tilde{\alpha}_1| > N/2} \bm{\Gamma}\big[\partial^{\tilde{\alpha}_1} \mathbf{Z}_n, \partial^{\tilde{\alpha} - \tilde{\alpha}_1} \mathbf{f}_n\big] \notag\\
			&\hspace{0.4cm}\to \sum_{|\tilde{\alpha}_1| \leq N/2} \bm{\Gamma}\big[\partial^{\tilde{\alpha}_1} \mathbf{f}, \partial^{\tilde{\alpha} - \tilde{\alpha}_1} \mathbf{Z}\big] + \sum_{|\tilde{\alpha}_1| > N/2} \bm{\Gamma}\big[\partial^{\tilde{\alpha}_1} \mathbf{Z}, \partial^{\tilde{\alpha} - \tilde{\alpha}_1} \mathbf{f}\big] \quad \text{in } \mathcal{D}'. \label{secd102102}
		\end{align}

		\begin{proof}[Proof of \eqref{secd102102}]
		We split \eqref{secd102102} into two parts:
		\begin{align*}
		\sum_{|\tilde{\alpha}_1| \leq N/2} \bm{\Gamma}\big[\partial^{\tilde{\alpha}_1} \mathbf{f}_n, \partial^{\tilde{\alpha} - \tilde{\alpha}_1} \mathbf{Z}_n - \partial^{\tilde{\alpha} - \tilde{\alpha}_1} \mathbf{Z}\big] + \sum_{|\tilde{\alpha}_1| > N/2} \bm{\Gamma}\big[\partial^{\tilde{\alpha}_1} \mathbf{Z}_n - \partial^{\tilde{\alpha}_1} \mathbf{Z}, \partial^{\tilde{\alpha} - \tilde{\alpha}_1} \mathbf{f}_n\big]\\
		+ \sum_{|\tilde{\alpha}_1| \leq N/2} \bm{\Gamma}\big[\partial^{\tilde{\alpha}_1} \mathbf{f}_n - \partial^{\tilde{\alpha}_1} \mathbf{f}, \partial^{\tilde{\alpha} - \tilde{\alpha}_1} \mathbf{Z}\big] + \sum_{|\tilde{\alpha}_1| > N/2} \bm{\Gamma}\big[\partial^{\tilde{\alpha}_1} \mathbf{Z}_n, \partial^{\tilde{\alpha} - \tilde{\alpha}_1} \mathbf{f}_n - \partial^{\tilde{\alpha} - \tilde{\alpha}_1} \mathbf{f}\big].
		\end{align*}
		By \eqref{socaed100100} and \eqref{soacl6464}, the first two terms converge to zero in $\mathcal{D}'$. 
		
		Since $\partial^{\tilde{\alpha}_1} \mathbf{f}_n - \partial^{\tilde{\alpha}_1} \mathbf{f}$ and $\partial^{\tilde{\alpha} - \tilde{\alpha}_1} \mathbf{f}_n - \partial^{\tilde{\alpha} - \tilde{\alpha}_1} \mathbf{f}$ converge weakly in $\big[L^2([0,1] \times \mathbb{T}^3 \times \mathbb{R}^3)\big]^2$, the last two terms also converge to zero in $\mathcal{D}'$. We first choose an $\eta$ ‌sufficiently small, and then let $n \to \infty$ in Lemma \ref{lem333scd6}. Consequently, \eqref{secd102102} follows.
		\end{proof}
		Letting $n \to \infty$ in \eqref{socaed9090} and separating the linear parts from the nonlinear parts yields
		\begin{align}
			&(\partial_t + v \cdot \nabla_x) \partial^{\tilde{\alpha}} \mathbf{Z} \notag\\
			&\hspace{1.3cm}= \sum_{|\tilde{\alpha}_1| \leq N/2} C_{\tilde{\alpha}}^{\tilde{\alpha}_1} \Gamma[\partial^{\tilde{\alpha}_1} \mathbf{f}, \partial^{\tilde{\alpha} - \tilde{\alpha}_1} \mathbf{Z}] + \sum_{|\tilde{\alpha}_1| > N/2} C_\alpha^{\tilde{\alpha}_1} \Gamma[\partial^{\tilde{\alpha}_1} \mathbf{Z}, \partial^{\tilde{\alpha} - \tilde{\alpha}_1} \mathbf{f}]\notag \\
			&\hspace{1.3cm}=: \mathbf{h}_{\tilde{\alpha}}, \label{socad103103}
		\end{align}
		Moreover, the conservation laws also hold as:
		\begin{equation}\label{conseerlawFZZ}
			\begin{aligned}
				\int_{\mathbb{T}^3} \big\langle \mathbf{Z}, (\mathbf{e}_i:\sqrt{\bm{\mu}}) \big\rangle_{L_v^2}  dx  &= 0, \quad i=1,2,\\
				\int_{\mathbb{T}^3} \big\langle \mathbf{Z}, (v_j \mathbf{m}:\sqrt{\bm{\mu}}) \big\rangle_{L_v^2} dx &= 0, \quad j=1,2,3,\\
				\int_{\mathbb{T}^3} \big\langle \mathbf{Z}, (\frac{|v|^2}{2}\mathbf{m} :\sqrt{\bm{\mu}})  \big\rangle_{L_v^2}dx &= 0.
			\end{aligned}
		\end{equation}
		
		The second step is to prove 
		\begin{equation*}
			\int_{0}^{1} \sum_{|\tilde{\alpha}| \leq N} \|\partial^{\tilde{\alpha}} \mathbf{Z}(s)\|^{2}_{\nu} \,ds \leq C M.
		\end{equation*}
		If we choose $M$ sufficiently small, this will lead to a contradiction with \eqref{socaed100100}. After substituting
		\begin{equation*}
		\mathbf{Z}(t,x,v)	= (\mathbf{a}(t,x):\sqrt{\bm{\mu}}) + (\mathbf{b}(t,x) \cdot v) (\mathbf{m}:\sqrt{\bm{\mu}}) + c(t,x)|v|^2(\mathbf{m}:\sqrt{\bm{\mu}})
		\end{equation*}
		 into equations \eqref{socad103103}, we obtain
		\begin{equation}
		\begin{aligned}
			&m^{\alpha}\nabla_{x}\partial^{\tilde{\alpha}} c \cdot v|v|^{2}\sqrt{\mu^{\alpha}} +  \{m^{\alpha}\partial^{\tilde{\alpha}} \dot{c}|v|^{2} +m^{\alpha}v \cdot \nabla_{x}(\partial^{\tilde{\alpha}}\mathbf{b} \cdot v)\}\sqrt{\mu^{\alpha}} \\
			&\hspace{1cm}+ \{m^{\alpha}\partial^{\tilde{\alpha}}\mathbf{b} + \nabla_{x}\partial^{\tilde{\alpha}} a^{\alpha}\} \cdot v\sqrt{\mu^{\alpha}} + \partial^{\tilde{\alpha}}\dot{a^{\alpha}}\sqrt{\mu^{\alpha}} = h^{\alpha}_{\tilde{\alpha}}(\mathbf{f}, \mathbf{Z}), \label{socad105105}
		\end{aligned}
		\end{equation}
		where $\alpha \in \{A,B\},\,$$\mathbf{a}(t,x)=(a^A,a^B)^T(t,x)$ and $|\alpha| \leq N$. The dot $`\cdot'$ 
		represents Newton's notation for the derivative with respect to time. 
		Since $\sqrt{\mu^{\alpha}},\ v_i\sqrt{\mu^{\alpha}},\ v_i^2\sqrt{\mu^{\alpha}},\ v_i v_j\sqrt{\mu^{\alpha}},\ v_i|v|^2\sqrt{\mu^{\alpha}}$ are linearly independent, 
		this set of vectors can be transformed into an orthonormal basis $\hat{\bm{\chi}}_k^{\alpha}$ via Gram-Schmidt orthogonalization, which satisfies
		$$
		[\sqrt{\mu^{\alpha}}, v_{i}\sqrt{\mu^{\alpha}}, v_{i}^{2}\sqrt{\mu^{\alpha}}, v_{i}v_{j}\sqrt{\mu^{\alpha}}, |v|^{2}v_{i}\sqrt{\mu^{\alpha}}]A^{\alpha}_{13 \times 13} = [\hat{\bm{\chi}}_k^{\alpha}],
		$$
		‌where $1 \leq i \neq j \leq 3,\, 1 \leq k\leq 13$ and $A^{\alpha}_{13 \times 13}$ is the matrix representing the linear transformation with $\det A^{\alpha}_{13 \times 13} \neq 0$. 
		Then we compare the coefficients of $\sqrt{\mu^{\alpha}}$, $v_{i}\sqrt{\mu^{\alpha}}$, $v_{i}^{2}\sqrt{\mu^{\alpha}}$, $v_{i}v_{j}\sqrt{\mu^{\alpha}}$ and $|v|^{2}v_{i}\sqrt{\mu^{\alpha}}$,  on both sides of \eqref{socad105105} to obtain
		\begin{align}
			m^{\alpha}\partial^i \partial^{\tilde{\alpha}} c =& h_{ci,\tilde{\alpha}}^\alpha,\label{socad10606}\\
			m^{\alpha}\{\partial^{\tilde{\alpha}} \dot{c} + \partial^i \partial^{\tilde{\alpha}} b_i\} =& h_{i,\tilde{\alpha}}^\alpha,\label{socad10707}\\
				m^{\alpha}\{\partial^i \partial^{\tilde{\alpha}} b_j + \partial^j \partial^{\tilde{\alpha}} b_i\} =& h_{ij,\tilde{\alpha}}^\alpha,\quad i \neq j,\label{socad10808}\\
			m^{\alpha}\partial^{\tilde{\alpha}} \dot{b}_i + \partial^i \partial^{\tilde{\alpha}} a^{\alpha} =& h_{bi,\tilde{\alpha}}^\alpha,\label{socad10909}\\
			\partial^{\tilde{\alpha}} \dot{a^{\alpha}} =& h_{a,\tilde{\alpha}}^\alpha,\label{socad110100}
		\end{align}
		where $h_{a,\tilde{\alpha}}^\alpha,\ h_{bi,\tilde{\alpha}}^\alpha,\ h_{i,\tilde{\alpha}}^\alpha,\ h_{ij,\tilde{\alpha}}^\alpha,\ h_{ci,\tilde{\alpha}}^\alpha$  can be expressed as follows:
		$$
		\left[ h_{a,\tilde{\alpha}}^\alpha,\ h_{bi,\tilde{\alpha}}^\alpha,\ h_{i,\tilde{\alpha}}^\alpha,\ h_{ij,\tilde{\alpha}}^\alpha,\ h_{ci,\tilde{\alpha}}^\alpha \right] = \left[ \int_{\mathbf{R}^3} h^\alpha_{\tilde{\alpha}}(t, x, v) \hat{\bm{\chi}}_k^{\alpha}(v) dv \right] A.
		$$
		Applying \eqref{soacl6565},\eqref{soacl65652} and \eqref{socad101101} to $\mathbf{h}_{\tilde{\alpha}}$ in \eqref{socad103103}, we get
		\begin{align}
		&\sum_{|\alpha| \le N}	\sup_{0 \leq s \le 1}  \Big\{ |h_{a,\tilde{\alpha}}^\alpha(s)|_{L_x^2} + |h_{bi,\tilde{\alpha}}^\alpha(s)|_{L_x^2} + |h_{ci,\tilde{\alpha}}^\alpha(s)|_{L_x^2} + |h_{i,\tilde{\alpha}}^\alpha(s)|_{L_x^2} + |h_{ij,\tilde{\alpha}}^\alpha(s)|_{L_x^2} \Big\}\notag\\
		&\hspace{0.4cm}\leq C \sqrt{M}.\label{socad111111}
		\end{align}
		
		Next, we use equations \eqref{socad10606}--\eqref{socad110100} to show that $\mathbf{a}(t,x)$, $\mathbf{b}(t,x)$ and $c(t,x)$ are of the order $O(\sqrt{M})$. By \eqref{socad10606}, $\nabla\partial^{\tilde{\alpha}} c(t, x)$ is bounded by
		\begin{equation}
		\sum_{|\tilde{\alpha}| \le N} |\nabla\partial^{\tilde{\alpha}} c|_{L_x^2} \leq C \sum_i |h_{ci,\tilde{\alpha}}^\alpha|_{L_x^2} \le C \sqrt{M}.\label{socaed112112} 
		\end{equation}
		Now, we turn to $b_i(t, x)$, for $1 \le i \le 3$. Taking $\partial^i$ of \eqref{socad10707} and $\partial^j$ of \eqref{socad10808} yields
		\begin{align*}
			m^{\alpha}\Delta \partial^{\tilde{\alpha}} b_i &= m^{\alpha}\sum_j \partial^{jj} \partial^{\tilde{\alpha}} b_i \\
			&= m^{\alpha}\sum_{j \neq i} \partial^{jj} \partial^{\tilde{\alpha}} b_i + m^{\alpha}\partial^{ii} \partial^{\tilde{\alpha}} b_i \\
			&= m^{\alpha}\sum_{j \neq i} \left[ -\partial^{ji} \partial^{\tilde{\alpha}} b_j + \partial^j h_{ij,\tilde{\alpha}}^\alpha \right] + \left[ \partial^i h_{i,\tilde{\alpha}}^\alpha - m^{\alpha}\partial^i \partial^{\tilde{\alpha}} \dot{c} \right] \\
			&= \sum_{j \neq i} \left[ m^{\alpha}\partial^i \partial^{\tilde{\alpha}} \dot{c} - \partial^i h_{j,\tilde{\alpha}}^\alpha \right] - m^{\alpha}\partial^i \partial^{\tilde{\alpha}} \dot{c} + \sum_{j \neq i} \partial^j h_{ij,\tilde{\alpha}}^\alpha + \partial^i h_{i,\tilde{\alpha}}^\alpha \\
			&= 2m^{\alpha}\partial^i \partial^{\tilde{\alpha}} \dot{c} - m^{\alpha}\partial^i \partial^{\tilde{\alpha}} \dot{c} - \sum_{j \neq i} \left[ \partial^i h_{j,\tilde{\alpha}}^\alpha - \partial^j h_{ij,\tilde{\alpha}}^\alpha \right] + \partial^i h_{i,\tilde{\alpha}}^\alpha \\
			&= m^{\alpha}\partial^i \partial^{\tilde{\alpha}} \dot{c} - \sum_{j \neq i} \left[ \partial^i h_{j,\tilde{\alpha}}^\alpha - \partial^j h_{ij,\tilde{\alpha}}^\alpha \right] + \partial^i h_{i,\tilde{\alpha}}^\alpha\\
			&= -m^{\alpha}\partial^{i i} \partial^{\tilde{\alpha}} b_i + \partial^i h_{i,\tilde{\alpha}}^\alpha - \sum_{j \neq i} \left[ \partial^i h_{j,\tilde{\alpha}}^\alpha - \partial^j h_{ij,\tilde{\alpha}}^\alpha \right] + \partial^i h_{i,\tilde{\alpha}}^\alpha \\
			&= -m^{\alpha}\partial^{i i} \partial^{\tilde{\alpha}} b_i + 2\partial^i h_{i,\tilde{\alpha}}^\alpha - \sum_{j \neq i} \left[ \partial^i h_{j,\tilde{\alpha}}^\alpha - \partial^j h_{ij,\tilde{\alpha}}^\alpha \right].
		\end{align*}
		Multiplying $\Delta \partial^{\tilde{\alpha}} b_i$ with $\partial^{\tilde{\alpha}} b_i$, then integrating over $\mathbb{T}^3$, we obtain
		\begin{equation}
		\sum_{|\tilde{\alpha}| \leq N} \left| \partial^i \partial^{\tilde{\alpha}} \mathbf{b} \right|_{L_x^2} \leq C \sum_{i,j} \left[ | h_{i,\tilde{\alpha}}^\alpha |_{L_x^2} + | h_{ij,\tilde{\alpha}}^\alpha |_{L_x^2} \right] \leq C \sqrt{M}. \label{soedc113113}
		\end{equation}
		An integration over a time interval is needed when estimating $\nabla\mathbf{a}(t,x)$. Without loss of generality, we assume $t \geq \frac{1}{2}$ (otherwise, consider the time interval $[t, 1]$). Integrating \eqref{socad10909} over $[0, t]$ yields,
		\begin{equation}
		\partial^{\tilde{\alpha}} b_i(t) - \partial^{\tilde{\alpha}} b_i(0) + \int_0^t \partial^i \partial^{\tilde{\alpha}} a^{\alpha}(s) ds = \int_0^t h_{bi,\tilde{\alpha}}^\alpha(s) ds. \label{socad114114}
		\end{equation}
		where $0 \leq |\tilde{\alpha}| \le N - 1$. Since $\partial^i \partial^{\tilde{\alpha}} \dot{a^{\alpha}} = \partial^i h_{a,\tilde{\alpha}}^\alpha \in L^2([0, 1] \times \mathbb{T}^3 \times \mathbb{R}^3)$, 
		$$
		\partial^i \partial^{\tilde{\alpha}} a^{\alpha}(s) = \partial^i \partial^{\tilde{\alpha}} a^{\alpha}(t) + \int_t^s \partial^i \partial^{\tilde{\alpha}}  \dot{a^{\alpha}}(\tau) d\tau.
		$$
		Plugging the above equation into \eqref{socad114114},and integrating over $[0, t]$, we get
		\begin{equation}
		\partial^i \partial^{\tilde{\alpha}} a^{\alpha}(t) = -\frac{1}{t} \left[ \partial^{\tilde{\alpha}} b_i(t) - \partial^{\tilde{\alpha}} b_i(0) \right] - \frac{1}{t} \int_0^t \int_t^s \partial^i \partial^{\tilde{\alpha}}  \dot{a^{\alpha}}(\tau) d\tau ds + \frac{1}{t} \int_0^t h_{bi,\tilde{\alpha}}^\alpha ds.\label{extrasec51}
		\end{equation}
		 Applying  $\partial^i$ again to both sides of equation \eqref{extrasec51}, we obtain
		\begin{align}
		\Delta\partial^{\tilde{\alpha}} a^{\alpha}(t) =& -\frac{1}{t} \sum_i \left[ \partial^i \partial^{\tilde{\alpha}} b_i(t) - \partial^i \partial^{\tilde{\alpha}} b_i(0) \right] \notag\\
		&- \frac{1}{t} \int_0^t \int_t^s \Delta h_{a,\tilde{\alpha}}^\alpha(\tau) d\tau ds + \frac{1}{t} \sum_i \int_0^t \partial^i h_{bi,\tilde{\alpha}}^\alpha ds. \label{extrel529}
		\end{align}
		Multiplying $\Delta\partial^{\tilde{\alpha}} a^{\alpha}(t)$ with $\partial^{\tilde{\alpha}} a^{\alpha}$, equation \eqref{extrel529} implies
		$$
		|\nabla \partial^{\tilde{\alpha}} a^\alpha|_{L_x^2} \le C \left[ |\partial^{\tilde{\alpha}} \mathbf{b}|_{L_x^2} + |\nabla_x h_{a,\tilde{\alpha}}^\alpha|_{L_x^2} + |h_{bi,\tilde{\alpha}}^\alpha|_{L_x^2} \right].
		$$
		Therefore, combining the above equality with \eqref{socaed112112}, \eqref{soedc113113} and \eqref{socad111111}, yields
		$$
		\sum_{\tilde{\alpha} \neq 0, |\tilde{\alpha}| \leq N} \int_0^1 \left[ \big|\partial^{\tilde{\alpha}} \mathbf{a}\big|_{L_x^2}^2 + \big|\partial^{\tilde{\alpha}} \mathbf{b}\big|_{L_x^2}^2 + \big|\partial^{\tilde{\alpha}} c\big|_{L_x^2}^2 \right] (s) ds \le C M.
		$$
		Finally, we estimate the $L^2_x$ norm of $\mathbf{a}(t,x), \mathbf{b}(t,x)$ and $c(t,x)$. Taking $\tilde{\alpha} = 0$ in  \eqref{socad10606}--\eqref{socad110100}, yields
		$$
		\int_0^1 \left[ |\dot{\mathbf{a}}|_{L_x^2}^2 + |\dot{\mathbf{b}}|_{L_x^2}^2 + |\dot{c}|_{L_x^2}^2 \right] (s) ds \le C M.
		$$
		By the Poincaré inequality in $[0,1] \times \mathbf{T}^3$, it holds that
		
		\begin{align*}
			&\int_0^1 \Big[ |\mathbf{a}|_{L_x^2}^2 + |\mathbf{b}|_{L_x^2}^2 + |c|_{L_x^2}^2 \Big] (s) ds \\
			&\hspace{2cm}\leq C \int_0^1 \Big[ |\nabla_{t,x} \mathbf{a}|_{L_x^2}^2 + |\nabla_{t,x} \mathbf{b}|_{L_x^2}^2 + |\nabla_{t,x} c|_{L_x^2}^2 \Big] (s) ds \\
			&\hspace{2.5cm} + C|\mathbb{T}^3| \left( \Big| \int_0^1 \int_{\mathbb{T}^3} \mathbf{a} \, dtdx \Big|^2 + \Big| \int_0^1 \int_{\mathbb{T}^3} \mathbf{b} \, dtdx \Big|^2 + \Big| \int_0^1 \int_{\mathbb{T}^3} c \, dtdx \Big|^2 \right) \\
			&\hspace{2cm}\leq CM + C|\mathbb{T}^3| \left( \Big| \int_0^1 \int_{\mathbb{T}^3} \mathbf{a} \, dtdx \Big|^2 + \Big| \int_0^1 \int_{\mathbb{T}^3} \mathbf{b} \, dtdx \Big|^2 + \Big| \int_0^1 \int_{\mathbb{T}^3} c \, dtdx \Big|^2 \right).
		\end{align*}
		But from the conservation laws in \eqref{conseerlawFZZ}, we have $\int_0^1 \int_{\mathbb{T}^3} \mathbf{b}\, dx\,dt = 0$, and
		$$
		\left| \int_0^1 \int_{\mathbb{T}^3}\mathbf{a}(t,x) \,dx\,dt\right| + \left| \int_0^1 \int_{\mathbb{T}^3} c (t,x) \,dx\,dt\right| = 0.
		$$
		This implies that $\int_0^1 \left[ |\mathbf{a}(s)|_{L_x^2}^2 + |\mathbf{b}(s)|_{L_x^2}^2 + |c(s)|_{L_x^2}^2 \right] (s) ds \le CM$, and therefore
		$$
		\sum_{|\tilde{\alpha}| \le N} \int_0^1 \left[ |\partial^{\tilde{\alpha}} \mathbf{a}|_{L_x^2}^2 + |\partial^{\tilde{\alpha}} \mathbf{b}|_{L_x^2}^2 + |\partial^{\tilde{\alpha}} c|_{L_x^2}^2 \right] (s) ds \le CM.
		$$
		This is a contradiction to $\sum_{|\tilde{\alpha}| \leq N} \int_0^1 \| \mathbf{Z} \|_{\nu}^2 \,ds = 1$, if $M$ is chosen to be small. This lemma thus follows. $\square$
		
		Proof of Theorem \ref{maintheorem22}. Assumption \eqref{socad8080} is valid for $0 \le t \le T_{*}$ because 
		$$
		\sup_{0 \le s \le T} \sum_{|\alpha| + |\beta| \le N} \left\| |w|^{|\beta|} \partial_\beta^\alpha f(s) \right\|_2^2 \le \sup_{0 \le s \le T} \mathcal{E}(f(s)) \le M.
		$$
		If $T_{*}\leq 1$, the conclusion follows naturally.
		If $T_{*}> 1$, there exists an integer $n \geq 1$ such that $n< T_{*}\leq n+1$. We apply Lemma \ref{socedlem9} to each interval $[0,  1],[1,  2],\ldots [n-1, n], [n, T_*]$. 
		\section*{6. Global existence}
		
		Proof of Theorem \ref{maintheorem11}. We choose $0 \le M \le 1$ and $ 8 \leq N$, such that both Lemma \ref{socedlem9} and Theorem \ref{soceadTm4} hold. Let the initial data satisfy
		\begin{equation*}
			\mathcal{E}[\mathbf{f}^{\rm{in}}] = \sum_{|\tilde{\alpha}|+|\tilde{\beta}|\leq N}  \frac{1}{2} \big\| w^{|\tilde{\beta}|} \partial_{\tilde{\beta}}^{\tilde{\alpha}} \mathbf{f}_0 \big\|^2_{L^{2}_{x,v}}\leq M_1. 
		\end{equation*}
		as in Theorem \ref{soceadTm4}. Let
		$$
		T = \sup_t \left\{ t : \sup_{0 \le s \le t} \mathcal{E}\big[\mathbf{f}(s)\big] \le M \right\} > 0.
		$$
		First, we estimate the $ x $ derivatives for $ \mathbf{f} $. By \eqref{scad7777} we get
		$$
		\frac{1}{2}\|\partial^{\tilde{\alpha}} \mathbf{f}\|^2_{L^{2}_{x,v}}(t) + \int_{t_1}^t \langle [\partial^{\tilde{\alpha}} \mathbf{f}], \partial^{\tilde{\alpha}} \mathbf{f} \rangle_{L^{2}_{x,v}} ds \leq \frac{1}{2}\|\partial^{\tilde{\alpha}} \mathbf{f}^{\rm{in}}\|^2_{L^{2}_{x,v}} + \int_{0}^t \langle \partial^{\tilde{\alpha}} \bm{\Gamma}[\mathbf{f}, \mathbf{f}], \partial^{\tilde{\alpha}} \mathbf{f} \rangle_{L^{2}_{x,v}} ds.
		$$
		For any $ t = t_1 + n $, where $ 0 \le t_1 \le 1 $ and $ n $ is an integer. By Theorem \ref{maintheorem22}, and the fact the linear operator $\mathbf{L}$ is nonnegative in $[0, t_1]$, we obtain
		\begin{align*}
			&\frac{1}{2}\sum_{|\tilde{\alpha}| \leq N}\|\partial^{\tilde{\alpha}} \mathbf{f}\|^2_{L^{2}_{x,v}}(t) + \delta_M \int_{t_1}^t \sum_{|\tilde{\alpha}| \leq N}\|\partial^{\tilde{\alpha}} \mathbf{f}(s)\|_{\nu}^2 ds + \delta_M \int_{0}^{t_1} \sum_{|\tilde{\alpha}| \leq N}\|\partial^{\tilde{\alpha}} \mathbf{f}(s)\|_{\nu}^2 ds \\
			&\leq C\mathcal{E}[\mathbf{f}^{\rm{in}}] + \int_{0}^t \sum_{|\tilde{\alpha}| \leq N}\langle \partial^{\tilde{\alpha}} \Gamma[\mathbf{f}, \mathbf{f}], \partial^{\tilde{\alpha}} \mathbf{f} \rangle_{L^{2}_{x,v}} ds + \delta_M \int_{0}^{t_1} \sum_{|\tilde{\alpha}| \leq N}\|\partial^{\tilde{\alpha}} \mathbf{f}(s)\|_{\nu}^2 ds \\
			&\leq C\mathcal{E}[\mathbf{f}^{\rm{in}}] + C \sup_{0 \leq s \leq t} \sqrt{\mathcal{E}[\mathbf{f}(s)]} \sum_{|\tilde{\alpha}| \leq N} \int_{0}^t \|\partial^{\tilde{\alpha}} \mathbf{f}(s)\|_{\nu}^2 ds \\
			&+ \delta_M \int_{0}^{t_1} \sum_{|\tilde{\alpha}| \leq N}\|\partial^{\tilde{\alpha}} \mathbf{f}(s)\|_{\nu}^2 ds,
		\end{align*}
		where $ 0 < \delta_M < 1 $. From Lemma \ref{socadLemm888} with $ t_1 \leq 1 $,
		$$
		\sum_{|\tilde{\alpha}| \leq N} \int_{0}^{t_1} \|\partial^{\tilde{\alpha}} \mathbf{f}(s)\|_{\nu}^2 ds \leq C\|\partial^{\tilde{\alpha}} \mathbf{f}^{\rm{in}}\|^2_{L^{2}_{x,v}}.
		$$
		Therefore, there exists $ C_M > 0 $ such that
		\begin{equation}\label{socde116}
		\sup_{0 \leq s \leq t} \sum_{|\tilde{\alpha}| \leq N} \left[ \frac{1}{2} \|\partial^{\tilde{\alpha}} \mathbf{f}(s)\|^2_{L^{2}_{x,v}} + \int_0^s \|\partial^{\tilde{\alpha}} \mathbf{f}(s)\|^2_{\nu} ds \right]
		\le C_M \mathcal{E}[\mathbf{f}^{\rm{in}}] + C_M \left[ \sup_{0 \leq s \le t} \mathcal{E}\big[\mathbf{f}(s)\big] \right]^{3/2}.
		\end{equation}
		Substitute  \eqref{socde116} into the right-hand side of \eqref{socad7878} to get, for some other constant $ C_M \ge 1 $,
		\begin{equation}\label{socde117}
		\sup_{0 \le s \leq t} \mathcal{E}\big[\mathbf{f}(s)\big] \le C_M \mathcal{E}[\mathbf{f}^{\rm{in}}]  + C_M \left[ \sup_{0 \leq s \le t} \mathcal{E}\big[\mathbf{f}(s)\big] \right]^{3/2}.
		\end{equation}
		To construct a global solution, for such chosen $ M > 0, C_M > 0 $, we further choose $ M_2 \le M_1 < M $ such that
		$$
		C_M M_2^{1/2} \leq \frac{1}{2}.
		$$
		Finally let
		$$
		\mathcal{E}[\mathbf{f}^{\rm{in}}] \le \varepsilon_0 \equiv \frac{M_2}{4C_M} \leq \frac{M_2}{2} \leq \frac{M_1}{2} < \frac{M}{2},
		$$
		and let $ C_0 = 2C_M $. By Theorem \ref{soceadTm4},
		$$
		T_2 = \sup_t \left\{ t : \sup_{0 \le s \le t} \mathcal{E}\big[\mathbf{f}(s)\big] \le M_2 \right\} > 0.
		$$
		 Clearly we have $ 0 \le t \le T_2 \le T $ and $ M_2^{1/2} C_M \le 1/2 $. It follows by \eqref{socde117} that
		\begin{align*}
		&\sup_{0 \le s \le T_2} \mathcal{E}\big[\mathbf{f}(s)\big] \leq C_M \mathcal{E}(f_0) + C_M \left[ \sup_{0 \leq s \leq T_2} \mathcal{E}\big[\mathbf{f}(s)\big] \right]^{3/2}\\
		&\hspace{0.4cm}\leq C_M \mathcal{E}[\mathbf{f}^{\rm{in}}] + \left[ \sup_{0 \le s \le T_2} \mathcal{E}\big[\mathbf{f}(s)\big] \right] \left[ C_M \left( \sup_{0 \leq s \leq T_2} \mathcal{E}\big[\mathbf{f}(s)\big] \right)^{1/2} \right]\\
		&\hspace{0.4cm}\leq C_M \mathcal{E}[\mathbf{f}^{\rm{in}}] + \frac{1}{2} \sup_{0 \leq s \le T_2} [\mathcal{E}\big[\mathbf{f}(s)\big]].
		\end{align*}
	Therefore,
		\begin{equation}\label{socdlaste}
		\sup_{0 \leq s \le T_2} \mathcal{E}\big[\mathbf{f}(s)\big] \le 2 C_M \mathcal{E}[\mathbf{f}^{\rm{in}}] = C_0 \mathcal{E}[\mathbf{f}^{\rm{in}}] \le 2 C_M \times \frac{M_2}{4 C_M} =  \frac{M_2}{2} < M_2.
		\end{equation}
		
		Comparing \eqref{socdlaste} with the definition of $ T_2 $, implies $ T_2 = \infty $ from the continuity of $ \mathcal{E}\big[\mathbf{f}(s)\big] $. Consequently, we have completed the proof of Theorem \ref{maintheorem11}.

	\subsection*{Acknowledgements}
	The authors are grateful for the valuable comments made by the referees. This research was
	supported by the National Natural Science Foundation of China (Grant 12271356).

\end{document}